\newcommand{\oset}[3][0ex]{%
  \mathrel{\mathop{#3}\limits^{
    \vbox to#1{\kern-2\ex@
    \hbox{$\scriptstyle#2$}\vss}}}}
\newcommand{\pp}{\mathbb{P}}
\newcommand{\zz}{\mathbb{Z}}
\renewcommand{\O}{\mathcal{O}}
\newcommand{\sE}{\mathscr{E}}
\newcommand{\sF}{\mathscr{F}}
\newcommand{\sP}{\mathscr{P}}
\newcommand{\Sym}{\operatorname{Sym}}
\newcommand{\Pic}{\operatorname{Pic}}
\newcommand{\res}{\operatorname{res}}
\newcommand{\Bl}{\operatorname{Bl}}
\newcommand{\Spec}{\operatorname{Spec}}
\newcommand{\Aut}{\operatorname{Aut}}
\newcommand{\rk}{\operatorname{rk}}
\newcommand{\ev}{\operatorname{ev}}
\newcommand{\Span}{\operatorname{Span}}
\renewcommand{\Im}{\operatorname{Im}}
\renewcommand{\bar}{\overline}
\renewcommand{\setminus}{\smallsetminus}
\newcommand{\posmod}{\oset{+}{\rightarrow}}
\newcommand{\posmodwithplus}{\oset{+}{\rightarrow}}
\newcommand{\negmod}{\oset{-}{\rightarrow}}
\newcommand{\posmodalong}{\oset{+}{\leadsto}}
\newcommand{\negmodalong}{\oset{-}{\leadsto}}
\newcommand{\biposmod}{\oset{+}{\leftrightarrow}}
\newcommand{\redmod}{\%}
\newcommand{\qand}{\quad \text{and} \quad}
\newcommand{\qor}{\quad \text{or} \quad}
\newtheorem{thm}{Theorem}[section]
\newtheorem{lem}[thm]{Lemma}
\newtheorem{conj}[thm]{Conjecture}
\newtheorem{prop}[thm]{Proposition}
\newtheorem{cor}[thm]{Corollary}
\newcommand{\defi}[1]{\textsf{#1}} 
\theoremstyle{definition}
\newtheorem{defin}[thm]{Definition}
\newtheorem{example}[thm]{Example}
\newtheorem*{question}{Question}
\newtheorem{hypo}[thm]{Hypothesis}
\theoremstyle{remark}
\newtheorem{rem}[thm]{Remark}
\title[Interpolation for Brill--Noether curves]{Interpolation for Brill--Noether curves}
\author{Eric Larson}
\address{Department of Mathematics, Brown University}
\email{elarson3@gmail.com}
\author{Isabel Vogt}
\address{Department of Mathematics, Brown University}
\email{ivogt.math@gmail.com}
\thanks{During the preparation of this article,
E.L.\ and I.V.\ were supported by
NSF MSPRF grants DMS-1802908 and DMS-1902743.}
\begin{document}

\begin{abstract}
In this paper we determine the number of general points through which a Brill--Noether curve of fixed degree and genus in any projective space can be passed. 
\end{abstract}

\maketitle

\section{Introduction}

The \defi{interpolation} problem has occupied a central position in mathematics
for several millennia. Roughly speaking, it asks:

\begin{question}
When can a curve of a given type be drawn through a given collection of points?
\end{question}

The first results on the interpolation problem date to classical antiquity.
Two such results appear in
Euclid's \emph{Elements} (circa 300 B.C.):
A line can be drawn
through any two distinct
points in the plane (the First Postulate), and a circle can be drawn through any three
non-collinear points in the plane (Proposition~5 of Book~IV).
\begin{center}
\begin{tikzpicture}
\draw (0, 0) -- (3, 1);
\filldraw (0.3, 0.1) circle[radius=0.03];
\filldraw (2.7, 0.9) circle[radius=0.03];
\draw (10, 0.5) circle[radius=0.5];
\filldraw (10.3, 0.9) circle[radius=0.03];
\filldraw (10.4, 0.2) circle[radius=0.03];
\filldraw (9.5, 0.5) circle[radius=0.03];
\end{tikzpicture}
\end{center}
The study of the interpolation problem in antiquity
culminated in the work of Pappus (circa 340 A.D.),
who showed in his \emph{Collection} 
that a conic section can be drawn through any five points in the plane
(Proposition 14 of Book VIII).

The introduction of algebraic techniques
to geometry enabled a second wave of results
in the 18th century,
and cast the interpolation problem firmly
in the then-emerging field of algebraic geometry. 
For example, Cramer generalized Pappus's result
to plane curves of arbitrary degree \(n\), which he showed can pass through \(n(n + 3)/2\) general points in 1750 \cite{cramer}.
Then Waring solved the interpolation problem
for graphs of polynomial functions in 1779 \cite{waring}.
(Lagrange independently 
rediscovered this result in 1795 \cite{lagrange}
and thus it is often known as the ``Lagrange interpolation formula.'')
Cauchy \cite{cauchy}, Hermite and Borchardt \cite{hermite}, and Birkhoff \cite{birkhoff},
all subsequently generalized Waring's result in several different directions.
These results are of interest far outside algebraic geometry, and even outside of mathematics.
For example, they play essential roles in the Newton--Cotes method for numerical integration,
in Shamir's cryptographic secret sharing protocol \cite{shamir},
and in Reed--Solomon error-correcting codes \cite{rs}
(which currently power most digital storage media).

The key prerequisite to the modern study of the interpolation problem
was the development of 
\defi{Brill--Noether theory} in the 20th century,
which studies maps from general curves to projective space,
and thus identifies the most natural class of curves for which to study the interpolation problem.
Namely, let \(C\) be a general curve of genus \(g\).
From our perspective here, the two key facts are:
\begin{enumerate}
\item There exists a nondegenerate map \(C \to \pp^r\) of degree \(d\) if and only if
the quantity
\[\rho(d, g, r) \colonequals (r + 1)d - rg - r(r + 1)\]
satisfies \(\rho \geq 0\).
[Proven in 1980 by Griffiths and Harris \cite{bn}.]

\item In this case, the
\emph{universal} space of such maps has a unique component dominating \(\bar{M}_g\).
[Proven in the 1980s by Fulton and Lazarsfeld \cite{fl}, Gieseker \cite{gp}, and Eisenbud and Harris \cite{im}.]
\end{enumerate}
We call stable maps \(f \colon C \to \pp^r\)
corresponding to points in this unique component \defi{Brill--Noether curves (BN-curves)}.
(The general such curve is an embedding of a smooth curve for \(r \geq 3\).)
This language then gives us a \emph{precise} and \emph{natural}
formulation of the interpolation problem:

\begin{question}
Let \(d, g, r, n\) be nonnegative integers with \(\rho(d, g, r) \geq 0\).
When can we pass a BN-curve of degree \(d\) and genus \(g\) through \(n\) general points in \(\pp^r\)?
\end{question}


Equivalently, writing \(M^\circ_{g, n}(\pp^r, d)\) for the component
corresponding to BN-curves, this question is asking when the evaluation map \(M^\circ_{g, n}(\pp^r, d) \to (\pp^r)^n\) is dominant.
It is evidently necessary for:
\[rn = \dim (\pp^r)^n \leq \dim M^\circ_{g, n}(\pp^r, d) = (r + 1)d - (r - 3)(g - 1) + n,\]
or upon rearrangement,
\[(r - 1)n \leq (r + 1)d - (r - 3)(g - 1).\]
Despite cases where this is not sufficient,
it is a folklore conjecture that it \emph{usually} suffices:

\begin{conj} \label{conj:main}
Let \(d, g, r, n\) be nonnegative integers with \(\rho(d, g, r) \geq 0\).
Then there is a BN-curve of degree \(d\) and genus \(g\) through \(n\) general points in \(\pp^r\)
if and only if
\[(r - 1)n \leq (r + 1)d - (r - 3)(g - 1),\]
apart from finitely many exceptions.
\end{conj}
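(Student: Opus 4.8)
The plan is to recast interpolation as a cohomological vanishing for the normal bundle and then to establish that vanishing by an induction that builds the curve up out of lines and rational normal curves. Recall that the deformations of a stable map $f\colon C\to\pp^r$ form $H^0(N_f)$, and that whether $f$ can be deformed so as to carry $n$ marked points $p_1,\dots,p_n\in C$ to general points of $\pp^r$ is governed by the evaluation map $H^0(N_f)\to\bigoplus_{i=1}^n N_f|_{p_i}$: a BN-curve of degree $d$ and genus $g$ can be passed through $n$ general points exactly when this map is surjective for general $p_i$, i.e.\ when $H^1\bigl(N_f(-p_1-\cdots-p_n)\bigr)=0$. Since $\operatorname{rk}N_f=r-1$ and $\chi(N_f)=(r+1)d-(r-3)(g-1)$ is the right-hand side of the asserted inequality, and since the vanishing for $n$ points forces it for $n-1$, it is enough to prove it at the extremal value $n=\lfloor\chi(N_f)/(r-1)\rfloor$. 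The ``only if'' direction is the dimension count recorded above. Finally, since $M^\circ_{g,n}(\pp^r,d)$ is irreducible and the locus where evaluation is dominant is open, it suffices to exhibit a \emph{single} curve in the BN-component -- possibly a reducible nodal limit -- whose limit normal bundle $\mathcal N$ satisfies $H^1(\mathcal N(-D))=0$ for $D$ general of degree $n$.

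For the main argument I would induct on $d$, or on $(g,d)$ ordered lexicographically. At the inductive step one degenerates $C$ to a nodal union $C_0\cup\Gamma$, where $\Gamma$ is a line meeting $C_0$ at one point (a $1$-secant: $d\mapsto d-1$, $g$ unchanged) or at two points (a $2$-secant: $d\mapsto d-1$, $g\mapsto g-1$); when no line works, one peels off instead a rational normal curve of higher degree spanning a linear subspace. The choice is forced by requiring that $C_0$ again satisfy $\rho\ge0$ and be nondegenerate and that, after distributing $p_1,\dots,p_n$ between $C_0$ and $\Gamma$, the point inequality hold on each piece. One also needs that $C_0\cup\Gamma$ lies in the BN-component: this uses the cited existence and irreducibility theorems together with a lemma to the effect that gluing BN-curves along sufficiently general points and smoothing stays in the BN-component. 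The cohomology is then bookkeeping with the limit normal bundle: $\mathcal N$ restricts on each component to an elementary modification of that component's normal bundle along the nodes, and it fits into a Mayer--Vietoris sequence $0\to\mathcal N\to\mathcal N|_{C_0}\oplus\mathcal N|_\Gamma\to\mathcal N|_{C_0\cap\Gamma}\to 0$; twisting down by the distributed points and chasing $H^1$, the desired vanishing follows from the inductive hypothesis applied to $C_0$ (which yields $H^1$-vanishing for the relevant modification of $N_{C_0}$), from the explicit vanishing on $\Gamma\cong\pp^1$, and from a surjectivity check for the gluing at the nodes.

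The induction bottoms out at rational curves -- where interpolation for BN-curves reduces to a statement about rational normal curves and their general projections that is available in the literature -- together with a finite list of small-degree or small-genus cases verified directly. Running alongside this is the determination of the finite exceptional set: these are precisely the triples $(d,g,r)$ for which the general BN-curve is forced onto a surface (or higher subvariety) of unexpectedly small degree -- for instance $(6,4,3)$, where the canonical genus-$4$ curve lies on a quadric and so passes through only $8$ rather than $12$ general points, and a few analogous low-genus space curves -- so one must show that outside an explicit bounded range the general curve lies on no such special variety and therefore has an ``as balanced as possible'' normal bundle, which is what makes the modifications above interpolate.

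The main obstacle is closing the induction in the tight regime: when $\rho$ is small, or when the slack in $(r-1)n\le\chi(N_f)$ is small -- exactly the range producing the exceptions -- there is essentially no freedom in the choice of secant line (or rational normal curve) and the distribution of the $n$ points, and one is pushed into a delicate case analysis, peeling off several components at once, switching building blocks, or carefully tracking the ``fractional'' last point when $r-1\nmid\chi(N_f)$. The second serious technical point is controlling the limit normal bundle through these degenerations -- its restriction to each component and its gluing data at the nodes -- and confirming that the reducible limits genuinely lie in the Brill--Noether component rather than an extraneous component of the space of stable maps.
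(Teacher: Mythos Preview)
Your high-level instincts are right --- the statement does reduce to normal-bundle cohomology, and degeneration to reducible curves is the engine --- but the inductive scheme you sketch cannot close, for two related structural reasons that the paper addresses head-on.

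First, peeling off \(1\)- and \(2\)-secant lines only reaches \emph{nonspecial} curves: starting from rational normal curves and adding such lines keeps \(d\geq g+r\), so you never hit the special range (e.g.\ canonical curves). Your vague ``rational normal curve of higher degree spanning a linear subspace'' is the right impulse, but the paper needs a very specific choice --- an \((r{+}1)\)-secant rational curve of degree \(r{-}1\) contained in a hyperplane --- and a substantial analysis of \(N_{C\cup R}|_R\) (Section~5.3) to make it work; for elliptic normal curves in odd \(\pp^r\) this restriction is \emph{not} balanced, which is why canonical curves of even genus require a separate and delicate argument (Section~13).

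Second, and more fundamentally, your induction only moves \((d,g)\) around but keeps \(r\) fixed, while each degeneration \emph{adds} elementary modifications to the normal bundle of the smaller piece (the ``\([\posmodalong Y]\)'' twists). Your Mayer--Vietoris bookkeeping is fine for one step, but the modifications pile up and there is no mechanism in your sketch to ever remove them. The paper's second key idea --- completely absent from your plan --- is to \emph{project from a general point \(p\in C\)}: one first specializes some of the accumulated modification data to point toward \(p\), then uses the pointing-bundle exact sequence to drop to \(\pp^{r-1}\) with fewer modifications. Balancing this sequence forces an augmented inductive hypothesis \(I(d,g,r,\ell,m)\) tracking two auxiliary integers, and making enough modifications point toward \(p\) (especially in positive characteristic, where the naive specialization count fails) requires the elaborate degenerations of the \((r{+}1)\)-secant curves in Section~7. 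Without projection, your induction is a one-way ratchet.

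Two smaller points: the rational-curve base case is not ``available in the literature'' in characteristic \(2\) --- indeed \(N_C\) fails interpolation there whenever \(d\not\equiv 1\pmod{r-1}\), and one must argue separately that the evaluation map is still dominant (Section~8). And your description of the exceptional \((6,4,3)\) is off: the canonical genus-\(4\) curve passes through \(9\) general points (the quadric has a \(9\)-dimensional family), not \(8\); the failure is that interpolation predicts \(12\).
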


This conjecture has been studied intensely in recent years.
As in previous eras, this attention has been motivated by both intrinsic interest, as well as by striking
applications
to a wide range of other interesting geometric problems. Recent examples
of such problems include smoothing curve singularities \cite{stevens1}, 
constructing moving
curves in \(\bar{M}_g\) \cite{atan, CFM}, the first author's resolution of Severi's 1915
Maximal Rank Conjecture \cite{mrc}, as well as various generalizations thereof \cite{ballico-union}.

The easiest cases of this conjecture are when
\(d\) is large relative to \(g\) and \(r\), and such cases
have therefore been the focus of significant work.
For example, Sacchiero proved Conjecture~\ref{conj:main} for rational
curves in 1980 \cite{sacchiero};
Ran later gave an independent proof in this case in 2007 \cite{ran}.
Subsequently, the first author, in joint work with Atanasov and Yang, proved Conjecture~\ref{conj:main}
when \(d \geq g + r\) in characteristic zero \cite{aly}.
Another case of interest is the proof of Conjecture~\ref{conj:main} for canonical curves in 
characteristic zero in a pair
of papers by Stevens from 1989 and 1996 \cite{stevens, stevens1}.
Many authors have also considered this conjecture in low dimensions.
For example,
Ellingsrud and Hirschowitz in 1984 \cite{eh}, Perrin in 1987 \cite{perrin}, and later Atanasov in 2014 \cite{atan}, all made
significant progress on Conjecture~\ref{conj:main} for space curves,
but their analysis left infinitely many cases unsolved.
This effort culminated in the proof of Conjecture~\ref{conj:main} for space
curves in characteristic zero by the second
author in 2018 \cite{p3}, and for curves in \(\pp^4\) in characteristic zero by both authors in 2021 \cite{p4}.

Nevertheless, despite this significant interest, fundamental limitations of previous techniques
have prevented the resolution of Conjecture~\ref{conj:main}
in general, and limited even partial results largely to characteristic zero.
Our main result gives the first comprehensive answer to the interpolation problem.

\begin{thm} \label{cor:main}
Conjecture~\ref{conj:main} holds in full generality and in any characteristic.
More precisely: Let \(d, g, r, n\) be nonnegative integers with \(\rho(d, g, r) \geq 0\).
There is a BN-curve of degree \(d\) and genus \(g\) through \(n\) general points in \(\pp^r\)
if and only if
\begin{equation} \label{npoints}
(r - 1)n \leq (r + 1)d - (r - 3)(g - 1),
\end{equation}
except in the following four exceptional cases:
\[(d, g, r) \in \{(5, 2, 3), (6, 4, 3), (7, 2, 5), (10, 6, 5)\}.\]
\end{thm}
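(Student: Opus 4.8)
\emph{Reduction to normal bundles.}
The ``only if'' direction is the dimension count recorded above, so the content is the ``if'' direction: assuming \(\rho(d,g,r)\geq 0\), that \eqref{npoints} holds, and that \((d,g,r)\) is none of the four listed triples, produce a BN-curve of degree \(d\) and genus \(g\) through \(n\) general points. I would first pass to a statement about normal bundles. Standard deformation theory identifies the locus of BN-curves through \(n\) general points with an incidence variety whose smoothness and expected dimension at \([f,p_1,\dots,p_n]\) are governed by \(H^1\) of the twisted normal sheaf \(N_f(-p_1-\cdots-p_n)\) on \(C\); since \(\chi(N_f)=(r+1)d-(r-3)(g-1)\) and each basepoint drops \(\chi\) by \(\rk N_f=r-1\), the inequality \eqref{npoints} is exactly the condition \(\chi\bigl(N_f(-p_1-\cdots-p_n)\bigr)\geq 0\). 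So the theorem follows once we show that the normal bundle of a general BN-curve \emph{satisfies interpolation} --- that each of its twists by a general effective point-divisor has \(h^0\) or \(h^1\) as small as the Euler characteristic allows --- except in the four exceptional cases, where one checks directly that interpolation fails by exhibiting the obstructing \(H^1\). A key point for the characteristic-free statement is that it suffices to produce, in each case, a \emph{single} curve with the required vanishing and then invoke semicontinuity: the vanishing of \(H^1\) forces the incidence variety to be smooth of expected dimension and dominant over \((\pp^r)^n\) at that point, and this implication is insensitive to the characteristic.

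\emph{Induction via degeneration.}
I would prove the normal-bundle statement by induction on \(d\) --- more precisely on a suitable weighting of \(d\) and \(g\) --- taking as base cases the already-known instances of Conjecture~\ref{conj:main}: rational curves \cite{sacchiero, ran}, canonical curves \cite{stevens, stevens1}, the range \(d\geq g+r\) \cite{aly}, together with a finite list of small \((d,g,r)\), including those bordering the four exceptions, handled by explicit construction. The inductive step is by specialization: degenerate a general BN-curve \(C\) of type \((d,g,r)\) to a reducible nodal curve \(X=C_0\cup D\) whose components are BN-curves (or rational, or elliptic) of strictly smaller invariants, arranged so that \(X\) lies in the closure of the BN-component and smooths back to a BN-curve of the desired type. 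The workhorse moves are attaching to \(C_0\) a \(1\)-secant line (taking \((d,g)\) to \((d-1,g)\)), a \(2\)-secant line or conic (taking \((d,g)\) to \((d-1,g-1)\) or \((d-2,g-1)\)), and attaching an elliptic normal curve or a short chain of rational curves to raise the genus --- combined with a Horace-style specialization of some of the \(n\) general points onto the attached subcurve. One must verify that these moves connect every \((d,g,r)\) with \(\rho\geq 0\) (and not bordering an exception) to a base case while keeping \(\rho\geq 0\) throughout, which forces a case analysis according to the residues of \(d\) and \(g\) modulo \(r\).

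\emph{Modifications and the normal bundle of a reducible curve.}
The technical heart is the behavior of the normal bundle under these degenerations. For \(X=C_0\cup_\Gamma D\), the restriction \(N_X|_{C_0}\) is not \(N_{C_0}\) itself but a \emph{modification} of it --- a positive elementary modification in the ``\(D\)-directions'' along \(\Gamma\), matched by negative modifications of \(N_D|_\Gamma\) --- and these modifications must be carried along in the induction. Thus the correct inductive statement is really: every bundle obtained from the normal bundle of a BN-curve by a controlled sequence of positive and negative elementary modifications at general points satisfies interpolation. Granting this for the pieces of \(X\), one chases the cohomology of the modified bundles through the short exact sequences defining \(N_X\) --- twisting also by the (possibly Horace-specialized) point-divisor --- to deduce interpolation for \(N_X\), and semicontinuity transfers the conclusion to the smoothing \(C\). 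One also checks that \(X\) deforms within the BN-component, i.e.\ that its smoothing dominates \(\bar M_g\); this is arranged by gluing the BN-pieces along general points and using the known structure of the relevant spaces of stable maps.

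\emph{The main obstacle.}
Essentially all of the work lies in two places. First, the combinatorial bookkeeping: one must show that the finitely many degeneration moves really do reach every admissible \((d,g,r)\) without passing through any configuration where interpolation of the (modified) normal bundle fails, and that the accounting of positive and negative modifications closes up consistently under the induction --- a delicate analysis modulo the congruences above, and precisely the place where the four genuine exceptions (and no others) are forced to appear. Second, the explicit base cases: curves on rational normal scrolls and quadrics, and curves of small degree and genus in \(\pp^3\), \(\pp^4\), and \(\pp^5\), where one must establish interpolation just above each exceptional triple while confirming its failure at the triple itself; these cannot be reduced further and must be done by hand, by writing down the curve, its normal bundle, and the relevant cohomology explicitly.
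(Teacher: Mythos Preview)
Your outline captures the broad strategy --- reduce to interpolation for the normal bundle, then argue by degeneration and semicontinuity --- but it underestimates the difficulty in two specific ways that the paper must address head-on.

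First, your proposed base cases are not available in positive characteristic. You invoke \cite{aly} for the range \(d\geq g+r\) and \cite{stevens,stevens1} for canonical curves, but both are proved only in characteristic zero (the paper explicitly flags this). Since Theorem~\ref{cor:main} is asserted in arbitrary characteristic, these cannot be imported as black boxes; the paper in effect re-proves them. Canonical curves of even genus, in particular, require an entirely separate argument (Section~\ref{sec:canonical}) via degeneration to an elliptic normal curve union an \((r+1)\)-secant rational curve in a hyperplane, together with a delicate analysis of the Harder--Narasimhan filtration of the restricted normal bundle and a naturality argument on \(\Pic E\). Rational curves in characteristic~\(2\) also need a separate (if short) treatment, since \(N_C\) genuinely fails interpolation there even though Theorem~\ref{cor:main} still holds.

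Second, and more seriously, the degeneration moves you list --- \(1\)- and \(2\)-secant lines, with a vague nod to ``elliptic normal curves or short chains of rational curves'' --- are precisely the toolkit of \cite{aly}, which the paper explains is insufficient: \(1\)- and \(2\)-secant attachments can only reach nonspecial curves, and the resulting menu of elementary modifications is too sparse to close the induction without characteristic-zero tricks. The paper's genuine innovations are (i) a third basic degeneration, attaching an \((r+1)\)-secant rational curve \(R\) of degree \(r-1\) in a hyperplane (Section~\ref{subsec:onion}), which reaches all BN-curves; and (ii) a second inductive mechanism that \emph{decreases} the number of accumulated modifications, by specializing \(R\) as its hyperplane becomes tangent to \(C\) (Section~\ref{sec:onion_specialization}) and then projecting from a point on \(C\) (Section~\ref{sec:inductive}). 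Your proposal has no analogue of (ii); without it, modifications pile up with each degeneration and the induction does not terminate. The actual proof packages both mechanisms into a strengthened hypothesis \(I(d,g,r,\ell,m)\) tracking two species of modifications, a substantial combinatorial analysis (Section~\ref{sec:combinat}) showing the induction closes for \(r\geq 14\), a computer search for \(r\leq 13\) (Section~\ref{sec:most-sporadic}), and roughly thirty hand-verified base cases (Section~\ref{sec:remaining-sporadic}).
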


Since the normal bundle \(N_C\) controls the deformation theory of \(C\),
Conjecture~\ref{conj:main} is closely related to a certain
property, also known as \defi{interpolation},
for \(N_C\).

\begin{defin}
A vector bundle \(E\) on a curve \(C\) \defi{satisfies interpolation} if \(H^1(E) = 0\),
and for every \(n > 0\), there exists an effective divisor \(D\) of degree \(n\) such that
\begin{equation} \label{h0h1}
H^0(E(-D)) = 0 \qor H^1(E(-D)) = 0.
\end{equation}
For \(C\) irreducible, \(\Sym^n C\) is also irreducible, so interpolation is equivalent
to \eqref{h0h1} for \(D\) general.
\end{defin}

If \(p_1, \dots, p_n \in C\subset \pp^r\), then obstructions to lifting deformations of \(p_1, \dots, p_n\) to deformations of \(C\) lie in \(H^1(N_C(-p_1 - \cdots -p_n))\).  Therefore, interpolation for 
\(N_C\) implies Conjecture~\ref{conj:main}.
In fact, interpolation for \(N_C\) is a slightly stronger condition, the essential differences being:
\begin{enumerate}
\item It implies an analog of Conjecture~\ref{conj:main} where the general points are replaced
by general linear spaces: There is a BN-curve of degree \(d\) and genus \(g\) incident to general
linear spaces \(\Lambda_i\) of dimension \(\lambda_i\) if and only if
\begin{equation} \label{inter-linsp}
\sum (r - 1 - \lambda_i) \leq (r + 1)d - (r - 3)(g - 1).
\end{equation}
(This implication can be deduced from \cite[Theorem 8.1]{atan}, cf.\ the introduction to loc.\ cit.)
\item It implies that \(M^\circ_{g, n}(\pp^r, d) \to (\pp^r)^n\) is generically smooth,
rather than merely dominant. (This is of course equivalent in characteristic zero but is a
stronger statement in positive characteristic.)
\end{enumerate}
Theorem~\ref{cor:main} is a consequence of our main theorem, which asserts:

\begin{thm}\label{thm:main}
Let \(d, g, r\) be nonnegative integers with \(\rho(d, g, r) \geq 0\),
and \(C \subset \pp^r\) be a general BN-curve of degree \(d\) and genus \(g\).
Then \(N_C\) satisfies interpolation
if and only if neither of the following hold:
\begin{enumerate}
\item The tuple \((d, g, r)\) is one of the following five exceptions:
\begin{equation}\label{eq:counter_examples}
(d, g, r) \in \{(5,2,3), (6,4,3), (6,2,4), (7,2,5),(10,6,5)\}.
\end{equation}
\item The characteristic is \(2\), and \(g=0\), and \(d \not\equiv 1\) mod \(r - 1\).
\end{enumerate}
\end{thm}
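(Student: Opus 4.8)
The plan is to prove the two implications separately, with essentially all of the work concentrated in the ``if'' direction. For the ``only if'' direction one must show, for each of the five triples in \eqref{eq:counter_examples} and for a general rational curve in characteristic $2$ with $d\not\equiv 1\bmod r-1$, that $N_C$ fails interpolation. In the former cases the curve $C$ lies on a quadric or a scroll $\Sigma$, and the subbundle $N_{C/\Sigma}\hookrightarrow N_C$ and quotient $N_C\twoheadrightarrow N_{\Sigma/\pp^r}|_C$ are of too large and too small degree respectively: writing $n$ for the critical value with $0\le \chi(N_C)-(r-1)n<r-1$, one checks that $H^0(N_{C/\Sigma}(-D))\ne 0$ and $H^1(N_{\Sigma/\pp^r}|_C(-D))\ne 0$ for general effective $D$ of degree $n$, so both $H^0(N_C(-D))$ and $H^1(N_C(-D))$ are nonzero (e.g.\ for the genus-$4$ canonical curve on a smooth quadric in $\pp^3$ one has a subbundle of degree $18$ and a quotient of degree $12$, while the average is $15$ and $n=12$). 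For rational curves in characteristic $2$ one instead verifies directly that $N_C$ on $\pp^1$ is unbalanced exactly when $d\not\equiv 1\bmod r-1$ --- a characteristic-$2$ obstruction coming from the failure of a generic surjectivity that holds automatically otherwise --- and an elementary computation shows that a direct sum of line bundles on $\pp^1$ satisfies interpolation if and only if it is balanced.

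For the ``if'' direction, since $C$ is irreducible $\Sym^n C$ is irreducible, so it suffices to exhibit, for the critical $n$ above, a \emph{single} stable map representing a BN-curve of degree $d$ and genus $g$ (allowed to be reducible) together with a \emph{single} effective divisor $D$ of degree $n$ for which $H^0(N_C(-D))=0$ or $H^1(N_C(-D))=0$; semicontinuity then propagates this to general $C$ and general $D$. I would run an induction on $d$ built from two moves: removing a general $1$-secant line (replacing $C$ by $C^\circ\cup L$ with $\deg C^\circ=d-1$, $g(C^\circ)=g$, available when $\rho(d-1,g,r)\ge 0$), and removing a general $2$-secant line (with $\deg C^\circ=d-1$, $g(C^\circ)=g-1$, available when $g\ge 1$ and $\rho\ge 1$, and decreasing $\rho$ by exactly $1$). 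In either case $C^\circ\cup L$ is again a BN-curve. Writing $\Gamma=C^\circ\cap L$ for the attaching divisor, restricting the normal bundle to the two components gives exact sequences
\[
0\to N_{C^\circ}\to N_{C^\circ\cup L}\to N_L(\Gamma)\to 0 \qand 0\to N_L\to N_{C^\circ\cup L}\to N_{C^\circ}(\Gamma)\to 0,
\]
in which $N_L=\O_L(1)^{\oplus(r-1)}$ is split, hence transparent after any twist. Distributing the $n$ points between the two components and chasing these sequences should reduce the inductive step to interpolation for $N_{C^\circ}$ --- the inductive hypothesis for the smaller triple --- together with elementary cohomology vanishing for twists of $\O_{\pp^1}(1)^{\oplus(r-1)}$.

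The gap between the bundle $N_{C^\circ}(\Gamma)$ appearing in these sequences and the bare $N_{C^\circ}$ to which induction applies is bridged by the elementary modification operations $\posmod$ and $\negmod$: at each point of $\Gamma$ the bundle $N_{C^\circ\cup L}$ carries a distinguished line subbundle (the tangent direction of $L$), and modifying along it interpolates between $N_{C^\circ\cup L}|_{C^\circ}=N_{C^\circ}(\Gamma)$ and $N_{C^\circ}$. The two structural lemmas the proof must establish --- in enough generality to chain indefinitely --- are that interpolation is preserved under a sufficiently general elementary modification, and that it is preserved under the gluing sequences above when both pieces satisfy interpolation and are attached generically, subject to a numerical inequality one arranges through the choice of how to split the $n$ points. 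This bookkeeping --- deciding, triple by triple, how many points to put on each component and which modifications to perform so that every intermediate bundle has the right Euler characteristic and the right vanishing --- is the combinatorial core of the argument.

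The induction bottoms out when neither move is available: when $g=0$ (so after peeling off $1$-secant lines one reaches $d=r$, the rational normal curve, which in odd characteristic has balanced --- hence interpolating --- normal bundle, and which is the $d\equiv 1$ case in characteristic $2$), or when $\rho(d,g,r)=0$ with $g\ge 1$; a short computation identifies the latter as $g=m(r+1)$, $d=r(m+1)$ for $m\ge 1$, which includes the genus-$(r+1)$ canonical curves. For these I would use a separate degeneration --- writing $C$ as the union of the analogous $\rho=0$ curve for $m-1$ with a rational normal curve of degree $r$ along a fixed number of general points --- reducing again to the rational normal curve, together with a finite list of small cases done by hand; the exceptions $(6,4,3)$ and $(10,6,5)$ (the two canonical $\rho=0$ cases) surface here, and $(5,2,3),(6,2,4),(7,2,5)$, all with $d=g+r$, at the boundary of the $d\ge g+r$ regime. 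The part I expect to be genuinely hard is not any individual degeneration or base case but the global coherence of the induction: verifying that it never invokes interpolation for an exceptional triple, so the exceptional set does not grow under the moves, and carrying the characteristic-$2$ analysis through the entire argument --- in particular checking that the defect in the rational-curve case in characteristic $2$ does not infect positive-genus triples, which is presumably why Theorem~\ref{cor:main} has no genus-$0$ exceptions even though Theorem~\ref{thm:main} does.
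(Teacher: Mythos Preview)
Your general architecture---degeneration to reducible curves, induction via peeling off lines, and identifying the \(\rho=0\) curves and rational normal curves as base cases---is correct in outline, and your ``only if'' sketch is close to what the paper does (the paper mainly argues via dimension counts on the surface family rather than via the sub/quotient, but mentions your approach as an alternative). However, the ``if'' direction has a genuine gap that the paper is largely devoted to filling.

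The core problem is that \(1\)- and \(2\)-secant line degenerations alone are exactly the approach of prior work (Atanasov--Larson--Yang), and the paper's introduction explains why they are insufficient: they reach only \emph{nonspecial} curves (those with \(d\ge g+r\)), and even there the resulting modifications are too rigid to close the induction outside characteristic zero. Your proposal to handle the \(\rho=0\) base cases by attaching an \((r+2)\)-secant rational normal curve is reasonable as a one-off degeneration, but you have not explained how interpolation for \(N_{C^\circ\cup R}\) reduces to interpolation for \(N_{C^\circ}\); unlike a line, \(R\) has many secancy points and \(N_{C^\circ\cup R}|_R\) is not automatically balanced, so the ``transparent after any twist'' step fails. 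The paper instead introduces a \emph{third} degeneration---to an \((r+1)\)-secant rational curve of degree \(r-1\) in a hyperplane---and tracks it through a strengthened inductive hypothesis \(I(d,g,r,\ell,m)\) that records auxiliary modifications. The second key innovation is a family of specializations of these auxiliary curves (letting the hyperplane become tangent to \(C\)) that produces enough flexibility in the resulting modifications to make the combinatorics close in every characteristic; this replaces the characteristic-zero-only tools you would otherwise need to show that ``sufficiently general'' modifications preserve interpolation.

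Two further points you underestimate. First, the ``finite list of small cases done by hand'' is in fact large: after the uniform inductive arguments, a computer search over all \(r\le 13\) is needed to reduce to thirty explicit base cases, each requiring an ad hoc argument. Second, canonical curves of even genus (\((d,g,r)=(2r,r+1,r)\) with \(r\) odd) resist the entire machinery: the restricted normal bundle \(N_{E\cup R}|_R\) is \emph{not} balanced in this case, and the paper devotes a full section to a separate argument via the Harder--Narasimhan filtration and naturality constraints on maps between Picard varieties of the elliptic component. Your proposal does not distinguish this family, but it is genuinely harder than the rest. Finally, your normal bundle sequences are not quite right: \(N_{C^\circ\cup L}|_{C^\circ}\) is the elementary modification \(N_{C^\circ}[\posmodalong L]\), not a full twist \(N_{C^\circ}(\Gamma)\), and this distinction is exactly what forces the modification bookkeeping you allude to.
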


\noindent There are several exceptions in Theorem~\ref{thm:main}
that are not exceptions for Theorem~\ref{cor:main}:
\begin{enumerate}
\item The case \((d, g, r) = (6, 2, 4)\): Such curves have the expected behavior for passing through points,
but \emph{not} for incidence to linear spaces of arbitrary dimension. More precisely, a naive dimension count
suggests that they can
pass through \(9\) general points and meet a general line, but this is not true.
\item The cases in characteristic \(2\): In these cases, the evaluation map
\(M^\circ_{g, n}(\pp^r, d) \to (\pp^r)^n\) is dominant but \emph{not} generically smooth.
\end{enumerate}
We discuss these two cases in more depth in Sections \ref{sec:XX} and \ref{sec:rational}.

\medskip

%
%

Our approach to Theorem~\ref{thm:main} will be via degeneration to reducible curves \(X \cup Y\).
In general, although the restrictions \(N_{X \cup Y}|_X\) and
\(N_{X \cup Y}|_Y\) admit nice descriptions, fitting these together
to describe \(N_{X \cup Y}\) is extremely challenging
outside a handful of special cases.
This fundamental obstacle has limited previous attempts to
study the interpolation problem. For example, the key innovation of \cite{aly}
was an essentially complete description of \(N_{X \cup Y}\)
in the special case that \(Y\) was a \(1\)- or \(2\)- secant line.
Considering only such degenerations leads to two severe limitations:
\begin{enumerate}
\item Only nonspecial curves can be obtained by successively adding
\(1\)- and \(2\)- secant lines.
\item Since the set of degenerations used is so limited,
only a few types of elementary modifications to the normal bundle appear.
Because there are only a few types of modifications, it is difficult
to produce desired modifications by combining them,
in a way reminiscent of the Frobenius coin problem.
Circumventing this difficulty requires additional
tools that work
only in characteristic zero.
\end{enumerate}
Previous attempts to overcome these difficulties were limited to ad-hoc constructions
in low-dimensional projective spaces. The present paper introduces two key innovations:
\begin{enumerate}
\item We consider a third degeneration, where \(Y\) is an \((r + 1)\)-secant
rational curve of degree \(r - 1\) contained in a hyperplane \(H\),
which allows us to obtain any BN-curve.
Describing how \(N_{X \cup Y}|_X\) and
\(N_{X \cup Y}|_Y\) fit together to give \(N_{X \cup Y}\) is intractable
even in a degeneration of this complexity.  Nevertheless, thanks to our detailed study of this setup in Sections~\ref{subsec:onion} and~\ref{sec:red-E},
we are able to reduce interpolation for \(N_{X \cup Y}\) to
interpolation for a certain modification of \(N_X\).

\item Although \(Y\) does not have many interesting degenerations in \(H\),
we show in Section~\ref{sec:onion_specialization} that as \(H\) becomes tangent to \(C\), a plethora of such degenerations appear.
As in the Frobenius coin problem, this plethora of additional degenerations
makes it possible to produce the desired modifications by combining them.
This eliminates the restriction to characteristic zero that plagued previous work.
\end{enumerate}

The techniques we develop here hold promise of application to other
problems about the geometry of normal bundles. Indeed, in \cite{canstab},
they have already been applied to prove semistability of normal bundles of canonical curves,
thereby establishing a (slightly weaker version of a) conjecture of Aprodu, Farkas, and Orgeta \cite{afo}.
(A vector bundle of integral slope which satisfies interpolation is necessarily semistable, c.f.\ \cite[Remark~1.6]{p3},
but in these cases the slope is not integral.  Nevertheless, our techniques may be applied.)


\subsection*{Structure of the paper}
We begin, in Section~\ref{sec:XX}, by discussing the various exceptional cases
appearing in Theorem~\ref{thm:main}.
Then in Section~\ref{sec:prelim}, we introduce the notation
we shall use for the remainder of the paper and discuss a few other preliminary points.

In Section~\ref{sec:overview}, we explain the basic strategy of proof for the
hard direction of Theorem~\ref{thm:main}, i.e., that there are no other exceptional
cases besides those mentioned in the statement of Theorem~\ref{thm:main} above.
After explaining the basic strategy, we give a roadmap to the proof, which occupies
the remainder of the paper.

\subsection*{Acknowledgements} We thank Dan Abramovich, Atanas Atanasov, Izzet Coskun, Joe Harris, Gavril Farkas, Dave Jensen, Ravi Vakil, and David Yang for many stimulating discussions about normal bundles of curves and comments on an earlier version of this manuscript. We particularly thank Izzet Coskun for many helpful discussions about the normal bundle of a canonical curve, and in particular, the Harder--Narasimhan filtration in the degeneration used in Section \ref{sec:canonical}.

\section{Counterexamples}\label{sec:XX}

\subsection{Counterexamples in all characteristics}
We start with the five counterexamples to Theorem~\ref{thm:main} that occur in all characteristics:
\[(d, g, r) \in \{(5,2,3), (6,4,3), (6,2,4), (7,2,5),(10,6,5)\}.\]
In each of these cases, we will construct a certain surface \(S\) containing \(C\),
and see that \(S\) prevents 
Theorem~\ref{cor:main} (or the generalization \eqref{inter-linsp} thereof) from holding.
Indeed, if \(S\) cannot be made to pass through the requisite number of points
(or be made incident to the requisite linear spaces), then \(C\) cannot either.
Since Theorem~\ref{thm:main} implies Theorem~\ref{cor:main} (and the generalization \eqref{inter-linsp}),
this implies that these five cases must also be counterexamples to Theorem~\ref{thm:main}.

\begin{rem}
An alternative approach, the details of which we leave to the interested reader,
would be to see directly that the geometry of \(S\) obstructs Theorem~\ref{thm:main}.
The basic idea is that,
for any effective divisor \(D\) on \(C\), we have
\(h^0(N_C(-D)) \geq h^0(N_{C/S}(-D))\);
in the five exceptional cases,
this inequality will prevent \(N_C\) from satisfying interpolation.
\end{rem}

\subsubsection{The family \((d,g,r)=(r+2, 2, r)\)}
Let \(C\) be a curve of genus \(2\) and \(L\) be a line bundle of degree \(r+2\) on \(C\).  Write \(f \colon C \to \pp^1\) for the hyperelliptic map.  Then \(E \colonequals f_*L\) is a vector bundle of rank \(2\) on \(\pp^1\) with
\[\chi(E) = \chi(L) = r+1.\]
By Riemann--Roch, \(c_1(E) = r-1\).  The inclusion \(L \to f^*f_* L\) embeds \(C\) in the projective bundle \(\pp E^\vee\) so that \(O_{\pp E^\vee}(1)|_C \simeq L\).
Therefore, the image of \(C\) in \(\pp^r\) under the complete linear series for \(L\) lies on the image \(S\) of \(\pp E^\vee\) under the complete linear series for \(\O_{\pp E^\vee}(1)\).  The surface \(S\) is a scroll of degree equal to 
\[[\O_{\pp E^\vee}(1)]^2 = - c_1(E^\vee) \cdot \O_{\pp E^\vee}(1) = r-1.\]
By \cite[Lemma 2.6]{izzet-scrolls}, the dimension of the space of such scrolls is \(r^2 + 2r - 6\).

\begin{description}
\item[\boldmath If \((d, g, r) = (5, 2, 3)\)]
Then \(r^2 + 2r - 6 = 9\). Since it is \(1\) condition for a surface in \(\pp^3\) to pass through a point,
\(S\) cannot pass through more than \(9\) general points.
This contradicts \eqref{npoints}, which
predicts that \(C\) should be able to pass through \(10\) general points.

\item[\boldmath If \((d, g, r) = (6, 2, 4)\)]
Then \(r^2 + 2r - 6 = 18\). Since it is \(2\) conditions for a surface in \(\pp^4\) to pass through a point
and \(1\) condition to meet a line,
\(S\) cannot pass through \(9\) general points points while meeting a general line.
This contradicts \eqref{inter-linsp}, which
predicts that \(C\) should be able to pass through \(9\) general points points while meeting a general line.

\item[\boldmath If \((d, g, r) = (7, 2, 5)\)]
Then \(r^2 + 2r - 6 = 29\). Since it is \(3\) conditions for a surface in \(\pp^5\) to pass through a point,
\(S\) cannot pass through more than \(9\) general points.
This contradicts \eqref{npoints}, which
predicts that \(C\) should be able to pass through \(10\) general points.
\end{description}

\subsubsection{The case \((d,g,r)=(6, 4, 3)\)}
A general canonical curve of genus \(4\) is a cubic section of a quadric surface \(S\).
There is a \(9\)-dimensional family of quadric surfaces, and it is \(1\) condition for a surface in \(\pp^3\)
to pass through a point, so \(S\) cannot pass through more than \(9\) points.
This contradicts \eqref{npoints}, which
predicts that \(C\) should be able to pass through \(12\) general points.

\subsubsection{The case \((d,g,r)=(10, 6, 5)\)}
A general canonical curve of genus \(6\) is a quadric section of a quintic del Pezzo surface \(S\).
There is a \(35\)-dimensional family of quintic del Pezzo surfaces, and it is \(3\) conditions for a surface in \(\pp^5\)
to pass through a point, so \(S\) cannot pass through more than \(11\) points.
This contradicts \eqref{npoints}, which
predicts that \(C\) should be able to pass through \(12\) general points.

\subsection{\boldmath Rational curves in characteristic \(2\)}
In this section we explain the final infinite family of counterexamples to Theorem~\ref{thm:main}
that occurs only in characteristic \(2\).  This phenomenon was already observed for space curves in \cite{stab_p3} in relation to semistability.  
We begin by describing which vector bundles on a rational curve satisfy interpolation in terms of the Birkhoff--Grothendieck classification.

\begin{lem}\label{lem:BG}
The bundle \(E = \bigoplus_i \O_{\pp^1}(e_i)\) satisfies interpolation if and only 
if for all \(i, j\), 
\[|e_i - e_j| \leq 1\qand e_i \geq -1.\]
\end{lem}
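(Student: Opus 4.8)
The plan is to reduce the whole statement to bookkeeping with line bundle cohomology on \(\pp^1\). On \(\pp^1\) every effective divisor \(D\) of degree \(n\) is linearly equivalent to \(n \cdot [\mathrm{pt}]\), so \(E(-D) \cong \bigoplus_i \O_{\pp^1}(e_i - n)\) regardless of the choice of \(D\); in particular the ``general \(D\)'' appearing in the definition of interpolation may be taken to be \emph{any} divisor of degree \(n\). Hence interpolation for \(E\) becomes a purely numerical condition on the integers \(e_i\) and \(n > 0\).

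First I would record the two standard facts \(H^0(\O_{\pp^1}(k)) = 0 \iff k \le -1\) and \(H^1(\O_{\pp^1}(k)) = 0 \iff k \ge -1\). Setting \(m \colonequals \min_i e_i\) and \(M \colonequals \max_i e_i\), these give: \(H^1(E) = 0 \iff m \ge -1\); and for \(n > 0\), \(H^0(E(-D)) = 0 \iff n \ge M + 1\) while \(H^1(E(-D)) = 0 \iff n \le m + 1\). Thus, unwinding the definition, \(E\) satisfies interpolation if and only if \(m \ge -1\) and, for every \(n > 0\), either \(n \le m + 1\) or \(n \ge M + 1\).

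It remains to turn this into the stated inequalities. The positive integers \(n\) violating both of the last two inequalities are exactly those with \(m + 2 \le n \le M\), and such an integer exists if and only if \(M - m \ge 2\); moreover, once \(m \ge -1\) we have \(m + 2 \ge 1\), so the clause \(n > 0\) discards no obstructing value. Therefore interpolation holds precisely when \(m \ge -1\) and \(M - m \le 1\), i.e.\ when \(e_i \ge -1\) for all \(i\) and \(|e_i - e_j| \le 1\) for all \(i, j\), as claimed. There is no genuinely difficult step here; the only point to watch is the interaction with the clause \(n > 0\), and the example \(\O_{\pp^1}(-3)^{\oplus 2}\) — which vacuously satisfies the ``gap'' condition for all \(n > 0\) yet has \(H^1 \neq 0\) — shows that the hypothesis \(e_i \ge -1\) must be imposed separately rather than being subsumed by the gap condition.
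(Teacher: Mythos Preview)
Your proof is correct and follows essentially the same approach as the paper: reduce to the numerical conditions \(h^0(E(-n)) = 0 \iff n \ge M+1\) and \(h^1(E(-n)) = 0 \iff n \le m+1\), then read off the claimed inequalities. Your treatment is arguably a bit more careful in separating the \(H^1(E)=0\) clause from the gap condition and noting the \(n>0\) issue, but the route is the same.
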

\begin{proof}
For any \(n \geq 0\), 
\begin{align*}
h^0(E(-n)) = 0 \quad &\Leftrightarrow\quad n \geq 1 + \max(e_i) \\ 
h^1(E(-n)) = 0 \quad &\Leftrightarrow \quad n \leq 1 + \min(e_i).
\end{align*}
One of these two conditions holds for all \(n \geq 0\) if and only if \(|e_i - e_j| \leq 1\) for all \(i\) and \(j\), and the first of these holds for \(n=0\) if and only if \(e_i \geq -1\) for all \(i\).
\end{proof}

\noindent
As a consequence of the Euler sequence, the conormal bundle of \(C\) sits in the exact sequence
\[0 \to N_C^\vee(1) \to \O_C^{\oplus r+1} \to \sP^1(\O_C(1)) \to 0, \]
where \(\sP^1(\O_C(1))\) is the bundle of first principle parts of \(\O_C(1)\).  If the characteristic is \(2\), and \(\pi \colon C \to C^{(2)}\) is the (relative) Frobenius morphism, then we have
\[\sP^1(\O_C(1)) \simeq \pi^*\pi_*\O_C(1).\]
Therefore \(N_C^\vee(1)\) is isomorphic to the pullback of a bundle under the Frobenius morphism,
so every entry of its splitting type is even.

\begin{lem}\label{lem:char2_xex}
Assume that the characteristic of the ground field is \(2\).  Let \(C \subset \pp^r\) be a rational curve of degree \(d\).  Then \(N_C\) satisfies interpolation only if 
\[d \equiv 1 \pmod{r-1}.\]
\end{lem}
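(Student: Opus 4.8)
The plan is to combine the parity constraint on the splitting type of \(N_C^\vee(1)\) established just above (coming from the Frobenius description of \(\sP^1(\O_C(1))\)) with the interpolation criterion of Lemma~\ref{lem:BG}, and then extract the divisibility condition from a degree computation.

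First I would record the numerics. Since \(C \simeq \pp^1\) has genus \(0\), the normal bundle sequence \(0 \to T_C \to T_{\pp^r}|_C \to N_C \to 0\) together with the Euler sequence on \(\pp^r\) gives \(\deg N_C = (r+1)d - 2\), so \(N_C^\vee(1)\) is a vector bundle of rank \(r-1\) on \(\pp^1\) with
\[\deg N_C^\vee(1) = -\big((r+1)d-2\big) + (r-1)d = 2-2d.\]
Writing \(N_C = \bigoplus_{i=1}^{r-1} \O_{\pp^1}(e_i)\), we have \(N_C^\vee(1) = \bigoplus_{i=1}^{r-1}\O_{\pp^1}(d-e_i)\); by the remarks preceding the lemma each \(d - e_i\) is even.

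Next I would invoke Lemma~\ref{lem:BG}: if \(N_C\) satisfies interpolation then \(|e_i - e_j| \leq 1\) for all \(i,j\). Since \(e_i - e_j = (d-e_j) - (d-e_i)\) is a difference of even integers it is even, and an even integer of absolute value at most \(1\) is \(0\); hence all the \(e_i\) are equal, and consequently all the \(d - e_i\) equal a single even value, say \(2b\). Summing the splitting type then gives \((r-1)\cdot 2b = 2 - 2d\), i.e.\ \((r-1) b = 1 - d\), so \(r-1\) divides \(1-d\), which is exactly \(d \equiv 1 \pmod{r-1}\).

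The one place to be careful is this final step: the evenness of the \(d - e_i\) alone would only give \(2d \equiv 2 \pmod{r-1}\), and it is essential to use \emph{both} that these integers are equal \emph{and} that their common value is even in order to improve this to \(d \equiv 1 \pmod{r-1}\) (the distinction being invisible when \(r-1\) is odd but real when \(r-1\) is even). Apart from this, the argument is routine once the Frobenius input is in hand.
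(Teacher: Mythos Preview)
Your proof is correct and follows essentially the same route as the paper: both write \(N_C = \bigoplus \O_{\pp^1}(e_i)\), use the Frobenius input to get \(d - e_i\) even, apply Lemma~\ref{lem:BG} to force all \(e_i\) equal, and then extract the congruence from a degree count. The only cosmetic difference is that the paper phrases the final step as \((r+1)d - 2 = c_1(N_C) \equiv (r-1)d \pmod{2(r-1)}\), whereas you sum the splitting type of \(N_C^\vee(1)\) directly; these are the same computation, and your closing paragraph about needing both equality and evenness of the \(d-e_i\) is exactly what the paper's ``mod \(2(r-1)\)'' is encoding.
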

\begin{proof}
Suppose that \(N_C \simeq \bigoplus_{i = 1}^{r - 1} \O_{\pp^1}(e_i)\).
Since \(N_C^\vee(1) \simeq \bigoplus_{i = 1}^{r - 1} \O_{\pp^1}(d - e_i)\),
and every entry of its splitting type is even,
every \(e_i\) satisfies \(e_i \equiv d\) mod \(2\).
Applying Lemma \ref{lem:BG}, we conclude that \(N_C\) can only satisfy interpolation if all \(e_i\) are equal.
This implies
\[(r + 1)d - 2 = c_1(N_C) \equiv (r - 1)d \pmod{2(r - 1)},\]
and therefore \(d \equiv 1\) mod \(r - 1\)
as desired.
\end{proof}

\section{Preliminaries} \label{sec:prelim}

\subsection{Elementary modifications of vector bundles}
In this section, we give a brief overview of the key properties of elementary modifications
of vector bundles. Our presentation will roughly follow the more detailed exposition given in Sections 2--4 of~\cite{aly}.

\begin{defin}\label{def:modification}
Let \(E\) be a vector bundle on a scheme \(X\), and \(D \subset X\) be a Cartier divisor, and \(F \subset E|_D\) be a subbundle
of the restriction of \(E\) to \(D\).
We define the
\defi{negative elementary modification of \(E\) along \(D\) towards \(F\)} by
\[E[D \negmod F] \colonequals \ker\left(E \to E|_D/F\right).\]
We then define the \defi{(positive) elementary modification of \(E\) along \(D\) towards \(F\)} as
\[E[D \posmod F] \colonequals E[D \negmod F](D).\]
\end{defin}

\begin{rem}
This notation differs slightly from \cite{aly}, in which negative modifications were denoted by \(E[D \to F]\)
(and no separate notation was given for positive modifications).
\end{rem}

By construction, a modification of \(E\) along \(D\) is naturally isomorphic to \(E\)
when restricted to the complement of \(D\).
If \(D_1\) and \(D_2\) are disjoint, then
we may easily make sense of multiple modifications
such as \(E[D_1 \posmod F_1][D_2 \posmod F_2]\) by working locally.
However, if \(D_1\) and \(D_2\) meet, then we do not have enough data to even define
multiple modifications: For example, if \(D_1 = D_2 = D\) and \(F_1 = F_2 = F\),
then we should have \(E[D_1 \posmod F_1][D_2 \posmod F_2] \simeq E[2D \posmod F]\),
so we must know how \(F\) extends over \(2D\).
To sidestep these issues, we suppose when defining multiple modifications --- at least
along divisors that meet ---
that we are given not just a subbundle \(F_i\) of \(E|_{D_i}\),
but a subbundle \(F_i\) of \(E|_{U_i}\) where \(U_i \subset X\) is an open neighborhood of \(D_i\).

We first construct the modification \(E[D_1 \posmod F_1]\),
which is naturally isomorphic to \(E\) on \(X \setminus D_1\),
and so in particular on \(U_2 \setminus D_1\).
If the data of a subbundle \(F_2 \subset E|_{U_2 \setminus D_1}\)
extends to \(U_2\), then it does so uniquely,
and we may modify along \(D_2\) towards this extension.
However, this subbundle may not extend to \(U_2\).
The following situation where it does will include
all situations we shall need in this paper:

\begin{defin}\label{def:tree_like}
Let \(M = \{(D_i, U_i, F_i)\}_{i \in I}\) be a collection of modification data.
For each point \(x \in X\), define \(I_x \subseteq I\) to be the set of indices for which \(x \in D_i\).
We say that \(M\) is \defi{tree-like} if for all \(x \in X\), and all subsets \(I' \subset I_x\), the following condition holds:
Whenever the fibers \(\{F_i|_x\}_{i \in I'}\) are dependent, there exist indices \(i,j \in I'\)
and an open \(U \subseteq U_i \cap U_j\) containing \(x\) such that \(F_i|_U \subseteq F_j|_U\).
\end{defin}

By \cite[Proposition 2.17]{aly}, we can transfer modification data as above when it is tree-like.
That is, given modification data \(M\) for \(E\), such that \(\{(D, U, F)\} \cup M\) is tree-like,
we obtain modification data \(M'\) for \(E[D \posmod F]\).
In this way we inductively define the multiple modification \(E[M]\) for tree-like modification data \(M\).
This is independent of the order in which the modifications from \(M\) are applied \cite[Proposition 2.20]{aly}.

\begin{example}
A simplifying special case is when \(F\) is a direct summand of \(E\).  Writing \(E \simeq F \oplus E'\), 
\[E[D \posmod F]  = \ker(F \oplus E' \to E'|_D)(D) \simeq (F \oplus E'(-D))(D) \simeq F(D) \oplus E'.\]
\end{example}

In nice cases, a short exact sequence of vector bundles
induces a short exact sequence of modifications. To make this more precise,
consider a short exact sequence
  \begin{equation}\label{eq:original_ses} 
 0 \to S \to E \to Q  \to 0.
  \end{equation}
For example, first suppose that \(F \cap S\) is flat over the base \(X\).
Then \eqref{eq:original_ses} induces the short exact sequence
\begin{equation}\label{eq:easy_ses}
0 \to S[D \posmod F \cap S] \to E[D \posmod F] \to Q[D \posmod F / (F \cap S)] \to 0.
\end{equation}

A more interesting example is when the base is a curve \(X = C\),
and \(F \subset E\) is a line subbundle, and \(D = np\) where \(p \in C\) is a smooth point.
Then we obtain an induced sequence for the modification \(E[np \posmod F]\), as follows.
Define \(k'\) to be the order to which \(F\) is contained in \(S\) in a neighborhood of \(p\). In other words,
if \(F\) is not contained in \(S\), this is the length of the subscheme \(\pp S \cap \pp F\) in \(\pp E\); if \(F\) is contained in \(S\), this is \(\infty\).
Let \(k = \max(k', n)\).  Then \eqref{eq:original_ses} induces the short exact sequence
\begin{equation}\label{eq:mods_ses}
0 \to S[kp \posmod F|_{kp}] \to E[np \posmod F] \to Q[(n-k)p \posmod \bar{F}] \to 0,
\end{equation}
where \(\bar{F}\) is the saturation of the image of \(F\) in \(Q\).
When \(k' = \infty\) or \(k' = 0\) this agrees with \eqref{eq:easy_ses}.

\subsection{Pointing bundles}

Given an unramified map \(f \colon C \to \pp^r\), the sheaf \(N_f = \ker(f^*\Omega_{\pp^r} \to \Omega_X)^\vee\) is a vector bundle, which we refer to as the \defi{normal bundle of the map \(f\)}.
In almost all cases that we shall consider, \(f\) will be an embedding,
in which case \(N_f = N_C\) coincides with the normal bundle of the image.

We will primarily deal with modifications of \(N_f\) towards pointing subbundles \(N_{f \to \Lambda}\), whose definition we now recall.  Let \(\Lambda \subset \pp^r\) be a linear space of dimension \(\lambda\).  Let \(\pi_{\Lambda} \circ f\) denote the composition of \(f\) with the projection map
\[\pi_{\Lambda} \colon \pp^r \dashrightarrow \pp^{r-\lambda -1}. \]
Let \(U =U_{\Lambda}\) denote the open locus of \(C \setminus (\Lambda \cap C)\) where \(\pi_{\Lambda} \circ f\) is unramified;
explicitly this is the locus of points of \(C\) whose tangent space does not meet \(\Lambda\).
Assuming that \(U\) is dense and contains the singular locus of \(C\), we may define
\(N_{f \to \Lambda}\) as the unique subbundle of \(N_f\) whose restriction to \(U\) is
\[\ker(N_f|_U \to N_{\pi_{\Lambda} \circ f}|_U).\]
The notation \(N_{f \to \Lambda}\) is evocative of the geometry of sections of \(N_{f \to \Lambda}\): Informally speaking,
they ``point towards'' the subspace \(\Lambda \subset \pp^r\).  
When \(f\) is an embedding, we write \(N_{C \to \Lambda} = N_{f \to \Lambda}\).
If \((\pi_\Lambda \circ f) \colon C \to \pp^{r-\lambda -1}\) is unramified, then \(N_{f \to \Lambda}\) sits in the \defi{pointing bundle exact sequence}
\begin{equation}\label{eq:pointing}
0 \to N_{f \to \Lambda} \to N_f \to N_{\pi_\Lambda \circ f} (\Lambda \cap C) \to 0.
\end{equation}
The same definitions work for families of curves in a projective bundle.  For a treatment in this more general setting, see \cite[Section 5]{aly}.

The simplest case, and our primary interest, is when \(\Lambda = p\) is a point in \(\pp^r\).  In this case, by \cite[Propositions 6.2 and 6.3]{aly}, we have the following explicit descriptions:
\begin{itemize}
\item If \(p \in \pp^r\) is a general point (in which case \(U_p = C\)), then \(N_{f \to p} \simeq f^*\O_{\pp^r}(1)\).
\item If \(p \in C\) is a general point (in which case \(U_p = C \setminus p\)), then \(N_{f \to p} \simeq f^*\O_{\pp^r}(1)(2p)\).
\end{itemize}
When modifying towards a pointing bundle, we use the simpler notation
\[N_f[D \posmod \Lambda] \colonequals N_f[D \posmod N_{f \to \Lambda}].\]

We now restate a foundational result of Hartshorne--Hirschowitz, which describes the normal bundle of a nodal curve in projective space,
in this language of pointing bundles. Let \(X \cup_{\Gamma} Y\) be a reducible nodal curve.  For each point \(p_i \in \Gamma\), let \(q_i\) denote any point on \(T_{p_i}Y \setminus p_i\). For simplicity, we introduce the following notation.  For any subset \(\Gamma' = \{p_1, \dots p_n\} \subseteq \Gamma\), we write
\[N_X[ \Gamma' \posmodalong Y] \colonequals N_X[p_1 \posmod q_1] \dots [p_n \posmod q_n].\]
When \(\Gamma' = \Gamma\) is the full set of points where \(X \) and \(Y\) meet, we simplify further and write
\[N_X[\posmodalong Y]\colonequals N_X[\Gamma \posmodalong Y].\]
(We analogously define \(N_X[ \Gamma' \negmodalong Y]\) and \(N_X[\negmodalong Y]\).)

\begin{prop}[{\cite[Corollary 3.2]{hh}}]\label{prop:hh} As above, let \(X \cup Y \subseteq \pp^r\) be a reducible nodal curve.  Then
\[N_{X \cup Y}|_X \simeq N_X[\posmodalong Y].\]
\end{prop}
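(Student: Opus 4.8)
The plan is to produce a natural inclusion $N_X \hookrightarrow N_{X \cup Y}|_X$, to identify its cokernel as a length-one skyscraper at each node pointing toward the other branch, and then to recognize the resulting sheaf as $N_X[\posmodalong Y]$. Write $Z = X\cup Y$. Since $Z$ is nodal, hence a local complete intersection in $\pp^r$, its conormal sheaf $I_Z/I_Z^2$ is locally free of rank $r-1$. First I would restrict the inclusion $I_Z \subseteq I_X$ to a map $(I_Z/I_Z^2)|_X \to I_X/I_X^2$ of locally free rank-$(r-1)$ sheaves on $X$; this is an isomorphism over $X\setminus\Gamma$ (where $Z$ and $X$ coincide as subschemes), and is injective since its source is torsion-free. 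Let $\mathcal{C}$ denote its cokernel, a torsion sheaf supported on $\Gamma$. Applying $\operatorname{Hom}(-,\O_X)$ to $0 \to (I_Z/I_Z^2)|_X \to I_X/I_X^2 \to \mathcal{C}\to 0$ identifies $N_{X\cup Y}|_X$ with the $\O_X$-dual of $(I_Z/I_Z^2)|_X$ and furnishes an inclusion $N_X \hookrightarrow N_{X\cup Y}|_X$ whose cokernel has, at each point of $\Gamma$, the same length as $\mathcal{C}$. Thus everything reduces to computing $\mathcal{C} = I_X/(I_Z + I_X^2)$ in a formal neighborhood of a node $p \in \Gamma$.

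At such a node $Z$ has two transverse smooth branches, exactly one of which lies on $X$, so $X$ is smooth at $p$; I would choose formal coordinates $x_1,\dots,x_r$ with $X = V(x_2,\dots,x_r)$ and $T_pY$ the $x_2$-axis, so that $I_X/I_X^2$ is free over $\O_{X,p}\cong k[[x_1]]$ on $\bar{x}_2,\dots,\bar{x}_r$. The key claim is that the image of $I_Z = I_X\cap I_Y$ in $I_X/I_X^2$ is precisely $x_1\,\O_{X,p}\,\bar{x}_2 \oplus \bigoplus_{j\ge 3}\O_{X,p}\,\bar{x}_j$, so that $\mathcal{C}\cong k(p)$ and the cokernel of $N_X \hookrightarrow N_{X\cup Y}|_X$ is the skyscraper at $p$ supported on the line dual to $\bar{x}_2$ --- which is exactly $N_{X\to q}|_p$ for any $q \in T_pY\setminus p$. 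For the containment ``$\supseteq$'' I would exhibit explicit members of $I_X\cap I_Y$: for instance $x_2 g$ for $g \in I_Y$ with linear term $x_1$ has class a unit multiple of $x_1\bar{x}_2$, and suitable combinations $g_j - c_j(x_1)\,g$ of generators of $I_Y$ lie in $I_X$ with class $\bar{x}_j + x_1\cdot(\cdots)$; a short Nakayama and Krull-intersection argument then promotes these to the full submodule. For the reverse containment ``$\subseteq$'' --- the crucial point, and the reason only the tangent line $T_pY$ matters --- one uses that every $\omega \in I_Y$ vanishes along $Y$, whence $d\omega|_p$ annihilates $T_pY$, which forces the $\bar{x}_2$-component of the class of any $\omega \in I_X\cap I_Y$ to lie in $x_1\,\O_{X,p}$.

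Running this at all nodes gives $N_X \subseteq N_{X\cup Y}|_X \subseteq N_X(\Gamma)$ with $(N_{X\cup Y}|_X/N_X)|_p$ equal to the line $N_{X\to q_p}|_p \subseteq N_X|_p \cong (N_X(\Gamma)/N_X)|_p$ for every $p\in\Gamma$; since $N_X[\posmodalong Y]$ is by construction the unique subsheaf of $N_X(\Gamma)$ with exactly this property, the two coincide. I expect the main obstacle to be the local computation of $\mathcal{C}$ --- in particular the verification that the higher-order geometry of $Y$ at the node is irrelevant; the remaining steps are formal manipulations with conormal sequences and duality on the smooth curve $X$. (This also recovers \cite[Corollary 3.2]{hh}.)
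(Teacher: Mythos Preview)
Your argument is correct. The paper itself does not supply a proof of this proposition; it simply cites \cite[Corollary~3.2]{hh}. You have instead given a self-contained proof via the conormal sequence and a local computation at each node, and every step checks out: the injectivity of \((I_Z/I_Z^2)|_X \to I_X/I_X^2\) follows from local freeness of the source on the smooth curve \(X\); the dualization producing \(N_X \hookrightarrow N_{X\cup Y}|_X\) with length-one cokernel at each node is standard; and your identification of the resulting elementary modification with \(N_X[\posmodalong Y]\) matches the paper's Definition~\ref{def:modification} and the convention for \([\posmodalong Y]\).

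One small simplification: since \(X\) and \(Y\) are smooth and meet transversally at \(p\), you may choose formal coordinates so that \(Y\) is \emph{exactly} the \(x_2\)-axis, i.e.\ \(I_Y = (x_1, x_3, \ldots, x_r)\), not merely tangent to it. Then \(I_Z = (x_1 x_2, x_3, \ldots, x_r)\) on the nose, and the image in \(I_X/I_X^2\) is visibly \(x_1\,\O_{X,p}\,\bar{x}_2 \oplus \bigoplus_{j\ge 3}\O_{X,p}\,\bar{x}_j\) without any need for the explicit-element / Nakayama / Krull-intersection maneuvers. Your more general computation (allowing \(Y\) to have higher-order wiggles) is not wrong, and it does make transparent the conceptual point that only \(T_pY\) matters; but it is more work than necessary. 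The ``Krull-intersection'' step in particular is superfluous---Nakayama alone suffices once you observe that your explicit elements span \(M'/\mathfrak{m}M'\) for \(M'\) the claimed image and \(\mathfrak{m}=(x_1)\).
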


\subsection{Interpolation for vector bundles}

\subsubsection{Interpolation for bundles on nodal curves}

Let \(C\) be a nodal curve.  
A nonspecial vector bundle \(E\) of rank \(r\) on \(C\)
satisfies interpolation if and only if, for every \(n > 0\), some collection of \(n\)
points impose the expected number of conditions:
\[h^0(E(-p_1 - \cdots - p_n)) = \max(0, h^0(E) - rn) \qquad \text{for some \(p_1, \dots, p_n\)}.\]

Since \(h^0\) and \(h^1\) can only change in a prescribed way when twisting down by a point, a nonspecial vector bundle \(E\) satisfies interpolation provided that there exist two divisors \(D_+\) and \(D_-\) for which
\begin{equation}\label{eq:DplusDminus}
h^0(E(-D_+)) = 0, \quad h^1(E( - D_-)) = 0, \qand \deg D_+ - \deg D_- \leq 1.
\end{equation}

More generally, we can use this idea to define interpolation for a space of sections of a vector bundle.  Given \(V \subseteq H^0(E)\), write
\[V(-p_1 - \dots - p_n) \colonequals \{ \sigma \in V  : \sigma|_{p_1} =  \cdots = \sigma|_{p_n} = 0\}. \]
We say that \(V \subseteq H^0(E)\) \defi{satisfies interpolation} if \(H^1(E) = 0\) and, for every \(n > 0\), there are \(n\) points
\(p_1, \dots, p_n\), such that
\[\dim V(-p_1 - \cdots - p_n) = \max(0,\dim V - rn).\]

The following basic result allows us to reduce interpolation for a vector bundle \(E\) on a reducible curve \(X \cup_{\Gamma} Y\) to interpolation for a space of sections on one component.

\begin{lem}[{\cite[Lemma 2.10]{p4}}]\label{lem:int_rest}
Let \(E\) be a vector bundle on  \(X \cup_{\Gamma} Y\).  If the restriction map on \(Y\)
\[ \res_{Y,\Gamma} \colon H^0(Y, E|_Y) \to E|_\Gamma\]
is injective, and the space of sections
\(\{\sigma \in H^0(X, E|_X) : \sigma|_\Gamma \in \Im(\res_{Y, \Gamma})\}\)
has dimension \(\chi(E)\) and satisfies interpolation, then \(E\) satisfies interpolation.
\end{lem}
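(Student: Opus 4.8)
The plan is to compare global sections on $C = X \cup_\Gamma Y$ with sections on the single component $X$ via the gluing (Mayer--Vietoris) sequence
\[0 \to E \to E|_X \oplus E|_Y \to E|_\Gamma \to 0,\]
in which the right-hand map sends $(\sigma_X, \sigma_Y)$ to $\sigma_X|_\Gamma - \sigma_Y|_\Gamma$. This identifies $H^0(C, E)$ with the space of compatible pairs $\{(\sigma_X, \sigma_Y) : \sigma_X|_\Gamma = \sigma_Y|_\Gamma\}$. First I would note that, since $\res_{Y,\Gamma}$ is injective, the projection $(\sigma_X, \sigma_Y) \mapsto \sigma_X$ maps $H^0(C, E)$ \emph{isomorphically} onto $V \colonequals \{\sigma \in H^0(X, E|_X) : \sigma|_\Gamma \in \Im(\res_{Y, \Gamma})\}$: it is injective because a compatible pair with $\sigma_X = 0$ has $\res_{Y,\Gamma}(\sigma_Y) = 0$ and hence $\sigma_Y = 0$, and its image is all of $V$ because for $\sigma_X \in V$ there is a $\sigma_Y$ with $\sigma_Y|_\Gamma = \sigma_X|_\Gamma$. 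In particular $h^0(C, E) = \dim V = \chi(E)$, so $H^1(C, E) = 0$, which is the first requirement for interpolation.

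Next I would run the same identification after twisting down. Fix $n > 0$. Since $V$ satisfies interpolation, there is an effective divisor $D_0$ of degree $n$ on $X$ with $\dim V(-D_0) = \max(0, \dim V - (\rk E)\, n)$; the locus of $D \in \Sym^n X$ on which the evaluation map $V \to E|_D$ has maximal rank is open, so on the component of $\Sym^n X$ containing $D_0$ this holds for a general $D$, in particular for some $D$ supported on smooth points of $C$ lying in $X \setminus \Gamma$. For such a $D$ we have $E(-D)|_Y \simeq E|_Y$ and $E(-D)|_X \simeq (E|_X)(-D)$ with the gluing along $\Gamma$ unchanged, so the argument of the first paragraph applies verbatim and gives $H^0(C, E(-D)) \simeq V(-D)$, whence $h^0(E(-D)) = \max(0, h^0(E) - (\rk E)\, n)$. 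Since this holds for every $n$ and $H^1(E) = 0$, the criterion recalled just before the lemma shows that $E$ satisfies interpolation.

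There is no genuinely hard step here; the only point needing a little care is the reduction, in the second paragraph, from ``$V$ satisfies interpolation'' (which only asserts that \emph{some} divisor of degree $n$ works) to the existence of a good divisor avoiding $\Gamma$ and the singular locus of $X$ — this is exactly where openness of the maximal-rank condition is used. The remaining ingredients — exactness of the gluing sequence, the behavior of $E(-D)$ under restriction to $X$ and $Y$, and the bookkeeping identity $\chi(E) = \chi(E|_X) + \chi(E|_Y) - (\rk E)\,|\Gamma|$ — are all routine.
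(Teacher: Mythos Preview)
Your argument is correct. The paper does not give its own proof of this lemma---it is quoted verbatim from \cite[Lemma~2.10]{p4}---so there is nothing to compare against here; your Mayer--Vietoris identification of $H^0(C,E(-D))$ with $V(-D)$ for $D$ supported on smooth points of $X\setminus\Gamma$, together with the openness argument to move $D$ off of $\Gamma$ and the singular locus, is exactly the intended mechanism.
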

\noindent
The main case of interest in this paper is when \(Y = R\) is a rational curve and \(E|_R\) is perfectly balanced.

\begin{lem}\label{lem:interpolation_rational_bal}
Let \(C \cup_{\Gamma} R\) be a nodal curve with \(R\) rational, and let \(E\) be a vector bundle on \(C \cup R\) with \(E|_R\) perfectly balanced of slope at least \(\#\Gamma - 1\).  If \(E|_C\) satisfies interpolation, then \(E\) satisfies interpolation.
\end{lem}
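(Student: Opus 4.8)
The plan is to verify the criterion \eqref{eq:DplusDminus} directly, rather than to quote Lemma~\ref{lem:int_rest}. Write \(\Gamma = \{p_1, \dots, p_m\}\), set \(n = \rk E\), and write \(E|_R \simeq \O_R(e)^{\oplus n}\), where \(e \geq m - 1\) is the slope. I would first record the elementary numerical facts on the \(R\)-side: for any \(b\) distinct points \(D^R \subset R \setminus \Gamma\), the bundle \(E|_R(-D^R) \simeq \O_R(e - b)^{\oplus n}\) has vanishing \(H^1\) provided \(b \leq e + 1\), and its restriction \(H^0(E|_R(-D^R)) \to E|_\Gamma\) is surjective provided \(b \leq e - m + 1\), since then \(H^1(E|_R(-D^R - \Gamma)) = H^1(\O_R(e - b - m)^{\oplus n}) = 0\). (When \(e = m - 1\), taking \(b = 0\) makes \(\res_{R, \Gamma}\) an isomorphism, so the result would follow at once from Lemma~\ref{lem:int_rest}; the content of the argument is to absorb the excess slope \(e - (m - 1)\) by adjoining \(e - m + 1\) auxiliary points on \(R\).)

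On the \(C\)-side, interpolation for \(E|_C\) gives \(H^1(E|_C) = 0\), hence \(h^0(E|_C) = \chi(E|_C)\), and for each \(\ell \geq 0\) an effective \(D \subset C\) of degree \(\ell\) with \(h^0(E|_C(-D)) = \max(0, \chi(E|_C) - \ell n)\). I would take \(D_+^C\) of degree \(\lceil \chi(E|_C)/n\rceil\), so that \(h^0(E|_C(-D_+^C)) = 0\), and \(D_-^C\) of degree \(\lfloor \chi(E|_C)/n\rfloor\), so that \(h^0(E|_C(-D_-^C)) = \chi(E|_C(-D_-^C))\) and hence \(h^1(E|_C(-D_-^C)) = 0\); note \(\deg D_+^C - \deg D_-^C \leq 1\). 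Then I would set \(D_\pm \colonequals D_\pm^C + D_\pm^R\) with \(D_+^R, D_-^R\) each a set of \(e - m + 1\) distinct points of \(R \setminus \Gamma\), so that \(\deg D_+ - \deg D_- = \deg D_+^C - \deg D_-^C \leq 1\).

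The two things left to check are \(h^0(E(-D_+)) = 0\) and \(h^1(E(-D_-)) = 0\); the latter also yields \(H^1(E) = 0\) (since \(H^1(E)\) is a quotient of \(H^1(E(-D_-))\), as \(D_-\) is a reduced divisor on the smooth locus), so \(E\) is nonspecial and \eqref{eq:DplusDminus} applies to finish. For \(D_+\): a section of \(E(-D_+)\) restricts on \(C\) into \(H^0(E|_C(-D_+^C)) = 0\), so it vanishes along \(\Gamma\), so it restricts on \(R\) into \(H^0(E|_R(-D_+^R - \Gamma)) = H^0(\O_R(-1)^{\oplus n}) = 0\); thus it is \(0\). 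For \(D_-\): the Mayer--Vietoris sequence of \(0 \to E(-D_-) \to E|_C(-D_-^C) \oplus E|_R(-D_-^R) \to E|_\Gamma \to 0\) identifies \(H^1(E(-D_-))\) with the cokernel of the restriction to \(E|_\Gamma\) --- the \(H^1\) of the middle term vanishes, using \(e - m + 1 \leq e + 1\) on \(R\) and the choice of \(D_-^C\) on \(C\) --- and this restriction is already surjective on the \(R\)-summand because \(\deg D_-^R = e - m + 1\).

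The only delicate point --- and the reason one cannot simply invoke Lemma~\ref{lem:int_rest} --- is that \(E|_R\) is only assumed to have slope \(\geq \#\Gamma - 1\): as soon as the slope is strictly larger, \(\res_{R, \Gamma}\) fails to be injective, so one must compensate with auxiliary points on \(R\), and the whole argument hinges on choosing exactly \(e - m + 1\) of them so that the excess cancels and the degree balance \(\deg D_+ - \deg D_- \leq 1\) survives. Everything else reduces to the cohomology of \(\O_{\pp^1}(k)^{\oplus n}\) and to unwinding the interpolation hypothesis on \(C\).
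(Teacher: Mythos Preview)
Your argument is correct. The core idea --- adjoin exactly \(e - (\#\Gamma - 1)\) auxiliary points on \(R \setminus \Gamma\) to absorb the excess slope --- is the same as the paper's, but the execution differs. The paper observes that one \emph{can} invoke Lemma~\ref{lem:int_rest} after a preliminary twist: let \(D \subset R \setminus \Gamma\) be effective of degree \(\mu(E|_R) - \#\Gamma + 1\); then \(E(-D)|_R \simeq \O_R(\#\Gamma - 1)^{\oplus n}\), so \(\res_{R,\Gamma}\) is an isomorphism, the gluing subspace in Lemma~\ref{lem:int_rest} is all of \(H^0(E|_C)\) (which has dimension \(\chi(E|_C) = \chi(E(-D))\) and satisfies interpolation by hypothesis), and interpolation for \(E(-D)\) follows; twisting back up by the effective divisor \(D\) then preserves interpolation. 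Your version instead builds the auxiliary \(R\)-points directly into the test divisors \(D_\pm\) and checks \eqref{eq:DplusDminus} by hand via Mayer--Vietoris, which amounts to unrolling the proof of Lemma~\ref{lem:int_rest} in this special case. So your claim that ``one cannot simply invoke Lemma~\ref{lem:int_rest}'' is not quite right: one can, after the twist.

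One small addition: you should remark that \(D_\pm^C\) may be taken disjoint from \(\Gamma\) (a finite set of smooth points of \(C\)), since the vanishing of \(h^0\) or of \(h^1\) is an open condition on each irreducible component of the relevant symmetric product; this is needed for \(D_\pm\) to be Cartier on \(C \cup R\) and for the Mayer--Vietoris sequence to take the clean form you wrote.
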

\begin{proof}
Let \(D\) be an effective divisor of degree \(\mu(E|_R) - \#\Gamma + 1\) supported on \(R \setminus \Gamma\).  It suffices to prove that \(E(-D)\) satisfies interpolation.  
The bundle \(E(-D)|_R\) is perfectly balanced of slope \(\#\Gamma - 1\), and so 
\(\res_{R, \Gamma}\) is an isomorphism.  The result now follows from Lemma \ref{lem:int_rest}.
\end{proof}

\subsubsection{Interpolation and twists}
If \(E\) satisfies interpolation, then the twist \(E(D)\) by any effective divisor \(D\) also satisfies interpolation.  Conversely, we have the following.

\begin{lem}[{\cite[Proposition 4.12]{aly}}] \label{lem:int_large_degree}
Suppose that \(E\) is a vector bundle on a genus \(g\) curve such that 
\[ \chi(E) \geq \rk(E) \cdot g.\]
If there exists an effective divisor \(D\) for which \(E(D)\) satisfies interpolation, then \(E\) also satisfies interpolation.
\end{lem}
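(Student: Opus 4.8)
The plan is to verify interpolation for $E$ through the reformulation given after \eqref{eq:DplusDminus}: it suffices to show that $E$ is nonspecial and that for every $n > 0$ there is an effective divisor $D'$ of degree $n$ with $h^0(E(-D')) = 0$ or $h^1(E(-D')) = 0$. Here I write $r = \rk E$ and $m = \deg D$.

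The large-$n$ regime is immediate. For any effective $D'$ the inclusion $\O_C(-D') \hookrightarrow \O_C(D - D')$ gives $E(-D') \hookrightarrow E(D)(-D')$, so $h^0(E(-D')) \le h^0(E(D)(-D'))$; since $E(D)$ satisfies interpolation, for each sufficiently large $n$ there is a divisor $D'$ of degree $n$ with $h^0(E(D)(-D')) = 0$, and hence $h^0(E(-D')) = 0$. The real work is the complementary regime, in particular nonspeciality ($n = 0$), and this is the only place the degree hypothesis enters; it is genuinely needed, as $E = \omega_C^{\oplus r}$ shows: this bundle fails interpolation, while $\omega_C^{\oplus r}(D)$ satisfies it for every effective $D$ of positive degree, and there $\chi(E) = r(g-1) < rg$.

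The crux is that interpolation for $E(D)$ forces $E$ to be balanced. Balancedness — a bound on the spread of the slopes appearing in the Harder–Narasimhan filtration — is unaffected by tensoring with the line bundle $\O_C(D)$, so it is enough to extract from interpolation of $E(D)$ that its Harder–Narasimhan polygon lies within a bounded distance of the straight line of slope $\mu(E(D))$, with the bound uniform in $\deg D$; indeed a sub- or quotient bundle of $E(D)$ whose slope deviated too far from $\mu(E(D))$ would obstruct interpolation, via the semistability-type consequences of interpolation recalled in the introduction. Granting such a statement for $E$, the hypothesis $\chi(E) \ge rg$ reads $\mu(E) = \chi(E)/r + g - 1 \ge 2g - 1$, which together with the balancedness bound confines every Harder–Narasimhan slope of $E$ to the range $(2g - 2, \infty)$ where the relevant cohomology is generic. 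Consequently $\operatorname{Hom}(E, \omega_C) = 0$, so $H^1(E) = 0$ by Serre duality, and for a general effective $D'$ of degree $n$ up to the balanced value one gets $h^1(E(-D')) = 0$; combining this with the large-$n$ regime yields interpolation for $E$.

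I expect the main obstacle to be precisely this quantitative balancedness step: showing that interpolation for $E(D)$ pins down the Harder–Narasimhan polygon of $E$ to within a bound independent of $\deg D$, and sharp enough that $\mu(E) \ge 2g - 1$ pushes all slopes strictly above $2g - 2$. Once that is in hand, the passage to cohomology vanishing via Serre duality and the bookkeeping of the two regimes against the criterion \eqref{eq:DplusDminus} are routine.
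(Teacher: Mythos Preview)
The paper does not give its own proof here---the lemma is quoted from \cite{aly}---so the question is simply whether your argument is complete and correct. It is not: there is a genuine gap, and in fact the Harder--Narasimhan route you sketch does not close even if one grants the balancedness you ask for.

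You yourself flag the quantitative balancedness step as the ``main obstacle,'' and indeed you never prove it. But the deeper problem is what comes after. Suppose you succeed in showing that interpolation for \(E(D)\) forces the HN-slopes of \(E\) to lie in an interval of length at most \(1\) (this is true, and not hard: from \(h^1(E(D)(-D'_-)) = 0\) at \(\deg D'_- = \lfloor \chi(E(D))/r \rfloor\) one gets that every quotient of \(E\) has slope at least \(\lfloor \chi(E)/r\rfloor + g - 1\), and dually for subbundles). Combined with \(\mu(E) \geq 2g-1\), this yields \(h^1(E) = 0\) via Serre duality, exactly as you say. But it does \emph{not} give \(h^1(E(-D')) = 0\) for any \(n = \deg D' \geq 1\) when \(g \geq 1\): the minimal HN-slope of \(E(-D')\) drops to roughly \(2g - 1 - n\), and ``semistable of slope \(> 2g-2\)'' only buys you \(n = 0\). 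Meanwhile your large-\(n\) regime, via \(h^0(E(-D')) \leq h^0(E(D)(-D'))\), only kicks in at \(n \geq \lceil \chi(E(D))/r \rceil = \lceil \chi(E)/r \rceil + \deg D\). The two regimes are separated by a gap of size roughly \(\chi(E)/r + \deg D\), not \(1\), so the criterion \eqref{eq:DplusDminus} cannot be invoked. For \(g = 0\) the HN approach happens to work (this is just Lemma~\ref{lem:BG}), but for \(g \geq 1\) it does not.

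The clean argument bypasses HN entirely. Observe that for \(n \geq g\), a general effective divisor of degree \(n\) gives a general point of \(\Pic^n C\); hence ``\(E\) satisfies interpolation at level \(n\)'' is the statement that a general \(L \in \Pic^{-n} C\) has \(h^0(E \otimes L) = 0\) or \(h^1(E \otimes L) = 0\), which is manifestly invariant under twisting \(E\) by \(\O_C(D)\). Thus interpolation for \(E(D)\) immediately gives interpolation for \(E\) at every level \(n \geq g\). Now the hypothesis \(\chi(E) \geq rg\) says exactly that \(\lfloor \chi(E)/r \rfloor \geq g\), so both \(n_- = \lfloor \chi(E)/r \rfloor\) and \(n_+ = \lceil \chi(E)/r \rceil\) lie in this range; at \(n_-\) one has \(\chi(E(-D_-)) \geq 0\), forcing \(h^1(E(-D_-)) = 0\), and at \(n_+\) one has \(\chi(E(-D_+)) \leq 0\), forcing \(h^0(E(-D_+)) = 0\). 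Since \(n_+ - n_- \leq 1\), the criterion \eqref{eq:DplusDminus} applies (and \(h^1(E) = 0\) follows from \(h^1(E(-D_-)) = 0\) since \(D_-\) is effective).
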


\subsubsection{Interpolation and modifications}

Consider a vector bundle \(E\) and its modification \(E[p \posmod F]\).
Given sufficient generality of either \(p\) or \(F\), or if the slope \(\mu(E) \in \zz\), it is sometimes possible to deduce that \(E[p \posmod F]\) satisfies interpolation from the assumption that \(E\) satisfies interpolation.

\begin{lem} \label{lem:muzz} Let \(E\) be a vector bundle on \(C\), let \(p \in C\) be a smooth point, and let \(F \subseteq E|_p\).
If \(E\) satisfies interpolation and \(\mu(E) \in \zz\), then \(E[p \posmod F]\) satisfies interpolation.
\end{lem}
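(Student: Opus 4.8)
The plan is to exploit the elementary sandwich $E \subseteq E' \subseteq E(p)$, where $E' \colonequals E[p \posmod F]$. This follows immediately from the definitions: the map $E \to E|_p/F$ factors through $E \to E|_p$, so $E(-p) = \ker(E \to E|_p) \subseteq E[p\negmod F] \subseteq E$, and twisting by $\O(p)$ gives the claim. Since $E'/E$ is torsion, the long exact sequence in cohomology exhibits $H^1(E')$ as a quotient of $H^1(E)$, which vanishes because $E$ satisfies interpolation; hence $E'$ is nonspecial, the first requirement for interpolation.

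For the remaining condition I will invoke the sufficient criterion \eqref{eq:DplusDminus}: a nonspecial bundle satisfies interpolation as soon as there are effective divisors $D_+, D_-$ with $h^0(E'(-D_+)) = 0$, $h^1(E'(-D_-)) = 0$, and $\deg D_+ - \deg D_- \leq 1$. Set $r = \rk E$ and $m \colonequals \chi(E)/r = \mu(E) + 1 - g$; since $\mu(E) \in \zz$ this is an integer, and $m \geq 0$ because $\chi(E) = h^0(E) \geq 0$. Applying interpolation for $E$ with $n = m$ (the case $m = 0$ being trivial, with $D_0 = 0$) produces an effective divisor $D_0$ of degree $m$ with $h^0(E(-D_0)) = 0$; note that if the interpolation divisor instead achieves $h^1(E(-D_0)) = 0$, then $\chi(E(-D_0)) = \chi(E) - rm = 0$ forces $h^0(E(-D_0)) = 0$ as well.

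Now take $D_- \colonequals D_0$ and $D_+ \colonequals D_0 + p$. The inclusion $E(-D_0) \hookrightarrow E'(-D_0)$ makes $H^1(E'(-D_0))$ a quotient of $H^1(E(-D_0)) = 0$, so $h^1(E'(-D_-)) = 0$; and the inclusion $E'(-D_0 - p) \hookrightarrow E(p)(-D_0 - p) = E(-D_0)$ makes $H^0(E'(-D_+))$ a subspace of $H^0(E(-D_0)) = 0$, so $h^0(E'(-D_+)) = 0$. Since $\deg D_+ - \deg D_- = 1$, the criterion applies and $E'$ satisfies interpolation.

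The argument is short, and I do not expect a substantial obstacle: the only points needing care are the degenerate case $m = 0$ and the observation that an interpolation divisor for $E$ of degree $m$ can always be upgraded to one killing $h^0$. The conceptual content is simply that the integrality of $\mu(E)$ is exactly what forces the optimal $h^0$- and $h^1$-vanishing divisors for $E$ to differ in degree by at most one, and that this gap is preserved — shifted by the single point $p$ — under the sandwich $E \subseteq E' \subseteq E(p)$.
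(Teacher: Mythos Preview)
Your proof is correct and follows essentially the same approach as the paper's. Both arguments produce a divisor \(D\) of degree \(\mu(E)+1-g\) with \(H^0(E(-D))=H^1(E(-D))=0\), then use the sandwich \(E \subseteq E' \subseteq E(p)\) to take \(D_- = D\) and \(D_+ = D+p\); you simply spell out in more detail why interpolation plus \(\mu(E)\in\zz\) yields such a \(D\) and why the sandwich inclusions hold.
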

\begin{proof}
Since \(\mu(E) \in \zz\), there is an effective divisor \(D\) with
\[H^0(E(-D)) = 0 \qand H^1(E(-D)) = 0.\]
Then
\[H^0(E[p \posmod F](-D-p)) = 0 \qand H^1(E[p \posmod F](-D)) = 0. \qedhere\]
\end{proof}

\begin{defin}
We say that a collection of subspaces \(\{W_b\}_{b \in B}\) of a vector space \(V\) are \defi{linearly general} if, for any subspace \(U \subset V\), there is some \(b \in B\) so that \(W_b\) is transverse to \(U\).
\end{defin}

\begin{lem}[{\cite[Proposition 4.10]{aly}}]\label{lem:lin_general}
Let \(E\) be a vector bundle on \(C\) and let \(p \in C\) be a smooth point.  Let \(\{F_b\}_{b \in B}\) be a collection of  subspaces of \(E|_p\) that all contain a fixed subspace \(F_0\).  Suppose that both \(E\) and \(E[p \to F_0]\) satisfy interpolation.  If the collection \(\{F_b/F_0\}_{b \in B}\) is linearly general in \(E|_p/F_0\), then for some \(b\in B\), the positive modification
\(E[p \posmod F_b]\)
satisfies interpolation.
\end{lem}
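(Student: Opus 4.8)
The plan is to verify the two--divisor criterion \eqref{eq:DplusDminus} for $E[p\posmod F_b]$ for a suitable $b$. Write $k \colonequals \rk E$ and $G_b \colonequals E[p\posmod F_b] = E[p\negmod F_b](p)$ (here $E[p\negmod F_0]$ is the bundle written $E[p\to F_0]$ in the statement); note $\chi(G_b) = \chi(E) + \dim F_b$ and that $G_b$ contains both $E[p\negmod F_b]$ and the fixed subsheaf $E[p\negmod F_0](p)$, the latter satisfying interpolation as a twist of $E[p\negmod F_0]$. If $\mu(E) \in \zz$ we are done by Lemma~\ref{lem:muzz}, so assume $\mu(E) \notin \zz$; set $q \colonequals \lceil\chi(E)/k\rceil$ and $s \colonequals kq - \chi(E) \in \{1,\dots,k-1\}$, so that $\lfloor\chi(E)/k\rfloor = q-1$ and $H^1(E) = 0$.

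The computational heart is the following. For an effective divisor $D'$ disjoint from $p$, one has $G_b(-p-D') = E[p\negmod F_b](-D')$, which sits inside $E(-D')$ with skyscraper quotient $E|_p/F_b$ at $p$; hence
\[H^0\bigl(G_b(-p-D')\bigr) = \ev_p^{-1}(F_b) \subseteq H^0(E(-D')), \qquad \ev_p \colon H^0(E(-D')) \to E(-D')|_p \cong E|_p .\]
Interpolation of $E$ pins down $\dim H^0(E(-D'))$, while $\ker(\ev_p) = H^0(E(-p-D'))$ is controlled using $E(-p) \subseteq E[p\negmod F_0]$ together with interpolation of $E[p\negmod F_0]$; this rewrites $H^0(G_b(-p-D'))$ as $W_{D'} \cap F_b$ (up to the contribution of $\ker\ev_p$) for an explicit subspace $W_{D'} \colonequals \ev_p H^0(E(-D')) \subseteq E|_p$ of predictable dimension. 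Applying the same computation to $F_0$ expresses $H^0\bigl(E[p\negmod F_0](p)(-p-D')\bigr)$ in terms of $W_{D'} \cap F_0$, and since that bundle satisfies interpolation this forces --- according to $\deg D'$ --- either $W_{D'} \cap F_0 = 0$ or $F_0 + W_{D'} = E|_p$. In either case the fixed subspace $F_0$ is thereby ``absorbed'', so that the vanishing $H^0(G_b(-D_+)) = 0$ at $\deg D_+ = \lceil\chi(G_b)/k\rceil$ and (via Riemann--Roch) $H^1(G_b(-D_-)) = 0$ at $\deg D_- = \lfloor\chi(G_b)/k\rfloor$ reduce to one transversality statement: that $F_b/F_0$ be transverse to the image of $W_{D'}$ in $E|_p/F_0$. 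One checks this transversality does imply both conditions, and then applies the hypothesis that $\{F_b/F_0\}$ is linearly general, with $U$ equal to that image, to produce the desired $b$; then \eqref{eq:DplusDminus} gives interpolation for $G_b$.

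I expect the main obstacle to be the bookkeeping. The divisors must be taken of the form $D_\pm = p + (\text{general of degree } \lceil\chi(G_b)/k\rceil - 1 \text{ resp.\ } \lfloor\chi(G_b)/k\rfloor - 1)$, and since these degrees, the dimension of $W_{D'}$, and whether interpolation of $E[p\negmod F_0]$ yields $W_{D'} \cap F_0 = 0$ versus $F_0 + W_{D'} = E|_p$ all depend on the positions of $\dim F_b$ and $\dim F_0$ relative to $s$, one is pushed into a short case analysis: $\dim F_b > s$ (where $H^0(E(-D_+')) = 0$ already, so the $H^0$ condition is automatic), versus $\dim F_b \le s$ (where linear generality is genuinely used), with a further split on $\dim F_0$ versus $s$, and a degenerate case $\dim F_b = \dim F_0$ or $\mu(G_b)\in\zz$ handled directly. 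The conceptual point carrying everything --- and which I would emphasize --- is that interpolation of the \emph{fixed} modification $E[p\negmod F_0]$ is precisely what trivializes the $F_0$--direction common to all the $F_b$, reducing the whole problem to a single transversality assertion about $F_b/F_0$ to which linear generality applies.
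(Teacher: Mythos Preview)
The paper does not prove this lemma; it is imported verbatim from \cite[Proposition~4.10]{aly}, so there is no argument here to compare against. Your direct approach via the two--divisor criterion is the natural one and is essentially correct. In particular, your central claim---that after absorbing \(F_0\) everything reduces to a \emph{single} transversality condition on \(F_b/F_0\)---does survive the case analysis: with \(q=\lceil\chi(E)/k\rceil\) and \(s=kq-\chi(E)\), when \(\dim F_b\le s\) the \(H^1\)-vanishing at degree \(q-1\) is automatic from interpolation of \(E\), and when \(\dim F_b>s\) the \(H^0\)-vanishing at degree \(q+1\) is automatic from \(G_b\subseteq E(p)\); in either case the remaining condition, computed with \(D'\) general of degree \(q-1\), becomes transversality of \(F_b/F_0\) with the image of \(W_{D'}\) in \(E|_p/F_0\).

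One point worth making explicit in your write-up: the vanishing of \(\ker\ev_p=H^0(E(-p-D'))\) does \emph{not} follow from interpolation of \(E\), since \(p+D'\) is not a general degree-\(q\) divisor; it comes instead from the inclusion \(E(-p)\subseteq E[p\negmod F_0]\) and interpolation of the latter, and this step needs \(\dim F_0\le s\). You should check (and it is true) that in every branch of the case split where you actually invoke this vanishing, the inequality \(\dim F_0\le s\) holds. Likewise, your dichotomy ``\(W_{D'}\cap F_0=0\) or \(F_0+W_{D'}=E|_p\)'' is correct but deserves a line of justification: in the second alternative one computes \(\dim(W_{D'}+F_0)=(k-s)+\dim F_0-\dim\ev_p^{-1}(F_0)=k\) directly from \(h^0(E[p\negmod F_0](-D'))=\dim F_0-s\), without needing \(\ker\ev_p=0\) separately.
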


\begin{lem}[{\cite[Proposition 4.21 for \(n = 1\)]{aly}}] \label{gen-to-fixed}
Suppose \(E\) satisfies interpolation, \(L \subset E\) is a nonspecial line subbundle,
and the quotient \(Q = E/L\) also satisfies interpolation.  If 
\(\mu(L) \leq \mu(H)\),
then \(E[p \posmod L]\) satisfies interpolation.
\end{lem}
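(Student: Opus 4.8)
If $\mu(E)\in\zz$ the statement follows at once from Lemma~\ref{lem:muzz} (applied with $F=L|_p$, noting that $E[p\posmod L]$ depends only on the line $L|_p\subseteq E|_p$), so assume $\mu(E)\notin\zz$. The hypothesis $\mu(L)\le\mu(Q)$ then gives $\deg L=\mu(L)\le\mu(E)$, hence $\deg L\le\lfloor\mu(E)\rfloor$ since $\mu(E)\notin\zz$. Write $\mathcal E\colonequals E[p\posmod L]$. By \eqref{eq:DplusDminus} it suffices to produce effective divisors $D_-$ and $D_+$ with $\deg D_+\le\deg D_-+1$, $H^1(\mathcal E(-D_-))=0$, and $H^0(\mathcal E(-D_+))=0$.

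Two structural facts drive the argument. First, $E[p\negmod L]\subseteq E\subseteq E[p\negmod L](p)=\mathcal E$ with $E/E[p\negmod L]\cong Q|_p$ of length $r-1$, so $\mathcal E/E$ is a length-one skyscraper at $p$; hence $H^1(\mathcal E(-D))$ is a quotient of $H^1(E(-D))$, and for $D_-$ one may take the largest-degree divisor (of degree $m$, say) with $H^1(E(-D_-))=0$ supplied by interpolation for $E$. Second, $\mathcal E$ is the pushout of $0\to L\to E\to Q\to 0$ along $L\hookrightarrow L(p)$, so there is an exact sequence $0\to L(p)\to\mathcal E\to Q\to 0$ whose connecting map factors as
\[\delta_D\colon H^0(Q(-D))\xrightarrow{\ \delta'_D\ }H^1(L(-D))\xrightarrow{\ \iota_{*}\ }H^1(L(p)(-D)),\]
with $\delta'_D$ the connecting map of the extension $E$ and $\iota_{*}$ induced by $L\hookrightarrow L(p)$.

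For $D_+$ I would take a general divisor of degree $m+1$. Since $\deg L\le\lfloor\mu(E)\rfloor$, interpolation for the nonspecial line bundles $L$ and $L(p)$ gives $H^0(L(-D_+))=H^0(L(p)(-D_+))=0$; together with $H^0(E(-D_+))=0$ (interpolation for $E$) this makes $\delta'_{D_+}$ injective, and it also makes $\ker\iota_{*}$ exactly one-dimensional. Interpolation for $Q$ then determines $\dim H^0(Q(-D_+))$, and a bookkeeping computation (using $\mu(E)\notin\zz$ in the form $0<\deg E\bmod r<r$) shows $\dim H^0(Q(-D_+))\le\dim H^1(L(p)(-D_+))$. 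Chasing the long exact sequence of $0\to L(p)(-D_+)\to\mathcal E(-D_+)\to Q(-D_+)\to 0$ now yields $\dim H^0(\mathcal E(-D_+))=\dim\bigl(\Im\delta'_{D_+}\cap\ker\iota_{*}\bigr)\le 1$, and this is zero exactly when the one-dimensional space $\ker\iota_{*}$ is not contained in $\Im\delta'_{D_+}$. Dualizing, that condition becomes: $p$ is not a base point of the (nonzero, by $\mu(E)\notin\zz$) image of $H^0(K_C\otimes E^\vee(D_+))$ in $H^0(K_C\otimes L^\vee(D_+))$ --- which holds for general $D_+$.

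I expect the main obstacle to be precisely this final step: arranging a \emph{single} general $D_+$ of degree $m+1$ for which all of the auxiliary cohomology simultaneously behaves as required, and then verifying the base-point-freeness. This is where all three hypotheses beyond interpolation for $E$ --- that $L$ is nonspecial, that $Q$ satisfies interpolation, and the slope bound $\mu(L)\le\mu(Q)$ --- are essential.
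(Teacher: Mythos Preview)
The paper does not prove this lemma at all; it simply cites \cite[Proposition~4.21]{aly} and adds a remark explaining why the characteristic-zero hypothesis there is unnecessary for $n=1$. So there is no ``paper's proof'' to compare against, only the remark, which says: \emph{the proof uses that vanishing at $np$ imposes $n$ conditions on sections of any linear series; this is true in any characteristic when $n=1$}.

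Your approach is sound and, judging from that remark, is essentially the argument of \cite{aly}. The reduction to $\mu(E)\notin\zz$, the choice of $D_-$ via $E\subset\mathcal E$, the pushout description of $\mathcal E$, the factorization $\delta_{D_+}=\iota_*\circ\delta'_{D_+}$, and the Serre-duality reformulation of $\ker\iota_*\not\subset\Im\delta'_{D_+}$ as a base-point condition are all correct. Your bookkeeping inequality $h^0(Q(-D_+))\le h^1(L(p)(-D_+))$ also checks out (and in fact if $h^0(Q(-D_+))=0$ you are done immediately, so only the case $h^1(Q(-D_+))=0$ needs the base-point argument).

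The one place to tighten is the very last step. You phrase it as ``$p$ is not a base point \ldots\ for general $D_+$,'' but with $p$ \emph{fixed} it is not obvious that varying $D_+$ alone moves the base locus of $\Im\bigl(H^0(K_C\otimes E^\vee(D_+))\to H^0(K_C\otimes L^\vee(D_+))\bigr)$ off of $p$. The intended hypothesis (implicit here, explicit in \cite{aly}) is that $p$ is a \emph{general} point of $C$. With that, the argument is immediate: fix a general $D_+$ first, so that the image is a fixed nonzero linear series; its base locus is then a finite set, and a general $p$ avoids it. This is exactly the ``vanishing at $p$ imposes one condition'' statement in the paper's remark, and it is where the argument would break for $n>1$ in positive characteristic (where one would need $np$ to impose $n$ conditions, which can fail even for general $p$). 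So: swap the order of quantifiers in your final sentence and invoke genericity of $p$ rather than of $D_+$.
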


\begin{rem}
While \cite{aly} assumes characteristic zero, none of the specific results we quote from \cite{aly} use this assumption --- except for Proposition 4.21.
This proposition states that if \(\mu(L) \leq \mu(H) + n - 1\), then
\(E[np \posmod L]\) satisfies interpolation.
The proof uses that vanishing at \(np\) imposes \(n\) conditions on sections of any linear series.
This is true in \emph{any} characteristic when \(n=1\), but \emph{fails in positive characteristic for \(n > 1\)}.
Since we will use Proposition 4.21 of \cite{aly} only when \(n = 1\),
we do not need a restriction on the characteristic.
\end{rem}

\begin{lem}\label{lem:monodromy}
Let \(E\) be a vector bundle on an irreducible curve \(C\).
Let \(p_1, \ldots, p_n \in C\) be points, and let
\(L_i \subseteq E|_{p_i}\) be \(1\)-dimensional subspaces.  Suppose that both 
\(E\) and \(E[p_1 \posmod L_1] \cdots[p_n \posmod L_n]\)
satisfy interpolation.  Then for any \(0 <m <n\), there is a collection of distinct indices \(i_1, \dots, i_m\) such that 
\[E[p_{i_1} \posmod L_{i_1}] \cdots[p_{i_m} \posmod L_{i_m}]\]
satisfies interpolation.
\end{lem}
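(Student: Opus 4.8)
The plan is to reduce to the case $m=n-1$ and then prove that case by a cohomology computation whose only ``monodromy'' ingredient is the irreducibility of a symmetric power of $C$. For the reduction, I induct on $n$: granting the case $m=n-1$, it produces an $(n-1)$-element subset $S\subseteq\{1,\dots,n\}$ for which the partial modification along the $p_j$, $j\in S$, satisfies interpolation; since $E$ also satisfies interpolation and $0<m<|S|=n-1$, the inductive hypothesis applied to the data indexed by $S$ yields the desired subset of size $m$.

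So assume $m=n-1$. Write $E_+$ for the full modification $E[p_1\posmod L_1]\cdots[p_n\posmod L_n]$ and, for $1\le i\le n$, write $E_i$ for the modification omitting $(p_i,L_i)$. These sheaves agree away from the smooth points $p_j$, and they sit in inclusions $E\subseteq E_i\subseteq E_+$ whose quotients $T_i\colonequals E_i/E$ and $T_+\colonequals E_+/E$ are torsion, with $\deg T_i=n-1$, $\deg(T_+/T_i)=1$; and since $E_j$ is unmodified near $p_j$ one has $\bigcap_i E_i=E$, hence $\bigcap_i T_i=0$ inside $T_+$. Each $E_i$ is then nonspecial (its $H^1$ is a quotient of $H^1(E)$), and $\chi(E_i)=\chi(E)+n-1$. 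I will use the following restatement, immediate from the definition of interpolation together with the irreducibility of $\Sym^k C$: \emph{a nonspecial rank-$r$ bundle $F$ satisfies interpolation if and only if $h^0(F(-D))=\max(0,\chi(F)-rk)$ for a general effective divisor $D$ of every degree $k\ge 0$.} Put $q=\lfloor\chi(E_i)/r\rfloor$ (independent of $i$). For $k>q$, interpolation for $E_+$ forces $h^0(E_+(-D))=0$ for general $D$ of degree $k$, whence $h^0(E_i(-D))=0$ by the inclusion $E_i\subseteq E_+$ — this is the required equality for $E_i$ at all degrees $>q$. For $k\le q$ the required equality for $E_i$ reads $h^1(E_i(-D))=0$ (with, when $rk=\chi(E_i)$, the additional assertion $h^0(E_i(-D))=0$, which follows from $h^1(E_i(-D))=0$ since then $\chi(E_i(-D))=0$); and the instances at $k<q$ follow from the one at $k=q$ by adjoining general points to $D$. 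Thus it suffices to prove, for \emph{some} $i$, that $h^1(E_i(-D))=0$ for general $D$ of degree $q$.

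Fix a general $D$ of degree $q$ disjoint from the $p_j$. From $0\to E(-D)\to E_\bullet(-D)\to T_\bullet\to 0$ and $H^1(T_\bullet)=0$ we get that $H^1(E_\bullet(-D))$ is the cokernel of the connecting map $\beta_\bullet\colon H^0(T_\bullet)\to H^1(E(-D))$; by naturality of connecting maps, $\beta_i$ is the restriction of $\beta_+$ along $H^0(T_i)\subseteq H^0(T_+)$, which is a hyperplane because $T_+/T_i$ is torsion of length one. Interpolation for $E_+$ gives $h^0(E_+(-D))=\chi(E_+)-rq>0$, hence $h^1(E_+(-D))=0$, so $\beta_+$ is surjective; interpolation for $E$ gives $\dim H^1(E(-D))=\max(0,rq-\chi(E))\le\chi(E_i)-\chi(E)=n-1$, so $\ker\beta_+\neq 0$. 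The hyperplanes $H^0(T_i)$ have zero common intersection (as $\bigcap_i T_i=0$), so the nonzero space $\ker\beta_+$ cannot be contained in all of them; for any $i$ with $\ker\beta_+\not\subseteq H^0(T_i)$ we get $H^0(T_i)+\ker\beta_+=H^0(T_+)$, i.e.\ $\beta_i$ surjective, i.e.\ $h^1(E_i(-D))=0$. So for \emph{every} general $D$ of degree $q$ some $i$ works. Finally, the loci $Z_i=\{D\in\Sym^q C:h^1(E_i(-D))>0\}$ are closed, and the previous sentence shows $\bigcap_i Z_i$ is not dense; since $\Sym^q C$ is irreducible, not every $Z_i$ can equal $\Sym^q C$, so some $Z_{i_0}$ is a proper closed subset — giving a single index $i_0$ with $h^1(E_{i_0}(-D))=0$ for general $D$ of degree $q$, hence with $E_{i_0}$ satisfying interpolation.

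I expect the main point requiring care to be the bookkeeping of the critical degrees $\lfloor\chi/r\rfloor$ and $\lceil\chi/r\rceil$ across $E$, $E_i$, and $E_+$ — in particular the case $r\mid\chi(E_i)$, where these degrees coincide and the argument at degree $q$ must simultaneously deliver $h^0(E_i(-D))=0$ and $h^1(E_i(-D))=0$ — together with the verification that, over the open locus of $\Sym^q C$ where $h^0(E(-D))$ and $h^0(E_+(-D))$ attain their minimal values, the groups $H^1(E(-D))$ and the maps $\beta_\bullet$ organize into vector bundles and a bundle morphism. Neither is serious.
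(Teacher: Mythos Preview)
Your proof is correct and follows essentially the same strategy as the paper's: reduce to \(m=n-1\) by induction on \(n\), then work at the single critical degree \(q=N-1\) and exploit \(\bigcap_i E_i=E\). The paper packages the last step more directly by observing \(\bigcap_i H^0(E_i(-D_{N-1}))=H^0(E(-D_{N-1}))\subsetneq H^0(E'(-D_{N-1}))\) and invoking the \((D_+,D_-)\) criterion~\eqref{eq:DplusDminus}, which sidesteps your connecting-map computation and the final irreducibility-of-\(\Sym^q C\) step.
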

\begin{proof}
By induction on \(n\) we reduce to the case \(m=n-1\). Write \(E' = E[p_1 \posmod L_1] \cdots[p_n \posmod L_n]\) and 
\(N = \lceil \chi(E') / \rk E'\rceil\).
Let \(D_N\) and \(D_{N-1}\) be general divisors of degrees \(N\) and \(N - 1\) respectively.
Since \(E\) and \(E'\) both satisfy interpolation, and \(\chi(E) < \chi(E')\), we have
\[h^0(E'(-D_N)) = 0, \quad h^0(E'(-D_{N-1})) \neq 0, \qand h^0(E(-D_{N-1})) < h^0(E'(-D_{N-1})).\]
Let \(E_i = E[p_1 \posmod L_1] \cdots[p_{i - 1} \posmod L_{i - 1}] [p_{i + 1} \posmod L_{i + 1}] \cdots [p_n \posmod L_n]\).  Since \(\chi(E_i) = \chi(E') - 1\), it suffices to show \(h^0(E_i(-D_{N-1})) < h^0(E'(-D_{N-1}))\) for some \(i\).  This follows from the fact that
\[\bigcap_i H^0(E_i(-D_{N-1}))  = H^0(E(-D_{N-1})) \subsetneq H^0(E'(-D_{N-1})). \qedhere\]
\end{proof}

\subsubsection{Interpolation and short exact sequences}

\begin{lem}\label{lem:intp_ses}
Consider an exact sequence
\[0 \to S \to E \to Q \to 0\]
of vector bundles on an irreducible curve \(C\).  Suppose that \(S\) and \(Q\) satisfy interpolation and 
\begin{equation}\label{eq:close_enough}\mu(S) \leq \lfloor \mu(Q) \rfloor + 1 \qand \mu(Q) \leq \lfloor \mu(S) \rfloor +1.\end{equation}
Then \(E\) also satisfies interpolation.  
\end{lem}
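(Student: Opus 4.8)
The plan is to reduce everything to the two-divisor criterion \eqref{eq:DplusDminus}. First I would note that $E$ is nonspecial: since $H^1(S)=H^1(Q)=0$ (these are part of interpolation for $S$ and $Q$), the cohomology sequence of $0\to S\to E\to Q\to 0$ forces $H^1(E)=0$. By \eqref{eq:DplusDminus} it then suffices to exhibit effective divisors $D_+$ and $D_-$ with $H^0(E(-D_+))=0$, $H^1(E(-D_-))=0$, and $\deg D_+-\deg D_-\le 1$.

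To produce $D_+$ and $D_-$ I would unwind what interpolation for $S$ and $Q$ says. Because $C$ is irreducible, $\Sym^n C$ is irreducible, so interpolation for $S$ means that for each $n$ a general effective divisor $D$ of degree $n$ satisfies $H^0(S(-D))=0$ or $H^1(S(-D))=0$; comparing with $\chi(S(-D))=\chi(S)-n\,\rk S$ upgrades this to: $H^0(S(-D))=0$ for general $D$ of degree $n$ whenever $n\ge \chi(S)/\rk S$, and $H^1(S(-D))=0$ for general $D$ of degree $n$ whenever $n\le \chi(S)/\rk S$ (and likewise for $Q$). Set $n_S=\lceil\chi(S)/\rk S\rceil$, $m_S=\lfloor\chi(S)/\rk S\rfloor$, and similarly $n_Q,m_Q$; these are all $\ge 0$ since $\chi(S)=h^0(S)\ge 0$ and $\chi(Q)=h^0(Q)\ge 0$. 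I would then take $D_+$ general of degree $\max(n_S,n_Q)$ and $D_-$ general of degree $\min(m_S,m_Q)$: since each of these vanishing conditions fails only on a proper closed subset of the relevant $\Sym^n C$, a general divisor of the given degree works simultaneously for $S$ and for $Q$, and chasing the cohomology sequence of $0\to S(-D)\to E(-D)\to Q(-D)\to 0$ gives $H^0(E(-D_+))=0$ and $H^1(E(-D_-))=0$.

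The only remaining point --- and the only place the hypothesis \eqref{eq:close_enough} is used --- is the inequality $\deg D_+-\deg D_-=\max(n_S,n_Q)-\min(m_S,m_Q)\le 1$. Since $\chi(S)/\rk S=\mu(S)+1-g$ with $1-g\in\zz$, we have $n_S=\lceil\mu(S)\rceil+1-g$ and $m_S=\lfloor\mu(S)\rfloor+1-g$, and similarly for $Q$, so the quantity to bound equals $\max(\lceil\mu(S)\rceil,\lceil\mu(Q)\rceil)-\min(\lfloor\mu(S)\rfloor,\lfloor\mu(Q)\rfloor)$. This expression is symmetric in $S$ and $Q$, so I may assume $\mu(S)\ge\mu(Q)$, in which case it equals $\lceil\mu(S)\rceil-\lfloor\mu(Q)\rfloor$; and the first inequality of \eqref{eq:close_enough}, $\mu(S)\le\lfloor\mu(Q)\rfloor+1$, gives $\lceil\mu(S)\rceil\le\lfloor\mu(Q)\rfloor+1$ upon taking ceilings (the right-hand side being an integer). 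Invoking \eqref{eq:DplusDminus} then finishes the proof.

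I do not expect a genuine obstacle here --- the lemma is essentially bookkeeping. The two points to be mildly careful about are (i) recognizing that \eqref{eq:close_enough} is exactly calibrated so that the ``$H^0$-vanishing degree'' $\max(n_S,n_Q)$ and the ``$H^1$-vanishing degree'' $\min(m_S,m_Q)$ of $E$ differ by at most $1$, and (ii) passing from the separate generic-divisor statements for $S$ and for $Q$ to a single generic divisor that works for both, which is where irreducibility of $C$ (equivalently of $\Sym^n C$) genuinely enters.
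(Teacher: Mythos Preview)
Your proof is correct and follows the same strategy as the paper's: observe that \eqref{eq:close_enough} pins both $\mu(S)$ and $\mu(Q)$ into a single interval $[n,n+1]$, then verify the two-divisor criterion \eqref{eq:DplusDminus} for $E$ by pulling the required $H^0$- and $H^1$-vanishings from $S$ and $Q$ through the exact sequence. Your explicit bookkeeping via $n_S,m_S,n_Q,m_Q$ (and the shift $\chi/\rk=\mu+1-g$) is in fact a bit more careful than the paper's terse choice of $\deg D_\pm$, but the idea is identical.
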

\begin{proof}
Since \(S\) and \(Q\) are nonspecial, \(E\) is nonspecial.
By \eqref{eq:close_enough}, there exists an integer \(n \in \zz\) such \(\mu(S)\) and \(\mu(Q)\) are contained in the closed interval \([n, n+1]\).
Since \eqref{eq:DplusDminus} is satisfied for \(D_+\) a general divisor of degree \(n + 1\),
and \(D_-\) a general divisor of degree \(n\), we conclude that \(E\) satisfies interpolation as desired.
\end{proof}

\noindent We will most often use this result in the special case in which \(S\) is a line subbundle of \(E\).

\begin{cor}\label{cor:intp_ses}
Suppose that \(S \subset E\) is a nonspecial line subbundle and \(|\mu(S) - \mu(E)| < 1\).  If the quotient \(Q = E/S\) satisfies interpolation, then \(E\) does as well.
\end{cor}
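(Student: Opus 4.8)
The plan is to deduce this immediately from Lemma~\ref{lem:intp_ses}, applied to the exact sequence $0 \to S \to E \to Q \to 0$ with $S$ the given line subbundle and $Q = E/S$. Of the hypotheses of that lemma, the fact that $Q$ satisfies interpolation is given, so two things remain to check: that $S$ satisfies interpolation, and that the slope condition~\eqref{eq:close_enough} holds (i.e.\ $\mu(S) \leq \lfloor \mu(Q)\rfloor + 1$ and $\mu(Q) \leq \lfloor \mu(S)\rfloor + 1$). I would dispatch these in turn.

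For the first point, I would observe that any nonspecial line bundle $L$ satisfies interpolation (and $S$ is nonspecial by hypothesis). Indeed, fix $n \geq 0$ and let $D$ be a general effective divisor of degree $n$ supported at smooth points of $C$. Since a general point never lies in the base locus of a nonempty complete linear system, twisting down by a general point drops $h^0$ by exactly $1$ whenever $h^0 \neq 0$; iterating, $h^0(L(-D)) = \max(0,\chi(L) - n)$, using $\chi(L) = h^0(L)$ since $L$ is nonspecial. By Riemann--Roch this forces $h^1(L(-D)) = \max(0, n - \chi(L))$, so for every $n$ at least one of $h^0(L(-D))$, $h^1(L(-D))$ vanishes; hence $L$ satisfies interpolation. (Alternatively one may simply cite this as standard.)

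For the slope condition, set $r = \rk Q$, so $\rk E = r + 1$, and note that the exact sequence gives $\mu(E) = \tfrac{\mu(S) + r\,\mu(Q)}{r+1}$, whence
\[
\mu(Q) - \mu(S) = \frac{r+1}{r}\bigl(\mu(E) - \mu(S)\bigr),
\qquad\text{so}\qquad
\bigl|\mu(Q) - \mu(S)\bigr| < \frac{r+1}{r}
\]
by hypothesis. Multiplying by $r$: the integer $r\,\mu(Q) - r\,\mu(S)$ has absolute value strictly below $r+1$, hence at most $r$, so $\mu(S) - 1 \leq \mu(Q) \leq \mu(S) + 1$. Since $\mu(S)$ and $\mu(S)-1$ are integers, this yields both $\mu(Q) \leq \lfloor \mu(S)\rfloor + 1$ and $\mu(S) \leq \lfloor \mu(Q)\rfloor + 1$, which is exactly~\eqref{eq:close_enough}; Lemma~\ref{lem:intp_ses} then gives the result. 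There is no serious obstacle here, but two points deserve care: the argument genuinely uses the \emph{strict} inequality $|\mu(S) - \mu(E)| < 1$ — with $\leq 1$ one could have $\mu(Q) = \mu(S) + 1 + \tfrac1r$, which violates~\eqref{eq:close_enough} and indeed makes the statement false — and the passage from the metric bound to the floor inequalities relies on $S$ having rank $1$, which is what makes $\mu(S)$ and $r\,\mu(Q) - r\,\mu(S)$ integers.
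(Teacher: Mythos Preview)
Your proof is correct and follows essentially the same approach as the paper's: both deduce the corollary from Lemma~\ref{lem:intp_ses} by converting the strict bound $|\mu(S)-\mu(E)|<1$ into the floor inequalities~\eqref{eq:close_enough} via the integrality of $r\,\mu(Q)-r\,\mu(S)$. Your version is slightly more explicit in spelling out why a nonspecial line bundle satisfies interpolation, which the paper leaves implicit.
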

\begin{proof}
By assumption
\(-1 < \frac{\deg(Q) - \rk(Q)\deg(S)}{\rk(Q)+1} < 1.\)  Hence we have strict inequalities
\[\deg(S) < \mu(Q) + \frac{\rk(Q) + 1}{\rk(Q)} \qand \mu(Q) < \deg(S) + \frac{\rk(Q) + 1}{\rk(Q)},\]
which imply the required inequalities in Lemma \ref{lem:intp_ses}, since \(\mu(Q)\) is a \([1/\rk(Q)]\)-integer.
\end{proof}

\section{Overview} \label{sec:overview}

\subsection{Base cases}  
We can reduce the number of base cases by extending Theorem \ref{thm:main} to \(r=1\) and \(r=2\). 
For \(r=2\), we replace \(N_C\) with the normal sheaf \(N_f\), where \(f \colon C \to \pp^2\) is a general BN-curve.  In this case, adjunction implies that \(N_f = K_C \otimes f^*\O_{\pp^2}(3)\) is a nonspecial line bundle, and therefore satisfies interpolation.  For \(r=1\), we only consider the case where \(f \colon C \to \pp^1\) is an isomorphism, so \(N_f = 0 \) satisfies interpolation.

\subsection{\boldmath First Strategy: Degeneration of \(C\)}
The first inductive strategy we will use is
degeneration of \(C\) to reducible curves \(X \cup Y\).
In Section~\ref{sec:basicdegen} we will study certain such degenerations,
for which \(Y\) has a prescribed form, and
we can thus relate interpolation for \(N_C\) to interpolation
for certain modifications of \(N_X\).

While the following hypothesis does not encompass \emph{all}
modifications that might appear using this method,
it includes those modifications that will play the most central role
in our inductive argument:

\begin{hypo}[\(I(d, g, r, \ell, m)\)]\label{ind_hyp}
Let \(C \subset \pp^r\) be a general BN-curve of degree \(d\) and genus \(g\).  Let \(u_1, v_1, \dots, u_\ell, v_\ell\) be \(\ell\) pairs of general points on \(C\).  Let \(R_1, \dots, R_m\) be \(m\) general \((r+1)\)-secant rational curves of degree \(r - 1\) (contained in hyperplanes transverse to \(C\)).
Then the modification 
\[N_C[u_1 \biposmod v_1] \cdots [u_\ell \biposmod v_\ell] [\posmodalong R_1]\cdots[\posmodalong R_m]\]
of the normal bundle of \(C\)
satisfies interpolation.
\end{hypo}

A central complicating factor is that the inductive hypothesis \(I(d, g, r, \ell, m)\) is not always true.
The following definition describes a set of tuples \((d, g, r, \ell, m)\)
for which we will prove that it holds:

\begin{defin}
A tuple \((d, g, r, \ell, m)\) is called \defi{good} if it satisfies all the following conditions:
\begin{itemize}
\item The following inequalities hold:
\[d \geq g + r, \quad 0 \leq \ell \leq \frac{r}{2}, \quad \text{and} \quad 0\leq m \leq \rho(d,g,r).\]
\item If \(g=m=0\), then
\[2 \ell  \geq (1-d) \redmod (r-1),\]
where for integers \(a\) and \(b\) we write \(a \redmod b\) for the reduced residue of \(a\) modulo \(b\),
and
\item It is not the following set:
\begin{equation}\label{XX}\tag{XEx}
\left\{
\begin{array}{cccccc}
(5, 2, 3, 0, 0), &(4, 1, 3, 1, 0), & (4, 1, 3, 0, 1), & (4, 1, 3, 1, 1), \\
 (6, 2, 4, 0, 0), & (5, 1, 4, 1, 0),&& (5, 1, 4, 1, 1), & (5, 1, 4, 2, 1), & (6, 2, 4, 1, 1),  \\
 (7, 2, 5, 0, 0), &&(6, 1, 5, 0, 1), &(6, 1, 5, 1, 1).
 \end{array}
\right\}
\end{equation}
\end{itemize}
\end{defin}

\begin{rem}\label{rem:small_g}
By Clifford's theorem, the first inequality  \(d \geq g + r\) follows from \(g \leq r\).
\end{rem}

We conclude Section~\ref{sec:basicdegen} by using this first strategy
to show that if \(I(d, g, r, 0, m)\) holds for every good tuple \((d, g, r, 0, m)\),
then Theorem~\ref{thm:main} holds except possibly for rational curves
or canonical curves of even genus.

\subsection{Second Strategy: Limits of Modifications and Projection}
The basic issue with the first strategy described above
is that every time we apply it,
we get more modifications. In order to make an inductive
argument work, we need a second inductive strategy that \emph{decreases}
the number of modifications.  

Hypothesis \(I(d,g,r,\ell, m)\) assets that \(N_C' \colonequals N_C[u_1 \biposmod v_1] \cdots [u_\ell \biposmod v_\ell] [\posmodalong R_1]\cdots[\posmodalong R_m]\) satisfies interpolation.  Let \(p\) be a general point on \(C\).
The pointing bundle exact sequence induces the exact sequence
\begin{equation}\label{eq:pointing_seq}
0 \to N_{C \to p} \to N_C' \to N_{\bar{C}}(p)[u_1 \biposmod v_1] \cdots [u_\ell \biposmod v_\ell] [\posmodalong \bar{R}_1\cup \dots \cup \bar{R}_m] \to 0,
\end{equation}
where \(\bar{C}\) and \(\bar{R}_j\) denote the images of \(C\) and \(R_j\) respectively
under projection from \(p\).  In order to apply Corollary \ref{cor:intp_ses} to relate interpolation for the original bundle \(N_C'\) to interpolation for the quotient bundle, the sequence must be close to balanced.  The failure of the sequence to be balanced is related to the quantity
\[\delta = \delta(d, g, r, \ell, m) \colonequals \mu(N_C') - \mu(N_{C \to p}) = \frac{2d + 2g - 2r + 2\ell + (r+1)m}{r-1}.\]
We first apply these ideas in Section \ref{sec:delta1} to treat the family of good tuples \((d, g,r, 0, 0)\) with \(\delta(d, g, r, 0,0)=1\), which are difficult from the perspective of our more uniform inductive arguments.

More generally, in order to make this sequence sufficiently close to balanced, we will appropriately specialize the points on \(C\) at which the modifications occur.  To illustrate this idea in the simplest possible case, assume here that \(\ell \geq \lfloor \delta\rfloor\).  Since the points \(v_1, \dots, v_{\lfloor \delta \rfloor}\) are general on \(C\), we may specialize them all to the point \(p\).  This induces the specialization of \(N_C'\) to
\[N_C'' \colonequals N_C[u_1 + \dots + u_{\lfloor \delta\rfloor} \biposmod p][u_{\lfloor \delta\rfloor +1} \biposmod v_{\lfloor \delta\rfloor +1}] \cdots [u_\ell \biposmod v_\ell] [\posmodalong R_1\cup\cdots\cup R_m].\]
Using \eqref{eq:original_ses}, the pointing bundle exact sequence becomes
\begin{multline*}
0 \to N_{C \to p}(u_1 + \dots + u_{\lfloor \delta\rfloor}) \to N_C'' \\
\to N_{\bar{C}}(p)[p \posmod u_1 + \dots + u_{\lfloor \delta\rfloor}] [u_{\lfloor \delta\rfloor +1} \biposmod v_{\lfloor \delta\rfloor +1}] \cdots [u_\ell \biposmod v_\ell] [\posmodalong \bar{R}_1\cup \dots \cup \bar{R}_m] \to 0.
\end{multline*}
By our auspicious choice to specialize exactly \({\lfloor \delta \rfloor}\) points to \(p\), this sequence is now close enough to balanced to reduce to proving interpolation for the quotient bundle.  Furthermore, since \(u_1, \dots, u_{\lfloor \delta \rfloor}\) are general, the modification at \(p\) in the quotient  is linearly general and we can erase it by Lemma \ref{gen-to-fixed}.  It therefore suffices to prove interpolation for 
\[N_{\bar{C}} [u_{\lfloor \delta\rfloor +1} \biposmod v_{\lfloor \delta\rfloor +1}] \cdots [u_\ell \biposmod v_\ell] [\posmodalong \bar{R}_1\cup \dots \cup \bar{R}_m],\]
which evidently has fewer modifications.
However, there are two basic issues with this argument:
\begin{enumerate}
\item In general, we might not have \(\ell \geq \lfloor \delta \rfloor\).
\item
Since \(\bar{R}_i\) is still an \((r+1)\)-secant curve of degree \(r-1\), the argument does not reduce to another case of our inductive hypothesis.
\end{enumerate}
To surmount both of these two difficulties, we will need to specialize the \(R_i\) as well.

This second strategy will be fleshed out in Sections~\ref{sec:onion_specialization} and~\ref{sec:inductive}:
In Section~\ref{sec:onion_specialization}, we will study how to specialize the \(R_i\)
so that they can also contribute modifications to \(N_{C \to p}\).
Then in Section~\ref{sec:inductive}, we will refine the basic argument outlined above
to use these degenerations of the \(R_i\) as well,
and also explain further degenerations that will be necessary
to reduce to another case of our inductive hypothesis.

\subsection{Outline of the Remainder of Paper}
Section \ref{sec:rational} is a brief interlude in which we use the inductive arguments of Section \ref{sec:inductive} to treat the case of rational curves not implied by \(I(d,g,r,\ell,m)\) for good tuples.  We also explain the counterexamples to Theorem \ref{thm:main} that are not counterexamples to Theorem~\ref{cor:main}.  At this point we will have reduced both Theorem \ref{thm:main} and Theorem~\ref{cor:main} to \(I(d,g,r,\ell, m)\) for good tuples, as well as Theorem \ref{thm:main} for canonical curves of even genus, which we treat at the end of the paper in Section \ref{sec:canonical}.  The intervening sections \ref{sec:combinat}--\ref{sec:remaining-sporadic} inductively prove \(I(d,g,r,\ell,m)\) for good tuples.

In Section \ref{sec:combinat}, we complete a purely combinatorial analysis, in which
we show that the inductive arguments of Section \ref{sec:inductive} can be applied to 
reduce \(I(d, g, r, \ell, m)\)
for \emph{all} good tuples to
\begin{itemize}
\item \(I(d, g, r, \ell, m)\) for a certain large but \emph{finite} list of sporadic cases \((d, g, r, \ell, m)\) with \(r \leq 13\).
\item The infinite family of tuples \((d, g, r, 0, 0)\) with \(\delta = 1\), which was already treated in Section \ref{sec:delta1}.
\end{itemize}
In Section \ref{sec:most-sporadic} we give a more complicated, yet more flexible, inductive argument in the style of those in Section \ref{sec:inductive}, and verify by exhaustive computer search that
it reduces the finitely many sporadic cases identified above to a managable list of \(30\) base cases.
These base cases are treated by ad hoc techniques in Section \ref{sec:remaining-sporadic}.

\section{Basic Degenerations \label{sec:basicdegen}}

In this section, we discuss the three basic degenerations of BN-curves, to reducible curves \(C \cup D\),
that we will use in the proof of Theorem \ref{thm:main}.  In each subsection we will first show that these degenerations
lie in the Brill--Noether component. 
We will then relate interpolation for \(N_{C \cup D}\), or a modification thereof,
to interpolation for a particular modification of \(N_C\).

In what follows, write \(N_{C \cup D}'\) for a modification of \(N_{C \cup D}\) away from \(D\).
In other words, \(N_{C \cup D}'\) is a vector bundle on \(C \cup D\), equipped with an isomorphism
to \(N_{C \cup D}\) over a dense open subset of \(C \cup D\) containing the entire curve \(D\),
and in particular containing a neighborhood \(U\) of \(C \cap D\) in \(C\).
Write \(N_C'\) for the bundle obtained by making the same modifications to \(N_C\).
In other words, \(N_C'\) is obtained from \(N_{C \cup D}'|_{C \setminus (C \cap D)}\)
by gluing along \(U \setminus (C \cap D)\) via our given isomorphism to \(N_C|_U\).

\begin{center}
\begin{minipage}{.4\textwidth}
\begin{center}
\begin{tikzpicture}[scale=1.5]
\draw (1, 2) .. controls (0.5, 2) and (-0.5, 1.5) .. (0, 1);
\draw (0, 1) .. controls (1, 0) and (1, 2) .. (0.1, 1.1);
\draw (-0.1, 0.9) .. controls (-0.5, 0.5) and (0.5, -0.3) .. (1, -0.3);
\draw (0.8, 0.5) node[right]{\(D = L\)};
\draw (1.12, 1.9) node{\(C\)};
\draw (0.8, 0.5) -- (-0.3, 0.61);
\filldraw (0.36, 0.544) circle[radius=0.02];
\filldraw (-0.18, 0.598) circle[radius=0.02];
\draw (0.3, 0.45) node{\(v\)};
\draw (-0.25, 0.52) node{\(u\)};
\end{tikzpicture} \\
Peeling off a \(1\)-secant line.
\end{center}
\end{minipage}
\begin{minipage}{.4\textwidth}
\begin{center}
\begin{tikzpicture}[scale=1.5]
\draw (1, 2) .. controls (0.5, 2) and (-0.5, 1.5) .. (0, 1);
\draw (0, 1) .. controls (1, 0) and (1, 2) .. (0.1, 1.1);
\draw (-0.1, 0.9) .. controls (-0.5, 0.5) and (0.5, -0.3) .. (1, -0.3);
\draw (1.12, 1.9) node{\(C\)};
\filldraw (0.31, 0.767) circle[radius=0.02];
\draw (0.45, 0.814) -- (-0.6, 0.458);
\draw (-0.6, 0.45) node[left]{\(D = L\)};
\filldraw (-0.18, 0.598) circle[radius=0.02];
\draw (-0.29, 0.67) node{\(u\)};
\draw (0.31, 0.87) node{\(v\)};
\end{tikzpicture}\\
Peeling off a \(2\)-secant line.
\end{center}
\end{minipage}
\end{center}

\subsection{\boldmath Peeling off \(1\)-secant lines}\label{subsec:1sec}

Our most basic degeneration will be when \(D = L\) is a quasitransverse \(1\)-secant line.  If \(C\) has degree \(d\) and genus \(g\), then \(C \cup L\) has degree \(d+1\) and genus~\(g\). Write \(v \in L \setminus \{u\}\)
for any other point on the line \(L\).

\begin{lem}\label{lem:1secBN}
If \(C\) is a BN-curve, then \(C \cup L\) is also a BN-curve.
\end{lem}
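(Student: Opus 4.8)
The plan is to prove that the stable map $C \cup L$ belongs to $M^\circ_g(\pp^r, d+1)$, the unique component of $\bar{M}_g(\pp^r, d+1)$ dominating $\bar{M}_g$, by realizing it as a limit of nondegenerate maps from smooth genus-$g$ curves of degree $d+1$. Since the triples $(C, u, L)$ with $[C] \in M^\circ_g(\pp^r, d)$, $u \in C$, and $L$ a quasitransverse $1$-secant line through $u$ form an irreducible parameter space (an open subset of a $\pp^{r-1}$-bundle over the universal curve over $M^\circ_g(\pp^r, d)$), the locus of points $[C \cup L]$ in $\bar{M}_g(\pp^r, d+1)$ is irreducible; as $M^\circ_g(\pp^r, d+1)$ is closed, it suffices to treat the case in which $(C, u, L)$ is general, which I assume from now on. In particular $H^1(N_C) = 0$, since the Brill--Noether component is generically reduced of the expected dimension.

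First I would check that $H^1(N_{C \cup L}) = 0$. By Proposition~\ref{prop:hh}, $N_{C\cup L}|_C \simeq N_C[\posmodalong L] = N_C[u \posmod q]$ for $q$ a (general) point of $L \setminus u$; since $N_C \subseteq N_C[u \posmod q]$ with torsion quotient, $H^1(N_{C\cup L}|_C) = 0$. Similarly $N_{C\cup L}|_L \simeq N_L[\posmodalong C]$ contains $N_L \simeq \O_{\pp^1}(1)^{\oplus(r-1)}$, so every summand of $N_{C\cup L}|_L(-u)$ has nonnegative degree and $H^1(N_{C\cup L}|_L(-u)) = 0$. The restriction exact sequence $0 \to N_{C\cup L}|_L(-u) \to N_{C\cup L} \to N_{C\cup L}|_C \to 0$ then gives $H^1(N_{C\cup L}) = 0$, and a Riemann--Roch computation yields $\chi(N_{C\cup L}) = (r+1)(d+1) - (r-3)(g-1) = \dim M^\circ_g(\pp^r, d+1)$.

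It follows that $[C \cup L]$ lies on a unique component $M$ of $\bar{M}_g(\pp^r, d+1)$, smooth there of dimension $\chi(N_{C\cup L})$. Because $L$ is a line meeting $C$ quasitransversely in the single point $u$, the nodal curve $C \cup L$ is smoothable in $\pp^r$ to smooth curves of degree $d+1$ and genus $g$ (by a standard smoothing argument, using $H^1(N_{C\cup L}) = 0$ and the vanishing of $H^1$ of the equisingular normal sheaf), so the general member of $M$ has smooth domain. Moreover the stable model of the domain of $C \cup L$ is the smooth curve $C$ (the rational tail $L$ is contracted), which is a general point of $\bar{M}_g$ since $C$ is a general BN-curve; hence $M$ dominates $\bar{M}_g$. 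Thus $M$ is a component of $\bar{M}_g(\pp^r, d+1)$ whose general member is a nondegenerate map from a general smooth genus-$g$ curve, so by the uniqueness of such a component in Brill--Noether theory, $M = M^\circ_g(\pp^r, d+1)$; therefore $C \cup L$ is a BN-curve.

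The step I expect to be the main obstacle is the identification of $M$ with $M^\circ_g(\pp^r, d+1)$: one must rule out that the component through $[C \cup L]$ is some other (boundary) component of $\bar{M}_g(\pp^r, d+1)$, for which both the smoothing argument and the fact that the domain deforms to a general curve are essential. A secondary point, relevant because we work in arbitrary characteristic, is establishing $H^1(N_C) = 0$ for the general BN-curve $C$, and hence the smoothing.
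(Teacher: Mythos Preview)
Your argument is correct, but it is more elaborate than the paper's. The paper works with the restricted tangent bundle rather than the normal bundle: after generalizing \(C\), it shows \(H^1(T_{\pp^r}|_{C\cup L})=0\) via the sequence \(0 \to T_{\pp^r}|_L(-u) \to T_{\pp^r}|_{C\cup L} \to T_{\pp^r}|_C \to 0\), using Gieseker--Petri for \(H^1(T_{\pp^r}|_C)=0\) and the explicit splitting \(T_{\pp^r}|_L \simeq \O_{\pp^1}(2)\oplus \O_{\pp^1}(1)^{\oplus(r-1)}\) for the line. This single vanishing directly shows that the map \(C\cup L \to \pp^r\) lifts along any deformation of the abstract domain to a general smooth curve, which places \([C\cup L]\) in the BN component in one stroke.

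Your route via \(H^1(N_{C\cup L})=0\) instead establishes smoothness of the Kontsevich/Hilbert space at \([C\cup L]\), and then you must separately argue smoothing of the node (via the equisingular normal sheaf) and dominance of \(\bar{M}_g\) (via the stable model being \(C\)). All of these steps are fine, and the input \(H^1(N_C)=0\) does follow from Gieseker--Petri through the normal bundle sequence, so your worry about positive characteristic is resolved the same way. The trade-off is that the paper's tangent-bundle argument is a two-line proof, whereas yours spends more effort reassembling the pieces that \(H^1(f^*T_{\pp^r})=0\) packages automatically.
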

\begin{proof}
Generalizing \(C\), we may suppose \(C\) is a general BN-curve.
We will show \(H^1(T_{\pp^r}|_{C \cup L}) =0\), which implies that the map \(C \cup L \to \pp^r\) may be lifted as \(C \cup L\) is deformed to a general curve. 

Since \(C\) is a general BN-curve, \(H^1(T_{\pp^r}|_C)=0\) by the Gieseker--Petri Theorem.  Furthermore, we have \(H^1(T_{\pp^r}|_L(-u)) =0\), because \(T_{\pp^r}|_L \simeq \O_{\pp^1}(2) \oplus \O_{\pp^1}(1)^{\oplus (r-1)}\).  This implies \(H^1(T_{\pp^r}|_{C \cup L}) = 0\) as desired, using
\[ 0 \to T_{\pp^r}|_L(-u) \to T_{\pp^r}|_{C \cup L} \to T_{\pp^r}|_C \to 0. \qedhere \]
\end{proof}

\begin{lem}[Lemma 8.5 of \cite{aly}]\label{lem:1sec}
If \(N_C'(u)[2u \negmod v]\) satisfies interpolation,
then so does \(N_{C \cup L}'\).
\end{lem}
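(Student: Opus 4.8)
The plan is to relate interpolation for $N_{C \cup L}'$ to interpolation for a space of sections on $C$ via Lemma~\ref{lem:int_rest}, applied with $X = C$ and $Y = L$. Recall $C \cap L = \{u\}$, and write $v \in L \setminus \{u\}$ for an auxiliary point. First I would describe $N_{C \cup L}'|_C$ using Proposition~\ref{prop:hh}: since $N_{C \cup L}'$ is a modification of $N_{C \cup L}$ away from $L$, we get $N_{C \cup L}'|_C \simeq N_C'[\posmodalong L] = N_C'[u \posmod q]$, where $q \in T_u L \setminus \{u\}$; but $q$ may be taken to be $v$ (any point on the line other than $u$ spans the same tangent direction), so $N_{C \cup L}'|_C \simeq N_C'[u \posmod v]$. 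On the other side, $L \simeq \pp^1$ is a $1$-secant line, so $T_{\pp^r}|_L \simeq \O(2) \oplus \O(1)^{\oplus(r-1)}$ and hence $N_{C \cup L}'|_L = N_{C\cup L}|_L \simeq N_{L/\pp^r}(u) \simeq \O(3) \oplus \O(2)^{\oplus (r-2)}$ — here the twist by $(u)$ accounts for the node, using the Hartshorne–Hirschowitz description of the normal bundle of the nodal curve restricted to $L$.

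Next I would analyze the hypotheses of Lemma~\ref{lem:int_rest}. The restriction map $\res_{L, u} \colon H^0(L, N_{C\cup L}'|_L) \to N_{C\cup L}'|_u$ is injective: a section of $\O(3)\oplus\O(2)^{\oplus(r-2)}$ on $\pp^1$ vanishing at a single point is determined up to the subspace of sections vanishing at $u$, which has dimension one less in each summand; one needs that no global section of this bundle vanishes to order high enough to matter — more precisely, the kernel consists of sections of $\O(2)\oplus\O(1)^{\oplus(r-2)}$ (after twisting down by $u$), which is nonzero, so in fact $\res_{L,u}$ is \emph{not} injective and I must instead first twist down. This is the reason the statement of the lemma involves $N_C'(u)[2u\negmod v]$ rather than $N_C'[u\posmod v]$: one should twist the whole picture down by a suitable divisor on $L$ to make $N_{C\cup L}'|_L$ perfectly balanced of the right slope, then apply Lemma~\ref{lem:interpolation_rational_bal} with $R = L$ and $\#\Gamma = 1$.

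Concretely: $E \colonequals N_{C\cup L}'$ has $E|_L \simeq \O(3)\oplus\O(2)^{\oplus(r-2)}$, which is \emph{not} perfectly balanced. I would instead apply a negative modification of $E$ at a point of $L \setminus \{u\}$ towards the $\O(2)$-summands — equivalently, work with $E[u \posmod \text{(appropriate }F)]$ — to arrive at a bundle whose restriction to $L$ is $\O(2)^{\oplus(r-1)}$ or $\O(1)^{\oplus(r-1)}$, i.e.\ perfectly balanced of slope $\geq \#\Gamma - 1 = 0$. By Lemma~\ref{lem:interpolation_rational_bal}, interpolation for this modified bundle restricted to $C$ implies interpolation for it on $C \cup L$, and then Lemma~\ref{lem:int_large_degree} (or a direct twist argument, using that the modifications and twists on $L$ don't affect the bundle on $C \setminus u$) recovers interpolation for $E = N_{C\cup L}'$ itself from interpolation on $C$. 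Tracking through Proposition~\ref{prop:hh} and the twist, the bundle on $C$ one needs to have interpolation is exactly $N_C'[u \posmod v]$ twisted appropriately, and a short computation identifies this with $N_C'(u)[2u \negmod v]$: the positive modification $[u\posmod v]$ is $[u\negmod v](u)$ by definition, and pulling the twist by $(u)$ outside and combining with the twist needed to balance $L$ produces the $2u$ and the sign flip. The main obstacle I expect is bookkeeping the precise bundle that appears on $C$ — getting the twist by $(u)$, the order-$2$ modification at $u$, and the negative-versus-positive sign exactly right — rather than any conceptual difficulty; the structural input (Hartshorne–Hirschowitz plus Lemma~\ref{lem:interpolation_rational_bal}) is already in hand.
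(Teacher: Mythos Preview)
The paper does not give its own proof here; the lemma is simply quoted from \cite{aly}, Lemma~8.5. So there is no proof in the present paper to compare against, only the strategy of the cited reference.

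Your overall plan --- reduce from \(C\cup L\) to \(C\) by analysing the gluing over \(\Gamma=\{u\}\) via Lemma~\ref{lem:int_rest} or Lemma~\ref{lem:interpolation_rational_bal} --- is the right shape, and your identification \(N_{C\cup L}'|_C\simeq N_C'[u\posmod v]\) via Proposition~\ref{prop:hh} is correct. However, two points need correction.

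First, your computation of the restriction to \(L\) is wrong. Hartshorne--Hirschowitz gives \(N_{C\cup L}|_L\simeq N_L[u\posmodalong C]\), which is a single rank-one positive modification of \(N_L\simeq\O_{\pp^1}(1)^{\oplus(r-1)}\) at one point, not the full twist \(N_L(u)\). Hence \(N_{C\cup L}'|_L\simeq\O_{\pp^1}(2)\oplus\O_{\pp^1}(1)^{\oplus(r-2)}\), with the \(\O(2)\) factor being \(N_{L\to T_uC}(u)\); it is not \(\O(3)\oplus\O(2)^{\oplus(r-2)}\). This changes the bookkeeping downstream but not the qualitative picture (the restriction is still imbalanced by one).

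Second, and more seriously, the step you describe as ``a short computation identifies this with \(N_C'(u)[2u\negmod v]\)'' is the entire content of the lemma, and you have not actually carried it out. Concretely: after twisting down by a suitable divisor on \(L\) so that the restriction-to-\(\Gamma\) map becomes injective, one must identify its image inside \(N_{C\cup L}'|_u\) (it is the line \(N_{C\cup L\to T_uC}|_u\), coming from the \(\O(2)\) factor), and then check that the subspace of sections of \(N_C'[u\posmod v]\) gluing to that image is exactly \(H^0(N_C'(u)[2u\negmod v])\). This last identification requires tracking the modification at \(u\) to second order and is where the formula \([2u\negmod v]\) arises; it is not a twist-and-sign bookkeeping exercise. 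Until you write this down, there is a genuine gap.
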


When \(C\) is nonspecial with genus small relative to \(r\), we can
combine Lemma \ref{lem:1sec} with Lemma \ref{lem:int_large_degree} to reduce to a positive modification of \(N_C'\).

\begin{cor}\label{cor:1sec_posmod}
Suppose that \(N_C'\) is a positive modification of \(N_C\).  If
\(d \geq g + r\) and \(g \leq r + 6\)
and \(N_C'[2u \posmod v]\) satisfies interpolation, then \(N_{C \cup L}'\) satisfies interpolation.
\end{cor}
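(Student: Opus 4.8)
The plan is to deduce the hypothesis of Lemma~\ref{lem:1sec} from that of Corollary~\ref{cor:1sec_posmod} by ``twisting down one point'', using Lemma~\ref{lem:int_large_degree} to justify the descent. The first step is purely formal: by the definition of a positive modification and the compatibility of elementary modifications with line-bundle twists,
\[
N_C'(u)[2u \negmod v] \;=\; \bigl(N_C'[2u \negmod v]\bigr)(u) \;=\; \bigl(N_C'[2u\posmod v](-2u)\bigr)(u) \;=\; N_C'[2u\posmod v](-u).
\]
So if we set \(E := N_C'[2u\posmod v](-u)\), then the hypothesis is exactly that \(E(u)\) satisfies interpolation, and Lemma~\ref{lem:1sec} will finish the proof as soon as we know \(E\) itself satisfies interpolation. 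By Lemma~\ref{lem:int_large_degree} (applied with the effective divisor \(D = u\)), it therefore suffices to verify the numerical inequality \(\chi(E) \geq \rk(E)\cdot g = (r-1)g\).

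The second step is a degree count. Since \(N_C'\) is obtained from \(N_C\) by positive elementary modifications, and a positive elementary modification \(F[D\posmod G]\) has degree \(\deg F - \operatorname{length}(F|_D/G) + \rk(F)\deg D \geq \deg F\), we get \(\deg N_C' \geq \deg N_C = (r+1)d + 2g - 2\). The further modification \([2u \posmod v]\) is along \(2u\) towards a line subbundle, so it raises the degree by \(2(r-1) - 2(r-2) = 2\), while the twist by \(-u\) lowers it by \(r-1\); hence
\[
\deg E \;\geq\; (r+1)d + 2g + 1 - r, \qquad \chi(E) = \deg E + (r-1)(1-g) \;\geq\; (r+1)d - (r-3)g.
\]
Thus it is enough to show \((r+1)d \geq 2(r-2)g\).

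The final step is to check this from the hypotheses \(d \geq g + r\) and \(g \leq r + 6\). It suffices that \((r+1)(g+r) \geq 2(r-2)g\), i.e.\ that \(r(r+1) \geq (r-5)g\). For \(r \leq 5\) the right side is nonpositive and there is nothing to prove; for \(r \geq 6\) one has \(\tfrac{r(r+1)}{r-5} = r + 6 + \tfrac{30}{r-5} > r + 6 \geq g\), which gives the claim. With \(\chi(E) \geq (r-1)g\) established, Lemma~\ref{lem:int_large_degree} shows \(E = N_C'(u)[2u\negmod v]\) satisfies interpolation, and then Lemma~\ref{lem:1sec} shows \(N_{C\cup L}'\) satisfies interpolation. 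There is no deep obstacle here: the substantive input is entirely contained in Lemmas~\ref{lem:1sec} and~\ref{lem:int_large_degree}, and the only points requiring care are the bookkeeping in the degree computation and the observation that the threshold \(r(r+1)/(r-5)\) only barely exceeds \(r+6\) — which is precisely why the hypothesis takes the form \(g \leq r + 6\).
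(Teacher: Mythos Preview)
Your proof is correct and follows essentially the same approach as the paper: reduce to Lemma~\ref{lem:1sec} via Lemma~\ref{lem:int_large_degree}, then verify the numerical bound $\chi(N_C'(u)[2u\negmod v]) \geq (r-1)g$ using $d \geq g+r$ and $g \leq r+6$. The paper packages the final inequality as $(r-5)g \leq (r-5)(r+6) \leq r(r+1)$, while you split off the case $r \leq 5$ and write $r(r+1)/(r-5) = r+6+30/(r-5) > g$; these are the same computation.
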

\begin{proof}
Since
\( N_C'[2u \posmod v] \simeq \left(N_C'(u)[2u \negmod v]\right)(u)\),
it
suffices by Lemma \ref{lem:int_large_degree} to show that 
\begin{equation}\label{eq:desired_ineq}
\chi\left(N_C'(u)[2u \negmod v]\right) \geq (r-1)g.
\end{equation}
We have
\begin{align*}
\chi(N_C'(u)[2u \negmod v]) &\geq \chi(N_C(u)[2u \negmod v]) \\
&= (r+1)d - (r-3)(g-1) - (r-3) \\
&\geq (r+1)(g+r) - (r-3)g \\
& = 4g + r(r+1).
\end{align*}
If \(g \leq r + 6\), then
\[(r-1)g - 4g = (r-5)g \leq (r-5)(r+6) \leq r(r+1),\]
and the desired inequality \eqref{eq:desired_ineq} holds.
\end{proof}

\subsection{\boldmath Peeling off \(2\)-secant lines}\label{subsec:2sec}

Our next basic degeneration will be to the union of a curve \(C\) and a quasitransverse \(2\)-secant line \(L\), meeting \(C\)
at points \(u\) and \(v\).  If \(C\) has degree \(d\) and genus \(g\), then \(C \cup L\) has degree \(d+1\) and genus \(g+1\).  

\begin{lem} \label{lem:2secBN}
If \(C\) is a BN-curve, then \(C \cup L\) is also a BN-curve.
\end{lem}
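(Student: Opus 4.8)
The proof will be a direct adaptation of that of Lemma~\ref{lem:1secBN}, with the single intersection point $u$ replaced by the pair $\{u, v\}$. First, since the BN-curves of degree $d$ and genus $g$ in $\pp^r$ form an irreducible family and being a BN-curve is a closed condition, it suffices to treat the case where $C$ is a general BN-curve and $u, v$ are general points of $C$; this moreover forces $L = \overline{uv}$ to be a quasitransverse $2$-secant line, so that $C \cup L$ is nodal of arithmetic genus $g + 1$. I will then show $H^1(T_{\pp^r}|_{C \cup L}) = 0$. As in Lemma~\ref{lem:1secBN}, this implies that the map $C \cup L \to \pp^r$ lifts as $C \cup L$ is deformed to a general curve of genus $g+1$, so $[C \cup L]$ lies in the component of $\bar{M}_{g+1}(\pp^r, d+1)$ dominating $\bar{M}_{g+1}$, which is unique and is exactly the locus of BN-curves.

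For the vanishing, I will use the exact sequence
\[0 \to T_{\pp^r}|_L(-u-v) \to T_{\pp^r}|_{C \cup L} \to T_{\pp^r}|_C \to 0.\]
Here $H^1(T_{\pp^r}|_C) = 0$ by the Gieseker--Petri Theorem, since $C$ is a general BN-curve. For the kernel term, $T_{\pp^r}|_L \simeq \O_{\pp^1}(2) \oplus \O_{\pp^1}(1)^{\oplus (r-1)}$, so twisting down by the degree-$2$ divisor $u + v$ gives $T_{\pp^r}|_L(-u-v) \simeq \O_{\pp^1} \oplus \O_{\pp^1}(-1)^{\oplus (r-1)}$, every summand of which has degree $\geq -1$ and hence vanishing $H^1$. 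The long exact sequence in cohomology then yields $H^1(T_{\pp^r}|_{C \cup L}) = 0$, as desired.

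I do not anticipate a genuine obstacle, since the argument is structurally identical to the $1$-secant case. The only place the $2$-secant situation could in principle behave differently is the cohomology computation on $L$: one now twists $T_{\pp^r}|_L$ down by a degree-$2$ rather than a degree-$1$ divisor, but the summand degrees drop only to $0$ and $-1$, which still lie in the range where $H^1$ vanishes. (Had $L$ been a $3$-secant line, the summand $\O_{\pp^1}(-1)$ would become $\O_{\pp^1}(-2)$ and the argument would fail, so this is the essential numerical input.) The remaining verifications — that a general $2$-secant line to a general BN-curve is quasitransverse and meets $C$ in exactly $u$ and $v$, so that $p_a(C \cup L) = g + 1$ — are routine.
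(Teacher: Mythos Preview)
Your proof is correct and follows essentially the same approach as the paper's: both reduce to the general case, use the exact sequence $0 \to T_{\pp^r}|_L(-u-v) \to T_{\pp^r}|_{C \cup L} \to T_{\pp^r}|_C \to 0$, invoke Gieseker--Petri for the quotient, and compute $T_{\pp^r}|_L(-u-v) \simeq \O_{\pp^1} \oplus \O_{\pp^1}(-1)^{\oplus(r-1)}$ for the sub. Your additional commentary on why the $2$-secant case is numerically the borderline where this argument works is a nice touch but not present in the paper's terse proof.
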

\begin{proof}
As in the proof of Lemma \ref{lem:1secBN}, we have that \(H^1(T_{\pp^r}|_{C \cup L})=0\) by combining \(H^1(T_{\pp^r}|_C)=0\) (from the Gieseker--Petri theorem) and \(H^1(T_{\pp^r}|_L(-u-v)) = 0\) (from \(T_{\pp^r}|_L \simeq \O_{\pp^1}(2) \oplus \O_{\pp^1}(1)^{\oplus r-1}\)).
\end{proof}

We generalize Lemma~\ref{lem:1sec} to \(2\)-secant lines.
In slightly greater generality,
let \(p \in L \setminus \{u, v\}\) be a point, and
\(\Lambda\) be a linear space disjoint from the span of the tangent lines to \(C\) at \(u\) and \(v\).

\begin{lem}[Slight generalization of Lemma 8.8 of \cite{aly}]\label{lem:2sec}
If \(N_C'(u + v)[u \negmod v][v \negmod u][v \negmod 2u + \Lambda]\)
satisfies interpolation,
then so does \(N_{C \cup L}'[p \posmod \Lambda]\).
\end{lem}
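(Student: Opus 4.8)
The plan is to adapt the proof of Lemma~8.8 of \cite{aly}, carrying the extra positive modification $[p \posmod \Lambda]$ through the argument; I expect the only genuinely new input to be a check that the disjointness hypothesis on $\Lambda$ makes the bookkeeping on $L$ come out as in the $\Lambda$-free case. Set $X = C \cup_{\{u,v\}} L$ and let $\mathcal{E} = N_{C\cup L}'[p \posmod \Lambda]$ be the bundle on $X$ whose interpolation we must establish. Because $p$ is a smooth point of $X$ lying on $L$ and away from the nodes, the modification at $p$ changes $N_{C\cup L}'$ only along $L$; thus $\mathcal{E}|_C = N_{C\cup L}'|_C$, and by Proposition~\ref{prop:hh} this is $N_C'[u \posmod q][v \posmod q']$, where $q \in L\setminus u$ and $q' \in L \setminus v$ record the tangent direction of the line $L$ at $u$ and at $v$. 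On the other side, $\mathcal{E}|_L = N_L[u \posmod w][v\posmod w'][p \posmod \Lambda]$, where $w \in T_uC \setminus u$ and $w' \in T_vC\setminus v$, and we recall $N_L \simeq \O_{\pp^1}(1)^{\oplus(r-1)}$ since $L$ is a line.

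First I would pin down the splitting type of $\mathcal{E}|_L$ on $L \cong \pp^1$: it is $\O_{\pp^1}(1)^{\oplus(r-1)}$ modified positively at the three distinct points $u$, $v$, $p$ towards the pointing subbundles $N_{L\to w}$, $N_{L\to w'}$, $N_{L\to\Lambda}$, each of which is a direct sum of copies of $\O_{\pp^1}(1)$. Here the hypothesis that $\Lambda$ is disjoint from $\Span(T_uC, T_vC)$ --- which contains the line $L = \bar{uv}$, so in particular $\Lambda \cap L = \emptyset$ --- guarantees that $w$, $w'$ and $\Lambda$ are in sufficiently general position with respect to $L$ that these modifications bump pairwise disjoint collections of summands. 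Consequently $\mathcal{E}|_L$ is as balanced as possible, its most positive summands being precisely the twists of $N_{L\to w}$, $N_{L\to w'}$ and $N_{L\to\Lambda}$; this is the step where the disjointness hypothesis is used. Next, following \cite{aly}, I would apply Lemma~\ref{lem:int_rest} with $Y = L$. Since $\res_{L,\{u,v\}}$ is not injective on $\mathcal{E}|_L$ itself, I would first replace $\mathcal{E}$ by $\mathcal{E}(-D)$ for $D$ a general point of $L\setminus\{u,v,p\}$; by Lemma~\ref{lem:int_large_degree} (whose hypothesis holds throughout our range, exactly as in \cite{aly}) it suffices to prove interpolation for $\mathcal{E}(-D)$, and now $\res_{L,\{u,v\}}$ on $\mathcal{E}(-D)|_L$ is injective, since twisting down by $D$ lowers the maximal summand degree to $1$ and $\O_{\pp^1}(1)$ has no sections vanishing at two points. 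Because $D$ is supported on $L$ away from the nodes, $\mathcal{E}(-D)|_C = \mathcal{E}|_C$, so the space of sections in Lemma~\ref{lem:int_rest} is the subspace of $H^0(C, \mathcal{E}|_C)$ consisting of sections whose values at $u$ and $v$ lie in $\Im(\res_{L,\{u,v\}})$.

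The remaining, and main, task is to identify this subspace with $H^0$ of an explicit modification of $\mathcal{E}|_C = N_C'[u\posmod q][v\posmod q']$ at $u$ and $v$, and to recognize that modification --- up to the twist absorbed into $N_C'(u+v)$ --- as $N_C'(u+v)[u\negmod v][v\negmod u][v\negmod 2u+\Lambda]$. As in \cite{aly}, $\Im(\res_{L,\{u,v\}})$ is spanned by the restrictions of the surviving sections of $\mathcal{E}(-D)|_L$, which lie on the most positive summands $N_{L\to w}$, $N_{L\to w'}$, $N_{L\to\Lambda}$; unwinding the residual exact sequences then shows that the compatibility conditions at $u$ and $v$ are exactly the negative modifications $[u\negmod v]$, $[v\negmod u]$ (recording that a section must point along the line), together with one further second-order condition at $v$ in the combined direction of $u$ and $\Lambda$, which is the term $[v\negmod 2u+\Lambda]$. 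The delicate points here are (i) verifying that this composite modification is well-defined (tree-like), which again relies on the disjointness of $\Lambda$ from $\Span(T_uC, T_vC)$, and (ii) matching Euler characteristics so that the subspace has the dimension $\chi(\mathcal{E}(-D))$ demanded by Lemma~\ref{lem:int_rest}; both are straightforward adaptations of the corresponding steps in \cite{aly}. Granting this identification, the hypothesis that $N_C'(u+v)[u\negmod v][v\negmod u][v\negmod 2u+\Lambda]$ satisfies interpolation gives, since interpolation is insensitive to twisting by effective divisors, that the subspace satisfies interpolation, and Lemma~\ref{lem:int_rest} then yields interpolation for $\mathcal{E}(-D)$, hence for $\mathcal{E} = N_{C\cup L}'[p\posmod\Lambda]$, as desired. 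The main obstacle is precisely the explicit identification in (i)--(ii): tracking how $[p\posmod\Lambda]$ on the line propagates, through the gluing at the two nodes, into the second-order term $[v\negmod 2u+\Lambda]$ on $C$.
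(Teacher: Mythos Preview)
Your high-level approach --- adapt Lemma~8.8 of \cite{aly} by carrying the extra factor \(N_{L\to\Lambda}\) through --- is exactly the paper's. The paper's proof consists only of the instruction to replace the decomposition \(N_L\simeq N_{L\to x}\oplus N_{L\to y}\oplus N_{L\to T}\) used in \cite{aly} by \(N_L\simeq N_{L\to x}\oplus N_{L\to y}\oplus N_{L\to\Lambda}\oplus N_{L\to T}\), now with \(\dim T=r-5-\dim\Lambda\); your outline anticipates precisely this bookkeeping.

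That said, the specific mechanics you sketch are not quite how the argument in \cite{aly} runs, and there is a gap at the step you yourself flag as the main obstacle. With a single \emph{general} point \(D\in L\), one has \(\mathcal{E}(-D)|_L\simeq\O(1)^{3+\dim\Lambda}\oplus\O(0)^{r-4-\dim\Lambda}\), and the image of \(\res_{L,\{u,v\}}\) on the \(\O(0)\) summands (those coming from \(N_{L\to T}\)) is the \emph{diagonal} in \(N_{L\to T}|_u\oplus N_{L\to T}|_v\), not zero at one node. So the gluing subspace is not literally \(H^0\) of \(\mathcal{E}|_C[v\negmod 2u+\Lambda]\): it has the right dimension but is cut out by a condition coupling the values at \(u\) and \(v\). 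The device in \cite{aly} --- illustrated in this paper by the proof of Lemma~\ref{lem:ell_norm_int1}, which explicitly ``imitate[s] the basic idea of the proof of \cite[Lemma~8.8]{aly}'' --- is to specialize the auxiliary point(s) on \(L\) to the nodes and identify the \emph{flat limit} of \(\Im(\res_{L,\{u,v\}})\) with the desired coordinate subspace; your phrase ``unwinding the residual exact sequences'' glosses over a genuine limiting step rather than a direct identification at general \(D\). (A minor point: interpolation for \(\mathcal{E}(-D)\) implies interpolation for \(\mathcal{E}\) simply by twisting up; Lemma~\ref{lem:int_large_degree} goes the other way and is not needed here.)
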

\begin{proof}
Imitate the proof Lemma 8.8 of \cite{aly}, mutatis mutandis.

(In the notation of \cite{aly}: Take \(T\) instead to have dimension \(r - 5 - \dim \Lambda\),
where by convention \(\dim \emptyset = -1\), and use instead the decomposition 
\(N_L \simeq N_{L \to x} \oplus N_{L \to y} \oplus N_{L \to \Lambda} \oplus N_{L \to T}\).)
\end{proof}

\noindent As in Corollary \ref{cor:1sec_posmod}, we can 
reduce to a positive modification of \(N_C'\).

\begin{cor}\label{cor:2sec_posmod}
Suppose that \(N_C'\) is a positive modification of \(N_C\).  If
\(d \geq g + r\) and \(g \leq r + 6\)
and \(N_C'[u \posmod v][v \posmod u][v \posmod 2u]\) satisfies interpolation, then \(N_{C \cup L}'\) satisfies interpolation.
\end{cor}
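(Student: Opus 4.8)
The plan is to imitate the proof of Corollary~\ref{cor:1sec_posmod}, using Lemma~\ref{lem:2sec} in place of Lemma~\ref{lem:1sec}. First I would apply Lemma~\ref{lem:2sec} with \(\Lambda = \emptyset\): with the convention \(\dim \emptyset = -1\) used in the proof of that lemma, the pointing bundle \(N_{f \to \emptyset}\) is zero, so \(N_{C \cup L}'[p \posmod \emptyset] = N_{C \cup L}'\) and the target \(2u + \emptyset\) is just \(2u\). Thus it suffices to prove that \(N_C'(u + v)[u \negmod v][v \negmod u][v \negmod 2u]\) satisfies interpolation.

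The next step is to recognize this negative modification as a small twist of the positive modification in the hypothesis. Since \(E[D \posmod F] = E[D \negmod F](D)\) and each of our three modifications is along a \emph{reduced} point, tracking twists (one copy of \(\O(u)\) from \([u \posmod v]\), and one copy of \(\O(v)\) from each of \([v \posmod u]\) and \([v \posmod 2u]\), versus the single \(\O(u + v)\) on the negative side) yields
\[N_C'[u \posmod v][v \posmod u][v \posmod 2u] \;\simeq\; \bigl(N_C'(u + v)[u \negmod v][v \negmod u][v \negmod 2u]\bigr)(v).\]
So the hypothesis says exactly that the bundle of interest, twisted by the effective divisor \(v\), satisfies interpolation; by Lemma~\ref{lem:int_large_degree} it therefore suffices to check the Euler characteristic bound \(\chi\bigl(N_C'(u + v)[u \negmod v][v \negmod u][v \negmod 2u]\bigr) \geq (r - 1)g\).

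For the Euler characteristic bound I would proceed exactly as in Corollary~\ref{cor:1sec_posmod}. Since \(N_C'\) is a positive modification of \(N_C\) we have \(N_C' \supseteq N_C\), so it is enough to bound \(\chi\bigl(N_C(u + v)[u \negmod v][v \negmod u][v \negmod 2u]\bigr)\); starting from \(\chi(N_C) = (r + 1)d - (r - 3)(g - 1)\), adding \(r - 1\) for each of the two points of the twist and subtracting the corank of each of the three modifications gives a linear function of \(d, g, r\), and then substituting \(d \geq g + r\) followed by estimating with \(g \leq r + 6\) closes the inequality just as in the \(1\)-secant case. I expect no genuine obstacle here: the argument is essentially forced once Lemma~\ref{lem:2sec} is in hand, and the only points that require care --- exactly as for Corollary~\ref{cor:1sec_posmod} --- are getting the twist in the displayed isomorphism precisely right (one copy of \(\O(v)\), not of \(\O(u)\) or \(\O(2v)\)) and bookkeeping the Euler characteristic contributions of the three modifications.
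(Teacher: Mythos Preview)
Your proposal is correct and matches the paper's proof essentially line for line: apply Lemma~\ref{lem:2sec} with \(\Lambda=\emptyset\), identify \(N_C'[u\posmod v][v\posmod u][v\posmod 2u]\) as the twist by \(v\) of \(N_C'(u+v)[u\negmod v][v\negmod u][v\negmod 2u]\), and invoke Lemma~\ref{lem:int_large_degree} via the Euler characteristic bound \((r+1)d-(r-3)(g-1)-(r-5)\geq 4g+r(r+1)+2\geq (r-1)g\). The only bookkeeping detail you left implicit is that the third modification \([v\negmod 2u]\) is toward the rank-\(2\) subbundle \(N_{C\to T_uC}\), so it contributes \(r-3\) rather than \(r-2\); the paper records this without comment and the numbers come out as you anticipated.
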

\begin{proof}
We have
\begin{align*}
\chi\left(N_C'(u+v)[u \negmod v][v \negmod u][v \negmod 2u]\right) &\geq \chi\left(N_C(u+v)[u \negmod v][v \negmod u][v \negmod 2u]\right) \\
&= (r+1)d - (r-3)(g-1) -(r-5) \\
& \geq 4g + r(r+1) + 2.
\end{align*}
As in the proof of Corollary \ref{cor:1sec_posmod}, if \(g \leq r+6\), then \(4g + r(r+1)+2 \geq (r-1)g\), and so
\begin{equation}\label{eq:desired_ineq2}\chi\left(N_C'(u+v)[u \negmod v][v \negmod u][v \negmod 2u]\right) \geq (r-1)g.\end{equation}
Hence, by Lemma \ref{lem:int_large_degree}, \(N_C'(u+v)[u \negmod v][v \negmod u][v \negmod 2u]\) satisfies interpolation.
\end{proof}

\subsection{Peeling off rational normal curves in hyperplanes}\label{subsec:onion}

The final of our basic degenerations is to the union of a BN-curve \(C\) of degree at least \(r+1\) in \(\pp^r\), and an \((r+1)\)-secant rational curve \(R\) of degree \(r-1\) contained in a hyperplane \(H\).  If \(C\) has degree \(d\) and genus \(g\), then \(C \cup R\) has degree \(d + r-1\) and genus \(g + r\).  Observe that
\[\rho(d,g,r) = \rho(d + r -1, g+r, r) + 1.\]

\begin{lem}\label{lem:onionBN}
If \(\rho(d, g, r) \geq 1\), and \(C\) is a BN-curve, then \(C \cup R\) is also a BN-curve.
\end{lem}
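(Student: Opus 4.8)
The plan is to follow the template of the proofs of Lemmas~\ref{lem:1secBN} and~\ref{lem:2secBN}. After generalizing $C$, we may assume it is a general BN-curve, so that $H^1(T_{\pp^r}|_C) = 0$ by Gieseker--Petri; it then suffices to prove $H^1(T_{\pp^r}|_{C \cup R}) = 0$, since this implies the map $C \cup R \to \pp^r$ lifts as $C \cup R$ is smoothed to a general curve of genus $g + r$, placing $[C \cup R]$ in the unique component dominating $\bar{M}_{g+r}$, i.e.\ the Brill--Noether component. (The hypothesis $\rho(d,g,r) \geq 1$ --- equivalently $\rho(d+r-1, g+r, r) \geq 0$ --- is exactly what guarantees this component exists.)

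First I would determine $T_{\pp^r}|_R$. Writing $H \cong \pp^{r-1}$ for the hyperplane containing $R$, the sequence $0 \to T_H|_R \to T_{\pp^r}|_R \to N_{H/\pp^r}|_R \to 0$ splits, because $N_{H/\pp^r}|_R \cong \O_R(1) \cong \O_{\pp^1}(r-1)$, the restriction $T_H|_R$ of $T_{\pp^{r-1}}$ to a rational normal curve of degree $r-1$ is perfectly balanced $\cong \O_{\pp^1}(r)^{\oplus(r-1)}$, and $\mathrm{Ext}^1(\O_{\pp^1}(r-1), \O_{\pp^1}(r)^{\oplus(r-1)}) = 0$. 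Hence $T_{\pp^r}|_R \cong \O_{\pp^1}(r-1) \oplus \O_{\pp^1}(r)^{\oplus(r-1)}$, so $H^1(T_{\pp^r}|_R) = 0$. With $D = C \cap R$ a divisor of degree $r+1$ on $R \cong \pp^1$, the sequence $0 \to T_{\pp^r}|_R(-D) \to T_{\pp^r}|_{C \cup R} \to T_{\pp^r}|_C \to 0$ and $H^1(T_{\pp^r}|_C) = 0$ then identify $H^1(T_{\pp^r}|_{C \cup R})$ with the cokernel of the connecting map $\partial \colon H^0(T_{\pp^r}|_C) \to H^1(T_{\pp^r}|_R(-D))$; and since $T_{\pp^r}|_R(-D) \cong \O_{\pp^1}(-2) \oplus \O_{\pp^1}(-1)^{\oplus(r-1)}$, the target is one-dimensional, supported on the $N_{H/\pp^r}|_R$-summand. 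So, unlike for the line degenerations, a single potential obstruction survives, and the heart of the matter is to show $\partial \neq 0$.

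Via the splitting, $\partial$ factors as: take the normal-to-$H$ components at the points of $D$, landing in $\bigoplus_{p \in D}\O_R(1)|_p$, then apply the connecting map $\bigoplus_{p \in D}\O_R(1)|_p \to H^1(\O_R(1)(-D)) \cong k$ of $0 \to \O_R(1)(-D) \to \O_R(1) \to \O_R(1)|_D \to 0$; this last map is the residue functional cutting out the span of the hyperplane sections of $R$, a linear combination $\sum_p \lambda_p (\,\cdot\,)_p$ with all $\lambda_p \neq 0$. So it is enough to produce one first-order deformation of $C$ moving a single point of $D$ off $H$ while fixing the other $r$. By semicontinuity it suffices to check this after a specialization, and by Lemma~\ref{lem:1secBN} we may specialize $C$ to $C_0 = C' \cup L$ with $L$ a $1$-secant line whose intersection with $H$ is one of the $r+1$ points of $D$, the other $r$ lying on $C'$; then rotating $L$ about its point of incidence with $C'$ gives a deformation of $C_0 \cup R$ that moves $L \cap H$ off $H$ and leaves the other $r$ points of $D$ fixed, so $\partial \neq 0$ at $C_0$, whence $H^1(T_{\pp^r}|_{C_0 \cup R}) = 0$ and therefore $H^1(T_{\pp^r}|_{C \cup R}) = 0$ for general $C$.

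I expect the main obstacle to be exactly this residual one-dimensional obstruction: it is unavoidable here because $\rho$ drops by one under this degeneration (in contrast to the line degenerations, where it can only increase, so those arguments yield $H^1 = 0$ directly). Everything else is a routine adaptation of Lemmas~\ref{lem:1secBN}--\ref{lem:2secBN}.
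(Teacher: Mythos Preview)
Your strategy is sound and the analysis of the one-dimensional obstruction is correct, but there is a genuine gap in the specialization step. You invoke Lemma~\ref{lem:1secBN} to degenerate \(C\) to \(C' \cup L\) with \(L\) a \(1\)-secant line; this requires a BN-curve \(C'\) of degree \(d-1\) and genus \(g\), hence \(\rho(d-1,g,r) = \rho(d,g,r) - (r+1) \geq 0\). Under the hypothesis \(\rho(d,g,r) \geq 1\) this can fail whenever \(g > 0\) (e.g.\ \((d,g,r) = (4,1,3)\)). The fix is easy: for \(g > 0\) use a \(2\)-secant line instead (so \(C'\) has degree \(d-1\), genus \(g-1\), and \(\rho(d-1,g-1,r) = \rho(d,g,r) - 1 \geq 0\)). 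Your \(\partial \neq 0\) argument then still goes through: with \(L\) meeting \(C'\) at \(u,w\), the space \(H^0(T_{\pp^r}|_L(-u-w))\) is one-dimensional, spanned by the tangent field of \(L\) vanishing at \(u,w\); at \(v = L \cap H\) this points along \(L\), hence has nonzero normal-to-\(H\) component provided \(L \pitchfork H\). Extending by zero on \(C'\) gives the required section of \(T_{\pp^r}|_{C_0}\). (Your informal phrase ``rotating \(L\)'' is best read as exactly this: producing a section of \(T_{\pp^r}|_L\) vanishing at the attaching points, not a normal deformation of \(L\).)

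With this repair, your route is genuinely different from the paper's. The paper never shows \(H^1(T_{\pp^r}|_{C \cup R}) = 0\) directly. Instead it degenerates to \(C' \cup L \cup R\), verifies this is a \emph{smooth point of the Hilbert scheme} (via \(H^1(N_{C' \cup L \cup R}) = 0\)), then smooths \(L \cup R\) to a rational normal curve \(M\) while preserving incidence with \(C'\), and finally checks that \(C' \cup M\) is a BN-curve --- which is immediate because \(T_{\pp^r}|_M \simeq \O_{\pp^1}(r+1)^{\oplus r}\), so \(H^1(T_{\pp^r}|_M(-\Gamma)) = 0\) with no residual obstruction. Thus the paper trades your explicit analysis of the one-dimensional cokernel for a bridging argument through \(M\); your approach is more direct and avoids the auxiliary smoothing, while the paper's avoids having to identify and kill the surviving obstruction class.
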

\begin{proof}
Generalizing \(C\), we may suppose \(C\) is a general BN-curve.

If \(g > 0\), then we can specialize \(C\) to the union \(C' \cup L\), where \(C'\)
is a general BN-curve of degree \(d-1\) and genus \(g-1\), and \(L\) is a general \(2\)-secant line.
Otherwise, if \(g = 0\), then we can specialize \(C\) to the union \(C' \cup L\), where \(C'\)
is a general BN-curve of degree \(d-1\) and genus \(g\), and \(L\) is a general \(1\)-secant line.
Either way, we can arrange for
one of the points \(p\) of \(C \cap R\) to specialize onto \(L\), and the rest to specialize onto \(C'\).
\begin{center}
\begin{minipage}{0.9\textwidth}
\begin{center}
\begin{minipage}{.4\textwidth}
\begin{center}
\begin{tikzpicture}[scale=1.5]
\draw (1, 2) .. controls (0.5, 2) and (-0.5, 1.5) .. (0, 1);
\draw (0, 1) .. controls (1, 0) and (1, 2) .. (0.1, 1.1);
\draw (-0.1, 0.9) .. controls (-0.5, 0.5) and (0.5, -0.3) .. (1, -0.3);
\draw (0.145, 1.53) -- (0.845, 0.53);
\draw (0.4838, 0.22) .. controls (-0.0362, 1.3328) and (0.67, 1.6628) .. (1.19, 0.55);
\draw (0.4838, 0.22) .. controls (1.0038, -0.8928) and (1.71, -0.5628) .. (1.19, 0.55);
\draw (0.39, 1.18) circle[radius=0.03];
\draw (0.9, 0.5) node{\(L\)};
\draw (1.38, 0.43) node{\(R\)};
\draw (0.39, 1.01) node{\(p\)};
\draw (1.14, 1.9) node{\(C'\)};
\filldraw (0.793, -0.263) circle[radius=0.02];
\filldraw (0.67, 0.78) circle[radius=0.02];
\filldraw (0.31, 0.767) circle[radius=0.02];
\filldraw (0.32, 1.277) circle[radius=0.02];
\filldraw (0.745, 1.155) circle[radius=0.02];
\end{tikzpicture}\\
\end{center}
\end{minipage}
\hfill
\begin{minipage}{.4\textwidth}
\begin{center}
\begin{tikzpicture}[scale=1.5]
\draw (0.145, 1.53) -- (0.845, 0.53);
\filldraw[color=white] (0.32, 1.277) circle[radius=0.025];
\draw (1, 2) .. controls (0.5, 2) and (-0.5, 1.5) .. (0, 1);
\draw (0, 1) .. controls (1, 0) and (1, 2) .. (0.1, 1.1);
\draw (-0.1, 0.9) .. controls (-0.5, 0.5) and (0.5, -0.3) .. (1, -0.3);
\draw (0.4838, 0.22) .. controls (-0.0362, 1.3328) and (0.67, 1.6628) .. (1.19, 0.55);
\draw (0.4838, 0.22) .. controls (1.0038, -0.8928) and (1.71, -0.5628) .. (1.19, 0.55);
\draw (0.39, 1.18) circle[radius=0.03];
\draw (0.9, 0.5) node{\(L\)};
\draw (1.38, 0.43) node{\(R\)};
\draw (0.39, 1.01) node{\(p\)};
\draw (1.14, 1.9) node{\(C'\)};
\filldraw (0.793, -0.263) circle[radius=0.02];
\filldraw (0.67, 0.78) circle[radius=0.02];
\filldraw (0.31, 0.767) circle[radius=0.02];
\filldraw (0.745, 1.155) circle[radius=0.02];
\end{tikzpicture}\\
\end{center}
\end{minipage}
\end{center}
\vspace{-25pt}
\end{minipage}
\end{center}

\noindent
Write \(\Gamma \colonequals (L \cup R) \cap C\).  Note that this is a set of \(r + 1\) or \(r+2\) points on \(C'\).
We will show the following:
\begin{enumerate}[label=(\alph*)]
\item\label{onionBNa} The curve \(C' \cup L \cup R\) is a smooth point of the Hilbert scheme.
\item\label{onionBNb} The curve \(L \cup R\) can be smoothed to a rational normal curve \(M\) while preserving the points of incidence with \(C'\).
\item\label{onionBNc} The curve \(C' \cup M\) is in the Brill--Noether component.
\end{enumerate}
By part \ref{onionBNa}, the curves \(C \cup R\) and \(C' \cup M\) are both generalizations of a smooth point of the Hilbert scheme.  Hence they are in the same component, which must be the Brill--Noether component by part \ref{onionBNc}.

Beginning with part \ref{onionBNb}, write \(N\) for the subsheaf of \(N_{L \cup R}\) whose sections fail to smooth the node \(p\).  It suffices to show that \(H^1(N(-\Gamma))=0\).  This follows from the exact sequence
\[ 0 \to N_{R \cup L}|_R(-p - \Gamma) \to N(-\Gamma) \to N|_L(-\Gamma) \to 0,\]
together with the isomorphisms
\begin{align*}
N_{R \cup L}|_R &\simeq N_{R/H} \oplus \O_R(1)(p) \simeq \O_{\pp^1}(r+2)^{\oplus r-2} \oplus \O_{\pp^1}(r), \\
N|_L &\simeq N_L \simeq \O_{\pp^1}(1)^{\oplus r-1}.
\end{align*}

Since \(N_{L \cup R}(-\Gamma)\) is a positive modification of \(N(-\Gamma)\), we also have \(H^1(N_{L \cup R}(-\Gamma)) = 0\).  Similarly, \(N_{C' \cup L \cup R}|_{C'}\) is a positive modification of \(N_{C'}\).  As \(C'\) is a general BN-curve, the Gieseker--Petri theorem implies that \(H^1(N_{C' \cup L \cup R}|_{C'})=0\).  Hence, using the exact sequence
\[0 \to N_{L \cup R}(-\Gamma) \to N_{C' \cup L \cup R} \to N_{C' \cup L \cup R}|_{C'} \to 0, \]
we see that \(H^1(N_{C' \cup L \cup R}) = 0\), and 
part \ref{onionBNa} follows.  

Finally, for part \ref{onionBNc}, we will show that \(H^1(T_{\pp^r}|_{C' \cup M})= 0\), and hence the map from \(C' \cup M\) to \(\pp^r\) can be lifted as \(C' \cup M\) is smoothed to a general curve.  This vanishing follows from the exact sequence
\[0 \to T_{\pp^r}|_M(-\Gamma) \to T_{\pp^r}|_{C' \cup M} \to T_{\pp^r}|_{C'} \to 0, \]
the isomorphism
\(T_{\pp^r}|_M \simeq \O_{\pp^r}(r+1)^{\oplus r} \), and the Gieseker--Petri theorem (\(H^1(T_{\pp^r}|_{C'}) = 0\)).
\end{proof}

Our next goal is to study the restricted normal bundle
\(N_{C \cup R}|_R \simeq N_R[\posmodalong C]\),
which is of slope \(r + 2\). In most cases, 
this bundle is perfectly balanced (equivalently semistable):

\begin{lem}\label{lem:rest_onion_bal}
Unless \(r\) is odd and \(C\) is an elliptic normal curve,
\(N_R[\posmodalong C]\) is perfectly balanced, i.e.
\begin{equation} \label{perf-bal}
N_R[\posmodalong C] \simeq \O_{\pp^1}(r + 2)^{\oplus (r - 1)}.
\end{equation}
If \(r\) is odd and \(C\) is an elliptic normal curve, then \(N_R[\posmodalong C]\) is ``almost balanced'', i.e., is isomorphic
to one of the two bundles:
\[\O_{\pp^1}(r + 2)^{\oplus (r - 1)} \quad \text{or} \quad \O_{\pp^1}(r + 3) \oplus \O_{\pp^1}(r + 2)^{\oplus (r - 3)} \oplus \O_{\pp^1}(r + 1).\]
\end{lem}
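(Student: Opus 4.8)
The plan is to realize $E:=N_{C\cup R}|_R\simeq N_R[\posmodalong C]$ as an extension of line bundles twisted up at the $r+1$ points of $\Gamma:=C\cap R$, to bound its splitting type from below by a cohomology count, and then to show the extension class is as general as possible --- giving a perfectly balanced $E$ --- for general $C$ that is not an elliptic normal curve, while it is general up to one linear condition, whose effect is controlled by the parity of $r$, when $C$ is an elliptic normal curve.

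\emph{Step 1: the basic sequence.} Since $R$ lies in the hyperplane $H$, the bundle $N_R$ sits in $0\to N_{R/H}\to N_R\xrightarrow{\pi}\O_R(1)\to 0$, where $N_{R/H}\simeq\O_{\pp^1}(r+1)^{\oplus(r-2)}$ is the (balanced) normal bundle of a rational normal curve of degree $r-1$ in $\pp^{r-1}$, and $\pi$ comes from $T_{\pp^r}|_H\twoheadrightarrow\O_H(1)$, so $\O_R(1)\simeq\O_{\pp^1}(r-1)$. The key observation is that $H$ is transverse to $C$ at each $p_i\in\Gamma$, so the tangent direction $q_i$ of $C$ there lies off $H$; since $T_xH=H$ for all $x\in H$, the pointing subbundle $N_{R\to q_i}$ meets $N_{R/H}$ trivially in every fibre --- in fact $N_{R\to q_i}\to\O_R(1)$ is an isomorphism, a global splitting of $\pi$. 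Applying the modification sequence~\eqref{eq:easy_ses} with $S=N_{R/H}$ at each $p_i$ (so $F\cap S=0$), the subbundle $N_{R/H}$ survives every modification and the quotient is twisted up by $p_i$; iterating over $\Gamma$ yields
\[
0\to\O_{\pp^1}(r+1)^{\oplus(r-2)}\to E\to\O_{\pp^1}(2r)\to 0,\tag{$\star$}
\]
so $\rk E=r-1$ and $\deg E=(r-1)(r+2)$. The class of $E$ depends only on the modification data $\{(p_i,T_{p_i}C)\}$; more precisely, the extension class of $(\star)$ in $\operatorname{Ext}^1(\O_{\pp^1}(2r),\O_{\pp^1}(r+1)^{\oplus(r-2)})\simeq H^1(\O_{\pp^1}(1-r))^{\oplus(r-2)}$ is the image, under the natural surjection from $\bigoplus_i(N_{R/H}\otimes\O_R(-1))|_{p_i}$, of the principal parts recording at each $p_i$ the position of $N_{R\to q_i}|_{p_i}$ relative to a fixed splitting of $\pi$.

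\emph{Step 2: what $(\star)$ gives, and reduction to genericity.} Write $E=\bigoplus_{i=1}^{r-1}\O_{\pp^1}(e_i)$. From $(\star)$, $h^0(E(-r-2))=h^0(\O_{\pp^1}(r-2))=r-1=\chi(E(-r-2))$, so $h^1(E(-r-2))=0$ and every $e_i\geq r+1$. Together with $\sum e_i=(r-1)(r+2)$, this shows $a:=h^1(E(-r-3))=\#\{i:e_i=r+1\}$, and that $E$ is perfectly balanced if $a=0$, equals $\O_{\pp^1}(r+3)\oplus\O_{\pp^1}(r+2)^{\oplus(r-3)}\oplus\O_{\pp^1}(r+1)$ if $a=1$, and is strictly less balanced if $a\geq 2$; so the Lemma is equivalent to $a=0$ in the main case and $a\leq 1$ for elliptic normal curves. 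Tensoring $(\star)$ by $\O_{\pp^1}(-r-3)$ identifies $a$ with $\dim\operatorname{coker}\bigl(\delta\colon H^0(\O_{\pp^1}(r-3))\to H^1(\O_{\pp^1}(-2))^{\oplus(r-2)}\bigr)$, a map of $(r-2)$-dimensional spaces given by cup product with the extension class. Since $\O_{\pp^1}(r+2)^{\oplus(r-1)}$ does arise from a sequence of shape $(\star)$ --- take a general surjection $\O_{\pp^1}(r+2)^{\oplus(r-1)}\twoheadrightarrow\O_{\pp^1}(2r)$, whose kernel has rank $r-2$ and degree $(r-2)(r+1)$ and so is generically balanced --- and balancedness is open on $\operatorname{Ext}^1$, a generic extension class has $a=0$. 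Thus it suffices to prove that the extension class coming from $C$ lies in the open locus $\{\rk\delta=r-2\}$ for general $C$ not an elliptic normal curve, and in $\{\rk\delta\geq r-3\}$ for $C$ an elliptic normal curve.

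\emph{Step 3: the two cases; the main obstacle.} By openness, the first statement reduces to exhibiting one degree-$d$, genus-$g$ BN-curve and one $R$ with $a=0$. For this I would degenerate just as in the proof of Lemma~\ref{lem:onionBN}: peel $1$- and $2$-secant lines off $C$ while sliding the points of $\Gamma$ onto them, until $C$ becomes a configuration --- e.g.\ a rational curve of degree $r+1$ together with lines --- whose tangent data along $\Gamma$ is visibly general; the room to do so is exactly the hypothesis $\rho(d,g,r)\geq 1$ (with the lone subcase $(d,g)=(r+2,2)$, where a $2$-secant step would create an elliptic normal curve, treated by a different degeneration). The genuine obstacle is the elliptic normal case $(d,g)=(r+1,1)$: here $\Gamma=C\cap H$ is the \emph{entire} hyperplane section, so the $p_i$ and the tangent lines $T_{p_i}C$ are no longer free but are tied together by the group law of the elliptic curve. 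The heart of the argument is to show that this imposes exactly one linear condition on the extension class of $(\star)$, that this lowers $\rk\delta$ by at most one --- so $a\leq 1$, giving the stated almost-balanced bundle --- and that it forces no drop at all when $r$ is even. The parity enters because $\O_R(1)=\O_{\pp^1}(r-1)$ is an even power of $\O_{\pp^1}(1)$ precisely when $r$ is odd, so that $R$ carries an extra symmetry that interacts with the one linear condition; making this dichotomy precise is where the real work of the Lemma lies.
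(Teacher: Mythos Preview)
Your Steps~1 and~2 are correct and set up a legitimate framework: the sequence $(\star)$ holds, your cohomology count shows every $e_i\geq r+1$, and the problem does reduce to showing $\delta$ has full rank (resp.\ corank at most~$1$). This is a genuinely different route from the paper's, which proceeds by induction on~$r$: it degenerates $C$ to $C'\cup L$ with $C'$ contained in an auxiliary hyperplane $H'$, arranges for $L$ to be $2$-secant to~$R$, and projects from $L$ to obtain
\[
0\to \O_{\pp^1}(r+2)^{\oplus 2}\to N_R[\posmodalong C'\cup L]\to N_{\bar R}[\posmodalong \bar{C'}](u+v)\to 0,
\]
so that the inductive hypothesis in $\pp^{r-2}$ applied to $\bar R$ and $\bar{C'}$ finishes both cases at once (the key point being that $\bar{C'}$ is an elliptic normal curve if and only if $C$ is).

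The gap is Step~3, which you yourself flag as ``where the real work of the Lemma lies'' but do not carry out. For the non-elliptic case you only gesture at a degeneration, without explaining why the resulting tangent data makes $\delta$ generic; one would need to check that as $C$ ranges over BN-curves, the $(r+1)$ normal directions $w_i\in N_{R/H}|_{p_i}$ sweep out enough of the parameter space to hit the open locus $\{\rk\delta=r-2\}$. For the elliptic normal case there is no argument at all: the assertion that the group law imposes ``exactly one linear condition'' on the extension class is not justified, and your parity heuristic (that the evenness of $\deg\O_R(1)=r-1$ when $r$ is odd produces an extra symmetry of $R$) does not correspond to any actual mechanism --- automorphisms of $R$ act on the extension space, but there is no reason the class produced by an elliptic normal $C$ should be fixed by any of them. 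In short, your reduction is sound, but the genericity analysis that would complete it is missing entirely, and the paper's inductive projection argument avoids this analysis altogether.
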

\begin{proof}
Write \(d\) and \(g\) for the degree and genus of \(C\).
First we reduce to the cases where \(C\) is nonspecial, i.e., where \(d \geq g + r\).
To do this, when \(d < g + r\),
we inductively specialize \(C\) to a union \(C' \cup D\), where \(C'\) is a general BN-curve of degree \(d' \geq r + 2\).
Since in particular \(d' \geq r + 1\), we may specialize the points where \(R\) meets \(C\) onto \(C'\),
thereby replacing \(C\) by \(C'\), which is not an elliptic normal curve since \(d' \geq r + 2\).
To find such a specialization, we break into cases as follows:

\begin{enumerate}
\item If \(d < g + r\) and \(\rho(d, g, r) > 0\) (which forces \(d \geq 2r + 1 \geq r + 3\)),
we apply Lemma~\ref{lem:2secBN} to degenerate \(C\)
to the union of a general BN-curve \(C'\) of degree \(d - 1\) and genus \(g - 1\), with a \(2\)-secant line \(D\).

\item If \(d < g + r\) and \(\rho(d, g, r) = 0\), but \(C\) is not a canonical curve
(which forces \(d \geq 3r \geq 2r + 2\)),
we claim that we may specialize \(C\)
to the union of a general BN-curve \(C'\) of degree \(d - r\) and genus \(g - r - 1\), with an \((r + 2)\)-secant rational
normal curve \(D\).
Indeed, as in the proof of Lemma \ref{lem:1secBN}, we have that \(H^1(T_{\pp^r}|_{C' \cup D})=0\) by combining \(H^1(T_{\pp^r}|_{C'})=0\) (from the Gieseker--Petri theorem) and \(H^1(T_{\pp^r}|_D(-D \cap C')) = 0\) (from \(T_{\pp^r}|_D \simeq \O_{\pp^1}(r + 1)^{\oplus r}\)).

\item If \(C\) is a canonical curve, we claim that we may specialize \(C\)
to the union of a general BN-curve \(C'\) of degree \(r + 2\) and genus \(2\), with a \(r\)-secant rational curve of degree \(r - 2\).
Indeed, we glue an abstract curve of genus \(2\) to \(\pp^1\) at \(r\) general points,
and map it to projective space via the complete linear series for the dualizing sheaf.
\end{enumerate}

For \(C\) nonspecial, we will prove the lemma by induction on \(r\).
Let \(\Gamma\) be a collection of \(r + 1\) points where \(C\) meets \(R\);
for \(r \geq 3\), this is exactly the intersection \(C \cap R\),
and so \(N_R[\Gamma \posmodalong C] = N_R[\posmodalong C]\).
However, we can extend the lemma to cover the case \(r = 2\) as well,
by replacing \eqref{perf-bal} with the assertion that
\(N_R[\Gamma \posmodalong C] \simeq \O_{\pp^1}(r + 2)^{\oplus (r - 1)}\).
With this formulation, the base case of \(r=2\) is clear since \(N_R[\Gamma \posmodalong C]\)
is a line bundle of degree \(4\).
The base case \(r=3\) is \cite[Lemma 4.2]{stab_p3}.

For the inductive step, we suppose \(r \geq 4\). Let \(H'\) be a general hyperplane transverse to \(H\).  
We will degenerate \(C\) to a union \(C' \cup L\), where \(C' \subset H'\) is a general BN-curve of degree \(d-1\) and genus \(g\).  By hypothesis, \(d\geq r+1\), and so \(C'\) meets \(H\) in at least \(r\) points, which are in linear general position
in \(H \cap H' \simeq \pp^{r - 2}\) since the sectional monodromy group of a general curve always contains the alternating group \cite{kadets}.  Since \(\Aut \pp^{r-2}\) acts transitively
on \(r\)-tuples of point in linear general position, we can apply an automorphism so that \(r-1\) of these points are on \(R\) and the final point \(p\) lies on a \(2\)-secant line \(L\) to \(R\).

\begin{center}
\begin{tikzpicture}[scale=0.9]
\draw[thick] (3, 0) -- (0, 0) -- (1, 2) -- (5, 2);
\draw[thick] (3, 0) -- (3, -2) -- (5, -1) -- (5, 2);
\draw[dotted] (3, 0) -- (5, 2);
\draw (4, 3.5) .. controls (4.5, 2.5) and (3.5, 2.5) .. (3.975, 3.45);
\draw (3.5, 0.5) .. controls (3.5, -2) and (4.5, -2) .. (4.5, 1.5);
\draw (4.25, 4.05) node{\(C'\)};
\draw (2, 1) .. controls (1, 0) and (3, 0) .. (2.05, 0.95);
\draw (1.95, 1.05) .. controls (1.7, 1.3) and (1.5, 1.5) .. (1, 1.5);
\draw (2, 1) .. controls (3, 2) and (3.5, 2) .. (4.5, 1.5);
\draw (3.5, 0.5) -- (1, 1);
\draw (3.5, 0.5) circle[radius=0.03];
\filldraw (4.5, 1.5) circle[radius=0.03];
\draw[white, line width = 3pt] (3.1, 0) -- (6, 0) -- (7, 2) -- (5.1, 2);
\draw[thick] (3, 0) -- (6, 0) -- (7, 2) -- (5, 2);
\draw[white, line width = 3pt] (3.55, 0.49) -- (5.5, 0.1);
\draw (3.5, 0.5) -- (5.5, 0.1);
\draw[white, line width = 3pt] (4.54, 1.48) .. controls (5, 1.25) and (5.5, 1) .. (6, 1);
\draw (4.5, 1.5) .. controls (5, 1.25) and (5.5, 1) .. (6, 1);
\draw[white, line width = 3pt] (3, 0.1) -- (3, 4) -- (5, 5) -- (5, 2.1);
\draw[thick] (3, 0) -- (3, 4) -- (5, 5) -- (5, 2);
\draw[white, line width = 3pt] (3.5, 0.55) .. controls (3.5, 2) and (3.5, 4.5) .. (4, 3.5);
\draw[white, line width = 3pt] (4.5, 1.55) .. controls (4.5, 3) and (4.5, 4.5) .. (4.025, 3.55);
\draw (3.5, 0.5) .. controls (3.5, 2) and (3.5, 4.5) .. (4, 3.5);
\draw (4.5, 1.5) .. controls (4.5, 3) and (4.5, 4.5) .. (4.025, 3.55);
\draw (1, 0.8) node{\(L\)};
\draw (1.3, 1.7) node{\(R\)};
\draw (3.65, 0.3) node{\(p\)};
\draw (7.1, 1.8) node{\(H\)};
\draw (5.25, 4.8) node{\(H'\)};
\filldraw (1.85, 0.83) circle[radius=0.03];
\filldraw (2.21, 0.76) circle[radius=0.03];
\draw (1.76, 0.97) node{\(u\)};
\draw (2.3, 0.89) node{\(v\)};
\end{tikzpicture}
\end{center}

We claim that \(C' \cup L\) is a BN-curve of degree \(d\) and genus \(g\), as \(H^1(T_{\pp^r}|_{C' \cup L})=0\).  Indeed, \(H^1(T_{\pp^r}|_{C'}) = 0\) because \(C'\) is nonspecial (and \(T_{\pp^r}\) is a quotient of \(\O_{\pp^r}(1)^{\oplus r+1}\)).  Furthermore, \(T_{\pp^r}|_L \simeq \O_{\pp^1}(2) \oplus \O_{\pp^r}(1)^{\oplus r}\), so \(H^1(T_{\pp^r}|_L(-p)) = 0\).  The result now follows by considering
\[0 \to T_{\pp^r}|_L(-p) \to T_{\pp^r}|_{C' \cup L} \to T_{\pp^r}|_{C'} \to 0.\]
Call \(u\) and \(v\) the points on \(R\) where \(L\) is \(2\)-secant.
Projecting
from \(L\) 
induces an exact sequence
\[ 0 \to N_{R \to L}[u \posmod v][v \posmod u] \simeq \O_{\pp^1}(r + 2)^{\oplus 2} \to N_R[\posmodalong C' \cup L] \to N_{\bar{R}}[\posmodalong \bar{C'}] (u + v) \to 0.\]
The curve \(\bar{R}\) is again a rational curve of degree \(r-3\) in a hyperplane that is incident to \(\bar{C'}\) at \(r-1\) points.  Furthermore, if \(C\) is not an elliptic normal curve, then neither is \(\bar{C'}\).
Applying our inductive hypothesis (for \(\pp^{r - 2}\)),
in combination with the above exact sequence, completes the proof.
\end{proof}

It is natural to ask what happens when \(C\) is an elliptic normal
curve and \(r\) is odd.  This case is only necessary to treat the special family of canonical curves of even genus.  When we treat that case in Section~\ref{sec:canonical}, we will show that
\(N_R[\posmodalong C]\) is not perfectly balanced in this case.
Moreover, we will give a geometric construction of its Harder--Narasimhan
filtration.

\subsection{Reduction to good tuples}\label{sec:BN_to_inductive}

In this section we show that, apart from rational curves and canonical curves
of even genus, all other cases of
Theorem~\ref{thm:main} follow from \(I(d,g,r, 0, m)\) for good tuples with \(\ell = 0\).

\begin{lem}\label{lem:pull_off_onion}
Suppose that \(\rho(d,g,r) \geq 0\) and that \((d, g,r) \neq (2r, r+1, r)\) if \(r\) is odd.
If \(g \geq r\) and \(I(d-r+1, g-r, r,\ell,m+1)\) holds, then \(I(d,g,r,\ell,m)\) holds.
\end{lem}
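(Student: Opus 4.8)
The plan is to degenerate $C$ to a reducible curve and apply the rational‑curve restriction lemma. Concretely, let $C'$ be a general BN‑curve of degree $d-r+1$ and genus $g-r$ (a nonnegative genus, since $g\geq r$), and let $R$ be an $(r+1)$‑secant rational curve of degree $r-1$ contained in a hyperplane $H$ transverse to $C'$, meeting $C'$ at a general set $\Gamma$ of $r+1$ points. Note $\deg C' = d-r+1\geq r+1$, since $d\geq 2r$ (if $\O_C(1)$ is nonspecial then $d\geq g+r\geq 2r$, and if it is special then $d\geq 2r$ by Clifford's theorem). Because $\rho(d-r+1,g-r,r)=\rho(d,g,r)+1\geq 1$, Lemma~\ref{lem:onionBN}, applied with $C'$ in place of $C$, shows that $C'\cup R$ is a BN‑curve of degree $d$ and genus $g$; hence a general BN‑curve of degree $d$ and genus $g$ specializes to $C'\cup R$. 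Simultaneously we specialize the modification data of $I(d,g,r,\ell,m)$: move each pair $u_i,v_i$ onto $C'$, where they remain general, and move each $R_j$ so that it meets $C'$ — rather than $R$ — in $r+1$ general points. Since interpolation is an open condition in flat families (the relevant $h^i$'s are upper semicontinuous, and effective divisors on the special fiber are limits of effective divisors on nearby fibers), it suffices to prove that the resulting modification $E$ of $N_{C'\cup R}$ satisfies interpolation.

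All the modifications defining $E$ are supported on $C'$, at pairwise distinct points away from $\Gamma$ (by the genericity built into $I$), so the modification data is tree‑like and order‑independent, and $E|_R$ and $E|_{C'}$ are computed by Proposition~\ref{prop:hh}. On the rational component, $E|_R\simeq N_R[\posmodalong C']$, of slope $r+2$. The hypotheses of the present lemma are calibrated precisely so that Lemma~\ref{lem:rest_onion_bal} yields the balanced conclusion here: $C'$ is an elliptic normal curve exactly when $g-r=1$ and $d-r+1=r+1$, i.e.\ $(d,g,r)=(2r,r+1,r)$, which is excluded when $r$ is odd. Therefore $E|_R\simeq \O_{\pp^1}(r+2)^{\oplus(r-1)}$ is perfectly balanced, of slope $r+2\geq \#\Gamma-1$, and Lemma~\ref{lem:interpolation_rational_bal} reduces interpolation for $E$ to interpolation for $E|_{C'}$.

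On the other component, Proposition~\ref{prop:hh} gives $E|_{C'}\simeq N_{C'}[\posmodalong R]$ modified by the remaining data, so after reordering
\[E|_{C'} \simeq N_{C'}[u_1 \biposmod v_1] \cdots [u_\ell \biposmod v_\ell][\posmodalong R_1]\cdots[\posmodalong R_m][\posmodalong R].\]
Since $C'$ is a general BN‑curve of degree $d-r+1$ and genus $g-r$, and $R$ together with $R_1,\dots,R_m$ are $m+1$ general $(r+1)$‑secant rational curves of degree $r-1$ in hyperplanes transverse to $C'$, this is exactly the bundle whose interpolation is asserted by $I(d-r+1,g-r,r,\ell,m+1)$, with $R$ playing the role of the $(m+1)$‑st secant curve. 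That statement holds by hypothesis, so $E|_{C'}$ satisfies interpolation; hence $E$ does, and therefore so does the modification of $N_C$ appearing in $I(d,g,r,\ell,m)$.

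The argument is essentially careful bookkeeping; the one substantive point is arranging the hypotheses of Lemma~\ref{lem:interpolation_rational_bal}, namely that $E|_R$ is perfectly balanced of slope at least $\#\Gamma-1$. This is exactly why the elliptic normal curve case $(2r,r+1,r)$ with $r$ odd — equivalently, a canonical curve of even genus — must be excluded here and treated separately. The remaining care needed is to verify that the degeneration can be carried out so that every modification lands on $C'$ at a point outside $\Gamma$, so that $E$ restricts correctly to each component; this follows from the genericity of the data in Hypothesis~$I$ and the freedom in choosing the degeneration.
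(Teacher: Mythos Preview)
Your proof is correct and follows essentially the same approach as the paper: degenerate $C$ to $C'\cup R$ via Lemma~\ref{lem:onionBN}, push all modification data onto $C'$, use Lemma~\ref{lem:rest_onion_bal} (noting the elliptic normal curve exception matches the excluded case) to get $E|_R$ perfectly balanced, apply Lemma~\ref{lem:interpolation_rational_bal}, and identify $E|_{C'}$ with the bundle in $I(d-r+1,g-r,r,\ell,m+1)$. Your justification of $d\geq 2r$ via Clifford's theorem is a minor variant of the paper's inequality $d\geq r+\tfrac{rg}{r+1}>2r-1$, but both are fine.
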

\begin{proof}
Let \(C\) be a general BN-curve of degree \(d\) and genus \(g\) in \(\pp^r\) with \(g \geq r\).  Let \(u_1, v_1, \ldots, u_\ell, v_\ell\) be general points on \(C\).  Let \(R_1, \dots, R_m\) be general \((r+1)\)-secant rational curves of degree \(r-1\).
The statement \(I(d, g, r, \ell, m)\) asserts that
\[N_C[\posmodalong R_1 + \cdots + R_m][u_1 \biposmod v_1][u_2 \biposmod v_2] \cdots [u_\ell \biposmod v_\ell]\]
satisfies interpolation.  Combining the assumptions that \(\rho(d,g,r)\geq 0\) and \(g \geq r\), we see that
\[d \geq r + \frac{rg}{r+1} \geq r + \frac{r^2}{r + 1} > 2r - 1.\]
We may therefore prove \(I(d, g, r, \ell, m)\) by peeling off an additional \((r+1)\)-secant rational curve \(R_{m+1}\) of degree \(r-1\).  That is, we specialize the curve \(C\) as in Lemma \ref{lem:onionBN} to the union of a general BN-curve \(C'\) of degree \(d-r+1\) and genus \(g-r\) and a rational curve \(R_{m+1}\), in such a way that the points of \(C \cap (R_1 \cup \cdots \cup R_m)\) and the \(u_i\) and \(v_i\) specialize onto \(C'\).
By virtue of specializing the auxilliary points onto \(C'\), we have 
\[\big(N_{C'\cup R_{m+1}}[\posmodalong R_1 + \cdots + R_m][u_1 \biposmod v_1][u_2 \biposmod v_2] \cdots [u_\ell \biposmod v_\ell]\big)\big|_{R_{m+1}} \simeq N_{C' \cup R_{m+1}}|_{R_{m+1}}.
\]
Since we assume that \((d,g,r)\neq (2r, r+1, r)\) if \(r\) is odd, Lemma \ref{lem:rest_onion_bal} implies that \(N_{C' \cup R_{m+1}}|_{R_{m+1}}\) is perfectly balanced.
By Lemma \ref{lem:interpolation_rational_bal}, it suffices to prove that \(N_{C' \cup R_{m+1}}|_{C'}\) satisfies interpolation.  This restriction is
\[N_{C'}[ \posmodalong R_1  + \cdots + R_{m+1}][u_1 \biposmod v_1][u_2 \biposmod v_2] \cdots [u_\ell \biposmod v_\ell],\]
which satisfies interpolation by our assumption that \(I(d-r+1, g-r, r,\ell,m+1)\) holds.
\end{proof}

\begin{prop}\label{prop:I_implies_interpolation}
Suppose that \(I(d,g,r, 0, m)\) holds for all good \((d,g,r,0,m)\).
Then \(I(d, g, r, 0, 0)\) holds whenever \(\rho(d, g, r) \geq 0\), except if
\begin{itemize}
\item \((d,g,r)\) is in the list \eqref{eq:counter_examples}, or
\item \((d,g,r)= (2r, r+1, r)\) and \(r\) is odd, or
\item \(g=0\) and \(d\not\equiv 1 \pmod{r-1}\).
\end{itemize}
\end{prop}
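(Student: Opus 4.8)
The plan is to reduce $I(d,g,r,0,0)$ for arbitrary $(d,g,r)$ with $\rho(d,g,r)\ge 0$ to the good cases by repeatedly peeling off $(r+1)$-secant rational curves of degree $r-1$, thereby trading genus for modifications. The key observation is that Lemma~\ref{lem:pull_off_onion} says $I(d-r+1,g-r,r,0,m+1)$ implies $I(d,g,r,0,m)$, provided $g\ge r$ and $(d,g,r)\ne (2r,r+1,r)$ for $r$ odd. So, starting from $I(d,g,r,0,0)$, I would iterate this $m$ times as long as the genus stays $\ge r$, landing on a tuple $(d',g',r)$ with $g' < r$ (equivalently $d' \ge g'+r$ is automatic by Remark~\ref{rem:small_g}, so we are in the nonspecial regime and the first "good" inequality $d\ge g+r$ holds), together with $m$ accumulated $[\posmodalong R_i]$ modifications. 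One must check that each intermediate tuple avoids the forbidden value $(2r,r+1,r)$ with $r$ odd: since we decrease $g$ by $r$ at each step, the genus hits $r+1$ only if the original genus was congruent to $r+1$ mod $r$, i.e.\ $g\equiv 1\pmod r$; in that one residue class the terminal genus before we would stop is $g'=r+1 \ge r$, and the peeling step is exactly blocked — this is the source of the $(d,g,r)=(2r,r+1,r)$, $r$ odd exception in the statement. (For $r$ even there is no obstruction since Lemma~\ref{lem:rest_onion_bal} gives perfect balance.)

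Next I would verify that the terminal tuple $(d',g',m)$ is good, or else is one of the listed exceptions. After the reduction we have $0\le g' \le r-1 < r$, so $d'\ge g'+r$; and $\ell=0$, so the constraint $0\le\ell\le r/2$ is vacuous; and $0\le m\le \rho(d,g,r)=\rho(d',g',r)$ since each peeling step decreases $\rho$ by exactly $1$ (using $\rho(d,g,r)=\rho(d+r-1,g+r,r)+1$ from Section~\ref{subsec:onion}) while increasing $m$ by $1$, so $m$ and $\rho$ stay in lockstep and $m\le\rho(d',g',r)$ holds with equality iff we peeled off the maximum number. The condition "if $g=m=0$ then $2\ell \ge (1-d)\redmod(r-1)$": when $g'=0$ this forces either $m>0$ (in which case the condition is vacuous) or $m=0$, and then since $\ell=0$ we need $(1-d')\equiv 0\pmod{r-1}$, i.e.\ $d'\equiv 1\pmod{r-1}$; but $d'$ and $d$ are congruent mod $r-1$ (peeling changes degree by $r-1$), so this is exactly the hypothesis $d\equiv 1\pmod{r-1}$ we are allowed to assume away — this is the source of the "$g=0$ and $d\not\equiv 1\pmod{r-1}$" exception. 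Finally, when $g'>0$ (so the reduction genuinely terminated at genus $<r$ with $g'\ge 1$) one must check the terminal tuple is not in \eqref{XX}; by Clifford (Remark~\ref{rem:small_g}) and a short case analysis with $r$ small, every tuple in \eqref{XX} either has $\ell\ge 1$ (hence unreachable from $\ell=0$ by pure onion-peeling, which never changes $\ell$) or is one of $(5,2,3,0,0)$, $(6,2,4,0,0)$, $(7,2,5,0,0)$ — and these three correspond precisely to three of the tuples $(5,2,3)$, $(6,2,4)$, $(7,2,5)$ in \eqref{eq:counter_examples}, which are among the allowed exceptions; the remaining two exceptional triples $(6,4,3)$ and $(10,6,5)$ have $g\ge r$ but survive the reduction only down to... actually they reduce via peeling ($g=4\ge 3$ gives $(6,4,3)\to(4,1,3,0,1)$, and $g=6\ge 5$ gives $(10,6,5)\to(8,1,5,0,1)$), so one checks $(4,1,3,0,1)\in\eqref{XX}$ and $(8,1,5,0,1)$ — the latter is not literally in \eqref{XX} but $(10,6,5)$ must be handled as an honest exception because the relevant restricted bundle fails balance, so one routes it through the $(2r,r+1,r)$-type analysis or simply notes it among \eqref{eq:counter_examples}.

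The remaining step is bookkeeping: once the terminal tuple $(d',g',r,0,m)$ is good (and not in the residual bad list), $I(d',g',r,0,m)$ holds by hypothesis, and unwinding the chain of implications from Lemma~\ref{lem:pull_off_onion} gives $I(d,g,r,0,0)$. I expect the main obstacle to be the careful identification of \emph{exactly which} tuples get stuck and matching them to the three stated exceptional families — in particular correctly tracking (i) the genus-$1$-mod-$r$ class that hits $(2r,r+1,r)$ for $r$ odd, (ii) the degree-mod-$(r-1)$ obstruction for $g=0$, and (iii) confirming that the finitely many triples $(5,2,3),(6,4,3),(6,2,4),(7,2,5),(10,6,5)$ are precisely the ones whose reduction lands in \eqref{XX} (or is blocked by the elliptic-normal-curve issue of Lemma~\ref{lem:rest_onion_bal}), with no others. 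This is a finite check once the reduction is set up, but it is the delicate part and must be done with care about the edge cases $g<r$ where Clifford forces $d\ge g+r$ and about small $r$ where \eqref{XX} is concentrated.
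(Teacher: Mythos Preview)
Your overall strategy---iterate Lemma~\ref{lem:pull_off_onion} until the genus drops to at most \(r\), then invoke the hypothesis on good tuples---is exactly the paper's approach, which packages it as an induction on \(g\) (base case \(1\le g\le r\), where \((d,g,r,0,m)\) is good by Remark~\ref{rem:small_g}; inductive step \(g\ge r+1\) via the lemma). However, two points in your execution need correction.

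First, your analysis of when the peeling step is blocked is off. Lemma~\ref{lem:pull_off_onion} only fails when the \emph{current} tuple equals \((2r,r+1,r)\) with \(r\) odd, not merely when the genus equals \(r+1\). The clean observation (implicit in the paper) is that \(\rho(2r,r+1,r)=0\): since each peel \emph{increases} \(\rho\) by \(1\) (you wrote ``decreases''---note the sign) while increasing \(m\) by \(1\), the invariant \(\rho-m\) stays equal to the original \(\rho(d,g,r)\ge 0\), so an intermediate tuple can have \(\rho=0\) only if \(m=0\), i.e., only at the start. Thus the obstruction occurs exactly for the tuples excluded by the second bullet of the proposition, and your residue-class discussion is unnecessary.

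Second, your treatment of \((6,4,3)\) and \((10,6,5)\) is confused. You compute \((10,6,5)\to(8,1,5,0,1)\), but \(10-(r-1)=6\), so the correct reduction is \((6,1,5,0,1)\), which \emph{is} in \eqref{XX}. More importantly, both \((6,4,3)=(2\cdot 3,4,3)\) and \((10,6,5)=(2\cdot 5,6,5)\) are already of the form \((2r,r+1,r)\) with \(r\) odd, so they are excluded by the second bullet before any peeling is attempted; there is no need to route them through \eqref{XX} at all. Once this is noticed, the finite check is immediate: the \(\ell=0\), \(m=0\) entries of \eqref{XX} are \((5,2,3,0,0)\), \((6,2,4,0,0)\), \((7,2,5,0,0)\), matching the remaining triples in \eqref{eq:counter_examples}; the \(\ell=0\), \(m\ge 1\) entries are \((4,1,3,0,1)\) and \((6,1,5,0,1)\), which would only arise by peeling \((6,4,3)\) or \((10,6,5)\), already excluded.
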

\begin{proof}
When \(g = 0\), the tuple \((d, g, r, 0, 0)\) is good when \(d \equiv 1\) mod \(r - 1\),
so the result is a tautology. We therefore suppose \(g \geq 1\).

We will prove by induction on \(g\) that \(I(d,g,r,0,m)\) holds for \(g \geq 1\)
subject to the conditions that \(m \leq \rho(d, g, r)\)
and \((d, g, r, 0, m)\) is not in the list \eqref{XX}.
(If \((d, g, r, 0, 0)\) is in the list \eqref{XX}, then \((d, g, r)\) is in the list \eqref{eq:counter_examples},
so this is sufficient.)
Our base cases will be \(g \leq r\); in these cases, \(d \geq g + r\) by Remark \ref{rem:small_g},
and so \((d, g, r, 0, m)\) is good.
For the inductive step, we apply
Lemma \ref{lem:pull_off_onion} to reduce from
\(I(d,g,r,0,m)\) to \(I(d-r+1, g-r, r, 0, m + 1)\).
\end{proof}

\section{\texorpdfstring{The family with \(\delta = 1\) and \(\ell = m = 0\)}{The family with delta = 1 and l = m = 0}}\label{sec:delta1}

In this section, we establish \(I(d, g, r, 0, 0)\) for good tuples with \(\delta = 1\).
When \(\ell = m = 0\), the condition \(\delta = 1\) is equivalent to
\begin{equation} \label{dlm}
2d + 2g = 3r - 1,
\end{equation}
and \(I(d, g, r, 0, 0)\) asserts interpolation for \(N_C\) (with no modifications).
Our argument will be by induction on \(d - g - r\).

When \(d - g - r = 0\), then by \eqref{dlm}, we have
\((d, g, r) = (5g + 1, g, 4g + 1)\).
We may therefore conclude interpolation by \cite[Lemma 11.3]{aly}.

To complete the inductive step, observe that if \(g = 0\), then \(d = (3r - 1)/2\) by~\eqref{dlm},
and so such tuples \((d, 0, r, 0, 0)\) are never good. It thus suffices to prove the following proposition
in the case \(g > 0\).
(The \(g = 0\) case is included here too since it will be useful later,
when establishing Theorem~\ref{thm:main} for rational curves,
and it will follow via the same argument.)

\begin{prop}\label{prop:delta1}
Suppose that \(\ell = m = 0 \), and \(d > g + r\), and \eqref{dlm} holds.
If \(I(d-3, g, r-2, 0, 0)\) holds, and \(g>0\) or the characteristic is not \(2\), then \(I(d,g,r,0,0)\) also holds.
\end{prop}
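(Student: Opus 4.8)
The equation \eqref{dlm} is exactly the statement that $\delta(d,g,r,0,0) = 1$. Since a general point $p \in C$ gives $N_{C \to p} \simeq \O_C(1)(2p)$, which has slope $d + 2$, this is equivalent to saying that $N_C$ has \emph{integral} slope $\mu(N_C) = d + 3$. This integrality is the feature that drives the proof: it is what legitimizes applications of Lemma~\ref{lem:muzz}, Corollary~\ref{cor:intp_ses}, and Lemma~\ref{gen-to-fixed} below. Note also that \eqref{dlm} together with $d > g + r$ forces $r > 4g + 1$; in particular $C$ (and every curve appearing in the induction) is nonspecial, and the numerical side conditions of the corollaries in Section~\ref{sec:basicdegen} (such as $g \le r + 6$) hold automatically.

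The plan is to lower the ambient dimension by $2$ using projections and lower the degree by $3$ using one $1$-secant degeneration, landing on $I(d - 3, g, r - 2, 0, 0)$. First degenerate $C$ to $C_0 \cup L$ with $L$ a quasitransverse $1$-secant line meeting $C_0$ at a point $u$, and $C_0$ a general BN-curve of degree $d - 1$ and genus $g$; this is a BN-curve by Lemma~\ref{lem:1secBN}, so since interpolation is open in the BN-component it suffices to show $N_{C_0 \cup L}$ satisfies interpolation, and by Corollary~\ref{cor:1sec_posmod} it suffices in turn that the positive modification $N_{C_0}[2u \posmod v]$ does, where $v \in L \setminus \{u\}$. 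This bundle again has integral slope (namely $d + 2$). Now project $C_0$ from two general points $p_1, p_2 \in C_0$; since $C_0$ is a general nonspecial BN-curve, its iterated projection $\bar{C}_0 \subset \pp^{r - 2}$ is a general BN-curve of degree $d - 3$ and genus $g$, to which $I(d - 3, g, r - 2, 0, 0)$ applies. Two applications of the pointing bundle exact sequence~\eqref{eq:pointing} exhibit $N_{C_0}[2u \posmod v]$ as an iterated extension whose two outer terms are nonspecial line bundles of degree roughly $d$ and whose inner term is a twist-and-modification of $N_{\bar{C}_0}$. Propagating the modification $[2u \posmod v]$ through the two projections via \eqref{eq:easy_ses}--\eqref{eq:mods_ses} --- the direction at $u$ of the general point $v$ being transverse to the pointing subbundles $N_{C_0 \to p_i}$, the modification migrates into the quotient at each step --- the inner term becomes $N_{\bar{C}_0}$ twisted by an effective divisor and modified toward a \emph{general} point of $\pp^{r - 2}$. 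The twist is harmless, and the residual modification can be erased using Lemma~\ref{lem:muzz} together with Lemma~\ref{gen-to-fixed}, legitimately because $\mu(N_{\bar C_0}) = d$ is an integer. Hence the inner term satisfies interpolation by hypothesis, and Corollary~\ref{cor:intp_ses}, applied up each of the two pointing sequences, propagates interpolation back to $N_{C_0}[2u \posmod v]$, hence to $N_{C_0 \cup L}$, hence to $N_C$.

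The step I expect to be most delicate is the middle one: pinning down precisely how the line subbundles $N_{C_0 \to p_i}$ and their twists sit inside $N_{C_0}[2u \posmod v]$ under the two successive projections, and checking at each projection that the resulting short exact sequence is close enough to balanced to invoke Corollary~\ref{cor:intp_ses}. This is exactly where $\delta = 1$ pays off: the integrality $\mu(N_C) \in \zz$ forces the slopes of sub and quotient at each stage to lie in a common unit interval, which is precisely the hypothesis of that corollary. Finally, the assumption that $g > 0$ or $\cha \neq 2$ is needed because when $g = 0$ every curve in the reduction is rational, and \eqref{dlm} then forces $r$ odd and $d = (3r - 1)/2 \not\equiv 1 \pmod{r - 1}$, so the conormal-bundle-is-a-Frobenius-pullback phenomenon of Lemma~\ref{lem:char2_xex} obstructs interpolation in characteristic $2$; away from characteristic $2$ (and for all $g > 0$) no such obstruction arises and the argument above goes through.
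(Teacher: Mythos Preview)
Your proposal has a genuine gap at the balancing step. After peeling off the \(1\)-secant line you are left with \(N_{C_0}[2u \posmod v]\) on a curve \(C_0\) of degree \(d-1\); this bundle has slope exactly \(d+2\). If you now project from a general point \(p_1 \in C_0\), the pointing subbundle \(N_{C_0 \to p_1}\) has slope \(d+1\), so \(|\mu(S) - \mu(E)| = 1\), and Corollary~\ref{cor:intp_ses} does not apply (it requires strict inequality). Checking Lemma~\ref{lem:intp_ses} directly also fails: the quotient has slope \(d+2+\tfrac{1}{r-2}\), and \(d+2+\tfrac{1}{r-2} \not\le \lfloor d+1 \rfloor + 1\). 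The same obstruction recurs at the second projection. Since you never use the hypothesis ``\(g>0\) or \(\operatorname{char}\ne 2\)'' anywhere in the argument itself, the contradiction with Lemma~\ref{lem:char2_xex} in the rational \(p=2\) case already signals that something is broken; your final paragraph diagnoses the symptom but does not locate the point of failure.

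The paper's fix is to project from a \emph{line} rather than two points in sequence, and to choose that line carefully. Take two general points \(u,v \in C_0\) (in the paper's notation, where your \(u,v\) are called \(x,y\)) and \emph{specialize} the free endpoint \(y\) of the \(1\)-secant line to a general point of \(\bar{uv}\). Then \(N_{C_0 \to y}\) is contained in \(N_{C_0 \to u} \oplus N_{C_0 \to v}\) near \(x\), so the modification \([2x \posmod y]\) lands entirely in the rank-\(2\) \emph{subbundle} of the pointing sequence for projection from \(\bar{uv}\). Now both sub and quotient have slope exactly \(d+2\), and Lemma~\ref{lem:intp_ses} applies cleanly. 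The quotient is \(N_{\bar{C}_0}(u+v)\), handled by \(I(d-3,g,r-2,0,0)\). The subbundle \(\bigl(N_{C_0\to u}\oplus N_{C_0\to v}\bigr)[2x \posmod y]\) requires a separate analysis (Lemma~\ref{lem:char2_line}): one must show that a \emph{diagonal} line subbundle of a rank-\(2\) split bundle, twisted at a general point, still satisfies interpolation. This reduces to separability of a certain pencil on \(C_0\), and it is \emph{here} that the hypothesis ``\(g>0\) or characteristic \(\ne 2\)'' is genuinely consumed.
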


\noindent
In order to prove this proposition, we first establish the following lemmas.

\begin{lem}\label{lem:pointing_tangent}
Let \(L\) be a line meeting \(C\) quasitransversely at a smooth point \(x\).
For any points \(y, y' \in L \setminus x\), the sections of \(\pp N_C\) corresponding to \(N_{C \to y}\) and \(N_{C \to y'}\) are tangent over \(x\).
\end{lem}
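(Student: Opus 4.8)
The plan is to work locally near the point $x$, since the statement is about the tangency of two sections of $\pp N_C$ over $x$, which is a second-order (length-$2$) condition at $x$. Fix local coordinates so that $x = [1:0:\cdots:0]$, the tangent line $T_x C$ is the span of the first two coordinates, and the line $L$ lies in the span of the first two coordinates as well (this is possible precisely because $L$ meets $C$ quasitransversely at $x$, so $L$ and $T_xC$ span a $\pp^2$; we may take that $\pp^2$ to be $\{x_3 = \cdots = x_r = 0\}$, but for the pointing bundle computation only the line $T_xC$ and the points $y, y'$ matter). First I would recall the explicit description of the pointing subbundle $N_{C \to y} \subset N_C$: away from the locus where projection from $y$ ramifies, it is $\ker(N_C \to N_{\pi_y \circ C})$, and its fiber at a point $z \in C$ is spanned by the direction at $z$ ``pointing toward $y$'', i.e. the image in $N_{C,z} = (T_z\pp^r)/(T_zC)$ of the tangent direction at $z$ to the line $\overline{zy}$.

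**The key computation.** With this description, the section $s_y$ of $\pp N_C$ associated to $N_{C\to y}$ sends $z \mapsto [\,\overline{zy} \bmod T_zC\,] \in \pp(N_{C,z})$. I would parametrize $C$ near $x$ by $t \mapsto \gamma(t)$ with $\gamma(0) = x$, $\gamma'(0)$ spanning $T_xC$, and compute $s_y(t)$ and $s_{y'}(t)$ as sections of $\pp N_C$ to first order in $t$. The crucial point is that $y, y' \in L \subset \Span(T_xC)$ — more precisely, both $y$ and $y'$ lie on the line $L$ through $x$ in the direction of some vector $w$, and $T_xC$ is spanned by $\gamma'(0)$, and $x$ itself lies on both $L$ and $C$. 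So the chords $\overline{\gamma(t) y}$ and $\overline{\gamma(t) y'}$ both limit, as $t \to 0$, to lines in the plane $\Span(x, \gamma'(0), w) = \Span(T_xC, L)$ — a plane containing $T_xC$. Hence $s_y(0) = s_{y'}(0)$ in $\pp(N_{C,x})$: both equal the class of $w \bmod T_xC$ (the ``direction of $L$''). That gives agreement at $x$ to order zero. For tangency I need agreement to order one: I would show $s_y'(0) = s_{y'}'(0)$ in the tangent space to the fiber, i.e. that the first-order variation of the chord direction $\overline{\gamma(t)y} \bmod T_{\gamma(t)}C$ is independent of which point $y$ of $L$ is chosen. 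The cleanest way is to note that modulo $T_xC$, the difference between ``pointing at $y$'' and ``pointing at $y'$'' is governed by the vector $y - y'$, which is parallel to $L$ and hence, modulo $T_xC$, is a scalar multiple of the common value $s_y(0) = s_{y'}(0)$; carrying this through the derivative shows the discrepancy $s_y(t) - s_{y'}(t)$ vanishes to order $\geq 2$ at $t = 0$ as a section of $\pp N_C$.

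**A more structural phrasing.** Alternatively — and I think this is the better write-up — I would observe that $N_{C\to y}$ and $N_{C\to y'}$ agree on the level of bundles after a twist: since $y, y' \in L$ and $L$ passes through $x$, projection from $L$ kills both, so both $N_{C\to y}$ and $N_{C\to y'}$ contain $N_{C\to L}$ (the rank-$2$ pointing bundle toward the line $L$), and the quotients $N_{C\to y}/N_{C \to L}$, $N_{C\to y'}/N_{C\to L}$ are line subbundles of $N_{C\to L}^{\mathrm{quot}} := (N_{C\to L} \text{ part of } \ldots)$ — more precisely, using the pointing sequence $0 \to N_{C\to L} \to N_C \to N_{\pi_L \circ C}(L \cap C) \to 0$, the sections $s_y, s_{y'}$ both factor through $\pp N_{C\to L}$, i.e. they take values in the sub-$\pp^1$-bundle $\pp N_{C\to L} \subset \pp N_C$ over a neighborhood of $x$. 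Now $N_{C\to L}$ restricted to a neighborhood of $x$ is, after twisting, $N_{L} $-like and in particular over $x$ it contains the distinguished line $N_{C\to x}|_x$ spanned by $T_xC$... hmm — the point I want is: $N_{C \to L}$ has rank $2$, and the two sections $s_y, s_{y'}$ of the $\pp^1$-bundle $\pp N_{C\to L}$ both pass through the same point over $x$ (namely the one corresponding to the direction of $L$), and their first-order behavior there is pinned down by a single invariant independent of the choice of $y$ versus $y'$ on $L$. The main obstacle, I expect, is making the ``first-order behavior is independent of $y \in L$'' claim rigorous without a long coordinate grind: the honest content is that, working in the affine chart and modding out by $T_xC$, the map $y \mapsto (\text{direction of } \overline{\gamma(t)y} \bmod T_{\gamma(t)}C)$ is, to first order in $t$ at $t=0$, a \emph{constant} function of $y$ as $y$ ranges over the line $L$, because $L$ passes through $x = \gamma(0)$ and $L \bmod T_xC$ is one-dimensional. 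So I would present the coordinate computation, but keep it to the minimal $2\times 2$ block: let $\gamma(t) = x + t e_1 + t^2(\tfrac12 e_2 + \cdots) + O(t^3)$ in an affine chart at $x$, let $L$ be parametrized by $x + s e_2$, take $y = x + s_0 e_2$, compute $\gamma(t) - y = (t)e_1 + (\tfrac12 t^2 - s_0)e_2 + O(t^2 \text{ in } e_{\geq 3})$, reduce mod $T_{\gamma(t)}C = \Span(e_1 + t e_2 + \cdots)$, and observe that the resulting class, expanded to order $t$, is $-s_0 e_2 + O(t^2)$ up to scaling by $-s_0$ — i.e. $[e_2] + O(t^2)$ in $\pp(N_{C,\gamma(t)})$, manifestly independent of $s_0$. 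This exhibits $s_y$ and $s_{y'}$ as agreeing to order $t^2$, which is exactly tangency over $x$.
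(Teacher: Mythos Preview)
Your approach is correct and essentially the same as the paper's: a local coordinate computation showing that the two sections agree at \(x\) to first order in the curve parameter \(t\). The paper's write-up is just more compact --- rather than reducing each chord direction modulo \(\gamma'(t)\) and comparing, it directly exhibits the single linear dependence \(-a(C(t)-y)+(C(t)-y')+t(1-a)C'(t)\equiv 0\pmod{t^2}\) (where \(y'=a\cdot y\) since \(x=0\)), which is exactly what you are computing piecewise.
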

\begin{proof}
We prove this by a calculation in local coordinates.  We may choose an affine neighborhood of \(x\) in \(\pp^r\),
and a local coordinate \(t\)
on \(C\), so that \(x = C(0) = 0\), and
\(C\) is given parametrically by the power series \(C(t) = tC_1 + t^2C_2 + O(t^3)\).
By assumption \(y' = a \cdot y\) for some invertible scalar \(a\).
It suffices to show that the three vectors \(C(t)-y\), and \(C(t) - y'\), and \(C'(t)\), are dependent mod~\(t^2\).
The explicit dependence is
\[-a(C(t) - y) + (C(t) - y') + t(1-a)C'(t) \equiv -a (tC_1 - y) + (tC_1 - y')  + t(1-a)C_1 = 0 \pmod{t^2}. \qedhere\]
\end{proof}

\begin{lem}\label{lem:char2_line}
Let \(u\), \(v\), and \(x\) be general points on a BN-curve \(C\) of degree \(d\) and genus \(g\) with
\begin{equation}\label{eq:Ray_assump} 2(d+1) + 2g = 3r - 1 \qand d \geq g+r.  \end{equation}
Let \(y\) be a general point on the \(2\)-secant line \(\bar{uv}\).  Then the bundle
\[\left(N_{C \to u} \oplus N_{C \to v} \right)[2x \posmod y]\]
satisfies interpolation if and only if \(g > 0\) or the characteristic is not \(2\). 
\end{lem}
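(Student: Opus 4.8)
The plan is to reduce interpolation for $E' := (N_{C\to u}\oplus N_{C\to v})[2x\posmod y]$ to a short exact sequence of line bundles and then analyze the resulting rank‑$2$ bundle. From $2(d+1)+2g=3r-1$ together with $d\geq g+r$ one gets $4g\leq r-3$, so $g$ is small relative to $d$; in particular $N_{C\to u}\simeq\O_C(1)(2u)$ and $N_{C\to v}\simeq\O_C(1)(2v)$ are nonspecial line bundles of degree $d+2$, so $E_0 := N_{C\to u}\oplus N_{C\to v}$ is nonspecial of integral slope $d+2$ and, being a direct sum of two nonspecial line bundles of equal slope, satisfies interpolation by Corollary~\ref{cor:intp_ses}. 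The key geometric input is that, since $u,v,y$ are collinear, for every $t\in C$ the three pointing lines $N_{C\to u}|_t$, $N_{C\to v}|_t$, $N_{C\to y}|_t$ all lie in the image of the $2$‑plane $\Span(t,u,v)$ in $N_C|_t=T_t\pp^r/T_tC$, hence are dependent; this exhibits $N_{C\to y}\simeq\O_C(1)$ as a line subbundle of $E_0$, and comparing determinants identifies $E_0/N_{C\to y}\simeq\O_C(1)(2u+2v)$. Applying the modification sequence \eqref{eq:easy_ses} to $0\to N_{C\to y}\to E_0\to\O_C(1)(2u+2v)\to 0$ along $2x$ towards $N_{C\to y}$ then gives
\[0\to N_{C\to y}(2x)\to E'\to\O_C(1)(2u+2v)\to 0,\]
with $N_{C\to y}(2x)\simeq\O_C(1)(2x)$ of degree $d+2$ and quotient of degree $d+4$.

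For the direction ``$g>0$ or $\cha\neq 2$ implies interpolation'', the displayed sequence is $2$ too unbalanced for Corollary~\ref{cor:intp_ses} to apply directly, so I would twist down by a general point $x'$ and prove interpolation for $E'(-x')=E_0(-x')[2x\posmod N_{C\to y}(-x')]$, recovering interpolation for $E'$ from Lemma~\ref{lem:int_large_degree} (the inequality $\chi(E'(-x'))\geq 2g$ is immediate). Now $E_0(-x')$ is nonspecial of integral slope $d+1$ and satisfies interpolation, and one reaches $E'(-x')$ from it by a modification at the \emph{double} point $2x$ towards the fixed line subbundle $N_{C\to y}(-x')$ of degree $d-1$; equivalently one is in the situation of \cite[Proposition~4.21]{aly} with $n=2$ and $\mu(L)=d-1\leq\mu(Q)+1$. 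As the Remark after Lemma~\ref{gen-to-fixed} explains, the only obstruction to that proposition in positive characteristic is the statement that vanishing at $2x$ imposes two conditions on the relevant linear series; this is automatic when $\cha\neq 2$, and when $\cha=2$ it holds for $x$ general because $C$ is then a general curve of positive genus and the line bundles in play are not Frobenius pullbacks. Hence $E'(-x')$, and therefore $E'$, satisfies interpolation in these cases.

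For the direction ``$g=0$ and $\cha=2$ implies failure of interpolation'', I would run the Frobenius argument of Lemma~\ref{lem:char2_xex}. Over $C\simeq\pp^1$ we have $\O_C(1)=\O_{\pp^1}(d)$, so $E_0\otimes\O_C(-1)\simeq\O_{\pp^1}(2)^{\oplus 2}=\pi^*(\O_{\pp^1}(1)^{\oplus 2})$ for the relative Frobenius $\pi$, and $2x=\pi^*(\mathrm{pt})$. The crux is that the line subbundle $N_{C\to y}(-1)\simeq\O_{\pp^1}\hookrightarrow\pi^*(\O_{\pp^1}(1)^{\oplus 2})$ descends along $\pi$; this follows from the fact that $N_C^\vee(1)$ is a Frobenius pullback (it is the bundle of first principal parts $\pi^*\pi_*\O_C(1)$), so the quotient $N_C^\vee(1)\twoheadrightarrow N_{C\to y}^\vee(1)=\O_C$ — equivalently the section of $N_C(-1)$ cutting out $N_{C\to y}$ — is itself Frobenius‑descended, using that $y$ lies on the secant line $\overline{uv}$ of $C$. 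Granting this, $E'\otimes\O_C(-1)$ is a Frobenius pullback, hence $\simeq\O_{\pp^1}(2a)\oplus\O_{\pp^1}(2b)$ with $a+b=3$; since $3$ is odd this is never $\O_{\pp^1}(3)^{\oplus 2}$, so $E'$ is not perfectly balanced and fails interpolation by Lemma~\ref{lem:BG}.

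I expect the main obstacle to be precisely this Frobenius‑descent claim for $N_{C\to y}$ with $y$ on a secant line of $C$ (equivalently, the failure of the ``two conditions at $2x$'' property): it is the single mechanism that separates characteristic $2$ from all other cases, and making it rigorous requires unwinding how the pointing bundle $N_{C\to y}$ interacts with the Frobenius structure on $N_C^\vee(1)$, together with a short transversality/generality check ensuring no unexpected descent occurs in the positive‑genus case.
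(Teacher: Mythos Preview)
Your structural setup matches the paper's: the key object is the line subbundle \(N_{C\to y}\subset E_0=N_{C\to u}\oplus N_{C\to v}\), yielding \(0\to N_{C\to y}(2x)\to E'\to\O_C(1)(2u+2v)\to 0\). But both directions have a real gap, and in each case the missing ingredient is the same: you never pin down \emph{which} inclusion \(\O_C(1)\hookrightarrow\O_C(1)(2u)\oplus\O_C(1)(2v)\) you actually have. The paper supplies this via Lemma~\ref{lem:pointing_tangent}: since \(N_{C\to y}\) and \(N_{C\to v}\) agree to second order at \(u\) (and symmetrically at \(v\)), the component maps \(N_{C\to y}\to N_{C\to u}\) and \(N_{C\to y}\to N_{C\to v}\) vanish to order exactly \(2\) at \(v\) and \(u\) respectively, so the inclusion is the diagonal one given by the canonical sections of \(\O_C(2u)\) and \(\O_C(2v)\). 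Your first-order observation that the three pointing directions are linearly dependent does not give this.

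For the forward direction, you invoke \cite[Proposition~4.21]{aly} with \(n=2\), which the Remark after Lemma~\ref{gen-to-fixed} explicitly says fails in positive characteristic. Your claim that the obstruction ``is automatic when \(\cha\neq 2\)'' is incorrect: in \emph{any} characteristic \(p>0\), a Frobenius-pulled-back pencil has the property that vanishing at a general point forces vanishing to order \(p\geq 2\), so vanishing at \(2x\) imposes only one condition. One must identify the specific linear series and prove it separable; the paper does this directly (no Proposition~4.21): twisting by a general line bundle reduces to separability of the pencil \(\langle\sigma,\tau\rangle\subset H^0(L)\) for \(L\) general of degree \(g+2\), and Riemann--Roch forces inseparability to imply \(p\leq(g+2)/(g+1)\), i.e.\ \(g=0\) and \(p=2\). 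For the failure direction, your Frobenius-descent argument cannot come from ``\(N_C^\vee(1)\) is a Frobenius pullback'': sub- and quotient bundles of a Frobenius pullback need not be Frobenius pullbacks, so the section of \(N_C(-1)\) cutting out \(N_{C\to y}\) has no a~priori reason to descend. What makes it work is again the explicit identification above: on \(\pp^1\) the inclusion is given by \((t-u)^2\) and \((t-v)^2\), which in characteristic~\(2\) are visibly squares; equivalently, the pencil is \(\langle t^2,s^2\rangle\), which is inseparable.
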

\begin{proof}
Subtracting the first equation in \eqref{eq:Ray_assump} from \(6\) times the second
implies \(2d + 4 - 2g \geq 2g\). By Lemma~\ref{lem:int_large_degree},
interpolation for \(\left(N_{C \to u} \oplus N_{C \to v} \right)[2x \posmod y]\) is thus
equivalent to interpolation for
\[N \colonequals \left(N_{C \to u} \oplus N_{C \to v} \right)[2x \negmod y].\]

By Lemma \ref{lem:pointing_tangent},
the subbundles \(N_{C \to y}\) and \(N_{C \to v}\) agree to second order at \(u\).
Therefore, the composition
\(N_{C \to y} \to N_{C \to u} \oplus N_{C \to v} \to N_{C \to u}\)
vanishes to order \(2\) at \(u\).  Similarly, \(N_{C \to y} \to N_{C \to v}\) vanishes to order \(2\) at \(v\).
Therefore, under the isomorphisms
\begin{equation} \label{Cypq}
N_{C\to y} \simeq \O_C(1), \quad N_{C \to u}\simeq \O_C(1)(2u), \qand N_{C \to v}\simeq \O_C(1)(2v),
\end{equation}
the map 
\(N_{C \to y} \to N_{C \to u} \oplus N_{C \to v}\)
is a diagonal inclusion
\[\O_C(1) \to \O_C(1)(2u) \oplus \O_C(1)(2v)\]
given by constant sections of \(\O_C(2u)\) and \(\O_C(2v)\) that vanish at \(2u\) and \(2v\) respectively.
Such a section is indexed by two nonzero constants \(c\) and \(d\).  
In other words,
\[N \simeq \left(\O_C(1)(2u) \oplus \O_C(1)(2v) \right)[2x \negmod \O_C(1)],\]
where \(O_C(1)\) is the diagonal subbundle identified above.

Since \(\mu(N)=d + 1\), it satisfies interpolation if and only if it has
no cohomology when twisted down by a general line bundle of degree \(d + 2 - g\).
Write such a line bundle as \(L^\vee(1)(2u+2v)\), where \(L\) is a general line bundle of degree \(g+2\).  We therefore want
\begin{equation}\label{eq:noh0}
N \otimes L(-1)(-2u-2v) = \left(L(-2v) \oplus L(-2u)\right)[2x \posmod L(-2u-2v)]
\end{equation}
to have no global sections, where the diagonal subbundle \(L(-2u-2v)\) is indexed by \([c:d] \in \mathbb{G}_m\) as above.

As \(L\) is a general line bundle of degree \(g+2\) and \(u\) and \(v\) are general points, \(h^0(L(-2v)) = h^0(L(-2u)) = 1\); write \(\sigma\) and \(\tau\) for the unique (up to scaling)
sections of \(L\) vanishing to order \(2\) at \(u\) and \(v\) respectively.
Every section of \(L(-2u) \oplus L(-2v)\) is a linear combination \(a \sigma \oplus b \tau\), viewed as a section of \(L \oplus L\).
Such a global section comes from the subsheaf \(\left(L(-2v) \oplus L(-2u)\right)[2x \posmod L(-2u-2v)]\) when it is dependent with the constant diagonal section \(c \oplus d\) at \(2x\), i.e., 
when the section \(ad \sigma - bc \tau\) of \(L\) vanishes at \(2x\).
Hence, \eqref{eq:noh0} has no cohomology if \(x\) is not a ramification point of the map
\(\varphi \colon C \to \pp^1\) determined by \(\langle \sigma, \tau \rangle \subseteq H^0(L)\).  

As \(x\) was a general point, this holds if and only if \(\varphi\) is separable.  If \(\varphi\) is not separable, then the characteristic \(p\) of the ground field is positive and \(\varphi\) factors through the Frobenius morphism \(F\):
\[C \xrightarrow{F} C' \to \pp^1.\]
In this case, \(L\) and the linear system \(\langle \sigma, \tau \rangle\)
are necessarily pulled back under \(F\). In other words,
\(L \simeq F^*M\)
for a line bundle \(M\) (necessarily general because \(L\) is general),
of degree \((g+2)/p\) with \(h^0(M) \geq 2\).  Therefore
\[\frac{g + 2}{p} + 1 - g = h^0(M) \geq 2,\]
or upon rearrangement,
\[p \leq \frac{g+2}{g+1}.\]
Thus \(g=0\) and \(p=2\).

Conversely, when \(g = 0\), there is a choice of coordinates \([t : s]\) on \(C\) so that
\(\langle \sigma, \tau \rangle = \langle t^2, s^2 \rangle\).
If in addition \(p = 2\), then the map \(\varphi\) is inseparable.
\end{proof}

\begin{proof}[Proof of Proposition~\ref{prop:delta1}]
Specialize \(C\) to the union \(C' \cup L\), where \(C'\) is a general BN-curve
of degree \(d - 1\) and genus \(g\) in \(\pp^r\), and \(L\) is a \(1\)-secant line \(\bar{xy}\) meeting \(C'\) at \(x\).
It suffices to show interpolation for
\[N_{C'}[2x \posmod y].\]
Let \(u,v \in C'\) be general points, and specialize \(y\) to a general point on the line \(\bar{uv}\).
Projection from \(\bar{uv}\) induces a pointing bundle exact sequence
\begin{equation}\label{eq:proj_L}
0 \to \left(N_{C' \to u} \oplus N_{C' \to v}\right) [2x \posmod y] \to N_{C'}[2x \posmod y] \to N_{\bar{C'}}(u+v) \to 0.
\end{equation}
By Lemma~\ref{lem:char2_line}, the subbundle \(\left(N_{C' \to u} \oplus N_{C' \to v}\right) [2x \posmod y]\) satisfies interpolation.
By hypothesis, \(N_{\bar{C'}}\), and hence the quotient \(N_{\bar{C'}}(u+v)\), also satisfies interpolation.
Finally, by \eqref{dlm},
\[\mu\left(\left(N_{C' \to u} \oplus N_{C' \to v}\right) [2x \posmod y]\right) = d + 2 = \frac{(r - 1)d + 2g - r - 5}{r - 3} = \mu(N_{\bar{C'}}(u+v)).\]
Thus \(N_{C'}[2x \posmod y]\) satisfies interpolation by Lemma~\ref{lem:intp_ses}.
\end{proof}

\section{Specializations of the \(R_i\)}\label{sec:onion_specialization}

In this section, for some integers \(n=n_i\), we construct a specialization of one of the rational curves \(R = R_i\) so that exactly \(n\) modifications point towards a point \(p\) on \(C\).  We then show that this specialization plays well with projection from \(p\).

\subsection{Setup}
Let \(n\) be an integer satisfying \(0 \leq n \leq r - 1\) and \(n \equiv r - 1\) mod \(2\).
Let \(p, q_1, q_2, \ldots, q_{r - 1} \in C\) be points such that \(2p + q_1 + \cdots + q_{r - 1}\)
lies in a hyperplane.
For \(i\) between \(1\) and \(n\), write \(L_i\) for the line joining \(p\) and \(q_i\).
For \(j\) from \(1\) to \(n' \colonequals (r - 1 - n)/2\), let \(Q_j\) be a plane conic passing through \(p\), \(q_{n + 2j - 1}\), and \(q_{n + 2j}\).
The following diagram illustrates the \(L_i\) and \(Q_j\):
\begin{center}
\begin{tikzpicture}
\draw (0, 0) -- (-1, -2);
\draw (0, 0) -- (1, -2);
\draw (0, 0) .. controls (1, -0.5) and (2, 1.5) .. (1, 2);
\draw (0, 0) .. controls (-0.6, 0.3) and (-0.5, 1.25) .. (-0.03, 1.72);
\draw (0, 0) .. controls (-1, -0.5) and (-2, 1.5) .. (-1, 2);
\draw (0.02, 1.77) .. controls (0.2, 1.95) and (0.6, 2.2) .. (1, 2);
\draw (0, 0) .. controls (1, 0.5) and (0, 2.5) .. (-1, 2);
\draw[thick] (0.8, -1.25) .. controls (0.8, -2) and (-0.5, -2) .. (-1, -1.25);
\draw[thick] (-1, -1.25) .. controls (-2, 0.25) and (0, 1.5) .. (2.25, 1.5);
\draw[thick] (0, 0) .. controls (0.5, -0.5) and (0.8, -1) .. (0.8, -1.25);
\draw[thick] (0, 0) -- (-0.15, 0.15);
\draw (2.45, 1.5) node{\(C\)};
\draw (0.1, 0.2) node{\(p\)};
\draw (1, -2.2) node{\(L_1\)};
\draw (-1, -2.2) node{\(L_n\)};
\draw (0, -2.2) node{\(\cdots\)};
\draw (-1, 2.25) node{\(Q_1\)};
\draw (0, 2.25) node{\(\cdots\)};
\draw (1, 2.25) node{\(Q_{n'}\)};
\draw (1, -1.5) node{\(q_1\)};
\draw (-1, -1.5) node{\(q_n\)};
\draw (-1.45, 0.27) node{\(q_{n + 1}\)};
\draw (0.75, 1.05) node{\(q_{n + 2}\)};
\draw (-0.75, 0.95) node{\(q_{r - 2}\)};
\draw (1.75, 1.29) node{\(q_{r - 1}\)};
\filldraw (0, 0) circle[radius=0.03];
\filldraw (0.745, -1.49) circle[radius=0.03];
\filldraw (-0.755, -1.51) circle[radius=0.03];
\filldraw (-1.01, 0.27) circle[radius=0.03];
\filldraw (-0.415, 0.81) circle[radius=0.03];
\filldraw (0.335, 1.18) circle[radius=0.03];
\filldraw (1.4, 1.44) circle[radius=0.03];
\end{tikzpicture}
\end{center}
Define
\[R^\circ \colonequals L_1 \cup L_2 \cup \cdots \cup L_n \cup Q_1 \cup Q_2 \cup \cdots \cup Q_{n'}.\]

We will study when \(R^\circ\) is a limit of \((r + 1)\)-secant rational normal curves \(R^t\) in hyperplanes
that are \((r + 1)\)-secant to \(C\), in such a way that exactly two points of secancy limit together to \(p\)
while the remaining points of secancy limit to \(q_1, q_2, \ldots, q_{r - 1}\).
For this, it is evidently necessary to have a containment of Zariski tangent spaces \(T_p C \subset T_p R^\circ\).
In what follows, we will show that this condition is sufficient.

Suppose that \(T_p C \subset T_p R^\circ\)
and \(m \colonequals n + n' = (r - 1 + n)/2 \geq 3\).
Then the tangent line to \(p\) at~\(L_i\) (respectively \(Q_j\))
gives a distinguished point \(a_i\) (respectively \(b_j\))
in \(\Lambda \colonequals\pp(T_p R^\circ / T_p C) \simeq \pp^{m - 2} \subset \pp N_C|_p\).
Write
\(\Gamma = \{a_1, \ldots, a_n, b_1, \ldots, b_{n'}\}\),
which is a collection of \(m\) points in \(\Lambda\).
Let \(T \subset \Lambda\)
be a general rational normal curve in \(\Lambda\) passing through \(\Gamma\),
and let \(M\) be a general \(2\)-secant line to \(T\).
Our argument will furthermore show that we can choose the family \(R^t\)
so that the modifications along \(R^t\)
at the points approaching \(p\) limit to \(M\), i.e., so that
\(N_C[\posmodalong R^t]\) fits into a flat family whose central fiber is
\(N_C[q_1 + \cdots + q_{r - 1} \posmodalong R^\circ][p \posmod M]\).

\subsection{The Construction}
To construct the desired family \(R^t\), let \(B\) be the spectrum of a DVR,
with special point \(t = 0\).
Consider the blowup of \(\pp^r \times B\) along \(C \times 0\).
The special fiber \(X\) over \(0\) has two components: The first is isomorphic to the blowup \(\Bl_C \pp^r\),
and contains the proper transform \(\hat{R}^\circ = \hat{L}_1 \cup \cdots \cup \hat{L}_n \cup \hat{Q}_1 \cup \cdots \cup \hat{Q}_{n'}\) of \(R^\circ\).
The second is isomorphic to the normal cone \(\pp(N_C \oplus \O_C)\),
and contains the special fiber of the proper transform of \(C \times B\),
which coincides with \(\pp \O_C \subset \pp(N_C \oplus \O_C)\) and is isomorphic to \(C\).
The two components meet along \(\pp N_C\), and the intersection \(\hat{R}^\circ \cap \pp N_C\)
is the finite set of points \(\Gamma \cup \Gamma'\), where \(\Gamma = \{a_1, \ldots, a_n, b_1, \ldots, b_{n'}\}\) lies in the fiber over \(p\),
and \(\Gamma' = \{q_1', q_2', \ldots, q_{r - 1}'\}\) contains one point \(q_i'\) in the fiber over each \(q_i\).
The following diagram illustrates the central fiber \(X\):

\begin{center}
\begin{tikzpicture}[scale=1.3]
\draw (-0.644, 2.576)--(0.857, 3.428);
\draw (-0.644, 2.576) circle[radius=0.03];
\draw (0.857, 3.428) circle[radius=0.03];
\draw (-0.5, 1) -- (-1.5, 2) -- (0.5, 4) -- (1.5, 3) -- (-0.5, 1);
\draw[dashed] (-0.5, 1) -- (0, 0);
\draw[dashed] (-1.5, 2) -- (0, 0);
\draw[dashed] (1.5, 3) -- (0, 0);
\draw (-4, 4) -- (2, 4) -- (1, 1) -- (-5, 1) -- (-4, 4);
\draw (-4, 4) -- (-4, -0.5);
\draw[dashed] (-4, -0.5) -- (-4, -1.25);
\draw (2, 4) -- (2, -0.5);
\draw[dashed] (2, -0.5) -- (2, -1.25);
\draw (1, 1) -- (1, -1.5);
\draw[dashed] (1, -1.5) -- (1, -2.25);
\draw (-5, 1) -- (-5, -1.5);
\draw[dashed] (-5, -1.5) -- (-5, -2.25);
\draw[thick] (-4.5, 0) -- (1.5, 0);
\draw (-4, 4) -- (-4.3, 5);
\draw[dashed] (-4.3, 5) -- (-4.6, 6);
\draw (2, 4) -- (2.9, 5);
\draw[dashed] (2.9, 5) -- (3.8, 6);
\draw (1, 1) -- (2.75, 2.0);
\draw[dashed] (2.75, 2.0) -- (3.625, 2.5);
\draw (-5, 1) -- (-5.9375, 1.75);
\draw[dashed] (-5.9375, 1.75) -- (-6.875, 2.5);
\draw (-0.9, 1.6) .. controls (-1.4, 2.1) and (0.4, 3.9) .. (0.9, 3.4);
\draw[white, line width = 3pt] (-0.445, 1.55) .. controls (-1.5, 5.4) and (-2, 6) .. (-3.5, 3.5);
\draw[thick] (-0.445, 1.55) .. controls (-1.5, 5.4) and (-2, 6) .. (-3.5, 3.5);
\draw[white, line width = 3pt] (-0.965, 1.99) .. controls (-1.5, 5.5) and (-2.5, 6) .. (-2.5, 2);
\draw[thick] (-0.965, 1.99) .. controls (-1.5, 5.5) and (-2.5, 6) .. (-2.5, 2);
\draw[white, line width = 3pt] (0.415, 2.17) -- (-0.25, 5);
\draw[thick] (0.415, 2.17) -- (-0.1835, 4.717);
\draw[thick, dotted] (-0.1835, 4.717) -- (-0.25, 5);
\draw[white, line width = 3pt] (0.94, 2.92) .. controls (2, 4) and (1.5, 4.5) .. (1.2, 4.8);
\draw[thick] (0.94, 2.92) .. controls (2, 4) and (1.5, 4.5) .. (1.2, 4.8);
\draw[thick, dotted] (1.2, 4.8) -- (1, 5);
\draw (1.6, 4.6) node{\(\hat{Q}_j\)};
\draw (0.03, 4.6) node{\(\hat{L}_i\)};
\filldraw (-3.5, 3.5) circle[radius=0.03];
\filldraw (-2.5, 2) circle[radius=0.03];
\draw[densely dotted] (-3.5, 3.5) -- (-3.5, 0);
\draw[densely dotted] (-2.5, 2) -- (-2.5, 0);
\draw (-3.65, 3.5) node{\(q_1'\)};
\draw (-2.2, 2) node{\(q_{r-1}'\)};
\filldraw (-3.5, 0) circle[radius=0.03];
\filldraw (-2.5, 0) circle[radius=0.03];
\filldraw (-0.965, 1.99) circle[radius=0.03];
\filldraw (-0.445, 1.55) circle[radius=0.03];
\filldraw (0.415, 2.17) circle[radius=0.03];
\filldraw (0.94, 2.92) circle[radius=0.03];
\draw (0.27, 2.21) node{\(a_i\)};
\draw (1.01, 2.74) node{\(b_j\)};
\filldraw (0, 0) circle[radius=0.03];
\draw[densely dotted] (-1.4, 2) .. controls (-0.5, 2) and (-0.5, 2) .. (0, 0);
\draw[densely dotted] (1.4, 3) .. controls (0.2, 3) and (0.5, 2) .. (0, 0);
\draw[densely dotted] (0, 0) .. controls (-0.25, -1) and (0.25, -1) .. (0, 0);
\draw (-0.9, 1.6) .. controls (-0.4, 1.1) and (1.4, 2.9) .. (0.9, 3.4);
\draw [decorate, decoration={brace, amplitude=0.75ex}] (-2.5, 5.1) -- (1.1, 5.1);
\draw (-0.65, 5.33) node{\(\hat{R}^\circ\)};
\draw (-1.75, 3) node{\(\pp N_C\)};
\draw (-1.75, 6) node{\(\Bl_C \pp^r\)};
\draw (-1.75, -2) node{\(\pp(N_C \oplus \O_C)\)};
\draw (1.5, -0.17) node{\(C\)};
\draw (0.5, 3.6) node{\(T\)};
\draw (-0.2, 2.65) node{\(M\)};
\draw (0.15, -0.15) node{\(p\)};
\draw (0, -0.9) node{\(f\)};
\draw (-3.5, -0.15) node{\(q_1\)};
\draw (-2.5, -0.15) node{\(q_{r - 1}\)};
\draw (-3, -0.15) node{\(\cdots\)};
\draw (-2.23, 0.5) node{\(\ell_{r-1}\)};
\draw (-3.65, 0.5) node{\(\ell_1\)};
\draw (-0.2, 3.5) node{\(\Lambda\)};
\end{tikzpicture}
\end{center}

Let \(\ell_k \subset \pp(N_C \oplus \O_C)|_{q_k}\) denote the line joining \(q_k\) to \(q_k'\).
These lines are pictured as dotted vertical lines in the above diagram.

Write \(\Delta = M \cap T\).
The linear series \(V \colonequals H^0(\O_T(1)) \oplus H^0(\O_T(1)(\Delta - \Gamma)) \subset H^0(\O_T(1)(\Delta))\)
defines a map \(\pp^1 \simeq T \to \pp V \simeq \pp^{m-1}\) of degree \(m\),
which identifies the two points of \(\Delta\) to a common point.
Fix an embedding \(\pp V \hookrightarrow \pp(N_C \oplus \O_C)\),
which agrees with the identification \(\pp H^0(\O_T(1)) \simeq \Lambda\),
and sends this common point to \(p \colonequals \pp \O_C|_p \in \pp(N_C \oplus \O_C)|_p\).
Composing these maps, we obtain a map \(f \colon \pp^1 \simeq T \to \pp(N_C \oplus \O_C)\),
which is pictured as the dotted curve in the above diagram.
By construction, \(f\) passes through \(\Gamma\) and is nodal at \(p\).
Moreover, composing projection from \(p\) with \(f\) is the identity on \(T\),
and projection from \(p\) sends the Zariski tangent space of the image of \(f\) at \(p\) to \(M\).

We now glue \(\hat{R}^\circ\) to \(f\) and the \(\ell_i\), i.e., we
consider the map \(F \colon \hat{R}^\circ \cup T \cup \ell_1 \cup \cdots \cup \ell_{r - 1} \to X\)
defined by the natural inclusions on \(\hat{R}^\circ\) and the \(\ell_i\),
and by \(f\) on \(T\).
To complete the argument, it suffices to deform
\(F\) to the general fiber
in such a way that preserves its incidence to \(C\).
To check that this is possible, we just need to check that the corresponding
obstruction space vanishes, i.e., that \(H^1(N_F[\negmodalong C]) = 0\).

\subsection{\boldmath The Normal Space \(\pp N_C|_p\)}
One tool that we will use --- both to show in the next section that
\(H^1(N_F[\negmodalong C]) = 0\), and in the following section to analyze
the transformation \([p \posmod M]\) --- is the natural identification of \(\pp N_C|_p\)
with the projection of \(\pp^r\) from \(T_p C\).
Under this projection, \(\bar{q}_1, \bar{q}_2, \ldots, \bar{q}_{r - 1}\)
are a collection of \(r - 1\) points, which are general
subject to the condition of being contained in a hyperplane \(H\).
The conics \(Q_j\) project to the lines through \(\bar{q}_{n + 2j - 1}\) and \(\bar{q}_{n + 2j}\).
The image \(\bar{p}\) of \(p\) is identified with the osculating
\(2\)-plane, which coincides with \(\pp N_{C\to p}|_p \in \pp N_C|_p\).
Under this identification, the points \(a_i\) are identified with \(\bar{q}_i\),
and the points \(b_j\) lie on \(\bar{Q}_j\).
The following diagram illustrates this setup. 
\begin{center}
\begin{tikzpicture}[scale=1.1]
\draw[densely dotted] (3, 0) .. controls (3, -1) and (3, -3) .. (0, -3);
\draw[densely dotted] (0, -3) .. controls (-3, -3) and (-3, 0) .. (0, 0);
\draw[densely dotted] (0, 0) .. controls (2.25, 0) and (2.25, -2) .. (0, -2);
\draw[densely dotted] (0, -2) .. controls (-1.5, -2) and (-3, -1) .. (-3, 0);
\draw (-3, -1) -- (4.5, -1) -- (3, -3.5) -- (-4.5, -3.5) -- (-3, -1);
\filldraw (0, 0) circle[radius=0.02];
\filldraw (-1.125, -1.82) circle[radius=0.02];
\filldraw (-1.875, -2.37) circle[radius=0.02];
\filldraw (1.875, -2.6) circle[radius=0.02];
\filldraw (1.125, -1.77) circle[radius=0.02];
\draw (-1.125, -1.82) -- (-1.875, -2.37);
\draw (1.875, -2.6) -- (1.125, -1.77);
\draw (-1.77, -2.05) node{\(b_1\)};
\draw (1.85, -2.25) node{\(b_{n'}\)};
\filldraw (-1.625, -2.18666) circle[radius=0.02];
\filldraw (1.625, -2.32333) circle[radius=0.02];
\filldraw (-0.75, -2.93) circle[radius=0.02];
\filldraw (0.75, -2.95) circle[radius=0.02];
\draw (0, -0.17) node{\(\bar{p}\)};
\draw (-0.75, -3.15) node{\(\bar{q}_1 = a_1\)};
\draw (0.75, -3.15) node{\(\bar{q}_n = a_n\)};
\draw (0, -3.15) node{\(\cdots\)};
\draw (-2.1, -2.45) node{\(\bar{q}_{n+1}\)};
\draw (2.12, -2.8) node{\(\bar{q}_{r - 2}\)};
\draw (-1, -1.55) node{\(\bar{q}_{n + 2}\)};
\draw (0.9, -1.55) node{\(\bar{q}_{r - 1}\)};
\draw (0, -1.55) node{\(\cdots\)};
\draw (-3.15, -0.1) node{\(\bar{C}\)};
\draw (-1.1, -2.15) node{\(\bar{Q}_1\)};
\draw (1.25, -2.15) node{\(\bar{Q}_{n'}\)};
\draw (4.65, -1.1) node{\(H\)};
\end{tikzpicture}
\end{center}

\subsection{\texorpdfstring{\boldmath Vanishing of \(H^1(N_F[\negmodalong C])\)}{Vanishing of H1(NF[-> C])}}

Given a vector bundle \(E\) on a reducible curve \(X \cup_{\Gamma} Y\), recall that the \defi{Mayer--Vietoris} sequence is
\[0 \to E \to E|_X \oplus E|_Y \to E|_{\Gamma} \to 0.\]
Applying this to \(E = N_F[\negmodalong C]\), we obtain
\begin{equation} \label{mv}
0 \to N_F[\negmodalong C] \to N_{\hat{R}^\circ / \Bl_C \pp^r} \oplus N_f[\negmodalong C] \oplus \bigoplus_{k = 1}^{r - 1} N_{\ell_k/\pp(N_C \oplus \O_C)}[\negmodalong C] \to T_{\pp N_C}|_{\Gamma \cup \Gamma'} \to 0.
\end{equation}
For each of the direct summands in the middle term, we both show that \(H^1\) vanishes,
and extract information about the image of its global sections in \(T_{\pp N_C}|_{\Gamma \cup \Gamma'}\).
We then combine this information to show that the rightmost map is surjective on global sections.

\begin{lem} \label{lem:Nell} We have
\(H^1(N_{\ell_k/\pp(N_C \oplus \O_C)}[\negmodalong C]) = 0\), and
\(H^0(N_{\ell_k/\pp(N_C \oplus \O_C)}[\negmodalong C])\) surjects onto \(T_{\pp N_C}|_{q_k'}\).
\end{lem}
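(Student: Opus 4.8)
The plan is to compute $N_{\ell_k/\pp(N_C \oplus \O_C)}[\negmodalong C]$ outright and see that it is trivial; both assertions are then immediate.

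First I would use that $\ell_k$ lies inside the fiber $F \colonequals \pp(N_C \oplus \O_C)|_{q_k} \simeq \pp^{r-1}$ of the bundle map $\pi \colon \pp(N_C \oplus \O_C) \to C$, where it is a line. Since $F$ is a fiber of a smooth morphism to a curve, $N_{F/\pp(N_C \oplus \O_C)} \simeq \pi^* T_{q_k} C|_F \simeq \O_F$, while $N_{\ell_k/F} \simeq \O_{\pp^1}(1)^{\oplus(r-2)}$ is the normal bundle of a line in $\pp^{r-1}$. The normal bundle sequence of the flag $\ell_k \subset F \subset \pp(N_C \oplus \O_C)$ then reads
\[
0 \to \O_{\pp^1}(1)^{\oplus(r-2)} \to N_{\ell_k/\pp(N_C \oplus \O_C)} \to \O_{\pp^1} \to 0 ,
\]
which splits since $\operatorname{Ext}^1(\O_{\pp^1}, \O_{\pp^1}(1)) = 0$, giving $N_{\ell_k/\pp(N_C \oplus \O_C)} \simeq \O_{\pp^1}(1)^{\oplus(r-2)} \oplus \O_{\pp^1}$.

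Next I would unwind the modification. The intersection $\ell_k \cap C = \ell_k \cap \pp\O_C$ is the single point $q_k$, so $[\negmodalong C]$ amounts to one negative modification at $q_k$ toward the line $s \subset N_{\ell_k/\pp(N_C \oplus \O_C)}|_{q_k}$ spanned by the image of $T_{q_k} C$. Because $C = \pp\O_C$ is a section of $\pi$ while $\ell_k$ lies in a fiber, $T_{q_k} C$ meets $T_{q_k} F$ only in $0$; hence $s$ is a line complementary to the sub-fiber $N_{\ell_k/F}|_{q_k}$ and mapping isomorphically onto the quotient fiber of the displayed sequence. Feeding that short exact sequence through the modification at $q_k$ (via the negative analogue of \eqref{eq:easy_ses}, with the relevant intersection equal to $0$) turns the subbundle into $N_{\ell_k/F}(-q_k) \simeq \O_{\pp^1}^{\oplus(r-2)}$ and leaves the quotient $\O_{\pp^1}$ unchanged — the induced modification there being toward the full fiber — so
\[
N_{\ell_k/\pp(N_C \oplus \O_C)}[\negmodalong C] \simeq \O_{\pp^1}^{\oplus(r-1)} .
\]

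The two conclusions then follow at once: $H^1(\O_{\pp^1}^{\oplus(r-1)}) = 0$, and since $\O_{\pp^1}^{\oplus(r-1)}$ is globally generated, the evaluation $H^0 \to N_{\ell_k/\pp(N_C \oplus \O_C)}[\negmodalong C]|_{q_k'}$ is surjective; as the modification occurs at $q_k \neq q_k'$, this fiber is $N_{\ell_k/\pp(N_C \oplus \O_C)}|_{q_k'}$, which the Mayer--Vietoris sequence \eqref{mv} identifies with $T_{\pp N_C}|_{q_k'}$ using that $\ell_k$ meets the divisor $\pp N_C$ transversely at $q_k'$ (inside $F$ the line $\ell_k$ crosses the hyperplane $\pp N_C|_{q_k}$ transversely, and $\pp N_C$ is not contained in $F$). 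I expect the only step demanding genuine care to be the bookkeeping of the elementary modification in the middle paragraph — in particular checking that $s$ is complementary to $N_{\ell_k/F}|_{q_k}$ so that the modified bundle comes out balanced and trivial; everything else is formal.
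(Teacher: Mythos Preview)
Your proof is correct and follows essentially the same route as the paper: both use the flag sequence for $\ell_k \subset F \subset \pp(N_C \oplus \O_C)$ with $F$ the fiber over $q_k$, and both absorb the negative modification $[\negmodalong C]$ into the subbundle $N_{\ell_k/F}$ (twisting it down by $q_k$) while leaving the quotient $N_{F/\pp(N_C\oplus\O_C)}|_{\ell_k}\simeq\O_{\pp^1}$ untouched. The only cosmetic difference is that you push one step further and identify the resulting bundle explicitly as $\O_{\pp^1}^{\oplus(r-1)}$, reading off both conclusions from global generation, whereas the paper twists by $-q_k'$ and deduces both conclusions from the single vanishing $H^1(N_{\ell_k/\pp(N_C\oplus\O_C)}[\negmodalong C](-q_k'))=0$ via the same exact sequence. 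Incidentally, your rank count $N_{\ell_k/F}\simeq\O_{\pp^1}(1)^{\oplus(r-2)}$ is the correct one; the exponent $r-1$ appearing in the paper's proof is a typo.
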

\begin{proof} Both statements follow from
\(H^1(N_{\ell_k/\pp(N_C \oplus \O_C)}[\negmodalong C](-q_k')) = 0\), which can in turn
be deduced from
\[0 \to N_{\ell_k / \pp(N_C \oplus \O_C)|_{q_k}}(-q_k - q_k') \to N_{\ell_k/\pp(N_C \oplus \O_C)}[\negmodalong C](-q_k') \to N_{\pp(N_C \oplus \O_C)|_{q_k} / \pp(N_C \oplus \O_C)}|_{\ell_k} (-q_k') \to 0,\]
using the isomorphisms
\[N_{\ell_k / \pp(N_C \oplus \O_C)|_{q_k}} \simeq \O_{\ell_k}(1)^{r-1} \quad \text{and} \quad N_{\pp(N_C \oplus \O_C)|_{q_k} / \pp(N_C \oplus \O_C)} \simeq \O_{\pp(N_C \oplus \O_C)|_{q_k}}. \qedhere\]
\end{proof}

\begin{lem} \label{lem:Nf}
We have \(H^1(N_f[\negmodalong C]) = 0\). Moreover, the image \(H^0(N_f[\negmodalong C]) \to T_{\pp N_C}|_\Gamma\)
consists of those deformations of \(\Gamma\) that can be lifted to deformations of \(\Lambda\), i.e., this image
coincides with the full preimage in \(T_{\pp N_C}|_\Gamma\) of the image of \(H^0(N_{\Lambda/\pp N_C}) \to N_{\Lambda/\pp N_C}|_\Gamma\).
\end{lem}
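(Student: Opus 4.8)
The plan is to read off both assertions from an explicit splitting type of $N_f[\negmodalong C]$ obtained by decomposing it along the tower of maps through which $f$ factors. By construction $f$ is the composite $T \xrightarrow{g} \pp V \hookrightarrow \pp(N_C \oplus \O_C)$, where $\pp V = \langle \Lambda, p \rangle$ is a linear $\pp^{m-1}$ inside the fibre $\pp(N_C \oplus \O_C)|_p \simeq \pp^{r-1}$, the hyperplane $\pp N_C|_p$ meets $\pp V$ exactly in $\Lambda$, and $g$ has degree $m$. Now $f$ meets $C = \pp \O_C$ only along its node $p$, the image of the two points of $\Delta = M \cap T$, and $T_p C$ is transverse to the fibre, so it maps isomorphically onto the outermost graded piece $N_{\pp(N_C \oplus \O_C)|_p / \pp(N_C \oplus \O_C)}|_T \simeq \O_T$ of the tower; applying the rule \eqref{eq:mods_ses} along the tower then shows that $[\negmodalong C]$ acts only on the ``horizontal'' part, twisting it down by $\Delta$:
\[0 \to N_{g/\pp^{r-1}}|_T(-\Delta) \to N_f[\negmodalong C] \to \O_T \to 0.\]
Decomposing $N_{g/\pp^{r-1}}|_T$ further by $T \xrightarrow{g} \pp V \hookrightarrow \pp^{r-1}$, using the pointing bundle sequence \eqref{eq:pointing} for projection from $p$ (whose composite with $g$ is the rational normal curve $T \hookrightarrow \Lambda$) and the standard degrees of $N_{g \to p}$ and $N_{T/\Lambda}$, one finds that $N_f[\negmodalong C]$ has splitting type
\[\O_{\pp^1}(m+2), \quad \O_{\pp^1}(m)^{\oplus(m-3)}, \quad \O_{\pp^1}(m-2)^{\oplus(r-m)}, \quad \O_{\pp^1}.\]

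Since $m \geq 3$, every summand has non-negative degree, so $H^1(N_f[\negmodalong C]) = 0$; this is the first assertion.

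For the image assertion, observe that $[\negmodalong C]$ is trivial near $\Gamma$, so $N_f[\negmodalong C]|_\Gamma$ is the gluing-node space, which Mayer--Vietoris identifies with $T_{\pp N_C}|_\Gamma$. The two top summands of the splitting above together constitute $N_{g/\pp V}|_T$, the deformations of $g$ keeping the span $\pp V$ fixed; because $\pp V \cap \pp N_C = \Lambda$ and $T_\gamma f \subset T_\gamma \pp V$, a short linear-algebra check shows that $N_{g/\pp V}|_\gamma$ lands inside $T_\gamma \Lambda \subset T_\gamma \pp N_C$, i.e.\ inside the kernel of $T_{\pp N_C}|_\Gamma \twoheadrightarrow N_{\Lambda/\pp N_C}|_\Gamma$. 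The summand $\O_{\pp^1}(m-2)^{\oplus(r-m)}$, after the twist by $-\Delta$, is canonically $N_{\Lambda/\pp N_C|_p}|_T$ (the twist precisely compensates for $T$ having degree $m$ in $\pp V$ but degree $m-2$ in $\Lambda$), and the final $\O_T$ accounts for the ``move $\Lambda$ to a nearby fibre'' deformations. Hence the $N_{\Lambda/\pp N_C}|_\Gamma$-component of $\Im(H^0(N_f[\negmodalong C]) \to T_{\pp N_C}|_\Gamma)$ equals the image of $H^0(N_{\Lambda/\pp N_C}|_T)$; since the rational normal curve $T \subset \Lambda \simeq \pp^{m-2}$ imposes independent conditions — so $H^1(\mathcal{I}_{T/\Lambda}(1)) = H^1(\mathcal{I}_{T/\Lambda}) = 0$, whence $H^0(N_{\Lambda/\pp N_C}) \to H^0(N_{\Lambda/\pp N_C}|_T)$ is surjective — this image is exactly $\Im(H^0(N_{\Lambda/\pp N_C}) \to N_{\Lambda/\pp N_C}|_\Gamma)$. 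Therefore $\Im(H^0(N_f[\negmodalong C]) \to T_{\pp N_C}|_\Gamma)$ is the full preimage of $\Im(H^0(N_{\Lambda/\pp N_C}) \to N_{\Lambda/\pp N_C}|_\Gamma)$. I would cross-check this with a dimension count: the splitting type gives $\dim \Im(H^0(N_f[\negmodalong C]) \to T_{\pp N_C}|_\Gamma) = (r-1)(m-1)$, which agrees with $\dim[\text{preimage}] = (m-1)(r-m) + 1 + m(m-2)$, so that only the inclusion ``$\subseteq$'' requires argument.

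The step I expect to be the main obstacle is the bookkeeping in the first paragraph — correctly propagating $[\negmodalong C]$ through the tower of normal-bundle sequences, in particular checking that it twists only the horizontal part by $-\Delta$ and leaves the fibre-normal quotient untouched — together with, in the image analysis, the surjectivity of the restriction maps from $\Lambda$ to $T$ and the verification that the $-\Delta$ twist does not enlarge the image at $\Gamma$ (this is what makes it essential that $[\negmodalong C]$ restores the correct degrees rather than producing extra positivity at $\Gamma$). Everything else is formal once the splitting type is established.
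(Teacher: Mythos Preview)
Your approach is correct and essentially the same as the paper's: both filter \(N_f[\negmodalong C]\) via the tower \(T \to \pp V \to \pp(N_C\oplus\O_C)|_p \to \pp(N_C\oplus\O_C)\) together with projection from \(p\), obtaining the same graded pieces \(\O(m+2)\), \(\O(m)^{m-3}\), \(\O(m-2)^{r-m}\), \(\O\). The paper packages the image statement as a diagram chase reducing to \(H^1(K(-\Gamma)) = H^1(N_{\Lambda/\pp N_C}|_T) = 0\), where \(K = \ker(N_f[\negmodalong C] \to N_{\Lambda/\pp N_C}|_T)\) is exactly your \(N_g(-\Delta)\); you instead read off the explicit splitting type and finish with the dimension count --- the underlying content is identical.
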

\begin{proof} 
The map \(\pi_p \circ f \colon T \to T\) is the identity map, so the pointing bundle exact sequence yields a surjection
\[N_f[\negmodalong C] \to N_T,\]
which we may further compose with the surjection \(N_T \to N_{\Lambda/ \pp N_C}|_T\) coming from the inclusion \(T \subset \Lambda\).  Define \(K\) via the exact sequence
\[0 \to K \to N_f[\negmodalong C] \to N_{\Lambda / \pp N_C}|_T \to 0.\]
By considering the diagram
\begin{center}
\begin{tikzcd}[column sep=small]
&&& H^1(K(-\Gamma)) \arrow[d] \\
H^0(K) \arrow[r] \arrow[d]& H^0(N_f[\negmodalong C]) \arrow[r] \arrow[d]& H^0(N_{\Lambda/\pp N_C}|_T) \arrow[r] \arrow[d]& H^1(K) \arrow[r] \arrow[d]& H^1(N_f[\negmodalong C]) \arrow[r] & H^1(N_{\Lambda/\pp N_C} |_T)\\
K|_\Gamma \arrow[r]\arrow[d] & T_{\pp N_C}|_\Gamma \arrow[r] & N_{\Lambda/\pp N_C}|_\Gamma \arrow[r] & 0 \\
H^1(K(-\Gamma))
\end{tikzcd}
\end{center}
and noting that \(H^0(N_\Lambda) \simeq H^0(N_\Lambda|_T)\), it
 suffices to show that
\[H^1(K(-\Gamma)) = H^1(N_{\Lambda / \pp N_C}|_T) = 0.\]
The first vanishing statement follows from the pointing bundle sequence
\[0 \to N_{f \to C}(-\Delta) \simeq \O_T(\Delta + \Gamma) \to K \to N_{T/\Lambda} \simeq \O_T(\Gamma)^{\oplus (m - 3)} \to 0.\]
The second vanishing statement follows from the sequence:
\[0 \to N_{\Lambda / \pp N_C|_p}|_T \simeq \O_T(1)^{r - m} \to N_{\Lambda / \pp N_C}|_T \to N_{\pp N_C|_p / \pp N_C}|_T \simeq \O_T \to 0. \qedhere\]
\end{proof}

\noindent
We finally consider the bundle
\[N_{\hat{R}^\circ / \Bl_C \pp^r} \simeq \bigoplus_{i = 1}^n N_{\hat{L}_i / \Bl_C \pp^r} \oplus \bigoplus_{j = 1}^{n'} N_{\hat{Q}_j / \Bl_C \pp^r}.\]
To describe the images of
\[H^0(N_{\hat{L}_i / \Bl_C \pp^r}) \to T_{\pp N_C}|_{a_i} \qand H^0(N_{\hat{Q}_j / \Bl_C \pp^r}) \to T_{\pp N_C}|_{b_j},\]
we define
\[A_i = T_{q_i} C \qand B_j = \langle T_{q_{n+2j - 1}} C, T_{q_{n+2j}} C\rangle,\]
for the tangent line or span of tangent lines,
and \(\bar{A}_i\) and \(\bar{B}_j\) for their projections from \(T_p C\).

\begin{lem} \label{lem:NLi} We have \(H^1(N_{\hat{L}_i / \Bl_C \pp^r}) = 0\).
Moreover the image of \(H^0(N_{\hat{L}_i / \Bl_C \pp^r}) \to T_{\pp N_C}|_{a_i}\) has the following two properties:
\begin{enumerate}
\item It surjects onto \(T_C|_{p}\).
\item\label{part2} The kernel of the map from the image to \(T_C|_{p}\) is precisely
\(T_{a_i} \bar{A}_i\).
\end{enumerate}
\end{lem}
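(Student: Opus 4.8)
The plan is to identify the bundle $N_{\hat L_i/\Bl_C\pp^r}$ explicitly and then read off its cohomology and the evaluation map into $T_{\pp N_C}|_{a_i}$ geometrically. Since $L_i$ meets $C$ quasitransversely, and only at the points $p$ and $q_i$, the proper transform $\hat L_i$ maps isomorphically to $L_i$ and meets $E := \pp N_C\subset\Bl_C\pp^r$ transversely, at the two points $a_i$ (over $p$) and $q_i'$ (over $q_i$). The differential of $\beta\colon\Bl_C\pp^r\to\pp^r$ realizes $N_{\hat L_i/\Bl_C\pp^r}$ as a subsheaf of $N_{L_i}\simeq\O_{\pp^1}(1)^{\oplus(r-1)}$ (pulled back along $\beta|_{\hat L_i}\colon\hat L_i\to L_i$), equal to it away from $\{p,q_i\}$; a local coordinate computation at $p$ and at $q_i$ identifies this subsheaf as $N_{L_i}[p\negmod\bar T_p][q_i\negmod\bar T_{q_i}]$, where $\bar T_p$ (respectively $\bar T_{q_i}$) is the image of the tangent direction of $C$ at $p$ (respectively $q_i$) in the fiber $N_{L_i}|_p$ (respectively $N_{L_i}|_{q_i}$) --- a line, by quasitransversality. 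Consequently $N_{\hat L_i/\Bl_C\pp^r}$ sits in an exact sequence $0\to N_{\hat L_i/\Bl_C\pp^r}\to\O_{\pp^1}(1)^{\oplus(r-1)}\to\mathcal T\to 0$ with $\mathcal T$ a torsion sheaf of length $2(r-2)$ supported on $\{p,q_i\}$. Since $H^0(\O_{\pp^1}(1)^{\oplus(r-1)})$ already surjects onto the fibers at $p$ and $q_i$, it surjects onto $\mathcal T$, and therefore $H^1(N_{\hat L_i/\Bl_C\pp^r})=0$ and $h^0(N_{\hat L_i/\Bl_C\pp^r})=2(r-1)-2(r-2)=2$. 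This proves the first assertion.

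For the rest, transversality of $\hat L_i$ and $E$ at $a_i$ identifies $T_{\pp N_C}|_{a_i}=T_{a_i}E$ with $N_{\hat L_i/\Bl_C\pp^r}|_{a_i}$, and the map in the lemma is evaluation at $a_i$ under this identification. Via $\beta$, a global section of $N_{\hat L_i/\Bl_C\pp^r}$ is a first-order deformation of $L_i$ in $\pp^r$ that remains secant to $C$ near $p$ and near $q_i$ (the conditions $s(p)\in\bar T_p$ and $s(q_i)\in\bar T_{q_i}$ being exactly the first-order conditions for the two points of secancy to stay on $C$); its image in $T_{\pp N_C}|_{a_i}$ records the resulting first-order motion of the point $a_i=\bar q_i\in\pp N_C|_p$, whose base point on $C$ is the point of secancy near $p$, and composing with the projection $T_{\pp N_C}|_{a_i}\to T_C|_p$ records the motion of that base point.

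I would now produce two explicit one-parameter families. Letting $p'$ vary along $C$, the secant lines $\overline{p'q_i}$ through the fixed point $q_i$ give a first-order deformation of $L_i=\overline{pq_i}$ remaining secant to $C$, moving the base point of $a_i$ along $C$ with nonzero velocity; hence the image of $H^0(N_{\hat L_i/\Bl_C\pp^r})$ surjects onto the one-dimensional $T_C|_p$, proving part (1). Letting $q'$ vary along $C$, the secant lines $\overline{pq'}$ through the fixed point $p$ also remain secant to $C$, but fix the base point near $p$, so their image lies in the kernel of the projection to $T_C|_p$; under $L\mapsto[\overline{T_pL}]\in\pp N_C|_p$ this family is carried to the curve $\bar C$, the image of $C$ under projection from the embedded tangent line $T_pC$ as in Section~\ref{sec:onion_specialization}, so the image of this family is the tangent direction of $\bar C$ at $\bar q_i=a_i$. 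Since $\bar A_i$ is the projection of the tangent line $A_i=T_{q_i}C$ from $T_pC$, and this projection is a local immersion at the general point $q_i$, the line $\bar A_i$ is precisely the embedded tangent line to $\bar C$ at $\bar q_i$, so $T_{a_i}\bar A_i$ is exactly this tangent direction. Finally, the image of $H^0(N_{\hat L_i/\Bl_C\pp^r})$ has dimension at most $2$ and surjects onto the one-dimensional $T_C|_p$, so the kernel of that surjection is at most one-dimensional; containing the nonzero line $T_{a_i}\bar A_i$, it equals it, proving part (2).

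The step I expect to be the main obstacle is the first one: pinning down $N_{\hat L_i/\Bl_C\pp^r}$ as the stated double negative modification, and checking that the evaluation map at $a_i$ really computes the motion of $a_i\in\pp N_C|_p$ as described --- together with the modest general-position inputs ($\bar A_i$ is a genuine line, equal to the embedded tangent line of $\bar C$ at $\bar q_i$, and $\bar C$ is immersed there), which hold because $p$ and $q_i$ are general points of the nondegenerate curve $C$.
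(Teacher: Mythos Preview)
Your proof is correct, but it takes a genuinely different route from the paper's. The paper never computes \(h^0(N_{\hat L_i/\Bl_C\pp^r})\) directly. Instead it uses the exact sequence
\[0 \to N_{L_i/\pp^r}[q_i \negmodalong C](-p) \to N_{\hat L_i/\Bl_C\pp^r} \to T_C|_p \to 0,\]
then filters the subsheaf further using the normal bundle sequence for \(L_i \subset \overline{L_i A_i}\), and runs the same diagram chase as in the proof of Lemma~\ref{lem:Nf}: it is enough to check that \(H^1(N_{L_i/\overline{L_iA_i}}(-2p)) = 0\) and that \(N_{\overline{L_iA_i}/\pp^r}|_{L_i}(-q_i-p)\) has no cohomology, both of which follow from explicit identifications with \(\O_{\pp^1}(-1)\) bundles. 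Part~(2) then falls out because the fiber \(N_{L_i/\overline{L_iA_i}}(-p)|_p\) is identified with \(T_{a_i}\bar A_i\) under projection from \(T_pC\).

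Your approach instead pins down \(N_{\hat L_i/\Bl_C\pp^r}\) globally as \(N_{L_i}[p\negmod \bar T_p][q_i\negmod \bar T_{q_i}]\), observes \(h^0 = 2\), and then exhibits a basis of \(H^0\) via the two explicit secant-line families \(\overline{p'q_i}\) and \(\overline{pq'}\). This is more geometric and arguably more transparent for \(\hat L_i\), and avoids the auxiliary span \(\overline{L_iA_i}\) entirely. The paper's approach, on the other hand, is set up to run in exact parallel with the proof of Lemma~\ref{lem:NQj} for the conics \(\hat Q_j\), where there is no equally clean global description and the filtration-and-diagram-chase method is what works; so its uniformity is the payoff.
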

\begin{proof}
We have an exact sequence
\[0 \to N_{L_i / \pp^r}[q_i \negmodalong C](-p) \to N_{\hat{L}_i / \Bl_C \pp^r} \to T_C|_{p} \to 0.\]
Using this, the kernel in \eqref{part2} is the image of \(H^0(N_{L_i / \pp^r}[q_i \negmodalong C](-p))\) in  \(T_{\pp N_C}|_{a_i}\).
Moreover, the normal bundle exact sequence for \(L_i\) in the span \(\bar{L_i A_i}\) gives
\[0 \to N_{L_i / \bar{L_i A_i}}(-p) \to N_{L_i / \pp^r}[q_i \negmodalong C](-p) \to N_{\bar{L_i A_i} / \pp^r}|_{L_i} (- q_i-p) \to 0,\]
and \(N_{L_i / \bar{L_i A_i}}(-p)|_p\) is identified with \(T_{a_i}\bar{A}_i\) under projection from \(T_pC\).

\vspace{5pt}

\begin{minipage}{.95\textwidth}
\begin{center}
\begin{tikzpicture}
\filldraw (1.5, -.5) circle[radius=0.05];
\draw (1.45, -.45) node [below right] {\(q_i\)};
\draw (2,-0.2) node [above right] {\({A_i}\)};
\draw (1.3,-.1) node {\({L_i}\)};
\draw (-0.6,0) node {\({\bar{L_iA_i}}\)};
\draw (1.5, -.5) .. controls (2.5, 0.5) and (1, 1) .. (-1, 1);
\draw (1.5, -.5) .. controls (0.25, -1.75) and (-2, 0) .. (-2, 1);
\draw[white, line width = 3pt] (-1.05, 1) .. controls (-1.7, 1) and (-2.5, 0.5) .. (-3, 0);
\draw (-1, 1) .. controls (-1.7, 1) and (-2.5, 0.5) .. (-3, 0);
\filldraw (-1, 1) circle[radius=0.05];
\draw (-1,0.75) node{\(p\)};
\draw[white, line width = 3pt] (0,0) -- (1, -1);
\draw[white, line width = 3pt] (2,0) -- (0.5, 0.5);
\draw (-1,1) -- (1, -1) -- (2,0) -- (-1, 1);
\draw[thick] (-1, 1) -- (1.5, -.5);
\draw[thick] (1, -1) -- (2,0);
\draw (-3.2, 0) node{\(C\)};
\end{tikzpicture}
\end{center}
\end{minipage}

\vspace{-10pt}

\noindent
The same diagram chase as in Lemma \ref{lem:Nf} implies that it suffices to show:
\[H^1(N_{L_i / \bar{L_i A_i}}(-2p)) = H^0(N_{\bar{L_i A_i} / \pp^r}|_{L_i} (- q_i-p)) = H^1(N_{\bar{L_i A_i} / \pp^r}|_{L_i} (- q_i-p)) = 0.\]
These statements follow from the isomorphisms
\[N_{L_i / \bar{L_i A_i}}(-2p) \simeq \O_{L_i}(-1) \quad \text{and} \quad N_{\bar{L_i A_i} / \pp^r}|_{L_i} (-p - q_i) \simeq \O_{L_i}(-1)^{r - 2}. \qedhere\]
\end{proof}

\begin{lem} \label{lem:NQj} We have \(H^1(N_{\hat{Q}_j / \Bl_C \pp^r}) = 0\).
Moreover the image of \(H^0(N_{\hat{Q}_j / \Bl_C \pp^r}) \to T_{\pp N_C}|_{b_j}\) has the following two properties:
\begin{enumerate}
\item It surjects onto \(T_C|_{p}\).
\item The kernel of the map from the image to \(T_C|_{p}\) is precisely
\(T_{b_j} \bar{B}_j\).
\end{enumerate}
\end{lem}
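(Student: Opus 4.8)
The plan is to mirror the proof of Lemma~\ref{lem:NLi}, with the line $L_i$ replaced by the conic $Q_j$, the point $q_i$ replaced by the pair $\{q_{n+2j-1}, q_{n+2j}\}$, the span $\bar{L_iA_i}$ replaced by $W \colonequals \langle Q_j, B_j \rangle$, and $\bar{A}_i$ replaced by $\bar{B}_j$. Since $\hat{Q}_j$ meets the exceptional divisor of $\Bl_C \pp^r$ transversely at $q_{n+2j-1}'$ and $q_{n+2j}'$, and meets it at $b_j$ over $p$ with exactly one extra normal direction along $C$, one obtains the exact sequence
\[0 \to N_{Q_j/\pp^r}[q_{n+2j-1} + q_{n+2j} \negmodalong C](-p) \to N_{\hat{Q}_j / \Bl_C \pp^r} \to T_C|_p \to 0.\]
The same diagram chase as in Lemma~\ref{lem:Nf} then reduces the two assertions of the lemma to (i) vanishing of $H^1$ of the first term after a further twist down by $p$, which gives surjectivity onto $T_C|_p$; and (ii) identifying the image of $H^0$ of the first term in $T_{\pp N_C}|_{b_j}$ with $T_{b_j}\bar{B}_j$.

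To establish (i) and (ii), I would filter $N_{Q_j/\pp^r}$ along the flag $Q_j \subset \langle Q_j \rangle \subset W = \langle Q_j, B_j \rangle \subset \pp^r$, which for general $C$ is a flag $\pp^1 \subset \pp^2 \subset \pp^4 \subset \pp^r$ (note that a $Q_j$ is present only when $r \geq 5$, so $\pp^4 \subseteq \pp^r$). Because both tangent lines spanning $B_j$ lie in $W$, the modifications $[q_{n+2j-1} + q_{n+2j}\negmodalong C]$ act on $N_{W/\pp^r}|_{Q_j} \simeq \O_{\pp^1}(2)^{\oplus(r-4)}$ and on $N_{Q_j/\langle Q_j\rangle} \simeq \O_{\pp^1}(4)$ as mere twists by $-q_{n+2j-1} - q_{n+2j}$, while on the middle graded piece $N_{\langle Q_j\rangle/W}|_{Q_j} \simeq \O_{\pp^1}(2)^{\oplus 2}$ they are elementary modifications towards general $1$-dimensional subspaces; inserting $\langle Q_j \rangle$ into the flag is what keeps all the splitting-type computations at rank $\leq 2$. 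With the appropriate twists by $p$ (one copy for the fiber-at-$p$ identification, an extra copy for the $H^1$-vanishing, just as $N_{L_i/\bar{L_iA_i}}$ was twisted by $-2p$ and $N_{\bar{L_iA_i}/\pp^r}|_{L_i}$ by $-q_i - p$ in Lemma~\ref{lem:NLi}), every graded piece becomes a direct sum of line bundles $\O_{\pp^1}(e)$ with $e \geq -1$, hence has vanishing $H^1$; and the graded pieces entering the fiber at $p$ are globally generated, so $H^0$ surjects onto that fiber, which is identified with $T_{b_j}\bar{B}_j$ under projection from $T_pC$ (using that $W$, and hence $B_j$, is met by $T_pC$ only at $p$, so that $\bar{W} = \bar{B}_j$ and $b_j \in \bar{B}_j$), exactly as $N_{L_i/\bar{L_iA_i}}(-p)|_p$ was identified with $T_{a_i}\bar{A}_i$.

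I expect the main obstacle to be the bookkeeping: one must track how each of the two modifications $[q_{n+2j-1}+q_{n+2j}\negmodalong C]$ --- which, being modifications of the rank-$(r-1)$ bundle $N_{Q_j/\pp^r}$ towards a line subbundle, each drop its degree by $r-2$ --- distributes across the successive extensions of the flag, and keep straight the resulting twists by $q_{n+2j-1}$, $q_{n+2j}$, and $p$; because a conic has self-intersection $4$ rather than a line's $1$, the numerology is tighter than in Lemma~\ref{lem:NLi}. The only input beyond this bookkeeping is the elementary fact that a general elementary modification of $\O_{\pp^1}(2)^{\oplus 2}$ towards a general line at each of two general points is again balanced, i.e.\ isomorphic to $\O_{\pp^1}(1)^{\oplus 2}$. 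Finally one should verify that for general $C$ the flag has the asserted dimensions --- in particular that $\langle Q_j, B_j\rangle$ is a $\pp^4$ met by $T_pC$ only at $p$ --- and that imposing the global tangency $T_pC \subset T_pR^\circ$ does not destroy the genericity of the individual conic $Q_j$ used in these splitting-type claims.
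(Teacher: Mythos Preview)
Your proposal is correct and follows essentially the same argument as the paper. You use the same blow-up exact sequence, the same flag \(Q_j \subset \langle Q_j \rangle \subset \langle Q_j, B_j\rangle \subset \pp^r\) (the paper first splits off \(N_{\langle Q_j, B_j\rangle/\pp^r}|_{Q_j}\) and then filters \(N_{Q_j/\langle Q_j, B_j\rangle}\) by \(N_{Q_j/\langle Q_j\rangle}\), but the graded pieces are identical to yours), the same identification of the fiber at \(p\) with \(T_{b_j}\bar{B}_j\), and the same key computation that the two negative modifications of \(\O_{\pp^1}(2)^{\oplus 2}\) yield \(\O_{\pp^1}(-1)^{\oplus 2}\) after twisting by \(-2p\). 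Your observation that \(\bar{W}=\bar{B}_j\) is a clean way to phrase the identification the paper asserts without further comment.
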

\begin{proof}  We will imitate the proof of Lemma \ref{lem:NLi}.
We have an exact sequence
\[0 \to N_{Q_j / \pp^r}[q_{n+2j-1} + q_{n+2j} \negmodalong C](-p) \to N_{\hat{Q}_j / \Bl_C \pp^r} \to T_C|_{b_j} \to 0.\]
Moreover, the normal bundle exact sequence for \(Q_j\) in the span \(\bar{Q_j B_j}\) gives
\begin{multline*}
0 \to N_{Q_j / \bar{Q_j B_j}}[q_{n+2j-1} + q_{n+2j} \negmodalong C](-p) \to N_{Q_j / \pp^r}[q_{n+2j-1} + q_{n+2j} \negmodalong C](-p) \\
\to N_{\bar{Q_j B_j} / \pp^r}|_{Q_j} (- q_{n+2j-1} - q_{n+2j}-p) \to 0,
\end{multline*}
and \(N_{Q_j / \bar{Q_j B_j}}[q_{n+2j-1} + q_{n+2j} \negmodalong C](-p)|_p\) is identified with \(T_{b_j}\bar{B}_j\) under projection from \(T_pC\).
Our goal is therefore to show both
\[H^1(N_{Q_j / \bar{Q_j B_j}}[q_{n+2j-1} + q_{n+2j} \negmodalong C](-2p)) = 0\]
and
\[H^0(N_{\bar{Q_j B_j} / \pp^r}|_{Q_j} (- q_{n+2j-1} - q_{n+2j}-p)) = H^1(N_{\bar{Q_j B_j} / \pp^r}|_{Q_j} (- q_{n+2j-1} - q_{n+2j}-p)) = 0.\]
The first vanishing statement follows from the exact sequence:
\begin{multline*}
0 \to \big[N_{Q_j/\bar{Q_j}}(-q_{n+2j-1}  - q_{n+2j} - 2p) \simeq \O_{\pp^1}\big] \to N_{Q_j / \bar{Q_j B_j}}[q_{n+2j-1} + q_{n+2j} \negmodalong C](-2p) \\
\to \big[N_{\bar{Q_j} / \bar{Q_j B_j}}|_{Q_j}[q_{n+2j-1} + q_{n+2j} \negmodalong C](-2p) \simeq \O_{\pp^1}(-1)^{\oplus 2}\big] \to 0.
\end{multline*}
The second vanishing statement follows from the isomorphism
\[N_{\bar{Q_j B_j} / \pp^r}|_{Q_j} (- q_{n+2j-1} - q_{n+2j}-p) \simeq O_{\pp^1}(-1)^{\oplus (r - 4)}. \qedhere\]
\end{proof}

Combining these lemmas, we immediately see that \(H^1\) of the middle terms in the sequence \eqref{mv}
vanish. We now see that the rightmost map of \eqref{mv} is surjective on global sections,
as follows. First we apply Lemma~\ref{lem:Nell} to handle the points of \(\Gamma'\); this reduces our problem
to showing the surjectivity of
\[H^0(N_{\hat{R}^\circ / \Bl_C \pp^r}) \oplus H^0(N_f[\negmodalong C]) \to T_{\pp N_C}|_\Gamma.\]
Applying Lemmas~\ref{lem:NLi} and~\ref{lem:NQj}, we see that the composition to \((T_C|_p)^m\) is surjective.
It thus suffices to show that the image contains the kernel of \(T_{\pp N_C}|_\Gamma \to (T_C|_p)^m\),
i.e., \(T_{\pp N_C|_p}|_\Gamma\).

Since removing any point from \(\Gamma\) yields a linearly independent collection of points, any deformation in \(\pp N_C|_p\) of all but one point of \(\Gamma\) lifts to a deformation of \(\Lambda\).
Combining Lemma~\ref{lem:Nf} with Lemma~\ref{lem:NLi}, we therefore conclude that the image contains each
\(T_{\langle \Lambda, \bar{A}_i\rangle}|_\Gamma\). Similarly, combining
Lemma~\ref{lem:Nf} with Lemma~\ref{lem:NQj}, we conclude that the image contains each
\(T_{\langle \Lambda, \bar{B}_j\rangle}|_\Gamma\).
Since the \(T_{\langle \Lambda, \bar{A}_i\rangle}|_\Gamma\) and \(T_{\langle \Lambda, \bar{B}_j\rangle}|_\Gamma\)
span \(T_{\pp N_C|_p}|_\Gamma\), the desired result follows.

\subsection{\texorpdfstring{\boldmath The transformation \([p \posmod M]\)}{The transformation [p -> M]}}
We next show that \( M\) is ``suitably generic''
in \(\pp N_C|_p\).  

\begin{lem} \label{M:unprojected} Fix a general BN-curve \(C\) and a general point \(p \in C\).
\begin{enumerate}
\item \label{n2} If \(n \geq 2\): As \(q_1, q_2, \ldots, q_{r-1}\) vary, \(M\) is linearly general in \(\pp N_C|_p\).
\item \label{n3} If \(n \geq 3\): This remains true if we fix \(q_{n + 1}, q_{n + 2}, \ldots, q_{r - 1} \in C\) to be general.
In other words, as just the remaining points \(q_1, q_2, \ldots, q_n\) vary, \(M\) is still linearly general in \(\pp N_C|_p\).
\end{enumerate}
\end{lem}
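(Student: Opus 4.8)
The plan is to reduce the lemma to an elementary transversality statement about secant lines of the projected curve $\bar{C}\subset\pp N_C|_p$. By definition, a family of $2$-dimensional subspaces of $N_C|_p$ is linearly general exactly when for \emph{every} proper subspace $U\subsetneq N_C|_p$ \emph{some} member of the family is transverse to $U$. Transversality to a fixed $U$ is an open condition on the Grassmannian, so if a single line $M_0$ lying in the closure of our family of $M$'s is transverse to $U$, then (the open transverse locus contains $M_0$, hence meets the family) some $M_b$ is transverse to $U$ as well. Thus it suffices, for each $U$, to exhibit one good degeneration of $M$.

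The key observation is that $M$ has a very convenient degeneration. Since $a_1,a_2\in\Gamma\subset T$, the line $\langle a_1,a_2\rangle$ is itself a $2$-secant line of $T$, hence a limit of general $2$-secant lines of $T$ (let the two points of secancy run to $a_1$ and to $a_2$). Under the identification of $\pp N_C|_p$ with the projection of $\pp^r$ from $T_pC$ one has $a_i=\bar{q}_i$, so this degenerate $M$ is the honest secant line $\langle\bar{q}_1,\bar{q}_2\rangle$ of $\bar{C}$. It therefore suffices to show that, as $q_1$ and $q_2$ vary, the secant lines $\langle\bar{q}_1,\bar{q}_2\rangle$ form a linearly general family in $\pp N_C|_p$; and for this I only need that $q_1,q_2$ can be taken to be \emph{general} points of $C$. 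In case~\ref{n2} this is immediate, as all the $q_i$ are free. In case~\ref{n3} the points $q_{n+1},\dots,q_{r-1}$ are fixed general, but the linear span of $T_pC$, $q_1$, $q_2$, $q_{n+1},\dots,q_{r-1}$ has projective dimension at most $r-n+2\le r-1$ precisely because $n\ge3$; hence it lies in a hyperplane $H$ for arbitrary $q_1,q_2$, and completing the configuration by taking $q_3,\dots,q_n$ on $H\cap C$ and choosing the conics $Q_j$ so that $T_pC\subset T_pR^\circ$ (possible by the dimension count of the setup) leaves $q_1,q_2$ general.

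It then remains to check that the secant lines $\langle\bar{q}_1,\bar{q}_2\rangle$ of $\bar{C}$ are linearly general, using only that $\bar{C}$ is nondegenerate (being the projection of the nondegenerate curve $C$). Fix a proper linear subspace $\pp U\subsetneq\pp N_C|_p$. If $\pp U$ is a hyperplane, pick $\bar{q}_1\in\bar{C}\setminus\pp U$ (possible by nondegeneracy) and any $\bar{q}_2\ne\bar{q}_1$ on $\bar{C}$; then $\langle\bar{q}_1,\bar{q}_2\rangle\not\subset\pp U$, so it is transverse to $\pp U$. If $\pp U$ has codimension at least $2$, pick a general $\bar{q}_1\in\bar{C}\setminus\pp U$; the cone $\Sigma$ over $\bar{C}$ with vertex $\bar{q}_1$ is a surface (as $\bar{C}$ is not a line), so $\Sigma\cap\pp U$ is finite, and since $\bar{q}_1\notin\pp U$ it meets only finitely many of the rulings $\langle\bar{q}_1,\bar{q}_2\rangle$; hence a general $\bar{q}_2\in\bar{C}$ gives $\langle\bar{q}_1,\bar{q}_2\rangle\cap\pp U=\emptyset$. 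In both cases we obtain a secant line of $\bar{C}$ transverse to $\pp U$, which by the first paragraph completes the proof. The only step demanding genuine care is the realizability of the degenerate configuration in case~\ref{n3} --- the dimension count placing $T_pC,q_1,q_2,q_{n+1},\dots,q_{r-1}$ in a common hyperplane while keeping $q_1,q_2$ general --- together with the bookkeeping that $\langle a_1,a_2\rangle$ genuinely lies in the closure of the family of $M$'s; everything else is soft.
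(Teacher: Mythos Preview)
Your proof is correct and follows essentially the same approach as the paper. Both arguments specialize \(M\) to the secant line \(\langle a_1,a_2\rangle = \langle\bar q_1,\bar q_2\rangle\), use the dimension count (that \(T_pC\) together with \(q_1,q_2,q_{n+1},\dots,q_{r-1}\) span at most a hyperplane when \(n\ge 3\)) to show \(q_1,q_2\) remain general, and then invoke nondegeneracy of \(\bar C\) to conclude that its secant lines are linearly general. The paper's version is terser---it works directly in \(\pp N_C|_p\simeq\pp^{r-2}\) and only explicitly treats the critical codimension-\(2\) case---but the content is the same.
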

\begin{proof}
Fix \(\Lambda \subset \pp N_C|_p\) of codimension~\(2\); we want to
show that \(M\) can be disjoint from \(\Lambda\).
In first case, \(\bar{q}_1\) and \(\bar{q}_2\) are general points on \(\bar{C}\).
In the second case, since any 
\((r - 1 - n) + 2 = r + 1 - n\leq r-2\)
points in \(\pp^{r-2}\) lie in a hyperplane,
the points \(\bar{q}_1\) and \(\bar{q}_2\) remain general even as  \(q_{n + 1}, q_{n + 2}, \ldots, q_{r - 1} \in C\) are fixed.
In either case, the line between \(\bar{q}_1\) and \(\bar{q}_2\) is therefore disjoint from \(\Lambda\).
This completes the proof because
\(M\) can be specialized to the line between \(\bar{q}_1\) and \(\bar{q}_2\).
\end{proof}

Lemma~\ref{M:unprojected}\eqref{n3} is sharp, in the sense that the conclusion is always false if \(n = 2\)
(the subspace \(M\) is never transverse to \(\Lambda = \bar{q_3 q_4 \cdots q_{r - 1}}\)).
Nevertheless, there is a variant that does hold for \(n = 2\).
By part~\eqref{n2}, the general such \(M\) is disjoint from \(\bar{p} = \pp N_{C \to p}|_p \in \pp N_C|_p\).
We may therefore ask for the weaker conclusion that the image of \(M\) is linearly general
in the quotient \(\pp (N_C / N_{C \to p})|_p\),
i.e., that \(M\) is transverse to any \(\Lambda\) containing \(\bar{p}\).
In this case, the analog of Lemma~\ref{M:unprojected}\eqref{n3} holds apart from a single counterexample:

\begin{lem} \label{M:projected}
Suppose \(n = 2\), and fix a general BN-curve \(C\) and general points \(p, q_3, q_4, \ldots, q_{r - 1} \in C\).
If \(C\) is not an elliptic normal curve, then as \(q_1, q_2\) vary, \(M\) is linearly general in \(\pp (N_C / N_{C \to p})|_p\).
\end{lem}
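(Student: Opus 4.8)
The plan is to run the argument of Lemma~\ref{M:unprojected} one dimension further down, tracking the relevant lines inside the quotient bundle. First I would make the geometry explicit: from the description of $\pp N_C|_p$ in the previous subsection, the curve $\bar C$ and the points $\bar q_1,\dots,\bar q_{r-1}$ all lie in a hyperplane $H\subset \pp N_C|_p$, whereas $\bar p=\pp N_{C\to p}|_p$ does not, so projection away from $\bar p$ restricts to an isomorphism $H\xrightarrow{\sim}\pp(N_C/N_{C\to p})|_p$. Since the conics $\bar Q_j$, and hence the points $b_j$, lie in $H$, we get $\Gamma\subset H$, so $T\subset\langle\Gamma\rangle\subset H$ and $M\subset H$; it therefore suffices to show the family of lines $M$ is linearly general in $H\simeq\pp^{r-3}$. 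Fixing a codimension-two subspace $W\subset H$ and setting $\Lambda_W\colonequals\langle W,\bar p\rangle\subset\pp N_C|_p$ (codimension two, containing $\bar p$; a line in $H$ misses $W$ iff it misses $\Lambda_W$), and using that $\overline{a_1a_2}=\overline{\bar q_1\bar q_2}$ is a $2$-secant to $T$ and that ``missing $\Lambda_W$'' is open, it is enough to produce general $q_1,q_2\in C$, with $2p+q_1+\dots+q_{r-1}$ spanning a hyperplane, whose chord $\overline{\bar q_1\bar q_2}$ misses $\Lambda_W$.

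Next I would translate both conditions into the language of pencils. Let $\psi,\phi\colon\bar C\dashrightarrow\pp^1$ be the projections from $\Xi\colonequals\langle\bar q_3,\dots,\bar q_{r-1}\rangle$ and from $\Lambda_W$; pulled back to $C$, the first is the pencil $|\O_C(1)(-2p-q_3-\dots-q_{r-1})|$, of degree $d-r+1$, and the second, since $\bar p\in\Lambda_W$ forces the osculating plane at $p$ into the pullback of $\Lambda_W$, is a subpencil of $|\O_C(1)(-3p)|$. For general $\bar q_1,\bar q_2$ the chord $\overline{\bar q_1\bar q_2}$ meets $\Xi$ exactly when $\psi(\bar q_1)=\psi(\bar q_2)$ and misses $\Lambda_W$ exactly when $\phi(\bar q_1)\neq\phi(\bar q_2)$; so the desired $q_1,q_2$ exist unless $\phi$ is constant on the fibers of $\psi$, i.e.\ unless $\phi=\rho\circ\psi$ for some $\rho\colon\pp^1\to\pp^1$. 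Comparing $\phi^*\O(1)=\psi^*\O(e)$ with $e\colonequals\deg\rho$, and using that $\psi$ is unramified at $\bar p$, such a factorization forces $e\geq 4$ together with effectivity of the class
\[\gamma_e\ \colonequals\ (2e-3)\,p+e\,(q_3+\dots+q_{r-1})-(e-1)\,\O_C(1),\qquad \deg\gamma_e=d-3-e(d-r+1).\]

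To rule this out, observe that since $C$ is general, $p$ and $q_3,\dots,q_{r-1}$ are general points; because $q_3+\dots+q_{r-1}$ sweeps all of $\Pic^{r-3}(C)$ once $r-3\geq g$ and multiplication by $e$ is surjective on the Jacobian, $\gamma_e$ sweeps all of $\Pic^{\deg\gamma_e}(C)$, hence for a general choice it is a general line bundle of its degree and so non-effective whenever $\deg\gamma_e<g$. As $\deg\gamma_e$ is largest at $e=4$, the chord specialization — and hence the lemma — succeeds whenever $3d+g>4r-7$. Combining $3d+g\leq 4r-7$ with $d\geq g+r$ gives $4g\leq r-7$, leaving a short explicit list of tuples, among them the elliptic normal case $(d,g,r)=(r+1,1,r)$, where $g=1$ and the residual class has degree $1$, hence is automatically effective, so this mechanism yields no valid $(q_1,q_2)$. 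On the remaining list I would argue directly, using the richer supply of $2$-secants $M$ produced by degenerating the rational normal curve $T$ — equivalently, showing that the span $\langle\bar q_1,\bar q_2,b_1,\dots,b_{n'}\rangle$ can be made transverse to $\Lambda_W$ for a suitable $(q_1,q_2)$ — and I expect the elliptic normal curve to be the unique genuine exception, consistently with the exception in Lemma~\ref{lem:rest_onion_bal}.

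The main obstacle is precisely this last step: once the naive chord specialization breaks down one loses the clean dictionary with pencils, and must instead analyze directly how the configuration $\langle\bar q_1,\bar q_2,b_1,\dots,b_{n'}\rangle$, with the $b_j$ pinned down by the incidence condition $T_pC\subset T_pR^\circ$, moves as $(q_1,q_2)$ varies, and verify that it avoids an arbitrary codimension-two subspace of $H$ except when $C$ is an elliptic normal curve.
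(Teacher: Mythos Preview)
Your reduction is clean, but the argument has a real gap at the claim \(e\geq 4\). You write that ``using that \(\psi\) is unramified at \(\bar p\), such a factorization forces \(e\geq 4\)''; I do not see how unramifiedness of \(\psi\) at \(p\) gives any lower bound on \(e\) beyond \(e\geq 1\). Concretely: if \(\phi=\rho\circ\psi\), then the line bundle of \(\phi\) is \(L_\psi^{\otimes e}\), and the base locus of the pencil defining \(\phi\) is an effective divisor in the class \(\O_C(1)-eL_\psi\). Requiring this base locus to contain \(3p\) yields exactly your effectivity condition on \(\gamma_e\), but places no further constraint on \(e\). Indeed for \(e=1\) one only needs \(\gamma_1=q_3+\cdots+q_{r-1}-p\) effective, which holds whenever \(g\leq r-4\) --- in particular for \((d,g)=(r+1,0)\) --- and when \(C\) is not linearly normal there really are codimension-two subspaces \(\Lambda_W\ni\bar p\) distinct from \(\Xi\) inducing the same map as \(\psi\). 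So the chord specialization \(M\leadsto\overline{\bar q_1\bar q_2}\) alone cannot handle all \(\Lambda_W\), and your numerical bound \(3d+g>4r-7\) (built on \(e\geq 4\)) does not hold.

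This is exactly why the paper does \emph{not} rely on the single chord. In Case~1 (\(d\geq r+2\)) it fixes \(\bar q_1\) general, lets the hyperplane through \(\bar q_3,\ldots,\bar q_{r-1}\) cut \(\bar C\) in two further points \(x,y\), and compares the full \(\tfrac{r-3}{2}\)-planes \(\Lambda_x,\Lambda_y\) (the projections of \(T_pR^\circ\)) rather than a chord; the key step is that \(\langle\Lambda_x,\Lambda_y\rangle=H\), which forces one of them to contain a line missing \(\Lambda\). In Case~2 (\((d,g)=(r+1,0)\)) the paper writes \(\bar C\) in explicit coordinates, computes \(\Lambda_s\) in closed form, and reduces to a combinatorial lemma about linearly independent lines. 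Your final paragraph correctly identifies that one must pass to the richer family of secants of \(T\) and analyze the full span \(\langle\bar q_1,\bar q_2,b_1,\ldots,b_{n'}\rangle\), but this is the entire content of the proof, not a mop-up for a short residual list; you would essentially be starting over.
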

\begin{proof}
By assumption, \(r-1\equiv n=2\) mod \(2\); hence \(r\) is odd.
If \(C\) is not an elliptic curve, then since \(d \geq r + 1\), either \(d \geq r + 2\)
or \((d, g) = (r + 1, 0)\).
We consider these two cases separately.

\medskip

\noindent
\textbf{\boldmath Case 1: \(d \geq r + 2\).}
Let \(\Lambda\subset \pp N_C|_p\) be any codimension \(2\) plane containing \(\bar{p}\).  We will show that \(M\) can be chosen disjoint from \(\Lambda\).
Since any \((r - 1 - n) + 1 = r - n=r-2\) points lie in a hyperplane,
\(\bar{q}_1\) is a general point on \(\bar{C}\), and is therefore not contained in \(\Lambda\).
Let \(H \simeq \pp^{r-3}\) be a general hyperplane containing \(\bar{q}_3, \bar{q}_4, \ldots, \bar{q}_{r - 1}\).
Since \(\bar{p} \notin H\) and \(\bar{p} \in \Lambda\), it follows that \(\Lambda\) is transverse to \(H\).
As \(d \geq r + 2\), the hyperplane section \(H \cap \bar{C}\) contains two points \(\{x, y\}\) distinct from
\(\bar{q}_1, \bar{q}_3, \bar{q}_4, \ldots, \bar{q}_{r - 1}\).
Since the sectional monodromy group of a general curve always contains the alternating group \cite{kadets}, the points \(\{x, y, \bar{q}_1, \bar{q}_3, \bar{q}_4, \ldots, \bar{q}_{r - 1}\}\)
are in linear general position.
For any \(k \geq 0\) there is a unique \(k\)-plane in \(\pp^{2k+2}\) meeting each of \(k+2\) lines in linear general position.  Applying this with \(k = (r-7)/2\), we see that
there is a unique \([(r - 3)/2]\)-plane
\(\Lambda_x\subset H\) containing \(\bar{q}_1\) and \(x\), and meeting each of the lines \(\bar{Q}_j\).
If \(\bar{q}_2 = x\), then by this uniqueness, \(\Lambda_x\) coincides with the projection of \(T_p R^\circ\).
Similarly define \(\Lambda_y\).
Because \(M\) can be linearly general in either \(\Lambda_x\) or \(\Lambda_y\), it suffices to show that one of \(\Lambda_x\) or \(\Lambda_y\)
contains a line disjoint from \(\Lambda\).

Note that \(\Lambda_x \cap \bar{Q}_1\) is the projection of \(x\) from \(\langle \bar{q}_1, \bar{Q}_2, \ldots, \bar{Q}_{n'}\rangle\)
onto \(\bar{Q}_1\), and similarly for \(\Lambda_y \cap \bar{Q}_1\).
It follows that \(\Lambda_x \cap \bar{Q}_1 \neq \Lambda_y \cap \bar{Q}_1\),
and thus that \(\langle \Lambda_x, \Lambda_y \rangle\) contains \(\bar{Q}_1\).
Similarly, \(\langle \Lambda_x, \Lambda_y \rangle\) contains all other \(\bar{Q}_i\).
By inspection, \(\langle \Lambda_x, \Lambda_y \rangle\) contains \(\bar{q}_1\), \(x\), and \(y\).
Therefore \(\langle \Lambda_x, \Lambda_y \rangle = H\).
In particular, \(\langle \Lambda_x, \Lambda_y \rangle\) is a distinct hyperplane from \(\langle \Lambda, \bar{q}_1\rangle\).
Without loss of generality, \(\Lambda_x\) contains a point \(z \notin \langle \Lambda, \bar{q}_1\rangle\).
Then \(\langle z, \bar{q}_1 \rangle\) gives the desired line contained in \(\Lambda_x\)
and disjoint from \(\Lambda\).

\medskip

\noindent
\textbf{\boldmath Case 2: \((d, g) = (r + 1, 0)\).}
Since \(\bar{f} \colon \pp^1 \simeq C \to \pp^{r - 2}\)
is a general rational curve of degree \(r - 1\), it suffices to verify
that \(M\) is linearly general for a particular choice of \(\bar{f}\).
We may therefore take
\[\bar{f}(t) = \left[t^2 + 1 : t : \frac{t - p_3}{t - q_3} : \frac{t - p_4}{t - q_4} : \cdots : \frac{t - p_{r - 1}}{t - q_{r - 1}}\right],\]
where \(p_i \in \pp^1\) are general.  For \(3 \leq i \leq r-1\), we have \(\bar{f}(q_i) = [0: \cdots : 0 : 1 : 0 : \cdots : 0]\), where the \(1\) occurs in the \(i\)th position.
(The interested reader may verify that this is not actually a specialization, i.e., the general
rational curve of degree \(r - 1\) in \(\pp^{r - 2}\) is of this form
after applying automorphisms of the source and target.)

Let \(H = H_s\) be a generic hyperplane passing through \(\bar{q}_3, \bar{q}_4, \ldots, \bar{q}_{r - 1}\),
defined by the ratio of the first two coordinates being equal to \(s\).
Note that \(H\) meets \(\bar{f}(\pp^1)\) at two other points \(\bar{q}_1 = \bar{f}(q_1)\) and \(\bar{q}_2 = \bar{f}(q_2)\).
The parameters \(q_1\) and \(q_2\) are the solutions of the equation \(t + t^{-1} = (t^2 + 1)/t = s\).
The projection \(\Lambda_s\) of \(T_p R^\circ\) is the
unique \([(r - 3)/2]\)-plane \(\Lambda_s\) containing \(\bar{q}_1\) and \(\bar{q}_2\) and meeting
each of the lines \(\bar{Q}_i\).
We will show that, for \(s \in \pp^1\) generic, \(\Lambda_s\)
is transverse to any fixed subspace \(\Lambda\) of codimension \(2\) containing \(\bar{p}\).  Hence a general line \(M \subseteq \Lambda_s\) is disjoint from \(\Lambda\).

To show this, we calculate \(\Lambda_s\) explicitly.
Since \(\Lambda_s\) is unique, it suffices to exhibit
a particular \([(r - 3)/2]\)-plane containing \(\bar{q}_1\) and \(\bar{q}_2\) and meeting
each of the lines \(\bar{Q}_i\). We claim that we may take:
\[\Lambda_s = \langle \alpha(s), \beta_1(s), \beta_2(s), \ldots, \beta_{(r - 3)/2}(s)\rangle,\]
where
\begin{align*}
\alpha(s) &= \left[s : 1 : \frac{p_3 q_4 s - p_3 - q_4}{q_3 q_4 - 1} : \frac{p_4 q_3 s - p_4 - q_3}{q_3 q_4 - 1} : \cdots : \frac{p_{r-2} q_{r-1} s - p_{r-2} - q_{r-1}}{q_{r-2} q_{r-1} - 1} : \frac{p_{r-1} q_{r-2} s - p_{r-1} - q_{r-2}}{q_{r-2} q_{r-1} - 1} \right] \\
\beta_i(s) &= \left[0 : 0 : \cdots : 0 : 0 : \frac{p_{2i+1}}{q_{2i + 1}} \cdot \frac{s - q_{2i+1} - p_{2i + 1}^{-1}}{s - q_{2i+1} - q_{2i+1}^{-1}} : \frac{p_{2i+2}}{q_{2i + 2}} \cdot \frac{s - q_{2i+2} - p_{2i + 2}^{-1}}{s - q_{2i+2} - q_{2i+2}^{-1}} : 0 : 0 : \cdots : 0 : 0\right].
\end{align*}
Here, the nonzero entries of \(\beta_i(s)\) occur in the \((2i + 1)\)st and \((2i + 2)\)nd
coordinates.
Indeed, \(\Lambda_s\) meets \(\bar{Q}_i\) at \(\beta_i(s)\),
so it suffices to check that \(\Lambda_s\) contains \(\bar{f}(t)\)
when \(s = t + t^{-1}\).  This follows from the following identity,
which may be verified by separately considering the first coordinate,
the second coordinate, the \((2i + 1)\)st coordinate, and the \((2i + 2)\)nd coordinate:
\[\bar{f}(t) = t \cdot \alpha(t + t^{-1}) - \sum_i \frac{(q_{2i + 1} t - 1)(q_{2i + 2} t - 1)}{q_{2i + 1} q_{2i + 2} - 1} \cdot \beta_i(t + t^{-1}).\]
This establishes that \(\Lambda_s\) is given by the above explicit formula, as claimed.

From the above explicit formulas for \(\alpha\) and the \(\beta_i\), it is evident that
\(\alpha\) is an isomorphism from \(\pp^1\) onto a line \(L\),
and the \(\beta_i\) are quadratic maps from \(\pp^1\) onto lines \(M_i\),
such that \(L, M_1, M_2, \ldots, M_{(r - 3)/2}\) are linearly independent
and span \(\pp^{r - 2}\). In fact, the above formulas for the \(\beta_i\) imply that,
up to changing coordinates on the \(M_i\), the \(\beta_i\) are \emph{independently general} quadratic maps
--- so in particular distinct (from themselves and from \(\alpha\)).
Since the image of \(\bar{f}\) does not lie in any union of proper linear subspaces,
and \(\Lambda\) must meet \(\bar{p}\) (which is a general point on the image of \(\bar{f}\)),
all that remains is to prove Lemma~\ref{lm:gen-quad} below.
\end{proof}

\begin{lem} \label{lm:gen-quad} Let \(L_1, L_2, \ldots, L_k \subset \pp^{2k - 1}\)
be linearly independent lines, and \(\beta_i \colon \pp^1 \to L_i\)
be maps which are pairwise distinct (under every possible identification of \(L_i\) with \(L_j\)).

If \(\Lambda \subset \pp^{2k + 1}\) is a fixed codimension~\(2\) subspace that
is not transverse to \(\langle \beta_1(s), \beta_2(s), \ldots, \beta_k(s) \rangle\)
for \(s \in \pp^1\) general, then \(\Lambda\) is the span of \(k - 1\) of the \(k\)
given lines \(L_1, L_2, \ldots, L_k\).
\end{lem}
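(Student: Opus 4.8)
The plan is to recast the non-transversality condition in terms of the two-dimensional space of linear forms cutting out \(\Lambda\), and then to analyze the \(L_i\) one line at a time. First I would set \(N \colonequals \langle L_1, \dots, L_k\rangle\), which is a \(\pp^{2k-1}\) since the \(L_i\) are linearly independent; this is the ambient space in which the statement makes sense (the ``\(\pp^{2k+1}\)'' should read ``\(\pp^{2k-1}\)''). Choosing \(V\) with \(\pp(V) = N\), I decompose \(V = V_1\oplus\dots\oplus V_k\) with \(L_i = \pp(V_i)\), lift each \(\beta_i\) to a nowhere-vanishing section \(b_i(s)\in V_i\) (using that \(\beta_i\) is non-constant), and write \(\Lambda = \pp(W)\) for a codimension-\(2\) subspace \(W\subset V\), with \(W^\perp\subset V^\vee\) the two-dimensional space of forms vanishing on \(W\). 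Since the \(b_i(s)\) lie in complementary subspaces, \(\langle\beta_1(s),\dots,\beta_k(s)\rangle = \pp(U_s)\) with \(U_s \colonequals \langle b_1(s),\dots,b_k(s)\rangle\) is a genuine \((k-1)\)-plane for \emph{every} \(s\), and it fails to meet \(\Lambda\) transversely exactly when the restriction map \(W^\perp\to U_s^\vee\) is not injective, i.e.\ when some nonzero \(\mu_s\in W^\perp\) satisfies \(\mu_s(b_i(s)) = 0\) for all \(i\). So the hypothesis becomes: \(\bigcap_i Z_i(s)\neq\emptyset\) for general \(s\), where \(Z_i(s)\colonequals\{[\mu]\in\pp(W^\perp) : \mu(b_i(s)) = 0\}\).

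The core of the proof is a trichotomy for \(Z_i(s)\) according to \(\dim(L_i\cap\Lambda)\). If \(L_i\cap\Lambda = \emptyset\) (call this \emph{type (a)}), then \(V_i\to V/W\) is an isomorphism and \(Z_i(s)\) consists of the single point \(\gamma_i(\beta_i(s))\), where \(\gamma_i\colon L_i\xrightarrow{\sim}\pp(W^\perp)\) is a projective isomorphism; in particular \(\gamma_i\circ\beta_i\) is a non-constant map \(\pp^1\to\pp(W^\perp)\). If \(L_i\cap\Lambda\) is a single point (\emph{type (b)}), then for general \(s\) the set \(Z_i(s)\) is the single, \(s\)-independent point \([\nu_i]\), where \(\nu_i\) spans the line of forms in \(W^\perp\) vanishing on all of \(V_i\) (one checks \(\ker\nu_i = W + V_i\)). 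If \(L_i\subseteq\Lambda\) (\emph{type (c)}), then \(Z_i(s) = \pp(W^\perp)\) imposes no condition. Establishing this trichotomy --- in particular checking that type (b) produces one constant point for general \(s\), and handling the passage from a dense set of \(s\) to all \(s\) that is needed to compare the various locus maps --- is the step I expect to be the most delicate, although it is routine two-dimensional linear algebra once the framework above is in place.

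With the trichotomy available, I would eliminate every configuration except ``one line of type (a), the remaining \(k-1\) of type (c)''. Two distinct type-(a) indices \(i,i'\) are impossible: \(Z_i(s)\) and \(Z_{i'}(s)\) are single points forced to coincide for general \(s\), so \(\gamma_i\circ\beta_i\equiv\gamma_{i'}\circ\beta_{i'}\), making \(\beta_i\) and \(\beta_{i'}\) agree under the identification \(\gamma_i^{-1}\gamma_{i'}\) of \(L_{i'}\) with \(L_i\) --- contradicting the pairwise-distinctness hypothesis. Not all indices can be type (c), since that would give \(V = \bigoplus_i V_i\subseteq W\). And no index can be type (b): if index \(1\) were, then \(\bigcap_i Z_i(s)\subseteq\{[\nu_1]\}\) must equal \(\{[\nu_1]\}\), which rules out any type-(a) index (the non-constant map \(\gamma_i\circ\beta_i\) cannot stay at \([\nu_1]\)) and forces \([\nu_i] = [\nu_1]\) for every type-(b) index \(i\); then a representative \(\nu\in W^\perp\) of \([\nu_1]\) vanishes on every \(V_i\) (on type-(b) ones by construction of \(\nu_i\), on type-(c) ones since \(V_i\subseteq W\)), so \(\nu = 0\), a contradiction. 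Combining these, there is no type-(b) index, at most one type-(a) index, and not all indices are type (c); hence exactly one line --- say \(L_1\) --- is type (a) and \(L_2,\dots,L_k\) are type (c), so \(\langle L_2,\dots,L_k\rangle\subseteq\Lambda\). Both sides have dimension \(2k-3\), hence are equal, so \(\Lambda\) is the span of \(k-1\) of the given lines, as required.
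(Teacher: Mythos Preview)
Your proof is correct and takes a genuinely different route from the paper. The paper argues by induction on \(k\), splitting into cases according to the codimension of \(\Lambda \cap \langle L_1,\dots,L_{k-1}\rangle\); when \(\Lambda\) is transverse to that span, the inductive hypothesis shows \(\Lambda\) contains some \(L_i\), and projection from that \(L_i\) reduces \(k\) by one (with a separate direct argument at \(k=2\)). Your approach is direct: dualize, so that non-transversality becomes the existence of a common point in the loci \(Z_i(s)\subset\pp(W^\perp)\simeq\pp^1\), and classify each line's contribution by \(\dim(L_i\cap\Lambda)\). This trades the induction for a clean trichotomy and a short elimination argument; it is arguably more transparent, since one sees immediately why two disjoint-from-\(\Lambda\) lines are incompatible (the pairwise-distinctness hypothesis is used exactly there) and why a ``type~(b)'' line forces a nonzero form in \(W^\perp\) to vanish on all of \(V\).

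Two small remarks. First, your parenthetical ``using that \(\beta_i\) is non-constant'' is attached to the lift, but the lift exists even for constant \(\beta_i\); non-constancy is really used later, to ensure \(\gamma_i\circ\beta_i\) is non-constant when ruling out type~(a) in the presence of a type~(b) index. Second, the lemma (as stated in the paper) does implicitly need the \(\beta_i\) non-constant---otherwise, for \(k=2\) with \(\beta_1\) constant at \(p_1\), any line \(\Lambda\) through \(p_1\) meets every \(\langle p_1,\beta_2(s)\rangle\)---so your assumption is consistent with the paper's proof and with its application (where the \(\beta_i\) are explicit quadratic maps). You also correctly flag the typo \(\pp^{2k+1}\to\pp^{2k-1}\).
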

\begin{proof} 
We argue by induction on \(k\). For the base case, we take \(k = 1\), which is vacuous.

For the inductive step, we suppose \(k \geq 2\), and
we divide into cases based on how \(\Lambda\) meets \(\langle L_1, L_2, \ldots, L_{k - 1}\rangle\).
If \(\Lambda = \langle L_1, L_2, \ldots, L_{k - 1}\rangle\),
then the desired conclusion evidently holds.

Next consider the case when
\(\Lambda\) meets \(\langle L_1, L_2, \ldots, L_{k - 1}\rangle\) in codimension \(1\).
Fix \(s \in \pp^1\) general.
Then the intersection \(\Lambda \cap \langle L_1, L_2, \ldots, L_{k - 1}\rangle\)
does not contain, and hence is transverse to, \(\langle \beta_1(s), \beta_2(s), \ldots, \beta_{k-1}(s) \rangle\)
inside of \(\langle L_1, L_2, \ldots, L_{k - 1}\rangle \simeq \pp^{2k - 3}\).
Also, \(\beta_k(s) \notin \Lambda + \langle L_1, L_2, \ldots, L_{k - 1}\rangle\), since 
\(\langle L_1, L_2, \ldots, L_k\rangle = \pp^{2k-1}\).
Combining these, \(\Lambda\) is transverse to
\(\langle \beta_1(s), \beta_2(s), \ldots, \beta_k(s) \rangle\)
in violation of our assumption.

Finally, consider the case when \(\Lambda\) is transverse to \(\langle L_1, L_2, \ldots, L_{k - 1}\rangle\).
Applying our inductive hypothesis, \(\Lambda \cap \langle L_1, L_2, \ldots, L_{k - 1}\rangle\)
is the span of \(k - 2\) of the \(k - 1\) given lines \(L_1, L_2, \ldots, L_{k-1}\).
If \(k \geq 3\), then \(\Lambda\) contains some \(L_i\),
and projecting from this \(L_i\) and applying our inductive hypothesis completes the proof.

It thus remains only to rule out the case when \(k = 2\) and \(\Lambda\) is transverse to \(L_1\);
exchanging the roles of \(L_1\) and \(L_2\), we may also suppose \(\Lambda\) is transverse to \(L_2\).
Projection from \(\Lambda\) then defines an isomorphism \(L_1 \simeq L_2\).
By assumption, \(\beta_1 \neq \beta_2\) with respect to this identification of \(L_1\) with \(L_2\),
i.e., \(\Lambda\) is disjoint from 
\(\langle \beta_1(s), \beta_2(s)\rangle\) for \(s \in \pp^1\) generic,
in violation of our assumption.
\end{proof}

\section{Inductive arguments}\label{sec:inductive}

In this section, we suppose that \((d,g,r,\ell,m)\) is good and give several inductive arguments 
that reduce \(I(d, g, r, \ell, m)\) to cases where \(d\) is smaller or where \(d\) is the same and \(m\) is smaller.
In the next section, we will show that these arguments reduce all allowed instances \(I(d, g, r, \ell, m)\) to
the already considered infinite family of cases with \((\delta, \ell, m) = (1, 0, 0)\),
plus finitely many sporadic base cases in small projective spaces.

\subsection{Outline of inductive arguments}\label{subsec:over_ind}
In order to indicate the specializations and projections of the original BN-curve \(C\), we introduce the following notation.  Write \(C(0,0;0) = C\) for our original general BN-curve of degree \(d\) and genus \(g\) in \(\pp^r\).  More generally, the notation \(C(a, b; c)\) will denote a curve obtained from \(C(0,0;0)\) by peeling off \(a\) one-secant lines (as described in \eqref{peel1sec} below), peeling off \(b\) two-secant lines (as described in \eqref{peel2sec} below), and projecting from \(c\) general points
on the curve (as described in \eqref{project} below).  In particular, \(C(a,b;c)\) is a BN-curve of degree \(d-a-b-c\) and genus \(g-b\) in \(\pp^{r-c}\). 
The inductive arguments we will give will
make use of the following six key ingredients:
\begin{enumerate}
\item\label{peel1sec} (cf.\ Section \ref{subsec:1sec}) We peel off a \(1\)-secant line, i.e., we degenerate \(C(a, b; c)\) to \(C(a + 1, b; c) \cup L\),
where \(L\) is a \(1\)-secant line to \(C(a + 1, b; c)\), meeting \(C(a + 1, b; c)\)
at a point we will call \(x\).  In this case, we write \(y\) for some point in \(L \setminus \{x\}\).
We always do this specialization so that all marked points determining the modification data
specialize onto \(C(a + 1, b; c) \setminus \{x\}\).

\item\label{peel2sec} (cf.\ Section \ref{subsec:2sec}) We peel off a \(2\)-secant line, i.e., we degenerate \(C(a, b; c)\) to \(C(a, b + 1; c) \cup L\),
where \(L\) is \(2\)-secant to \(C(a, b + 1; c)\), meeting \(C(a, b + 1; c)\)
at points we will denote \(\{z, w\}\).
We always do this specialization so that all marked points determining the modification data
specialize onto \(C(a, b + 1; c) \setminus \{z, w\}\).

\item We specialize the modification data. For the modifications \([\posmodalong R_i]\),
we use the technology developed in Section \ref{sec:onion_specialization}. For the remaining modifications,
we specialize the marked points determining the modification data
(which start out general).

\item\label{project} We project from a point \(p \in C(a, b; c)\). Namely, if we
write \(C(a, b; c + 1)\) for the projection of \(C(a, b; c)\) from \(p\), then the pointing bundle
exact sequence induces (cf.~\eqref{eq:mods_ses}) an exact sequence
\[\qquad 0 \to N_{C(a, b; c) \to p}(\text{mods to \(p\)}) \to N_{C(a, b; c)}[\text{mods}] \to N_{C(a, b; c + 1)}[\text{residual mods}](p) \to 0.\]
If the number \(n\) of modifications towards \(p\) satisfies \(|n - \delta| < 1\),
then by Corollary \ref{cor:intp_ses} interpolation for \(N_{C(a, b; c)}[\text{mods}]\) follows from
interpolation for \(N_{C(a, b; c + 1)}[\text{residual mods}]\).
More generally, if \(n\) satisfies \(|n - \delta| \leq 1 - \frac{\epsilon}{r - 1}\),
then we may iterate this construction (i.e., first specialize as desired and then project)
a total of \(\epsilon\) times.

\item We erase modifications that are linearly general.
Namely, suppose that one of our modifications \([p \posmod M]\)
is linearly general. Then interpolation for \(N[p \posmod M]\) follows from interpolation for \(N\) by Lemma \ref{lem:lin_general}.
More generally, if \(M\) is not linearly general, but contains some subspace \(M_0\)
and is linearly general in the quotient \(N|_p / M_0\),
then interpolation for \(N[p \posmod M]\)
follows from interpolation for \(N\) and \(N[p \posmod M_0]\) by Lemma \ref{lem:lin_general}.

\item\label{indstrat:center_proj_R} We specialize any remaining \(R_i\) to pass through the center of projection.
In more detail, suppose that we projected from a point \(p\), and that prior to this step, \(R_i\) remains general;
write \(\bar{R}_i\) for the projection of \(R_i\) from \(p\).
Specializing \(R_i\) to pass through \(p\) then induces
the specialization of \(\bar{R}_i\)
to a union \(R'_i \cup L\), where \(R'_i\) is an \(r\)-secant rational normal curve
in a hyperplane (the projection from \(p\) of the hyperplane containing \(R_i\)),
and \(L\) is a line passing through \(p\) and a point \(t \in R'_i\).
This has the effect of replacing the modification
\([\posmodalong \bar{R}_i]\) with the modifications
\([\posmodalong R'_i] [p \posmod t]\).
By Lemma~\ref{proj-general} below, \(R'_i\) is a general \(r\)-secant rational normal curve in a hyperplane,
and \(t \in R_i'\) is a general point.
The modification \([p \posmod t]\) is therefore in a linearly general direction, and
can be erased as above.
In other words, at least when no other modifications are made at \(p\),
the combined effect of these steps is to replace 
\([\posmodalong \bar{R}_i]\) with \([\posmodalong R'_i]\)
(which fits well with our inductive hypothesis).
\end{enumerate}

\begin{lem}\label{proj-general}
Let \(p, q_1, \ldots, q_{r} \in \pp^{r-1}\) be a general set of points,
and write \(\bar{q}_i \in \pp^{r - 2}\) for the projection of \(q_i\) from \(p\).
Let \(\bar{R} \subset \pp^{r - 2}\) be a general rational normal curve
passing through \(\bar{q}_1, \bar{q}_2, \ldots, \bar{q}_{r}\),
and \(x \in \bar{R}\) be a general point.
Then there exists a rational normal curve \(R\) through \(p, q_1, \ldots, q_{r}\)
whose tangent direction at \(p\) corresponds to \(x\),
and whose projection from \(p\) is \(\bar{R}\).
\end{lem}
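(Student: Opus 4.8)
The plan is to choose coordinates so that the whole problem collapses to ordinary polynomial (Lagrange) interpolation. First I would fix homogeneous coordinates $[x_0 : \cdots : x_{r-1}]$ on $\pp^{r-1}$ with $p = [0 : \cdots : 0 : 1]$, so that projection from $p$ is $[x_0 : \cdots : x_{r-1}] \mapsto [x_0 : \cdots : x_{r-2}]$ and the target $\pp^{r-2}$ is identified with the hyperplane $H = \{x_{r-1} = 0\}$ via this projection. Choosing $H$ to avoid the finitely many points $q_1, \dots, q_r$, and then choosing $x_0, \dots, x_{r-2}$ appropriately, I can arrange that $\bar R \subset H \cong \pp^{r-2}$ is the standard rational normal curve $s \mapsto [1 : s : s^2 : \cdots : s^{r-2} : 0]$ with $x$ its point at $s = \infty$, i.e.\ $x = [0 : \cdots : 0 : 1 : 0]$. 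Since $q_1, \dots, q_r$ are general and $x$ is a general point of $\bar R$, the projections $\bar q_i$ then correspond to distinct finite parameters $s_1, \dots, s_r$; and since $q_i \notin H$, each $q_i$ lies on the line $\overline{p\,\bar q_i}$ and may be written uniquely as $q_i = [1 : s_i : s_i^2 : \cdots : s_i^{r-2} : u_i]$ with $u_i \neq 0$, where $(u_1, \dots, u_r)$ is general.

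Next I would record the following: for any polynomial $f(s)$ of degree exactly $r-1$, the image $R_f$ of the map $\pp^1 \to \pp^{r-1}$, $s \mapsto [1 : s : s^2 : \cdots : s^{r-2} : f(s)]$, is a rational normal curve. Indeed, in homogeneous coordinates $[s_0 : s_1]$ on the source this map is given by the $r$ degree-$(r-1)$ forms $s_0^{r-1-k}s_1^k$ for $0 \le k \le r-2$ together with the degree-$(r-1)$ homogenization of $f$, and these are linearly independent precisely because the leading coefficient of $f$ is nonzero, so the map is an embedding by the complete linear system of degree $r-1$ on $\pp^1$. (Alternatively, $R_f$ has degree at most $r-1$ and passes through the $r+1$ points $p, q_1, \dots, q_r$, which are in linear general position, hence spans $\pp^{r-1}$, hence has degree exactly $r-1$.) Moreover $R_f$ projects from $p$ onto $s \mapsto [1 : s : \cdots : s^{r-2}] = \bar R$; as $s \to \infty$ one has $R_f(s) \to [0 : \cdots : 0 : 1] = p$ (here $\deg f = r-1$ is used), so $p = R_f(\infty)$ lies on $R_f$, and the morphism $\pi_p|_{R_f}$ extends over $p$ with $p \mapsto \lim_{s \to \infty} \pi_p(R_f(s)) = x$ — which is exactly the assertion that the tangent direction of $R_f$ at $p$ projects to $x$. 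Finally, $R_f$ passes through $q_i$ if and only if $f(s_i) = u_i$.

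It then suffices to produce a polynomial $f$ of degree exactly $r-1$ with $f(s_i) = u_i$ for all $i$. Since $s_1, \dots, s_r$ are distinct, Lagrange interpolation furnishes a unique polynomial $f$ of degree at most $r-1$ with $f(s_i) = u_i$ for $i = 1, \dots, r$, and its leading coefficient is $\sum_i u_i \prod_{j \neq i}(s_i - s_j)^{-1}$, a nonzero linear form in $(u_1, \dots, u_r)$; this is therefore nonzero for our general choice of the $q_i$. Hence $\deg f = r-1$, and $R \colonequals R_f$ has all the required properties.

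The only genuinely fiddly part is the first paragraph: one must check that the genericity hypotheses on $p, q_1, \dots, q_r$ (and on $\bar R$ and $x$) really do deliver the open conditions used — the $s_i$ distinct and finite, $q_i \notin H$, and the $u_i$ avoiding the single linear relation that would force the Lagrange interpolant to have degree below $r-1$. The remaining point needing a word is that $R_f$ is an honest rational normal curve and not a degenerate or multiply-covered curve, but as noted this is just the linear independence of the $r$ forms above, equivalently $\deg f = r-1$; neither of these should present a real obstacle.
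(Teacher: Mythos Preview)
Your proof is correct and takes a genuinely different route from the paper. The paper argues by a pure dimension count: a lift \(R\) with the prescribed data, if it exists, is unique, and the space of rational normal curves through \(p, q_1, \ldots, q_r\) in \(\pp^{r-1}\) has the same dimension \(r-2\) as the space of pairs \((\bar{R}, x)\) with \(\bar{R}\) through \(\bar{q}_1, \ldots, \bar{q}_r\); hence the projection map is dominant. Your argument instead constructs \(R\) explicitly by choosing coordinates to reduce to Lagrange interpolation. Your approach has the virtue of being entirely constructive and of exhibiting the uniqueness directly (the Lagrange interpolant is unique); the paper's approach is shorter and coordinate-free, and makes the uniqueness do all the work. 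One small point of phrasing: the linear relation on the \(u_i\) that you must avoid has coefficients depending on the \(s_i\), which in turn depend on \(\bar{R}\) and \(x\), chosen after the \(q_i\); so strictly speaking the open condition lives on the total parameter space rather than on the \(q_i\) alone. The standard irreducibility argument you allude to handles this without difficulty.
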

\begin{proof}
Such a rational curve, if it exists, is unique.
We can therefore simply compare the dimension of the space
of rational curves through \(p, q_1, \ldots, q_{r}\),
to the dimension of the space of rational curves through \(\bar{q}_1, \bar{q}_2, \ldots, \bar{q}_{r}\)
together with a choice of point on that rational curve.
Visibly both are equal to \(r - 2\).
\end{proof}

\subsection{Main inductive arguments}

We begin with the following proposition, which applies this method without utilizing specialization \eqref{peel2sec} (peeling off a \(2\)-secant line), and which specializes the \(R_i\) as in Section~\ref{sec:onion_specialization}. Since this is the first application of the method
described above, we include some additional explanations which serve to clarify
this method, and will be omitted in subsequent applications.

\begin{prop} \label{master} Let \(\ell'\) and \(m'\) be integers satisfying
\(0 \leq \ell' \leq \ell\) and \(0 \leq m' \leq m\), with \(m' = 0\) if \(r = 3\). Let \(d'\) be an integer satisfying
\(g + r \leq d' \leq d\), with \(d' > g + r\) if both \(g = 0\) and \(m \neq 0\).
For \(1 \leq i \leq m'\), let \(n_i\) be an integer satisfying
\(n_i \equiv r - 1\) mod \(2\) and \(2 \leq n_i \leq r - 1\), with \(n_i \neq 2\) if \((d', g) = (r + 1, 1)\).
Define
\[\bar{\ell} = \ell - \ell' + \frac{(r - 1)m' - \sum n_i}{2} \quad \text{and} \quad \bar{m} = m - m'.\]
If
\[2m' + \ell' \leq r - 2 \quad \text{and} \quad \left|\delta - \left[\ell' + 2(d - d') + \sum n_i \right] \right| \leq 1 - \frac{1}{r - 1},\]
and \(I(d' - 1, g, r - 1, \bar{\ell}, \bar{m})\) holds, then so does
\(I(d, g, r, \ell, m)\).
\end{prop}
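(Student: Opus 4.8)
The plan is to establish $I(d,g,r,\ell,m)$ by carrying out ingredients~\eqref{peel1sec}, (3), \eqref{project}, (5), and~\eqref{indstrat:center_proj_R} of Section~\ref{subsec:over_ind} in that order --- without ever invoking~\eqref{peel2sec} --- and then applying Corollary~\ref{cor:intp_ses} to the resulting pointing bundle sequence. Fix a general point $p$ on the curve we will ultimately project from, and let $\Pi$ denote the osculating $2$-plane to that curve at $p$.

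First I would peel off $d - d'$ one-secant lines, each attached at $p$ and lying in $\Pi$; as allowed by~\eqref{peel1sec}, I let the remaining data $u_i, v_i$ and the curves $R_j$ specialize onto the residual curve $C' := C(d-d',0;0)$, which by Lemma~\ref{lem:1secBN} is a general BN-curve of degree $d'$ and genus $g$. Since $d' \geq g + r$, Lemma~\ref{lem:1sec} together with Lemma~\ref{lem:int_large_degree} reduces the problem to interpolation for $N_{C'}$ equipped with the $\ell$ original pairs, the $m$ original onion modifications, and $d-d'$ further modifications $[2p \posmod y_k]$ with $y_k \in \Pi$; since $y_k \in \Pi$, each such $N_{C'\to y_k}$ agrees with $N_{C'\to p}$ to order two at $p$. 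Next I would specialize the modification data: send $v_i \to p$ for $\ell'$ of the pairs, so that $[u_i \biposmod v_i]$ becomes $[u_i \posmod p][p \posmod u_i]$; apply the construction of Section~\ref{sec:onion_specialization} with parameter $n_i$ to $m'$ of the onions --- legitimate exactly under the stated numerical constraints (with $n_i \neq 2$ when $(d',g) = (r+1,1)$ avoiding the single exception of Lemma~\ref{M:projected}, and $m' = 0$ when $r = 3$ because there $n_i + n_i' \geq 3$ is impossible) --- replacing $[\posmodalong R_i]$ by $[q^{(i)}_1 + \cdots + q^{(i)}_{r-1} \posmodalong R^\circ_i][p \posmod M_i]$ with $M_i$ linearly general in $\pp N_{C'}|_p$ for $n_i \geq 3$ (Lemma~\ref{M:unprojected}) and in $\pp(N_{C'}/N_{C'\to p})|_p$ for $n_i = 2$ (Lemma~\ref{M:projected}); and specialize the remaining $m - m'$ onions through $p$ as in~\eqref{indstrat:center_proj_R}, so that by Lemma~\ref{proj-general} each projection degenerates to an onion $R_i'$ for $\pp^{r-1}$ together with a line $L_i$ through the image of $p$.

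Now I would project from $p$; write $\bar{C'}$ for the image, a general BN-curve of degree $d'-1$ and genus $g$ in $\pp^{r-1}$. Transferring the modification data along~\eqref{eq:mods_ses}, the pointing bundle sequence becomes
\[0 \to N_{C'\to p}(np) \to N_{C'}[\text{all modifications}] \to N_{\bar{C'}}[\text{residual modifications}](p) \to 0,\]
where $n := \ell' + 2(d-d') + \sum n_i$: the $2(d-d')$ from the order-two peeled-line modifications at $p$, the $\ell'$ from the $[u_i \posmod p]$, and the $\sum n_i$ from the line-components of the $R^\circ_i$ (whose tangent directions at the $q^{(i)}_j$ may be taken to point at $p$, hence to lie inside $N_{C'\to p}$) are precisely the modifications absorbed into the line subbundle $N_{C'\to p} = \ker(N_{C'} \to N_{\bar{C'}}(p))$. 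Since $N_{C'\to p}(np)$ is a nonspecial line bundle it satisfies interpolation, and the hypothesis $|\delta - n| \leq 1 - \tfrac{1}{r-1}$ is exactly the balance condition needed so that, by Corollary~\ref{cor:intp_ses}, it remains only to prove interpolation for the quotient.

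The residual modifications on $\bar{C'}$ are: the $\ell - \ell'$ untouched pairs; the $\sum n_i' = \tfrac{(r-1)m' - \sum n_i}{2}$ new $2$-secant pairs obtained from the conic-components of the $R^\circ_i$ projecting to $2$-secant lines of $\bar{C'}$; the $m - m'$ onion modifications $[\posmodalong R_i']$; and, supported at the image point $\bar p$, the modifications $\bar M_i$, $[\bar p \posmod \bar u_i]$, and $[\bar p \posmod \bar t_i]$ (the last coming from the lines $L_i$). The inequality $2m' + \ell' \leq r-2$ leaves enough room in the rank-$(r-2)$ bundle $N_{\bar{C'}}(p)$, and all of these are in linearly general directions, so I can erase them one at a time by Lemma~\ref{lem:lin_general}, each step being legitimate because the previous bundle satisfies interpolation. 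What is left is interpolation for $N_{\bar{C'}}$ with $(\ell - \ell') + \tfrac{(r-1)m' - \sum n_i}{2} = \bar\ell$ pairs and $m - m' = \bar m$ onion modifications --- precisely the statement $I(d'-1, g, r-1, \bar\ell, \bar m)$ we assumed. The main obstacle is the bookkeeping that makes all of this precise: verifying via~\eqref{eq:mods_ses} and the explicit geometry of Section~\ref{sec:onion_specialization} exactly which modifications land in $N_{C'\to p}$ and which survive in the quotient and in what form, and checking that the modifications surviving at $\bar p$ remain linearly general after the earlier erasures --- this last being exactly where $2m' + \ell' \leq r-2$ is needed.
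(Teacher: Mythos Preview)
Your overall architecture matches the paper's proof (peel off one-secant lines, specialize $m'$ of the $R_i$ as in Section~\ref{sec:onion_specialization}, send $\ell'$ of the $v_i$ to $p$, project from $p$, erase the residual modification at $\bar p$, and replace the remaining $\bar R_i$ by $r$-secant onions in $\pp^{r-1}$). The genuine gap is in your treatment of the one-secant lines.

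You attach all $d-d'$ lines at the \emph{same} point $p$ and place them in the osculating $2$-plane $\Pi$, asserting that $N_{C'\to y_k}$ agrees with $N_{C'\to p}$ to order two at $p$ for $y_k\in\Pi$. This assertion is false. Take $C'$ locally parametrized by $\gamma(t)=(t,t^2,t^3,\dots)$ with $p=\gamma(0)$, so $\Pi$ is the $(z_1,z_2)$-plane. Working modulo $\gamma'(t)$ and trivializing by projection to $(z_2,\dots,z_r)$, the section $N_{C'\to p}$ of $\pp N_{C'}$ is $t\mapsto[1:2t:3t^2:\cdots]$, while for $y=(a,b,0,\dots,0)\in\Pi$ with $b\neq 0$ one computes $N_{C'\to y}$ is $t\mapsto[1:-\tfrac{3a}{b}t^2+O(t^3):\cdots]$. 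These agree at $t=0$ but have distinct first derivatives, so they are \emph{not} tangent. (The criterion of Lemma~\ref{lem:pointing_tangent} requires a dependence in which the coefficient of $\gamma(t)-y$ is a unit in $k[t]/(t^2)$; here no such relation exists.) Consequently, by \eqref{eq:mods_ses} with $k'=1$, each $[2p\posmod y_k]$ contributes only $1$ to $N_{C'\to p}$, and your count $n=\ell'+2(d-d')+\sum n_i$ is off by $d-d'$. A secondary problem is that the iterated peeling-off of Section~\ref{subsec:1sec} produces modifications at \emph{distinct} attachment points, and attaching several lines at a common point $p$ would both fall outside that framework and yield non-tree-like modification data at $p$.

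The paper makes the opposite choice: it attaches the lines at distinct general points $x_1,\dots,x_{d-d'}$, obtaining modifications $[2x_j\posmod y_j]$, and then specializes the \emph{off-curve} points $y_j$ to $p$. This yields $[2x_j\posmod p]$, which are supported away from $p$, point exactly along $N_{C'\to p}$ (so $k'=\infty$ in \eqref{eq:mods_ses}), and each contributes $2$ to the sub at $x_j$ --- giving the correct count while keeping all the modification data at $p$ coming only from the $M_i$ and $u_i$, where the constraint $2m'+\ell'\leq r-2$ guarantees tree-likeness.
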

\begin{proof} Our goal is to establish \(I(d, g, r, \ell, m)\), which asserts interpolation for
\[N_{C(0,0;0)}[u_1 \biposmod v_1]\cdots[u_\ell \biposmod v_\ell][\posmodalong R_1 \cup \cdots \cup R_m].\]
Our assumption that \(g + r \leq d' \leq d\), with \(d' > g + r\) if both \(g = 0\) and \(m \neq 0\),
implies that we may peel off \(d - d'\) one-secant lines.
(Recall from the discussion at the beginning of the section that this means
we specialize \(C(0, 0; 0)\) to the union of a BN-curve \(C(d - d', 0; 0) \subset \pp^r\)
of degree \(d'\) and genus \(g\), with \(d - d'\) one-secant lines, in such a way that
all \(u_i\) and \(v_i\), and all points of intersection with the \(R_i\),
specialize onto \(C(d - d', 0; 0)\).)
This reduces our problem to showing interpolation for
\[N \colonequals N_{C(d - d', 0; 0)} [u_1 \biposmod v_1]\cdots[u_\ell \biposmod v_\ell][\posmodalong R_1 \cup \cdots \cup R_m][2x_1 \posmod y_1]\cdots[2x_{d-d'} \posmod y_{d-d'}].\]

For \(1 \leq i \leq m'\), write \(n_i' = (r-1-n_i)/2\),
and degenerate \(R_i\) as in Section \ref{sec:onion_specialization} to the union \(R_i^\circ\), of \(n_i\) lines \(L_{i,j}\) meeting \(C\) at \(p_i\) and \(q_{i,j}\), and \(n_i'\) conics \(Q_{i,j}\) meeting \(C\) at \(p_i\) and \(q_{i,n_i + 2j-1}\) and \(q_{i,n_i + 2j}\).
This induces a specialization of \(N\) to
\begin{multline*}
N_{C(d - d', 0; 0)}[u_1 \biposmod v_1]\cdots[u_\ell \biposmod v_\ell][\posmodalong R_{m' + 1} \cup \cdots \cup R_m][2x_1 \posmod y_1]\cdots[2x_{d-d'} \posmod y_{d-d'}] \\
[q_{1,1} + \cdots + q_{1,r - 1} \posmodalong R_1^\circ] \cdots [q_{m',1} + \cdots + q_{m',r - 1} \posmodalong R_{m'}^\circ] [p_1 \posmod M_1] \cdots [p_{m'} \posmod M_{m'}].
\end{multline*}
Now fix a general point \(p \in C\), and specialize \(p_1, p_2, \ldots, p_{m'}, v_1, v_2, \ldots, v_{\ell'}, y_1, y_2, \ldots, y_{d - d'}\)
all to \(p\).
Because \(2m' + \ell' \leq r - 2 \leq r-1\) by assumption, the limiting directions \(M_1, \dots, M_{m'}, u_1, \dots, u_{\ell'}\) are linearly independent in \( \pp N_C|_p\), and the limit is therefore tree-like (cf.~Defintion \ref{def:tree_like}).  Hence, this induces a further specialization of \(N\) to
\begin{multline*}
N^\circ \colonequals N_{C(d - d', 0; 0)}[u_{\ell'+1} \biposmod v_{\ell'+1}]\cdots[u_\ell \biposmod v_\ell][\posmodalong R_{m' + 1} \cup \cdots \cup R_m] [u_1 + \cdots + u_{\ell'} + 2x_1 + \cdots + 2 x_{d - d'} \posmod p] \\
[q_{1,1} + \cdots + q_{1,r - 1} \posmodalong R_1^\circ] \cdots [q_{m',1} + \cdots + q_{m',r - 1} \posmodalong R_{m'}^\circ] [p \posmod M],
\end{multline*}
where \(M = \Span(M_1, \dots, M_{m'}, u_1, \dots, u_{\ell'}) \subset \pp N_C|_p\).  Furthermore, \(M\) is disjoint from \(\pp N_{C \to p}|_p\) by combining the assumption \(2m' + \ell' \leq r - 2 \) with Lemma \ref{M:unprojected}\eqref{n2}.  Finally, \(M\) linearly general in
\(\pp (N_C / N_{C \to p})|_p\) by Lemmas \ref{M:unprojected}\eqref{n3} and \ref{M:projected}.

It remains to see that \(N^\circ\) satisfies interpolation. For this,
we project from \(p\). In other words, as described at the beginning of the section,
we use the following pointing bundle exact sequence:
\begin{multline} \label{ncircqseq}
0 \to N_{C(d - d', 0; 0) \to p}(u_1 + \cdots + u_{\ell'} + 2x_1 + \cdots + 2x_{d-d'} + (q_{1,1} + \cdots + q_{1, n_1}) + \cdots + (q_{m',1} + \cdots + q_{m', n_{m'}})) \\
\to N^\circ \to Q(p) \to 0,
\end{multline}
where 
\begin{multline*}
Q \colonequals N_{C(d - d', 0; 1)}[u_{\ell' + 1} \biposmod v_{\ell' + 1}] \cdots[u_\ell \biposmod v_\ell][\posmodalong \bar{R}_{m' + 1} \cup \cdots \cup \bar{R}_m] [p \posmod \bar{M}] \\
[q_{1,n_1 + 1} \biposmod q_{1,n_1 + 2}] \cdots [q_{1, r-2} \biposmod q_{1, r-1}] \cdots [q_{m',n_{m'} + 1} \biposmod q_{m',n_{m'} + 2}] \cdots [q_{m', r-2} \biposmod q_{m', r-1}].
\end{multline*}
The number of transformations towards \(p\) is \(\ell' + 2(d - d') + \sum n_i\).
(These transformations occur at \(u_1, \ldots, u_{\ell'}, x_1, \ldots, x_{d - d'}, q_{1,1}, \ldots, q_{1, n_1}, \ldots, q_{m',1}, \ldots, q_{m', n_{m'}}\), c.f.\ \eqref{ncircqseq}.
In particular, this specialization 
does \emph{not} produce a positive transformation at \(p\) in the direction of \(p\), because
\(M\) is disjoint from \(\pp N_{C \to p}|_p\) as explained above.)
Our assumption that \(|\delta - [\ell' + 2(d - d') + \sum n_i]| \leq 1 - \frac{1}{r - 1}\) therefore implies
that interpolation for \(N^\circ\) follows from interpolation for \(Q\) by Corollary \ref{cor:intp_ses}.

We next erase the transformation at \(p\).
In other words,
the only way that \(Q\) depends on the points \(u_1, \dots, u_{\ell'}, q_{1,1}, \dots, q_{1, n_1}, \dots, q_{m', 1}, \dots, q_{m', n_{m'}}\) is via the dependence of \(\bar{M}\) on these points.  As only these points vary, 
 \(\bar{M}\) is linearly general.  Thus interpolation for \(Q\) follows, by Lemma \ref{lem:lin_general}, from interpolation for
\begin{multline*}
N_{C(d - d', 0; 1)}[u_{\ell' + 1} \biposmod v_{\ell' + 1}] \cdots[u_\ell \biposmod v_\ell][\posmodalong \bar{R}_{m' + 1} \cup \cdots \cup \bar{R}_m] \\
[q_{1,n_1 + 1} \biposmod q_{1,n_1 + 2}] \cdots [q_{1, r-2} \biposmod q_{1, r-1}] \cdots [q_{m',n_{m'} + 1} \biposmod q_{m',n_{m'} + 2}] \cdots [q_{m', r-2} \biposmod q_{m', r-1}].
\end{multline*}

Finally, we specialize the remaining \(R_i\), for \(m' + 1 \leq i \leq m\), to pass through \(p\).
Namely, we first specialize \(R_{m' + 1}\) to pass through \(p\), which induces the specialization of \(\bar{R}_{m' + 1}\)
to a union \(R'_{m' + 1} \cup L\) as described in Subsection \ref{subsec:over_ind}\eqref{indstrat:center_proj_R}.
The effect of this specialization on the above bundle is to replace the modification
\([\posmodalong \bar{R}_{m' + 1}]\) with the modifications
\([\posmodalong R'_{m' + 1}] [p \posmod t]\), where \(t\) is a general point on \(R'_{m'+1}\).  In other words, the above bundle specializes to
\begin{multline*}
N_{C(d - d', 0; 1)}[u_{\ell' + 1} \biposmod v_{\ell' + 1}] \cdots[u_\ell \biposmod v_\ell][\posmodalong R'_{m' + 1}][\posmodalong \bar{R}_{m' + 2} \cup \cdots \cup \bar{R}_m] [p \posmod t] \\
[q_{1,n_1 + 1} \biposmod q_{1,n_1 + 2}] \cdots [q_{1, r-2} \biposmod q_{1, r-1}] \cdots [q_{m',n_{m'} + 1} \biposmod q_{m',n_{m'} + 2}] \cdots [q_{m', r-2} \biposmod q_{m', r-1}].
\end{multline*}
The modification \([p \posmod t]\) is in a linearly general direction,
and may therefore be erased by Lemma \ref{lem:lin_general}. In other words, interpolation for this bundle follows from interpolation for
\begin{multline*}
N_{C(d - d', 0; 1)}[u_{\ell' + 1} \biposmod v_{\ell' + 1}] \cdots[u_\ell \biposmod v_\ell][\posmodalong R'_{m' + 1}][\posmodalong \bar{R}_{m' + 2} \cup \cdots \cup \bar{R}_m] \\
[q_{1,n_1 + 1} \biposmod q_{1,n_1 + 2}] \cdots [q_{1, r-2} \biposmod q_{1, r-1}] \cdots [q_{m',n_{m'} + 1} \biposmod q_{m',n_{m'} + 2}] \cdots [q_{m', r-2} \biposmod q_{m', r-1}].
\end{multline*}
Similarly specializing \(R_{m' + 2}\), then \(R_{m' + 3}\), and so on until \(R_m\), we reduce to interpolation for the bundle
\begin{multline*}
N_{C(d - d', 0; 1)}[u_{\ell' + 1} \biposmod v_{\ell' + 1}] \cdots[u_\ell \biposmod v_\ell] [\posmodalong R'_{m' + 1} \cup \cdots \cup R'_m] \\
[q_{1,n_1 + 1} \biposmod q_{1,n_1 + 2}] \cdots [q_{1, r-2} \biposmod q_{1, r-1}] \cdots [q_{m',n_{m'} + 1} \biposmod q_{m',n_{m'} + 2}] \cdots [q_{m', r-2} \biposmod q_{m', r-1}],
\end{multline*}
which is just the assertion
\(I(d' - 1, g, r - 1, \bar{\ell}, \bar{m})\).
\end{proof}

The \(n_i\) appearing in 
Proposition~\ref{master} are constrained mod \(2\).
It is thus often difficult to apply Proposition~\ref{master} in situations
where \(\delta\) is an integer with the ``wrong'' parity.
We introduce the following variant, which has the advantage
that its difficult parity is the opposite of the difficult parity
for Proposition~\ref{master}.

\begin{prop} \label{master-111}
Let \(\ell'\), \(m'\), \(d'\), the \(n_i\), \(\bar{\ell}\), and \(\bar{m}\), be as in Proposition~\ref{master}.
If
\[m' < m, \quad 2m' + \ell' < r - 2, \quad \text{and} \quad \left|\delta - \left[1 + \ell' + 2(d - d') + \sum n_i\right]\right| \leq 1 - \frac{1}{r - 1},\]
and
\[I(d' - 1, g, r - 1, \bar{\ell}, \bar{m}), \quad I(d' - 1, g, r - 1, \bar{\ell}, \bar{m} - 1), \quad \text{and} \quad I(d' - 2, g, r - 2, \bar{\ell}, \bar{m})\]
all hold, then so does \(I(d, g, r, \ell, m)\).
\end{prop}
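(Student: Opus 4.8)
The plan is to imitate the proof of Proposition~\ref{master}, inserting one new degeneration to absorb the extra ``$+1$'' in the balance condition. Exactly as there, I would first peel off $d - d'$ one-secant lines, then degenerate $R_1, \dots, R_{m'}$ by the construction of Section~\ref{sec:onion_specialization} so that $R_i$ contributes $n_i$ modifications pointing toward an auxiliary point $p_i$, together with bi-modifications among the remaining intersection points of $R_i$ with $C$, and finally specialize $p_1, \dots, p_{m'}$, the points $v_1, \dots, v_{\ell'}$, and the points $y_1, \dots, y_{d - d'}$ all to a single general point $p \in C$.

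The new step uses the hypothesis $m' < m$: before projecting from $p$, I would additionally degenerate one of the untouched rational curves, say $R_m$, to a nodal union $R_m^\circ \cup L$, where $s \in C$ is a further general point, $L = \bar{ps}$ is the $2$-secant line of $C$ joining $p$ and $s$, and $R_m^\circ$ is the residual rational curve of degree $r - 2$ meeting $L$ at one point and meeting $C$ quasitransversely at $r - 1$ further general points (so $R_m^\circ \cup L$ is still $(r+1)$-secant to $C$ of degree $r - 1$). This replaces $[\posmodalong R_m]$ by $[\posmodalong R_m^\circ][s \posmod p][p \posmod L]$: the tangent direction of $L$ at $s$ is the direction of the line $\bar{ps}$, so the modification at $s$ points toward $p$ and supplies the needed extra modification; the tangent direction of $L$ at $p$ is a general direction, transverse to $\pp N_{C \to p}|_p$, and is absorbed into the modification at $p$ in the quotient. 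The strengthened inequality $2m' + \ell' < r - 2$ is precisely what is needed so that the directions at $p$ --- the $M_i$, the $u_j$, and the direction of $L$ --- are still linearly independent (hence the limit is tree-like) and span a subspace $M \subset \pp N_C|_p$ disjoint from $\pp N_{C\to p}|_p$ and linearly general in $\pp(N_C/N_{C\to p})|_p$, by Lemmas~\ref{M:unprojected} and~\ref{M:projected}.

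With this in place, the number of modifications pointing toward $p$ becomes $1 + \ell' + 2(d - d') + \sum n_i$. Projecting from $p$, the hypothesis on $\delta$ lets me apply Corollary~\ref{cor:intp_ses} to reduce to interpolation for the quotient bundle on $\bar{C}$, then erase the linearly general modification at $p$ by Lemma~\ref{lem:lin_general}, and specialize $R_{m'+1}, \dots, R_{m-1}$ to pass through $p$ as in Proposition~\ref{master}, replacing each $[\posmodalong \bar{R}_i]$ by $[\posmodalong R_i']$ for an $r$-secant rational normal curve $R_i'$ of degree $r - 2$ in a hyperplane of $\pp^{r-1}$. Since $L$ collapses under projection from $p$ to the point $\bar{s} \in \bar{C}$, which lies on $\bar{R}_m^\circ$ (the point where $R_m^\circ$ met $L$ maps to $\bar s$), the curve $\bar{R}_m^\circ$ is likewise an $r$-secant rational normal curve of degree $r - 2$ in a hyperplane of $\pp^{r-1}$, so the residual bundle on $\bar{C}$ is --- up to the position of $\bar R_m^\circ$ --- the bundle of $I(d' - 1, g, r-1, \bar\ell, \bar m)$. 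When $N_{\bar{C} \cup \bar{R}_m^\circ}|_{\bar{R}_m^\circ}$ is perfectly balanced I would finish by Lemma~\ref{lem:interpolation_rational_bal} together with $I(d'-1, g, r-1, \bar\ell, \bar m)$; in the remaining case --- where, by Lemma~\ref{lem:rest_onion_bal}, $r$ is even and $\bar C$ is an elliptic normal curve of degree $r$ --- I would instead remove $\bar{R}_m^\circ$ and project once more, routing the residual reduction through $I(d'-1, g, r-1, \bar\ell, \bar m - 1)$ and $I(d'-2, g, r-2, \bar\ell, \bar m)$.

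The hard part will be that last paragraph: verifying that $\bar{R}_m^\circ$, together with the secancy point $\bar{s}$ produced by collapsing $L$, is generic enough to invoke the inductive hypotheses --- i.e.\ controlling the restricted normal bundle $N_{\bar C \cup \bar R_m^\circ}|_{\bar R_m^\circ}$ and, in the exceptional elliptic-normal case, correctly organizing the extra degeneration so that interpolation for the quotient follows from the two auxiliary hypotheses $I(d'-1, g, r-1, \bar\ell, \bar m - 1)$ and $I(d'-2, g, r-2, \bar\ell, \bar m)$.
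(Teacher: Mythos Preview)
Your overall setup matches the paper's, but the endgame has a real gap --- and it is not about whether \(N_{\bar C \cup \bar R_m^\circ}|_{\bar R_m^\circ}\) is balanced. That lemma is a red herring here: you are not proving interpolation for a bundle on the reducible curve \(\bar C \cup \bar R_m^\circ\), but for a modified normal bundle on \(\bar C\) alone.

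There are two interlocking issues. First, after projecting from \(p\), the modification \([p \posmod \bar M]\) is \emph{not} linearly general. The direction of \(L\) at \(p\) points toward \(s\), and \(\bar s\) is one of the \(r\) secancy points of \(\bar R_m^\circ\) with \(\bar C\); this direction cannot be moved without simultaneously moving \(\bar R_m^\circ\), which appears elsewhere in the bundle. Lemmas~\ref{M:unprojected} and~\ref{M:projected} address only the \(M_i\) and the \(u_j\). The paper therefore applies Lemma~\ref{lem:lin_general} with the \emph{fixed} subspace \(N_{\bar C \to s}|_p\), reducing to a pair of bundles \(Q^-\) (no modification at \(p\)) and \(Q^+ = Q^-[p \posmod s]\).

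Second --- and this is the point you miscount --- the modification along \(\bar R_m^\circ\) is at only the \(r-1\) points \(\bar s_1, \dots, \bar s_{r-1}\), not at \(\bar s\): the point where \(R_m^\circ\) met \(L\) was not on \(C\), so no modification was ever placed there. Hence \(Q^-\) is not the bundle in \(I(d'-1, g, r-1, \bar\ell, \bar m)\). The paper closes this gap with Lemma~\ref{lem:monodromy}, reducing the ``\(r-1\) out of \(r\)'' case to the two extremes: all \(r\) modifications, which is \(I(d'-1, g, r-1, \bar\ell, \bar m)\), and none, which is \(I(d'-1, g, r-1, \bar\ell, \bar m-1)\). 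For \(Q^+\), the paper applies Lemma~\ref{gen-to-fixed} and then projects from \(s\), landing on \(I(d'-2, g, r-2, \bar\ell, \bar m)\). This is where the three inductive hypotheses enter, and they are needed in \emph{every} case --- not only in an elliptic-normal exception.
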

\begin{proof}
As in the proof of Proposition~\ref{master}, it suffices to show that \(N^\circ\)
satisfies interpolation, where
\begin{multline*}
N^\circ \colonequals N_{C(d - d', 0; 0)}[u_{\ell'+1} \biposmod v_{\ell'+1}]\cdots[u_\ell \biposmod v_\ell][\posmodalong R_{m' + 1} \cup \cdots \cup R_m] [u_1 + \cdots + u_{\ell'} + 2x_1 + \cdots + 2 x_{d - d'} \posmod p] \\
[q_{1,1} + \cdots + q_{1,r - 1} \posmodalong R_1^\circ] \cdots [q_{m',1} + \cdots + q_{m',r - 1} \posmodalong R_{m'}^\circ] [p \posmod M].
\end{multline*}
Write \(R_{m' + 1} \cap C = \{s_0, s_1, s_2, \ldots, s_{r - 1}, s_r\}\).
We first specialize \(R_{m' + 1}\) to a union \(R \cup L\), where \(L\)
is the line through \(s_0\) and \(s_r\), and \(R\) is a rational curve of degree \(r - 2\)
passing through \(s_1, s_2, \ldots, s_{r - 1}\) and meeting \(L\) at a single point. 
We then specialize \(s_r\) to \(p\).
These specializations induce a specialization of \(N^\circ\) to
\begin{multline*}
N_{C(d - d', 0; 0)}[u_{\ell'+1} \biposmod v_{\ell'+1}]\cdots[u_\ell \biposmod v_\ell][\posmodalong R][\posmodalong R_{m' + 2} \cup \cdots \cup R_m] [s_0 + u_1 + \cdots + u_{\ell'} + 2x_1 + \cdots + 2 x_{d - d'} \posmod p] \\
[q_{1,1} + \cdots + q_{1,r - 1} \posmodalong R_1^\circ] \cdots [q_{m',1} + \cdots + q_{m',r - 1} \posmodalong R_{m'}^\circ] [p \posmod M'],
\end{multline*}
where \(M' = \langle M,  \pp N_{C \to s_0}|_p\rangle \).
We then project from \(p\), thereby reducing to interpolation for
\begin{multline*}
N_{C(d - d', 0; 1)} [u_{\ell' + 1} \biposmod v_{\ell' + 1}] \cdots[u_\ell \biposmod v_\ell][s_1 + \cdots + s_{r - 1} \posmodalong \bar{R}][\posmodalong \bar{R}_{m' + 2} \cup \cdots \cup \bar{R}_m] [p \posmod \bar{M}'] \\
[q_{1,n_1 + 1} \biposmod q_{1,n_1 + 2}] \cdots [q_{1, r-2} \biposmod q_{1, r-1}] \cdots [q_{m',n_{m'} + 1} \biposmod q_{m',n_{m'} + 2}] \cdots [q_{m', r-2} \biposmod q_{m', r-1}].
\end{multline*}
Here we have written out the modification \([s_1 + \cdots + s_{r - 1} \posmodalong \bar{R}]\) because \(\bar{R}\) also meets \(C(d-d',0;1)\) at \(s_0\).
Note that \(\bar{M}'\) is not linearly general since it contains the fixed direction \(\pp N_{C(d-d', 0;1) \to s_0}|_p\); however, it is linearly general in the quotient by \(\pp N_{C(d-d', 0;1) \to s_0}|_p\).  Using Lemma \ref{lem:lin_general}, we reduce to interpolation for the pair of bundles
\begin{align*}
& Q \colonequals N_{C(d - d', 0; 1)}[u_{\ell' + 1} \biposmod v_{\ell' + 1}] \cdots[u_\ell \biposmod v_\ell][s_1 + \cdots + s_{r - 1} \posmodalong \bar{R}][\posmodalong \bar{R}_{m' + 2} \cup \cdots \cup \bar{R}_m]  \\
& \qquad\qquad [q_{1,n_1 + 1} \biposmod q_{1,n_1 + 2}] \cdots [q_{1, r-2} \biposmod q_{1, r-1}] \cdots [q_{m',n_{m'} + 1} \biposmod q_{m',n_{m'} + 2}] \cdots [q_{m', r-2} \biposmod q_{m', r-1}] \\
\text{and} \ & Q[p \posmod s_0].
\end{align*}
Specializing \(R_{m' + 2}, R_{m' + 3}, \ldots, R_m\) to pass through \(p\),
interpolation for these two bundles follows from interpolation for the two bundles
\begin{align*}
Q^- &\colonequals N_{C(d - d', 0; 1)}[u_{\ell' + 1} \biposmod v_{\ell' + 1}] \cdots[u_\ell \biposmod v_\ell][s_1 + \cdots + s_{r - 1} \posmodalong \bar{R}][\posmodalong R'_{m' + 2} \cup \cdots \cup R'_m]  \\
&\qquad 
[q_{1,n_1 + 1} \biposmod q_{1,n_1 + 2}] \cdots [q_{1, r-2} \biposmod q_{1, r-1}] \cdots [q_{m',n_{m'} + 1} \biposmod q_{m',n_{m'} + 2}] \cdots [q_{m', r-2} \biposmod q_{m', r-1}] \\
Q^+ &\colonequals Q^- [p \posmod s_0].
\end{align*}

By Lemma \ref{lem:monodromy},
interpolation for \(Q^-\) follows from interpolation for the two closely related
vector bundles where all (respectively none) of the transformations along \(\bar{R}\) are performed:
\begin{multline*}
N_{C(d - d', 0; 1)}[u_{\ell' + 1} \biposmod v_{\ell' + 1}] \cdots[u_\ell \biposmod v_\ell][\posmodalong \bar{R} \cup R'_{m' + 2} \cup \cdots \cup R'_m]  \\
[q_{1,n_1 + 1} \biposmod q_{1,n_1 + 2}] \cdots [q_{1, r-2} \biposmod q_{1, r-1}] \cdots [q_{m',n_{m'} + 1} \biposmod q_{m',n_{m'} + 2}] \cdots [q_{m', r-2} \biposmod q_{m', r-1}]
\end{multline*}
\begin{multline*}
N_{C(d - d', 0; 1)}[u_{\ell' + 1} \biposmod v_{\ell' + 1}] \cdots[u_\ell \biposmod v_\ell][\posmodalong R'_{m' + 2} \cup \cdots \cup R'_m]  \\
[q_{1,n_1 + 1} \biposmod q_{1,n_1 + 2}] \cdots [q_{1, r-2} \biposmod q_{1, r-1}] \cdots [q_{m',n_{m'} + 1} \biposmod q_{m',n_{m'} + 2}] \cdots [q_{m', r-2} \biposmod q_{m', r-1}].
\end{multline*}
But these are the assertions \(I(d' - 1, g, r - 1, \bar{\ell}, \bar{m})\) and \(I(d' - 1, g, r - 1, \bar{\ell}, \bar{m} - 1)\) respectively, which hold by assumption.

It remains to see that \(Q^+\) satisfies interpolation.
Applying Lemma~\ref{gen-to-fixed},
and noting that we have already established interpolation for \(Q^-\) above,
it suffices to check interpolation for \(Q^- / N_{C(d - d', 0; 1) \to s_0}\),
which after twisting down by \(s_0\) is isomorphic to
\begin{multline*}
N_{C(d - d', 0; 2)}[u_{\ell' + 1} \biposmod v_{\ell' + 1}] \cdots[u_\ell \biposmod v_\ell][\posmodalong \bar{\bar{R}} \cup \bar{R}'_{m' + 2} \cup \cdots \cup \bar{R}'_m]  \\
[q_{1,n_1 + 1} \biposmod q_{1,n_1 + 2}] \cdots [q_{1, r-2} \biposmod q_{1, r-1}] \cdots [q_{m',n_{m'} + 1} \biposmod q_{m',n_{m'} + 2}] \cdots [q_{m', r-2} \biposmod q_{m', r-1}].
\end{multline*}
Specializing \(R'_{m' + 2}, R'_{m' + 3}, \ldots, R'_m\) to pass through \(s_0\),
we reduce to interpolation for
\begin{multline*}
N_{C(d - d', 0; 2)}[u_{\ell' + 1} \biposmod v_{\ell' + 1}] \cdots[u_\ell \biposmod v_\ell][\posmodalong \bar{\bar{R}} \cup R''_{m' + 2} \cup \cdots \cup R''_m]  \\
[q_{1,n_1 + 1} \biposmod q_{1,n_1 + 2}] \cdots [q_{1, r-2} \biposmod q_{1, r-1}] \cdots [q_{m',n_{m'} + 1} \biposmod q_{m',n_{m'} + 2}] \cdots [q_{m', r-2} \biposmod q_{m', r-1}].
\end{multline*}
But this is just the assertion
\(I(d' - 2, g, r - 2, \bar{\ell}, \bar{m})\),
which holds by assumption.
\end{proof}

\subsection{\label{subsec:largeparams} Large parameters}
Both of the main inductive arguments above impose upper bounds on \(2m' + \ell'\)
(depending on \(r\)).
It is thus difficult to apply them when any of the remaining parameters, i.e., \(d\), \(g\), or \(m\), is large.
(Note that \(\ell\) is already bounded in terms of \(r\) by construction.)
We therefore next give three inductive arguments that apply
for large values of \(d\), \(g\), and \(m\), respectively.

\begin{prop}\label{gather_lines} Suppose that \(d \geq g + 2r - 1\). If \(I(d - (r - 1), g, r, \ell, m)\) holds, then so does
\(I(d, g, r, \ell, m)\).
\end{prop}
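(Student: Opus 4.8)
The plan is to peel off $r-1$ one-secant lines, which lowers the degree from $d$ to $d-(r-1)$ while leaving $g$, $r$, $\ell$, $m$ fixed. Since $d \ge g + 2r - 1$, we have $d - (r-1) \ge g + r$, so I would degenerate the general BN-curve $C$ of degree $d$ and genus $g$ to a nodal curve $C' \cup L_1 \cup \cdots \cup L_{r-1}$, where $C'$ is a general --- in particular nonspecial --- BN-curve of degree $d-(r-1)$ and genus $g$, and $L_1, \dots, L_{r-1}$ are pairwise disjoint one-secant lines meeting $C'$ at general points $x_1, \dots, x_{r-1}$. Iterating Lemma~\ref{lem:1secBN} shows this degeneration stays in the Brill--Noether component, and as usual one arranges for all the modification data of $I(d,g,r,\ell,m)$ --- the pairs $u_i,v_i$ and the rational curves $R_i$ --- to specialize onto $C'$.

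Applying Lemma~\ref{lem:1sec} once per line --- the lines being pairwise disjoint, the order is immaterial, and using $[2u \posmod v] = [2u \negmod v](2u)$ together with the fact that twisting by a line bundle commutes with elementary modification --- reduces $I(d,g,r,\ell,m)$ to interpolation for
\[
E \colonequals B(-x_1 - \cdots - x_{r-1}), \qquad B \colonequals N_{C'}'[2x_1 \posmod y_1]\cdots[2x_{r-1}\posmod y_{r-1}],
\]
where $N_{C'}' = N_{C'}[u_1 \biposmod v_1]\cdots[u_\ell \biposmod v_\ell][\posmodalong R_1]\cdots[\posmodalong R_m]$ satisfies interpolation by the hypothesis $I(d-(r-1),g,r,\ell,m)$, and $y_j \in L_j$ is a general point, so that $N_{C'\to y_j} \simeq \O_{C'}(1)$ is a linearly general line subbundle of $N_{C'}'$. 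Now $H^1(B)=0$, since positive modifications preserve the vanishing $H^1(N_{C'}') = 0$, and a short computation using $d-(r-1) \ge g + r$ gives $h^0(B) = \chi(B) = \chi(N_{C'}') + 2(r-1) \ge (r-1)^2$. Because $C'$ is irreducible and the $x_j$ were chosen general, a general effective divisor of degree $n$ together with $x_1 + \cdots + x_{r-1}$ is a general effective divisor of degree $n+r-1$; since interpolation on an irreducible curve is a statement about general divisors, interpolation for $B$ then transfers directly to $E = B(-x_1-\cdots-x_{r-1})$, the bound $h^0(B)\ge (r-1)^2$ ensuring moreover that $H^1(E)=0$. (One could instead cite Lemma~\ref{lem:int_large_degree} when $\chi(E) \ge (r-1)g$, but this inequality fails for $g$ large relative to $r$, so the general-divisor argument is needed.)

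So everything reduces to deducing interpolation for $B$ from interpolation for $N_{C'}'$. Since $B$ is obtained from $N_{C'}'$ by positive modifications at the general points $x_j$ towards the linearly general directions $N_{C'\to y_j}$, I would erase these one at a time using Lemma~\ref{lem:lin_general} (with $F_0 = 0$), which would complete the proof. The main obstacle --- and where the real work lies --- is that each modification $[2x_j \posmod y_j]$ is made along the \emph{double} point $2x_j$ rather than a reduced point, so Lemma~\ref{lem:lin_general} does not apply verbatim: one must write $[2x_j \posmod y_j] = [x_j \posmod y_j][x_j \posmod y_j]$, check that the full collection of $2(r-1)$ modification data is tree-like, and verify that the requisite linear generality of the modification directions persists as the earlier modifications are applied. (An alternative that sidesteps this point is to peel off a single $1$-secant rational normal curve $M$ of degree $r-1$ rather than $r-1$ disjoint lines: then $N_{C'\cup M}'|_{C'} = N_{C'}'[x \posmod q]$ carries only one, linearly general, modification, and one glues $M$ back via Lemma~\ref{lem:int_rest} after an appropriate twist-down on $M$ making the restriction to $M$ nearly balanced --- a dimension count shows the resulting space of sections has the expected dimension $\chi(N_{C'}')$.)
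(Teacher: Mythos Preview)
Your setup is correct up to the point where you must pass from interpolation for \(N_{C'}'\) to interpolation for
\[
B = N_{C'}'[2x_1 \posmod y_1]\cdots[2x_{r-1}\posmod y_{r-1}],
\]
and you rightly identify this as the crux. But your proposed resolution does not go through. Writing \([2x_j \posmod y_j]\) as two successive rank-one modifications and invoking Lemma~\ref{lem:lin_general} iteratively fails for the reason you half-anticipate: after one application the direction \(N_{C'\to y_j}|_{x_j}\) is \emph{fixed}, not varying in a linearly general family, so the lemma gives no control over the second modification at the same point toward the same direction. (Lemma~\ref{lem:lin_general} only asserts interpolation for \emph{some} member of the family, not for a prescribed one.) Your alternative of a single \(1\)-secant rational curve of degree \(r-1\) is not executed and has its own issues: the restriction to such a curve is not perfectly balanced, so Lemma~\ref{lem:interpolation_rational_bal} does not apply directly. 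There is also a secondary gap: your passage from interpolation for \(B\) to interpolation for \(E = B(-x_1-\cdots-x_{r-1})\) is not rigorous, since \(B\) itself depends on the \(x_j\), so \(x_1+\cdots+x_{r-1}+D'\) is \emph{not} a general divisor of its degree relative to \(B\).

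The paper sidesteps all of this with one clean move: after peeling off the \(r-1\) lines, it \emph{specializes} \(x_1,\ldots,x_{r-1}\) to a common general point \(x\) while keeping \(y_1,\ldots,y_{r-1}\) general. The \(r-1\) directions \(N_{C'\to y_j}|_x\) are then linearly independent in the rank-\((r-1)\) fiber \(N_{C'}'|_x\), so the combined modification \([2x\posmod y_1]\cdots[2x\posmod y_{r-1}]\) is simply the full twist \((2x)\). Thus \(B\) specializes to \(N_{C'}'(2x)\), whose interpolation follows immediately from that of \(N_{C'}'\). This is the missing idea.
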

\begin{proof}
We want to show interpolation for
\[N_C[u_1 \biposmod v_1]\cdots[u_\ell \biposmod v_\ell][\posmodalong R_1 \cup \cdots \cup R_m].\]
Peeling off \(r - 1\) one-secant lines, it suffices to show interpolation for
\[N_{C(r - 1, 0; 0)} [u_1 \biposmod v_1]\cdots[u_\ell \biposmod v_\ell][\posmodalong R_1 \cup \cdots \cup R_m][2x_1 \posmod y_1]\cdots[2x_{r - 1} \posmod y_{r - 1}].\]
Specializing \(x_1, x_2, \ldots, x_{r - 1}\) to a common point \(x \in C\)
(while leaving \(y_1, y_2, \ldots, y_{r - 1}\) general) reduces to interpolation for
\[N_{C(r - 1, 0; 0)} [u_1 \biposmod v_1]\cdots[u_\ell \biposmod v_\ell][\posmodalong R_1 \cup \cdots \cup R_m](2x).\]
Removing the twist, this bundle satisfies interpolation provided that
\[N_{C(r - 1, 0; 0)} [u_1 \biposmod v_1]\cdots[u_\ell \biposmod v_\ell][\posmodalong R_1 \cup \cdots \cup R_m]\]
satisfies interpolation, which is the assertion \(I(d - (r - 1), g, r, \ell, m)\) that holds by assumption.
\end{proof}

\begin{prop} \label{peel_onion} Suppose that \(g \geq r\). If \(I(d - (r - 1), g - r, r, \ell, m + 1)\) holds, then so does
\(I(d, g, r, \ell, m)\).
\end{prop}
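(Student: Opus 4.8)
The plan is to obtain Proposition~\ref{peel_onion} as an immediate consequence of Lemma~\ref{lem:pull_off_onion}, which has exactly the conclusion $I(d,g,r,\ell,m)$ and whose hypothesis list differs from the data at hand only by the two side conditions $\rho(d,g,r) \ge 0$ and ``$(d,g,r) \ne (2r,r+1,r)$ if $r$ is odd.'' So the only real task is to check that these side conditions are automatic under the standing assumption of this section that $(d,g,r,\ell,m)$ is good.

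First I would invoke goodness directly: it includes $0 \le m \le \rho(d,g,r)$, so in particular $\rho(d,g,r) \ge 0$, and it includes $d \ge g+r$. Since the triple $(2r,r+1,r)$ fails $d \ge g+r$ (as $2r < (r+1)+r$), goodness forces $(d,g,r) \ne (2r,r+1,r)$, so the clause of Lemma~\ref{lem:pull_off_onion} excluding that triple when $r$ is odd is vacuously satisfied. After matching indices via the identity $d-(r-1) = d-r+1$, the remaining hypotheses of Lemma~\ref{lem:pull_off_onion} — namely $g \ge r$ and $I(d-r+1,\,g-r,\,r,\,\ell,\,m+1)$ — are precisely our assumptions, so applying the lemma yields $I(d,g,r,\ell,m)$.

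For completeness I would recall that the underlying geometry is exactly the argument in the proof of Lemma~\ref{lem:pull_off_onion}: one peels off an $(r+1)$-secant rational curve $R_{m+1}$ of degree $r-1$, specializing $C$ as in Lemma~\ref{lem:onionBN} to the union of a general BN-curve $C'$ of degree $d-(r-1)$ and genus $g-r$ with $R_{m+1}$, and arranging all marked points and all intersections with $R_1,\dots,R_m$ to land on $C'$; the restriction of the modified bundle to $R_{m+1}$ is then $N_{C'\cup R_{m+1}}|_{R_{m+1}}$, which is perfectly balanced by Lemma~\ref{lem:rest_onion_bal} (the elliptic-normal exception there would force $(d,g,r) = (2r,r+1,r)$, already excluded above), so Lemma~\ref{lem:interpolation_rational_bal} reduces interpolation for the degenerate bundle to interpolation for $N_{C'\cup R_{m+1}}|_{C'}$, which is exactly the assertion $I(d-(r-1),\,g-r,\,r,\,\ell,\,m+1)$.

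I do not anticipate any genuine obstacle here: the statement is essentially a re-export of Lemma~\ref{lem:pull_off_onion} into the ``good-tuple'' bookkeeping of Section~\ref{sec:inductive}, and the only point requiring a moment's care is confirming that goodness rules out the single triple $(2r,r+1,r)$ on which the relevant restricted normal bundle can fail to be balanced.
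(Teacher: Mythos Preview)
Your proposal is correct and matches the paper's proof essentially verbatim: the paper simply notes that goodness forces \((d,g,r)\neq(2r,r+1,r)\) (via \(d\geq g+r\)) and then invokes Lemma~\ref{lem:pull_off_onion}. Your additional remark that goodness also yields \(\rho(d,g,r)\geq 0\) from \(m\leq\rho(d,g,r)\) is exactly the check needed, and your recap of the underlying geometry is accurate but optional.
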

\begin{proof}
Since \((d,g,r,\ell, m)\) is good, \((d,g,r) \neq (2r, r+1, r)\), and so this is a special case of Lemma \ref{lem:pull_off_onion}.
\end{proof}

\begin{prop}\label{pancake_onions} Suppose that \(m \geq r - 1\). If \(I(d, g, r, \ell, m - (r - 1))\) holds, then so does
\(I(d, g, r, \ell, m)\).
\end{prop}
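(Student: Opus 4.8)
The plan is to mimic the proof of Proposition~\ref{gather_lines}: there, $r-1$ peeled-off one-secant lines contribute $r-1$ modifications which, once their base points are made to collide, collapse into a twist by an effective divisor; here I would instead gather $r-1$ of the rational curves $R_i$. Since $m \geq r-1 \geq 1$ forces $\rho(d,g,r) \geq 1$, and hence $d \geq r+1$, a general hyperplane $H$ meets $C$ in $d \geq r+1$ points, which are in linear general position in $H \simeq \pp^{r-1}$ by \cite{kadets}, and which (for $H$ general) avoid the marked points $u_i, v_i$ and the points of $C \cap (R_r \cup \cdots \cup R_m)$. Fix $r+1$ of them, $s_0, s_1, \ldots, s_r$, and specialize $R_1, \ldots, R_{r-1}$ to general members $R_1^\circ, \ldots, R_{r-1}^\circ$ of the (nonempty, of dimension $r-2$) family of rational normal curves of degree $r-1$ in $H$ through all of $s_0, \ldots, s_r$.

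Two features of this specialization are what I would need. First, for the $R_i^\circ$ general, the tangent directions $b_{1,k}, \ldots, b_{r-1,k} \in \pp N_C|_{s_k} \simeq \pp^{r-2}$ at each $s_k$ are linearly independent: projecting $H$ from $s_k$ identifies a rational normal curve of degree $r-1$ through $s_0, \ldots, s_r$ with a rational normal curve of degree $r-2$ through the $r$ projected points $\{\bar{s}_j : j \neq k\}$ together with a marked point (the image of its tangent line at $s_k$), and a general point of $\pp^{r-2}$ together with those $r$ points lies on a unique such curve; hence $R^\circ \mapsto b_k$ is dominant, the $b_{i,k}$ are general for each fixed $k$, and a single general choice of the $R_i^\circ$ arranges independence at all $k$ at once. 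Second, this independence makes the combined modification data tree-like along the whole degeneration, so that the bundle
\[N_C[u_1 \biposmod v_1]\cdots[u_\ell \biposmod v_\ell][\posmodalong R_1 \cup \cdots \cup R_m]\]
of $I(d,g,r,\ell,m)$ specializes flatly, its central fiber being the same expression with $R_i$ replaced by $R_i^\circ$ for $1 \leq i \leq r-1$. Since interpolation is preserved under generization, it then suffices to prove interpolation for this central fiber.

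Finally I would collapse the gathered modifications. Those along $R_1^\circ, \ldots, R_{r-1}^\circ$ take place along $s_0, \ldots, s_r$, which are disjoint from the supports of all other modifications, and so may be performed first. At each $s_k$ we modify $N_C$ towards the $r-1$ directions $b_{1,k}, \ldots, b_{r-1,k}$, which form a basis of $N_C|_{s_k}$: choosing a local coordinate $t$ at $s_k$ and local generators $\sigma_1, \ldots, \sigma_{r-1}$ of the subbundles $N_{C \to b_{1,k}}, \ldots, N_{C \to b_{r-1,k}}$ (which then form a local frame of $N_C$), the modification towards $b_{j,k}$ replaces $\sigma_j$ by $t^{-1}\sigma_j$, so that after all $r-1$ steps the sheaf is locally $\langle t^{-1}\sigma_1, \ldots, t^{-1}\sigma_{r-1}\rangle$; that is,
\[N_C[s_k \posmod b_{1,k}]\cdots[s_k \posmod b_{r-1,k}] \simeq N_C(s_k).\]
Doing this at each $s_k$ and pulling the resulting twist past the remaining (disjointly supported) modifications, the central fiber becomes
\[\big(N_C[u_1 \biposmod v_1]\cdots[u_\ell \biposmod v_\ell][\posmodalong R_r \cup \cdots \cup R_m]\big)(s_0 + s_1 + \cdots + s_r),\]
which has the same rank and degree as the original bundle, as a short count confirms. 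By the hypothesis $I(d,g,r,\ell,m-(r-1))$, the bundle $N_C[u_1 \biposmod v_1]\cdots[u_\ell \biposmod v_\ell][\posmodalong R_r \cup \cdots \cup R_m]$ satisfies interpolation, hence so does its twist by the effective divisor $s_0 + \cdots + s_r$, and the proof is complete.

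I expect the main work to lie in the setup of the specialization rather than in the final identity: one must check that $r+1$ points in linear general position in $\pp^{r-1}$ lie on a rational normal curve of degree $r-1$, that a general such curve is quasitransverse to $C$ at each $s_k$ (with the tangent genericity above), and that the modification data stays tree-like along a path from general onions to the $R_i^\circ$, so that \cite[Propositions~2.17 and~2.20]{aly} apply and the limit of modifications is as claimed. Granting this, the collapse identity and the degree bookkeeping are routine.
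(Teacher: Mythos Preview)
Your proposal is correct and follows essentially the same approach as the paper: specialize \(r-1\) of the \(R_i\) to general rational normal curves in a common hyperplane through a fixed set of \(r+1\) points of \(C\cap H\), so that at each of these points the \(r-1\) modification directions span the normal space and the combined modification becomes a twist by \(s_0+\cdots+s_r\); then remove the twist and invoke \(I(d,g,r,\ell,m-(r-1))\). The paper's proof is terser---it simply asserts that the specialization induces the twisted bundle---while you supply the justification (linear independence of the tangent directions via the projection argument, tree-likeness, and \(d\geq r+1\)) that the paper leaves implicit.
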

\begin{proof}
We want to show interpolation for 
\[N_C[u_1 \biposmod v_1]\cdots[u_\ell \biposmod v_\ell][\posmodalong R_1 \cup \cdots \cup R_m].\]
Fix points \(q_1, q_2, \ldots, q_{r + 1}\) lying in a general hyperplane section of \(C\).
For \(m - (r - 2) \leq i \leq m\), specialize \(R_i\) to a general rational curve of degree \(r - 1\) meeting
\(C\) at \(q_1, q_2, \ldots, q_{r + 1}\).
This induces a specialization of the above bundle to
\[N_C[u_1 \biposmod v_1]\cdots[u_\ell \biposmod v_\ell][\posmodalong R_1 \cup \cdots \cup R_{m - (r - 1)}](q_1 + \cdots + q_{r + 1}).\]
Removing the twist, this bundle satisfies interpolation provided that
\[N_C[u_1 \biposmod v_1]\cdots[u_\ell \biposmod v_\ell][\posmodalong R_1 \cup \cdots \cup R_{m - (r - 1)}]\]
satisfies interpolation, which is the assertion \(I(d, g, r, \ell, m - (r - 1))\) that holds by assumption.
\end{proof}

\subsection{Small parameters}
The remaining cases where our main inductive arguments do not apply
are when various parameters are small
(which deprives us of flexibility in choosing \(\ell'\) and the \(n_i\)).
Some of the arguments we give here readily generalize to larger values
of various parameters, but since we will not need them in that regime,
we opt to simplify the exposition as far as possible.
We first consider two cases where \(\ell = 0\) and \(m = 1\).

\begin{prop}\label{two_proj} Suppose that \(\ell = 0\) and \(m = 1\).
Let \(\epsilon\) be an integer satisfying \(0 \leq \epsilon \leq (d - g - r) / 2\),
with \(\epsilon < (d - g  - r)/2\) if \(g = 0\).
If
\[\left| \delta - (2\epsilon + 1)\right| \leq 1 - \frac{2}{r - 1},\]
and \(I(d - 2\epsilon - 2, g, r - 2, 0, 1)\) holds, then so does \(I(d, g, r, 0, 1)\).
\end{prop}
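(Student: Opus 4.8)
The plan is to deduce $I(d,g,r,0,1)$ --- interpolation for $N_C[\posmodalong R]$ --- from $I(d-2\epsilon-2, g, r-2, 0, 1)$ by peeling off $2\epsilon$ one-secant lines and then running the ``specialize and project'' construction of Section~\ref{subsec:over_ind}\eqref{project} twice; since the inequality $|\delta - (2\epsilon+1)| \leq 1 - \frac{2}{r-1}$ has slack $1 - \frac{2}{r-1}$ rather than $1 - \frac{1}{r-1}$, this is precisely the iterated version of that construction with two iterations.

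Concretely, first I would peel off $2\epsilon$ one-secant lines exactly as at the start of the proof of Proposition~\ref{master}: this is legitimate because $2\epsilon \leq d-g-r$, with strict inequality when $g=0$, and reduces $I(d,g,r,0,1)$ to interpolation for
\[ N \colonequals N_{C(2\epsilon,0;0)}[\posmodalong R][2x_1 \posmod y_1]\cdots[2x_{2\epsilon}\posmod y_{2\epsilon}]. \]
Then fix a general point $p_1 \in C$, specialize the line-directions $y_1,\dots,y_\epsilon$ to $p_1$, and specialize $R$ so that one of its points of secancy limits to $p_1$ (with its tangent direction made to lie in the osculating plane of $C$ there); this arranges that exactly $2\epsilon+1$ modifications point towards $p_1$. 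Projecting from $p_1$, the pointing bundle sequence \eqref{eq:mods_ses} then exhibits the specialization of $N$ as an extension whose sub-line-bundle $N_{C\to p_1}$ is twisted up by a divisor of degree $2\epsilon+1$, and whose quotient lives on $C(2\epsilon,0;1) \subset \pp^{r-1}$ and carries the residual modifications together with $[\posmodalong \bar R]$; after erasing any linearly general modification created at $p_1$ via Lemma~\ref{lem:lin_general} and applying Section~\ref{subsec:over_ind}\eqref{indstrat:center_proj_R} so that $\bar R$ becomes a genuine onion --- an $r$-secant rational normal curve of degree $r-2$ in a hyperplane --- the hypothesis $|\delta - (2\epsilon+1)| \leq 1 - \frac{2}{r-1}$ places the sub-bundle and quotient slopes within $1$ of the whole, so Corollary~\ref{cor:intp_ses} reduces to interpolation for this quotient. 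Because we consumed only slack $1 - \frac{2}{r-1}$, the value of $\delta$ for the quotient has decreased by exactly the amount that lets the identical construction be run a second time from a second general point $p_2 \in C(2\epsilon,0;1)$; the outcome is interpolation for $N_{C(2\epsilon,0;2)}[\posmodalong R'']$, where $R''$ is an $(r-1)$-secant rational normal curve of degree $r-3$ in a hyperplane of $\pp^{r-2}$, which is exactly the assertion $I(d-2\epsilon-2, g, r-2, 0, 1)$.

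I expect the main obstacle to be the slope bookkeeping across the two projections: one must verify that the divisor twisting the sub-line-bundle really has degree $2\epsilon+1$ at \emph{both} projections --- equivalently, that after the first projection $\delta$ has shifted by precisely the amount that keeps $|\delta - (2\epsilon+1)|$ within $1 - \frac{1}{r-1}$ --- which is the arithmetic behind the $\frac{\epsilon}{r-1}$-bound in Section~\ref{subsec:over_ind}\eqref{project}, here used with $\epsilon$ replaced by $2$. A secondary technical point is the tracking of $R$: one must confirm that these two successive (suitably specialized) projections turn $R$ into $r$- then $(r-1)$-secant rational normal curves in the appropriate hyperplanes while leaving $m=1$ intact, and that every auxiliary modification introduced at $p_1$ or $p_2$ is in a linearly general direction so that it can be erased. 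The constraints $0 \leq \epsilon \leq (d-g-r)/2$ (strict when $g=0$) and the parity of $r$ entering $\delta$ are exactly what is needed to make both steps legal and to land on a genuine instance of the inductive hypothesis.
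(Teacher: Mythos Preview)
Your high-level plan --- peel off $2\epsilon$ one-secant lines, then specialize and project twice with $2\epsilon+1$ modifications toward each center --- is exactly right, but the mechanism you propose for obtaining the crucial extra ``$+1$'' at each step does not work as written. You want to specialize a secancy point of $R$ to $p_1$ and force $T_{p_1}R$ into the osculating $2$-plane of $C$, so that $[p_1 \posmodalong R]$ is absorbed by $N_{C\to p_1}$. But $R$ lies in a hyperplane $H$ meeting $C$ transversally at $p_1$, so this would require $T_{p_1}R$ to equal the specific line $\ell = H \cap (\text{osculating }2\text{-plane})$; you do not check that an $(r+1)$-secant rational normal curve in $H$ with this prescribed tangent exists as a limit of general onions. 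Your description is also internally inconsistent: if that tangent condition holds, there is no residual modification at $p_1$ to erase and $\bar R$ is already an $r$-secant degree-$(r-2)$ onion in a hyperplane of $\pp^{r-1}$, so step~\eqref{indstrat:center_proj_R} plays no role; if it fails, the modification $[p_1\posmodalong R]$ lands in the quotient, you have only $2\epsilon$ modifications toward $p_1$, and Corollary~\ref{cor:intp_ses} does not apply under the given hypothesis.

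The paper avoids this entirely with a single degeneration of $R_1$ at the outset and no osculating conditions. Writing $R_1\cap C = \{s_0,\dots,s_r\}$, specialize $R_1$ to $R\cup L$ where $L=\overline{s_0s_r}$ and $R$ is a degree-$(r-2)$ rational curve through $s_1,\dots,s_{r-1}$ meeting $L$ once; this replaces $[\posmodalong R_1]$ by $[\posmodalong R][s_0\biposmod s_r]$. Then specialize $y_1,\dots,y_\epsilon$ to $s_0$ and $y_{\epsilon+1},\dots,y_{2\epsilon}$ to $s_r$. Now each of $s_0,s_r$ receives exactly $2\epsilon+1$ modifications toward it --- the $2\epsilon$ from the lines plus one from $[s_0\biposmod s_r]$ --- and you project from $s_0$, then from $s_r$. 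No erasure is needed at either stage, and $R$ automatically becomes the correct $(r-1)$-secant degree-$(r-3)$ onion after both projections (the point $L\cap R$ and $s_r$ have the same image after the first projection, so the second projection drops the degree of $\bar R$ by one). The idea you are missing is that the two projection centers should be \emph{chosen as the endpoints of a secant line split off from $R_1$}, rather than as independent general points.
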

\begin{proof}
We want to show that \(N_C[\posmodalong R_1]\) satisfies interpolation.
Peeling off \(2\epsilon\) one-secant lines, we reduce to interpolation for
\[N_{C(2\epsilon, 0; 0)} [\posmodalong R_1][2x_1 \posmod y_1]\cdots[2x_{2\epsilon} \posmod y_{2\epsilon}].\]
Write \(R_1 \cap C = \{s_0, s_1, s_2, \ldots, s_{r - 1}, s_r\}\).
As in the proof of Proposition~\ref{master-111}, 
specialize \(R_1\) to a union \(R \cup L\), where \(L\)
is the line through \(s_0\) and \(s_r\), and \(R\) is a rational curve of degree \(r - 2\)
passing through \(s_1, s_2, \ldots, s_{r - 1}\) and meeting \(L\) at a single point.
Then specialize \(y_1, y_2, \ldots, y_\epsilon\) to \(s_0\), and
\(y_{\epsilon + 1}, y_{\epsilon + 2}, \ldots, y_{2\epsilon}\) to \(s_r\).
This reduces our problem to interpolation for
\[N_{C(2\epsilon, 0; 0)}[\posmodalong R][s_r + 2 x_1 + \cdots + 2 x_\epsilon \posmod s_0][s_0 + 2 x_{\epsilon + 1} + \cdots + 2 x_{2\epsilon} \posmod s_r].\]
Projecting from \(s_0\) and then from \(s_r\), we reduce to interpolation for
\(N_{C(2\epsilon, 0; 2)}[\posmodalong \bar{R}]\),
which is the assertion \(I(d - 2\epsilon - 2, g, r - 2, 0, 1)\)
that holds by assumption.
\end{proof}

\begin{prop}\label{delta5} Let \(k \geq 3\).  If \(I(4k - 3, 2k - 2, 2k - 1, k - 3, 0)\) holds, then so does
\(I(4k + 1, 2k - 1, 2k + 1, 0, 1)\).
\end{prop}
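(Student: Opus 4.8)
The plan is to follow the template of the proofs of Propositions~\ref{master-111} and~\ref{two_proj}: peel off auxiliary lines to correct the degree and genus, degenerate $R_1$ via the machinery of Section~\ref{sec:onion_specialization}, and then project from two general points of the curve, at each stage erasing linearly general modifications and passing to quotients. The first thing to record is that $\delta(4k+1,2k-1,2k+1,0,1) = \frac{2(4k+1)+2(2k-1)-2(2k+1)+(2k+2)}{2k} = 5$, whence the name. Since $5$ is odd while $r-1 = 2k$ is even, $\delta$ has the ``wrong parity'' for a direct appeal to Proposition~\ref{master}, so the argument will need the $s_0$-type splitting used in the proof of Proposition~\ref{master-111}.

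Concretely, I would first peel off one one-secant line and one two-secant line from $C$ --- degenerating $C$ to $C' \cup L^{(1)} \cup L^{(2)}$, where $C' = C(1,1;0)$ is a general BN-curve of degree $4k-1$ and genus $2k-2$ in $\pp^{2k+1}$, $L^{(1)}$ is one-secant to $C'$ at a point $x$, and $L^{(2)}$ is two-secant to $C'$ at points $z,w$ --- arranging that all $2k+2$ points of $R_1 \cap C$ specialize onto $C'$. Since $C'$ has degree $4k-1 = (2k-2)+(2k+1)$ and genus $2k-2 \le (2k+1)+6$, Corollaries~\ref{cor:1sec_posmod} and~\ref{cor:2sec_posmod} reduce the problem to interpolation for
\[N_{C'}[\posmodalong R_1][2x \posmod y][z \posmod w][w \posmod z][w \posmod 2z].\]
Next I would degenerate $R_1$ as in Section~\ref{sec:onion_specialization} with the parameter $n = 6$; here the hypothesis $k \ge 3$ is used precisely to guarantee $n \le r-1 = 2k$ and $n' \colonequals (r-1-n)/2 = k-3 \ge 0$. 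This replaces $[\posmodalong R_1]$ by $[q_1 + \cdots + q_{2k} \posmodalong R^\circ][p \posmod M]$, where $R^\circ$ is a union of six lines through $p$ together with $k-3$ conics through $p$, and by Lemma~\ref{M:unprojected} the subspace $M$ is linearly general in $\pp N_{C'}|_p$ and remains so as all but two of the $q_i$ vary.

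The heart of the argument is then the two-step projection. I would specialize the auxiliary points --- the point $y$, the endpoint $w$, an $s_0$-point coming from a master-111-style splitting of the remaining data on $R^\circ$, and a suitably chosen subset of the $q_i$ --- onto two general points $p_1, p_2 \in C'$, checking tree-likeness (Definition~\ref{def:tree_like}) from the linear independence supplied by the ``general subject to lying in a hyperplane'' hypotheses, and then project from $p_1$ and from $p_2$ in turn. At each projection, Corollary~\ref{cor:intp_ses}, applied in the iterated form of Section~\ref{sec:inductive}, reduces interpolation for the total space to interpolation for the quotient, provided the number of transformations aimed at the centers is within $1 - \frac{2}{r-1}$ of $\delta = 5$ --- and the transformations toward $p_1,p_2$ coming from the six lines of $R^\circ$, from $[2x \posmod y]$, and from the two-secant modifications must be balanced against this. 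After projection, the modifications at the centers that have become linearly general are erased via Lemma~\ref{lem:lin_general}, the $s_0$-type modifications are handled with Lemmas~\ref{lem:monodromy} and~\ref{gen-to-fixed} exactly as in the proof of Proposition~\ref{master-111}, and the $k-3$ conics of $R^\circ$ collapse to $k-3$ pair modifications $[u_i \biposmod v_i]$. What is left is interpolation for $N_{C(1,1;2)}[u_1 \biposmod v_1] \cdots [u_{k-3} \biposmod v_{k-3}]$, which is $I(4k-3, 2k-2, 2k-1, k-3, 0)$, valid by hypothesis.

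The step I expect to be the main obstacle is the bookkeeping of this two-step projection: arranging the specializations so that exactly the right number of transformations point at $p_1$ and $p_2$ to keep both pointing-bundle sequences close enough to balanced (this is what forces $n = 6$ and the choice of precisely one one-secant and one two-secant peel), and verifying that everything not contributing a pair modification is either absorbed into a twist or is linearly general and hence erasable. A secondary technical point is that the images of the $q_i$ stay sufficiently general after two projections, so that the resulting $[u_i \biposmod v_i]$ are the generic pair modifications appearing in $I(4k-3, 2k-2, 2k-1, k-3, 0)$; this follows from the irreducibility of the monodromy of hyperplane sections of $C$.
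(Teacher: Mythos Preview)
Your overall architecture matches the paper's: peel off a $1$-secant and a $2$-secant line, degenerate $R_1$ via Section~\ref{sec:onion_specialization}, then project twice. But two of your parameter choices break the argument.

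The choice $n = 6$ cannot work. The six lines of $R^\circ$ already force six modifications $[q_1 + \cdots + q_6 \posmod p]$ toward the single degeneration center $p$, and every further specialization you list only adds to this count. Since $\delta = 5$ is an integer, the balance condition for projection from $p$ (Corollary~\ref{cor:intp_ses}) demands \emph{exactly} $5$ modifications toward $p$; six is one too many, and there is no way to shed one, nor to redirect any of them toward a second general point $p_2$, since all six lines pass through the same $p$. The paper instead takes $n = 4$ and specializes the $2$-secant endpoint $z$ to $p$, which contributes one additional modification toward $p$ (from $[w \posmod p]$), for a total of exactly $5$. After this first projection one is left with $k - 2$ conic pairs, one more than the target.

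The second projection is not from an independent general point, and no Proposition~\ref{master-111}--style $s_0$-splitting enters. The paper projects from the surviving $2$-secant endpoint $w$: specializing both $y$ and one conic endpoint $q_5$ to $w$ makes $p$, $q_6$, and $2x$ point toward $w$, which balances the second projection and simultaneously sacrifices one conic pair, leaving the required $k - 3$. The residual modifications $[w \posmod q_6]$ and $[p \posmod \bar{M} + w]$ are then erased in that order via Lemma~\ref{lem:lin_general}. Your invocation of Lemmas~\ref{lem:monodromy} and~\ref{gen-to-fixed} is unnecessary here: only the single hypothesis $I(4k-3, 2k-2, 2k-1, k-3, 0)$ is used, not a range of $\bar m$'s.
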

\begin{proof}
Note that \(\delta(4k + 1, 2k - 1, 2k + 1, 0, 1) = 5\).
Our goal is to show interpolation for \(N_C[\posmodalong R_1]\).
Peeling off a \(1\)-secant line and a \(2\)-secant line, we reduce to interpolation for
\[N_{C(1, 1; 0)} [2x \posmod y] [z \biposmod w] [z \posmod 2w] [\posmodalong R_1].\]
Degenerate \(R_1\) as in Section \ref{sec:onion_specialization} to the union \(R_1^\circ\), of \(4\) lines \(L_j\) meeting \(C\) at \(p\) and \(q_j\), and \(k - 2\) conics \(Q_j\) meeting \(C\) at \(p\) and \(q_{2j + 3}\) and \(q_{2j + 4}\).
This induces a specialization of the above bundle to
\[N_{C(1, 1; 0)} [2x \posmod y] [z \biposmod w] [z \posmod 2w] [q_1+q_2+q_3+q_4 \posmod p][q_5 + \cdots + q_{2k} \posmodalong R_1^\circ][p \posmod M],\]
where \(M\) is linearly general as \(q_1, q_2, q_3, q_4\) vary.
Specializing \(z\) to \(p\), we reduce to interpolation for
\[N_{C(1, 1; 0)} [2x \posmod y] [p \posmod w] [p \posmod 2w] [w+q_1+q_2+q_3+q_4 \posmod p][q_5 + \cdots + q_{2k} \posmodalong R_1^\circ][p \posmod M].\]
Projecting from \(p\), we reduce to interpolation for
\[N_{C(1, 1; 1)} [2x \posmod y] [q_5 \biposmod q_6] \cdots [q_{2k - 1} \biposmod q_{2k}][p \posmod w][p \posmod \bar{M} + 2w].\]
Specializing \(y\) and \(q_5\) to \(w\), we reduce to interpolation for
\[N_{C(1, 1; 1)} [q_7 \biposmod q_8] \cdots [q_{2k - 1} \biposmod q_{2k}][p + q_6 + 2x \posmod w][p \posmod \bar{M} + 2w][w \posmod q_6].\]
Projecting from \(w\), we reduce to interpolation for
\[N_{C(1, 1; 2)} [q_7 \biposmod q_8] \cdots [q_{2k - 1} \biposmod q_{2k}][p \posmod \bar{M} + w][w \posmod q_6].\]
Erasing the transformation \([w \posmod q_6]\), and then \([p \posmod \bar{M} + w]\),
we reduce to interpolation for
\[N_{C(1, 1; 2)} [q_7 \biposmod q_8] \cdots [q_{2k - 1} \biposmod q_{2k}],\]
which is the assertion \(I(4k - 3, 2k - 2, 2k - 1, k - 3, 0)\) that holds by assumption.
\end{proof}

\noindent
We finally consider several arguments that are adapted to the case \(m = 0\).

\begin{prop} \label{m0-delta35}
Suppose \(m = 0\), and \(g \geq 3\), and \(r \geq 6\).
Let \(\epsilon\) be an integer with \(0 \leq \epsilon \leq (d - g - r) / 3\).
If
\[\left|\delta - (2\epsilon + 3)\right| \leq 1 - \frac{3}{r - 1},\]
and \(I(d - 3\epsilon - 6, g - 3, r - 3, \ell + 1, 0)\) and \(I(d - 3\epsilon - 6, g - 3, r - 3, \ell, 0)\)
hold, then so does \(I(d, g, r, \ell, 0)\).
\end{prop}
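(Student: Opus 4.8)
The plan is to reduce $I(d,g,r,\ell,0)$ to interpolation for bundles on a BN-curve in $\pp^{r-3}$, by peeling off three $2$-secant lines (to lower the genus by $3$) and $3\epsilon$ one-secant lines, and then projecting from three general points of the resulting curve (to lower the dimension by $3$), in the style of Propositions~\ref{master-111} and~\ref{delta5}. Since $m=0$, no rational curves $R_i$ intervene, so this is a ``pure degeneration-and-projection'' argument and the machinery of Section~\ref{sec:onion_specialization} is not needed. Concretely: the hypothesis $\epsilon\leq(d-g-r)/3$ ensures that every intermediate curve still has $d'\geq g'+r$, so Corollaries~\ref{cor:1sec_posmod} and~\ref{cor:2sec_posmod} (or, when the genus is large, Lemmas~\ref{lem:1sec} and~\ref{lem:2sec} with a $\chi$-bound fed into Lemma~\ref{lem:int_large_degree}) reduce $I(d,g,r,\ell,0)$ to interpolation for $N_{C'}$, where $C'\subset\pp^r$ is a general BN-curve of degree $d-3\epsilon-3$ and genus $g-3$ (note $d-3\epsilon-3\geq(g-3)+r$), carrying the $\ell$ bi-modification pairs $[u_i\biposmod v_i]$, the $3\epsilon$ one-secant modifications $[2x_j\posmod y_j]$, and the three two-secant triples $[z_k\posmod w_k][w_k\posmod z_k][w_k\posmod 2z_k]$.

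Next I would pick three general points $p_1,p_2,p_3\in C'$ and specialize the auxiliary points of the modification data so that, in the (tree-like) limit, exactly $2\epsilon+3$ elementary modifications point towards $\{p_1,p_2,p_3\}$: each of the three two-secant triples contributes one modification towards a center (its other two members becoming external modifications at that center), $\epsilon$ of the one-secant modifications are made to point towards the centers (contributing $2$ each), and the remaining $2\epsilon$ one-secant modifications are removed by coalescing their base points and stripping the resulting twist as in Proposition~\ref{gather_lines}. The bounds $r\geq6$ and $\ell\leq r/2$ ensure that the resulting simultaneous specializations are tree-like and that the external directions at each $p_i$ are linearly general in the quotient by $\pp N_{C'\to p_i}|_{p_i}$. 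I then project from $p_1$, then $p_2$, then $p_3$: the inequality $|\delta-(2\epsilon+3)|\leq 1-\frac{3}{r-1}$ is exactly what is needed (compare the projection step in Section~\ref{subsec:over_ind}) for each of the three successive pointing-bundle sequences to be close enough to balanced that Corollary~\ref{cor:intp_ses} applies, once one keeps track of how $\delta$ changes at each step. This leaves interpolation for the projected bundle on $\bar C'\subset\pp^{r-3}$, of degree $d-3\epsilon-6$ and genus $g-3$, carrying the $\ell$ pairs $[\bar u_i\biposmod\bar v_i]$, the external modifications at the $\bar p_i$, and one residual modification coming from the last two-secant line.

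Finally, the external modifications are erased by Lemma~\ref{lem:lin_general}, and Lemma~\ref{lem:monodromy} (or Lemma~\ref{gen-to-fixed}) applied to the residual modification reduces interpolation for the projected bundle to interpolation for the bundle with one extra bi-modification pair and the bundle with none, i.e.\ to $I(d-3\epsilon-6,g-3,r-3,\ell+1,0)$ and $I(d-3\epsilon-6,g-3,r-3,\ell,0)$, which hold by hypothesis. The hard part is the bookkeeping in the middle step: one must check that the $2\epsilon+3$ modifications can be parcelled out among the three centers so that Corollary~\ref{cor:intp_ses} remains applicable as $\delta$ decreases through the three projections, and that all the simultaneous specializations stay tree-like. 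The hypotheses on $\epsilon$ and $\delta$ are designed precisely so that such a distribution exists, but verifying this — and handling the genus-large regime where the positive-modification corollaries must be replaced by direct $\chi$-estimates — is where the real work lies.
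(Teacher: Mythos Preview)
Your overall plan (three $2$-secant lines, $3\epsilon$ one-secant lines, three projections) matches the paper, but the bookkeeping in your middle step is off in a way that is fatal. For three successive projections you need roughly $2\epsilon+3$ elementary modifications pointing toward \emph{each} center, hence about $3(2\epsilon+3)=6\epsilon+9$ in total, not $2\epsilon+3$ in total. Your scheme uses only $\epsilon$ of the one-secant lines (contributing $2\epsilon$) and one modification from each two-secant triple (contributing $3$), so after the first projection there is nothing left to balance the second and third pointing-bundle sequences. The attempted fix of ``removing'' the other $2\epsilon$ one-secant modifications as in Proposition~\ref{gather_lines} also fails: that argument needs $r-1$ coalesced base points to produce a full twist, and here $2\epsilon\le \tfrac{2}{3}(d-g-r)$ is in general strictly less than $r-1$.

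The idea you are missing is that the centers should not be three fresh general points, but the two-secant attachment points themselves. The paper specializes $w_1\to z_2$, $w_2\to z_3$, and later $w_3\to z_2$, so that each triple $[z_k\biposmod w_k][z_k\posmod 2w_k]$ contributes \emph{three} times to subbundles across the three projections (the rank-$2$ term $[z_k\posmod 2w_k]$ feeds the sub at two different steps), and all $3\epsilon$ one-secant lines are used, $\epsilon$ per projection. The projections are then taken from $z_2$, from $z_3$, and from $z_2$ a second time (after a further specialization), which is what makes the total $6\epsilon+9$ distribute as $2\epsilon+3$ at each step and keeps every pointing-bundle sequence within the $1-\tfrac{3}{r-1}$ window.
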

\begin{proof}
Our goal is to show interpolation for
\[N_C [u_1 \biposmod v_1] \cdots [u_\ell \biposmod v_\ell].\]
Peeling off \(3\) two-secant lines and \(3\epsilon\) one-secant lines, we reduce to interpolation for
\begin{multline*}
N_{C(3\epsilon, 3; 0)} [u_1 \biposmod v_1] \cdots [u_\ell \biposmod v_\ell] [2x_1 \posmod y_1] \cdots [2x_{3\epsilon} \posmod y_{3\epsilon}] \\
[z_1 \biposmod w_1][z_1 \posmod 2w_1] [z_2 \biposmod w_2][z_2 \posmod 2w_2] [z_3 \biposmod w_3][z_3 \posmod 2w_3].
\end{multline*}
Specializing \(y_1, y_2, \ldots, y_\epsilon, w_1\) to \(z_2\), and
\(y_{\epsilon + 1}, y_{\epsilon + 2}, \ldots, y_{2\epsilon}, w_2\) to \(z_3\), 
we reduce to interpolation for
\begin{multline*}
N_{C(3\epsilon, 3; 0)} [u_1 \biposmod v_1] \cdots [u_\ell \biposmod v_\ell] [2x_{2\epsilon + 1} \posmod y_{2\epsilon + 1}] \cdots [2x_{3\epsilon} \posmod y_{3\epsilon}] [z_3 \posmod w_3][z_3 \posmod 2w_3] [z_2 \posmod z_1]\\
[2x_1 + \cdots + 2x_\epsilon + z_1 + z_3 \posmod z_2] [2x_{\epsilon + 1} + \cdots + 2x_{2\epsilon} + z_2 + w_3 \posmod z_3] [z_1 \posmod 2z_2] [z_2 \posmod 2z_3].
\end{multline*}
Projecting from \(z_2\), and then \(z_3\), we reduce to interpolation for
\[N_{C(3\epsilon, 3; 2)} [u_1 \biposmod v_1] \cdots [u_\ell \biposmod v_\ell] [2x_{2\epsilon + 1} \posmod y_{2\epsilon + 1}] \cdots [2x_{3\epsilon} \posmod y_{3\epsilon}] [z_3 \posmod w_3][z_3 \posmod 2w_3] [z_1 \biposmod z_2] [z_2 \posmod z_3].\]
Specializing \(y_{2\epsilon + 1}, y_{2\epsilon + 2}, \ldots, y_{3\epsilon}, w_3\) to \(z_2\), we reduce to interpolation for
\[N_{C(3\epsilon, 3; 2)} [u_1 \biposmod v_1] \cdots [u_\ell \biposmod v_\ell] [z_2 \posmod z_1+z_3] [2x_{2\epsilon + 1} +  \cdots + 2x_{3\epsilon} + z_1 + z_3 \posmod z_2] [z_3 \posmod 2z_2].\]
Projecting from \(z_2\) (again), we reduce to interpolation for
\[N_{C(3\epsilon, 3; 3)} [u_1 \biposmod v_1] \cdots [u_\ell \biposmod v_\ell] [z_3 \posmod z_2] [z_2 \posmod z_1 + z_3].\]
Erasing the transformation \([z_2 \posmod z_1 + z_3]\), we reduce to interpolation for the pair of bundles
\[N_{C(3\epsilon, 3; 3)} [u_1 \biposmod v_1] \cdots [u_\ell \biposmod v_\ell] [z_3 \biposmod z_2] \quad \text{and} \quad N_{C(3\epsilon, 3; 3)} [u_1 \biposmod v_1] \cdots [u_\ell \biposmod v_\ell][z_3 \posmod z_2].\]
The first is our assumption \(I(d - 3\epsilon - 6, g - 3, r - 3, \ell + 1, 0)\).
For the second, we erase the transformation \([z_3 \posmod z_2]\) to reduce to interpolation for
\[N_{C(3\epsilon, 3; 3)} [u_1 \biposmod v_1] \cdots [u_\ell \biposmod v_\ell],\]
which is our assumption \(I(d - 3\epsilon - 6, g - 3, r - 3, \ell, 0)\).
\end{proof}

\begin{prop} \label{m0-delta2}
Suppose \(m = 0\) and \(g \geq 1\), and that
\[|\delta - 2| \leq 1 - \frac{1}{r - 1}.\]
If \(I(d - 2, g - 1, r - 1, \ell + 1, 0)\) holds, then so does
\(I(d, g, r, \ell, 0)\).
\end{prop}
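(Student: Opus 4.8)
The plan is to peel off a single $2$-secant line (available since $g \geq 1$) and then project from one of its two points of secancy on the smaller curve: the $2$-secant line supplies exactly the two modifications pointing toward the center of projection that are needed to keep the pointing bundle sequence almost balanced, while its remaining modification reassembles with the one at the center into a single new biposmod pair, landing us on $I(d-2,g-1,r-1,\ell+1,0)$.

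Concretely, I would first specialize $C$ to $C' \cup L$, where $C'$ is a general BN-curve of degree $d-1$ and genus $g-1$ in $\pp^r$ — a BN-curve by Lemma~\ref{lem:2secBN} — and $L$ is a $2$-secant line meeting $C'$ at general points $z$ and $w$, chosen so that all of $u_1,v_1,\ldots,u_\ell,v_\ell$ specialize onto $C' \setminus \{z,w\}$. Since $d - 1 \geq (g-1) + r$, Corollary~\ref{cor:2sec_posmod} reduces the problem to interpolation for
\[N' \colonequals N_{C'}[u_1 \biposmod v_1]\cdots[u_\ell \biposmod v_\ell][z\posmod w][w\posmod z][w\posmod 2z].\]
Next I would project from $z$. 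Because $N_{C'\to z}|_w \subseteq N_{C'\to T_z C'}|_w$, the two modifications at $w$ are nested, so by \eqref{eq:easy_ses} and \eqref{eq:mods_ses} both $[w\posmod z]$ and the ``$z$-part'' of $[w\posmod 2z]$ twist the line subbundle $N_{C'\to z}$ up by $w$, while the residual part of $[w\posmod 2z]$ descends to the one-dimensional modification $[\bar w\posmod \bar z]$ on the quotient, where $\bar z$ is the image of $T_z C'$ under projection from $z$ (the distinguished point of $\bar{C'}$); the modification $[z\posmod w]$, which sits at the center of projection, descends to $[\bar z\posmod\bar w]$. Writing $\bar{C'}$ for the projection of $C'$ from $z$ — a general BN-curve of degree $d-2$ and genus $g-1$ in $\pp^{r-1}$, with $\bar z, \bar w, \bar u_i, \bar v_i$ general — the pointing bundle exact sequence becomes
\[0 \to N_{C'\to z}(2w) \to N' \to \bigl(N_{\bar{C'}}[u_1\biposmod v_1]\cdots[u_\ell\biposmod v_\ell][\bar z\biposmod\bar w]\bigr)(z) \to 0.\]

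The subbundle $N_{C'\to z}(2w) \simeq \O_{C'}(1)(2z+2w)$ has degree $d+3$ and is nonspecial because $C'$ is (as $d-1 \geq (g-1)+r$), and a direct computation gives $\mu(N') - (d+3) = \delta - 2$. The hypothesis $|\delta - 2| \leq 1 - \tfrac{1}{r-1} < 1$ therefore lets me apply Corollary~\ref{cor:intp_ses}: it suffices to prove interpolation for the quotient, hence — since twisting up by the effective divisor $z$ preserves interpolation — for $N_{\bar{C'}}[u_1\biposmod v_1]\cdots[u_\ell\biposmod v_\ell][\bar z\biposmod\bar w]$, which is precisely the assertion $I(d-2,g-1,r-1,\ell+1,0)$ assumed to hold.

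I expect the main obstacle to be the bookkeeping in the middle step: one must verify carefully, via \eqref{eq:easy_ses} and \eqref{eq:mods_ses}, that the three modifications contributed by the $2$-secant line really distribute under projection from $z$ as exactly two twists of $N_{C'\to z}$ together with the residual pair $[\bar z\biposmod\bar w]$ on the quotient, and that $(\bar{C'};\bar z,\bar w,\bar u_1,\ldots,\bar v_\ell)$ is a general BN-curve with $\ell+1$ general pairs of marked points. Once that identification is pinned down, the slope computation and the two applications of Corollaries~\ref{cor:2sec_posmod} and~\ref{cor:intp_ses} are routine.
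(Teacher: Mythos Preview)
Your proposal is correct and takes essentially the same approach as the paper: peel off a single \(2\)-secant line, then project from one of its two points of secancy, using the hypothesis \(|\delta-2|\leq 1-\tfrac{1}{r-1}\) to balance the pointing bundle sequence. The paper's proof is terser (it invokes the standing machinery of Section~\ref{subsec:over_ind} rather than writing out the exact sequence and slope computation), and it projects from the point you call \(w\) rather than \(z\), but up to relabeling the two endpoints of \(L\) the arguments are identical.
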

\begin{proof}
Our goal is to show interpolation for
\(N_C [u_1 \biposmod v_1] \cdots [u_\ell \biposmod v_\ell]\).
Peeling off a \(2\)-secant line, we reduce to interpolation for
\[N_{C(0, 1; 0)} [u_1 \biposmod v_1] \cdots [u_\ell \biposmod v_\ell] [z \biposmod w][z \posmod 2w].\]
Projecting from \(w\), we reduce to interpolation for
\[N_{C(0, 1; 1)} [u_1 \biposmod v_1] \cdots [u_\ell \biposmod v_\ell] [z \biposmod w],\]
which is our assumption \(I(d - 2, g - 1, r - 1, \ell + 1, 0)\).
\end{proof}

\begin{prop}\label{m0-delta4}
Suppose \(m = 0\) and \(g \geq 3\) and \(r \geq 6\), and that
\[ |\delta - 4| \leq 1 - \frac{2}{r - 1}.\]
If \(I(d - 5, g - 3, r - 2, \ell + 1, 0)\) and \(I(d - 5, g - 3, r - 2, \ell, 0)\) hold, then so does
\(I(d, g, r, \ell, 0)\).
\end{prop}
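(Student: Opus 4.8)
The plan is to run the ``two-projection'' analogue of the argument proving Proposition~\ref{m0-delta35} (which is the corresponding three-projection statement for $\delta \approx 2\epsilon+3$). Since $m = 0$ and $g \geq 3$, and since the tuple is good (so $d \geq g + r$, and $g \leq r + 6$ in the range where the hypothesis $|\delta - 4| \leq 1 - \tfrac{2}{r-1}$ is non-vacuous), Corollary~\ref{cor:2sec_posmod} allows us to peel off three $2$-secant lines $L_1, L_2, L_3$, meeting $C(0,3;0)$ at pairs of general points $\{z_1,w_1\}$, $\{z_2,w_2\}$, $\{z_3,w_3\}$. Thus $I(d,g,r,\ell,0)$ follows once we prove interpolation for
\[
N_{C(0,3;0)}[u_1\biposmod v_1]\cdots[u_\ell\biposmod v_\ell][z_1\biposmod w_1][z_1\posmod 2w_1][z_2\biposmod w_2][z_2\posmod 2w_2][z_3\biposmod w_3][z_3\posmod 2w_3],
\]
a bundle on $C(0,3;0) \subset \pp^r$, a curve of degree $d - 3$ and genus $g - 3$.

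Next I would specialize the auxiliary points $w_1$ and $w_2$ to $z_2$ and $z_3$ respectively; this converts the modification data coming from $L_1$ (resp.\ $L_2$) into modification data located at and pointing towards $z_2$ (resp.\ $z_3$), and the resulting modification data is tree-like, so it induces the corresponding degeneration of the bundle above. I would then project from $z_2$ and afterwards from $z_3$, using the induced exact sequences \eqref{eq:mods_ses} and \eqref{eq:pointing} at each step. The role of the hypothesis $|\delta - 4| \leq 1 - \tfrac{2}{r-1}$ is that two transformations point towards each of the two centers, so at each of the two projections the pointing subbundle $N_{\,\cdot\,\to z_i}(\text{mods})$ and the quotient have slopes within $1$ of $\mu$, and hence Corollary~\ref{cor:intp_ses} applies. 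This reduces the problem to interpolation for a bundle on $C(0,3;2) \subset \pp^{r-2}$ (of degree $d - 5$ and genus $g - 3$), which after erasing via Lemma~\ref{lem:lin_general} the induced transformations at $z_2, z_3$ that lie in linearly general directions takes the form
\[
N_{C(0,3;2)}[u_1\biposmod v_1]\cdots[u_\ell\biposmod v_\ell][z_3\posmod z_2][z_2\posmod z_1 + z_3].
\]

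Finally, exactly as at the end of the proof of Proposition~\ref{m0-delta35}, I would erase the transformation $[z_2 \posmod z_1 + z_3]$: it contains the fixed direction $\pp N_{C(0,3;2)\to z_3}|_{z_2}$ and, as $z_1$ varies, is linearly general modulo that direction, so Lemma~\ref{lem:lin_general} reduces interpolation for the above bundle to interpolation for the pair of bundles
\[
N_{C(0,3;2)}[u_1\biposmod v_1]\cdots[u_\ell\biposmod v_\ell][z_3\biposmod z_2] \qquad\text{and}\qquad N_{C(0,3;2)}[u_1\biposmod v_1]\cdots[u_\ell\biposmod v_\ell][z_3\posmod z_2].
\]
The first is precisely the assertion $I(d-5, g-3, r-2, \ell+1, 0)$. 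For the second, the transformation $[z_3 \posmod z_2]$ is in a linearly general direction (as $z_2$ varies), so by Lemma~\ref{lem:lin_general} again it reduces to $I(d-5, g-3, r-2, \ell, 0)$. Both of these hold by hypothesis, which completes the proof.

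The step I expect to be the main obstacle is the middle one: correctly tracking the modification data through the two specializations and two projections. In particular, one must determine which transformations are absorbed into the pointing subbundles at each projection (this requires the $k'$-refined form \eqref{eq:mods_ses} of the induced sequence for the transformations sitting at $z_2$ and $z_3$ and for the twists towards the tangent directions $2w_i$), check that all the intermediate modification data remain tree-like, and verify that the two slope inequalities required by Corollary~\ref{cor:intp_ses} are exactly those supplied by the tolerance $1 - \tfrac{2}{r-1}$. The surrounding steps --- peeling off the $2$-secant lines and erasing linearly general directions --- are routine given the cited results.
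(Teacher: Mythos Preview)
Your specialization does not produce the correct number of modifications pointing toward the projection centers, so the slope hypothesis of Corollary~\ref{cor:intp_ses} is not the one you need. After sending \(w_1 \to z_2\) and \(w_2 \to z_3\), the modifications pointing toward \(z_2\) are \([z_1 \posmod z_2]\), \([z_3 \posmod z_2]\), and the \(N_{C \to z_2}\)-component of \([z_1 \posmod 2z_2]\): that is three, not two (and not four). A direct computation gives
\[
\mu(\text{modified bundle}) - \mu\big(N_{C(0,3;0)\to z_2}(\text{mods to }z_2)\big) \;=\; \delta - 3,
\]
so the condition for the first projection is \(|\delta - 3| < 1\), not \(|\delta - 4| < 1\). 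The same count holds toward \(z_3\). Thus your two projections are valid only in the range of Proposition~\ref{m0-delta35}, not here. Relatedly, your displayed bundle after two projections is not what one actually obtains: the third line \(L_3\) is untouched by your specialization, so after projecting from \(z_2\) and \(z_3\) one is still left with \([z_3 \posmod w_3][z_3 \posmod 2w_3]\) (and residuals like \([z_1 \posmod z_2]\)) on \(C(0,3;2)\). The formula you wrote is in fact the output of \emph{three} projections in Proposition~\ref{m0-delta35} (on \(C(0,3;3)\)), transplanted to \(C(0,3;2)\).

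The paper's argument uses a different specialization that genuinely produces four modifications toward the center at each step: send \(w_2 \to w_1\) so that \emph{both} \(L_1\) and \(L_2\) contribute two modifications toward \(w_1\) (namely \([z_i \posmod w_1]\) and the \(N_{\to w_1}\)-part of \([z_i \posmod 2w_1]\) for \(i=1,2\)), then project from \(w_1\); afterwards send \(w_3 \to w_1\) and project from \(w_1\) a second time, again with four modifications toward \(w_1\). This is what makes \(|\delta - 4| \leq 1 - \tfrac{2}{r-1}\) the correct hypothesis and leads cleanly to the pair \(I(d-5,g-3,r-2,\ell+1,0)\) and \(I(d-5,g-3,r-2,\ell,0)\).
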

\begin{proof}
Our goal is to show interpolation for
\(N_C [u_1 \biposmod v_1] \cdots [u_\ell \biposmod v_\ell]\).
Peeling off \(3\) two-secant lines, we reduce to interpolation for
\[N_{C(0, 3; 0)} [u_1 \biposmod v_1] \cdots [u_\ell \biposmod v_\ell] [z_1 \biposmod w_1][z_1 \posmod 2w_1][z_2 \biposmod w_2][z_2 \posmod 2w_2][z_3 \biposmod w_3][z_3 \posmod 2w_3].\]
Specializing \(w_2\) to \(w_1\), we reduce to interpolation for
\[N_{C(0, 3; 0)} [u_1 \biposmod v_1] \cdots [u_\ell \biposmod v_\ell] [z_1 + z_2 \biposmod w_1][z_1 + z_2 \posmod 2w_1][z_3 \biposmod w_3][z_3 \posmod 2w_3].\]
Projecting from \(w_1\), we reduce to interpolation for
\[N_{C(0, 3; 1)} [u_1 \biposmod v_1] \cdots [u_\ell \biposmod v_\ell] [z_1 + z_2 \biposmod w_1][z_3 \biposmod w_3][z_3 \posmod 2w_3].\]
Specializing \(w_3\) to \(w_1\), we reduce to interpolation for
\[N_{C(0, 3; 1)} [u_1 \biposmod v_1] \cdots [u_\ell \biposmod v_\ell] [z_1 + z_2 + z_3 \biposmod w_1] [z_3 \posmod 2w_1].\]
Projecting from \(w_1\) (again), we reduce to interpolation for
\[N_{C(0, 3; 2)} [u_1 \biposmod v_1] \cdots [u_\ell \biposmod v_\ell] [z_3 \posmod w_1][w_1 \posmod z_1 + z_2+z_3].\]
Erasing the transformation \([w_1 \posmod z_2 + z_2 +z_3]\), we reduce to interpolation for the pair of bundles
\[N_{C(0, 3; 2)} [u_1 \biposmod v_1] \cdots [u_\ell \biposmod v_\ell] [z_3 \biposmod w_1] \quad \text{and} \quad N_{C(0, 3; 2)} [u_1 \biposmod v_1] \cdots [u_\ell \biposmod v_\ell] [z_3 \posmod w_1].\]
The first is our assumption \(I(d - 5, g - 3, r - 2, \ell + 1, 0)\).
For the second, we erase the transformation \([z_3 \posmod w_1]\) to reduce to interpolation for
\[N_{C(0, 3; 2)} [u_1 \biposmod v_1] \cdots [u_\ell \biposmod v_\ell],\]
which is our assumption \(I(d - 5, g - 3, r - 2, \ell, 0)\).
\end{proof}

\section{\texorpdfstring{Interlude: Some cases not implied by \(I(d, g, r, \ell, m)\)}{Interlude: Some cases not implied by I(d, g, r, l, m)} \label{sec:rational}}

As explained in Section~\ref{sec:overview},
our main inductive argument will establish
\(I(d, g, r, \ell, m)\) for all good tuples. We have already seen that:
\begin{itemize}
\item \(I(d, g, r, \ell, m)\) for all good tuples implies
Theorem~\ref{thm:main} except in a couple of cases.
\item Theorem~\ref{thm:main} implies Theorem~\ref{cor:main}
except in a couple of cases.
\end{itemize}
Of course, we must also check
Theorem~\ref{thm:main} and Theorem~\ref{cor:main}
respectively in these couple of cases.
The most difficult of these is
Theorem~\ref{thm:main} for canonical curves of even genus \(g \geq 8\),
which we defer to Section~\ref{sec:canonical}.
Here we quickly take care of all the others.

\subsection{Theorem~\ref{thm:main} for rational curves}
Consulting Proposition~\ref{prop:I_implies_interpolation},
we may assume \(d \not\equiv 1\) mod \(r - 1\).
By assumption, this implies the characteristic is distinct from \(2\).
It thus suffices to argue that Theorem~\ref{thm:main} holds for rational curves
in characteristic distinct from \(2\), which we do by induction on \(d\) as follows:

\begin{description}
\item[\boldmath If \(\delta < 1\)] We apply Proposition~\ref{master} with \(d' = d\).
\item[\boldmath If \(\delta = 1\)] We apply Proposition \ref{prop:delta1} (the characteristic assumption enters here).
\item[\boldmath If \(1 < \delta < 2\)] We apply Proposition~\ref{master} with \(d' = d - 1\).
\item[\boldmath If \(2 \leq \delta\)] Upon rearrangement this implies \(d \geq 2r - 1\).
We may thus apply Proposition \ref{gather_lines}.
\end{description}

\subsection{Theorem~\ref{cor:main} for rational curves}
Using Theorem~\ref{thm:main}, we deduce Theorem~\ref{cor:main}
for rational curves when the characteristic is distinct from \(2\).
Here we show that Theorem~\ref{cor:main} also holds for rational curves in characteristic \(2\).

\begin{lem} \label{tozero} Suppose the evaluation map \(\bar{M}_{g, n}(\pp^r, d) \to (\pp^r)^n\) is dominant
in characteristic \(0\). Then it is dominant in all characteristics.
\end{lem}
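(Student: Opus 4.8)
The plan is to deduce this from the properness of the space of stable maps over $\Spec \zz$, together with the elementary fact that a proper morphism onto an irreducible scheme whose image contains the generic point is surjective. Let $\bar{M}_{\zz} \colonequals \bar{M}_{g,n}(\pp^r_{\zz}, d)$ denote the Kontsevich space of stable maps, which is defined and proper over $\Spec \zz$ (one may work with the coarse moduli space throughout), and let $\ev \colon \bar{M}_{\zz} \to (\pp^r_{\zz})^n$ be the evaluation morphism. Since $\bar{M}_{\zz}$ is proper over $\Spec \zz$ and $(\pp^r_{\zz})^n$ is separated over $\Spec \zz$, the morphism $\ev$ is proper; in particular it is closed, so its image $Z \subseteq (\pp^r_{\zz})^n$ is a closed subset. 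Note that $(\pp^r_{\zz})^n$ is integral, and its generic point $\eta$ coincides with the generic point of $(\pp^r_{\mathbb{Q}})^n = (\pp^r_{\zz})^n \times_{\zz} \mathbb{Q}$.

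Next I would invoke the hypothesis. The base change $\ev_{\mathbb{Q}}$ is again proper, hence closed, so its being dominant forces its image to be all of $(\pp^r_{\mathbb{Q}})^n$; in particular $\eta$ lies in the image of $\ev_{\mathbb{Q}}$, and hence in $Z$. Since $Z$ is closed and contains the generic point of the irreducible scheme $(\pp^r_{\zz})^n$, we conclude $Z = (\pp^r_{\zz})^n$, i.e.\ $\ev$ is surjective. Now fix a prime $p$ and let $\eta_p$ be the generic point of the fibre $(\pp^r_{\mathbb{F}_p})^n \subset (\pp^r_{\zz})^n$. Surjectivity of $\ev$ provides a point $x \in \bar{M}_{\zz}$ with $\ev(x) = \eta_p$; since the structure morphism $\bar{M}_{\zz} \to \Spec \zz$ factors through $\ev$, and $\eta_p$ lies over the closed point $(p) \in \Spec \zz$, the point $x$ also lies over $(p)$, so $x$ is a point of $\bar{M}_{g,n}(\pp^r_{\mathbb{F}_p}, d)$ mapping to $\eta_p$ under $\ev_{\mathbb{F}_p}$. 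Thus $\ev_{\mathbb{F}_p}$ hits the generic point of $(\pp^r_{\mathbb{F}_p})^n$ and is therefore dominant, as desired.

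The only input that is not pure point-set topology of schemes is the properness of the stable-map space over $\Spec \zz$, which is standard, so I do not anticipate any real obstacle; the remainder is formal. If one prefers to avoid stacks, one can replace $\bar{M}_{\zz}$ throughout by its coarse moduli space, which is a proper $\zz$-scheme carrying a morphism to $(\pp^r_{\zz})^n$ through which $\ev$ factors, and run the identical argument.
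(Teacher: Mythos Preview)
Your proof is correct and follows exactly the same approach as the paper. The paper's proof is a two-line compression of your argument: it simply asserts that properness of \(\bar{M}_{g,n}(\pp^r,d)\) over \(\Spec\zz\) together with dominance in characteristic \(0\) forces surjectivity of the evaluation map over \(\Spec\zz\), from which dominance on each fibre is immediate.
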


\begin{proof}
Because \(\bar{M}_{g, n}(\pp^r, d)\) is proper over \(\Spec \zz\),
and the evaluation map is dominant in characteristic \(0\),
the evaluation map is therefore surjective over \(\Spec \zz\).
\end{proof}

If \(g = 0\), then \(\bar{M}_{0, n}(\pp^r, d)\) is \emph{irreducible} in any characteristic, and so we conclude
the truth of Theorem~\ref{cor:main} in characteristic \(2\) from
the truth of Theorem~\ref{cor:main} in characteristic \(0\).

\begin{rem}
The reader might hope to apply Lemma~\ref{tozero} to higher genus curves.
Unfortunately, all we learn is that \emph{some} component of \(\bar{M}_{g, n}(\pp^r, d)\)
dominates \((\pp^r)^n\) in positive characteristic. This is a fatal flaw
when the genus is positive, because there are other components, not corresponding to BN-curves,
which would tell us nothing about the interpolation problem for positive-genus curves.
For example, consider the component
containing those stable maps which contract a smooth curve of genus \(g\)
to a point and map a rational tail to \(\pp^r\) with degree \(d\).
\end{rem}

\subsection{\boldmath Theorem~\ref{cor:main} for \((d, g, r) = (6, 2, 4)\)}
We want to show such a BN-curve can pass through \(9\) general points.
It suffices to show \(H^1(N_C(-D)) = 0\) when \(D\) is a general divisor of degree \(9\) on \(C\).
Peeling off a \(2\)-secant line and specializing one of the points of \(D\) onto the \(2\)-secant line,
this reduces to
\[H^1(N_{C(0, 1; 0)}[u \posmod v][v \posmod u][v \posmod 2u](-v-D')) = 0,\]
where \(D'\) is now a general divisor of degree \(8\) on \(C(0, 1; 0)\).
This follows in turn from
\[H^1(N_{C(0, 1; 0)}[2v \posmod u](-v-D')) = 0,\]
because this is a subsheaf with punctual quotient.
Since \(v + D'\) is a general divisor of degree \(9\) on \(C(0, 1; 0)\),
this follows from interpolation for
\[N_{C(0, 1; 0)}[2v \posmod u].\]
Projecting from \(u\), we reduce to interpolation for \(N_{C(0, 1; 1)}\),
which is Theorem~\ref{thm:main} for \((d, g, r) = (4, 1, 3)\).

\section{Combinatorics}\label{sec:combinat}

In this section we show, by a purely combinatorial argument,
that the inductive arguments in Section \ref{sec:inductive} apply to all good tuples \((d, g,r, \ell, m)\) except for:
\begin{itemize}
\item The infinite family \((d, g,r, 0,0)\) with \(\delta = 1\) already treated in Section \ref{sec:delta1}; and
\item A \emph{finite} number of other cases.
\end{itemize}

We begin by showing that these
inductive arguments apply to all but finitely many tuples
for each projective space, i.e., for each value of \(r\).
To reduce casework, define:
\[\epsilon_0 = \epsilon_0(g) = \begin{cases}
1 & \text{if \(g = 0\);} \\
0 & \text{if \(g \neq 0\).}
\end{cases} \quad \text{and} \quad \epsilon_1 = \epsilon_1(d, g) = \begin{cases}
1 & \text{if \(d > g + r\);} \\
0 & \text{if \(d = g + r\).}
\end{cases}
\]

\begin{prop} \label{prop:largeparams}
Let \((d, g, r, \ell, m)\) be a good tuple. Then one of the
arguments of Section~\ref{subsec:largeparams} may be applied unless
\begin{equation} \label{box}
d \leq g + 2r - 1, \quad g \leq r - 1, \quad \text{and} \quad m \leq r - 2 + \epsilon_0,
\end{equation}
or unless
\begin{equation} \label{to-exc}
(d, g, r, \ell, m - (r - 1)) \ \text{lies in \eqref{XX}}.
\end{equation}
\end{prop}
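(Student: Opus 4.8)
The plan is a straightforward contrapositive argument: assume $(d,g,r,\ell,m)$ is a good tuple for which \emph{none} of the three arguments in Section~\ref{subsec:largeparams} (Propositions~\ref{gather_lines}, \ref{peel_onion}, and \ref{pancake_onions}) applies, and deduce that either \eqref{box} or \eqref{to-exc} holds. Each of the three propositions has a simple numerical hypothesis, so the first step is to record what it means for each to be inapplicable. Proposition~\ref{gather_lines} requires $d \geq g + 2r - 1$; if it does not apply we must have $d \leq g + 2r - 2$, which since the relevant quantities are integers gives $d \leq g + 2r - 1$ a fortiori --- but more to the point, we get the strict bound $d < g + 2r - 1$. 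Here one must be slightly careful: Proposition~\ref{gather_lines} is only stated for $d \geq g + 2r - 1$, so its failure to apply really does give $d \leq g + 2r - 2 < g + 2r - 1$, and in particular the first inequality of \eqref{box} holds. Similarly, Proposition~\ref{peel_onion} requires $g \geq r$; if it does not apply then $g \leq r - 1$, the second inequality of \eqref{box}. And Proposition~\ref{pancake_onions} requires $m \geq r - 1$; if it does not apply then $m \leq r - 2$.

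The only subtlety is the discrepancy between the bound $m \leq r - 2$ just obtained and the bound $m \leq r - 2 + \epsilon_0$ appearing in \eqref{box}. So first I would dispose of the easy case $g \neq 0$: then $\epsilon_0 = 0$, and the three bounds $d \leq g + 2r - 1$, $g \leq r - 1$, $m \leq r - 2 = r - 2 + \epsilon_0$ are exactly \eqref{box}, and we are done. The remaining case is $g = 0$, where $\epsilon_0 = 1$ and \eqref{box} asks only for $m \leq r - 1$; but we have already shown $m \leq r - 2$, so \eqref{box} holds here too. Wait --- this seems to make the $\epsilon_0$ in \eqref{box} vacuous. The resolution, which I expect to be the actual content of the argument, is that Proposition~\ref{pancake_onions} is \emph{not} quite applicable whenever $m \geq r - 1$: applying it produces the tuple $(d,g,r,\ell,m-(r-1))$, and for the induction to close we need $I(d,g,r,\ell,m-(r-1))$ to be a case we can appeal to, i.e.\ $(d,g,r,\ell,m-(r-1))$ must itself be good (or already handled). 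The one way this can fail is if $(d,g,r,\ell,m-(r-1))$ lands in the excluded list \eqref{XX} --- precisely the escape hatch \eqref{to-exc}. So the real structure is: if $m \geq r - 1$, then either Proposition~\ref{pancake_onions} legitimately applies (contradiction), or $(d,g,r,\ell,m-(r-1))$ lies in \eqref{XX}, giving \eqref{to-exc}; and if $m \leq r - 2$, then combined with the other two bounds we get \eqref{box}. I would also double-check the $\epsilon_0$ bookkeeping by noting that when $g=0$ one of the tuples in \eqref{XX} with $m' = m - (r-1)$ could a priori require $m$ as large as $r - 2 + 1$ before the subtraction is problematic --- but in fact the cleanest statement is simply: good $+$ ($m \geq r-1$) $+$ (Prop.~\ref{pancake_onions} inapplicable) $\Rightarrow$ \eqref{to-exc}.

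I would therefore organize the write-up as: (1) if $d \geq g + 2r - 1$, apply Proposition~\ref{gather_lines} and we are done, so assume $d \leq g + 2r - 2$; (2) if $g \geq r$, apply Proposition~\ref{peel_onion} (legitimate since good tuples avoid $(2r,r+1,r)$ with $r$ odd, as recorded in the proof of Proposition~\ref{peel_onion}), so assume $g \leq r - 1$; (3) if $m \leq r - 2 + \epsilon_0$ we have \eqref{box} --- actually I'd want $m \le r-2$ here so I should recheck whether step (4) needs the $\epsilon_0$ slack; (4) otherwise $m \geq r - 1$ (or $\geq r-1+\epsilon_0$), and either Proposition~\ref{pancake_onions} applies --- checking its sole hypothesis $m \geq r-1$ and that the output tuple $(d,g,r,\ell,m-(r-1))$ is good --- or that output tuple fails to be good, and since $(d,g,r,\ell)$ satisfies all the inequalities defining goodness (these don't involve $m$ except $0 \le m \le \rho$, which after subtracting $r-1$ is only easier, plus the $g=m=0$ condition which is unaffected since $m - (r-1) = 0$ forces $m = r-1 > 0$), the only way to fail is membership in \eqref{XX}, i.e.\ \eqref{to-exc}. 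The main obstacle, and the step I'd be most careful about, is precisely verifying that ``$(d,g,r,\ell,m)$ good and $m \geq r-1$'' implies ``$(d,g,r,\ell,m-(r-1))$ good, \emph{or} in \eqref{XX}'': one must check each defining inequality of goodness is preserved under $m \mapsto m - (r-1)$ (the $\rho$ bound, the $d \geq g+r$ bound, the $\ell$ bound, and the $g = m = 0$ parity condition) and confirm that the list \eqref{XX} is exactly the set of leftover bad cases, which is where the $\epsilon_0$ in \eqref{box} earns its keep.
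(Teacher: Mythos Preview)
Your overall plan is right in spirit, but the \emph{order} in which you invoke the three reductions matters, and the order you propose does not close. The paper applies Proposition~\ref{peel_onion} and Proposition~\ref{pancake_onions} \emph{first}, obtaining $g\le r-1$ and $m\le r-2+\epsilon_0\le r-1$, and only \emph{then} applies Proposition~\ref{gather_lines}. The reason is that ``applying'' Proposition~\ref{gather_lines} means reducing to the tuple $(d-(r-1),g,r,\ell,m)$, and for the induction to close this output must again be good. The only goodness condition that is not automatic is $m\le\rho(d-(r-1),g,r)$; since $\rho$ drops by $(r+1)(r-1)=r^2-1$ when $d$ drops by $r-1$, the input bound $m\le\rho(d,g,r)$ is nowhere near enough. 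This is exactly why the paper first secures $m\le r-1$ and then takes the threshold $d\ge g+2r$ (not $g+2r-1$): the output then has $d'=d-(r-1)\ge g+r+1$, whence $\rho(d',g,r)\ge r+1>m$. Your step~(1), which fires Proposition~\ref{gather_lines} immediately at $d\ge g+2r-1$ with no prior control on $m$, simply fails to produce a good tuple in general --- e.g.\ take $d=g+2r-1$, so $d'=g+r$ and $\rho(d',g,r)=g$, while $m$ can easily exceed $g$.

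A smaller point: your discussion of $\epsilon_0$ ends up tangled with the \eqref{XX} clause, but these are two separate obstructions to Proposition~\ref{pancake_onions}. The role of $\epsilon_0$ is exactly that when $g=0$ one must avoid landing at $m-(r-1)=0$, because that would trigger the extra ``$g=m=0$'' parity condition in the definition of good; taking the threshold $m\ge r-1+\epsilon_0$ forces $m-(r-1)\ge 1$ in that case and sidesteps the issue entirely. The \eqref{XX} clause is a genuinely separate escape hatch: even when all the inequalities for goodness are preserved under $m\mapsto m-(r-1)$ (and they are, as you note), the output could still land in the finite excluded list, and that is precisely condition~\eqref{to-exc}.
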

\begin{proof}
If \(g \geq r\), then we may apply Lemma~\ref{peel_onion}.
If \(m \geq r - 1 + \epsilon_0\), then we may apply Lemma~\ref{pancake_onions},
unless \((d, g, r, \ell, m - (r - 1))\) lies in \eqref{XX}.

We may thus assume \(m \leq r - 2 + \epsilon_0 \leq r - 1\).
For any \(d' > g + r\), this implies \(\rho(d', g, r) \geq r + 1 \geq m\).
Therefore, if \(d \geq g + 2r\), we may apply Lemma~\ref{gather_lines}.
\end{proof}

For any \emph{fixed} \(r\), conditions \eqref{box} and \eqref{to-exc} describe
a \emph{finite} set of tuples \((d, g, r, \ell, m)\) as promised.
It therefore suffices to prove:

\begin{thm} \label{thm:14}
If \(r \geq 14\), one of the arguments in Section~\ref{sec:inductive}
may be applied, unless \(\ell = m = 0\) and \(\delta = 1\).
\end{thm}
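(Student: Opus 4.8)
The plan is a bookkeeping argument built around the invariant $\delta = \delta(d,g,r,\ell,m)$. First I would dispose of tuples outside the range \eqref{box}: since the exceptional set \eqref{XX} involves only $r \le 5$, condition \eqref{to-exc} is vacuous for $r \ge 14$, so Proposition~\ref{prop:largeparams} reduces us to good tuples with $d \le g + 2r - 1$, $g \le r - 1$, and $m \le r - 2 + \epsilon_0$. A short computation with these inequalities gives $\delta > m$ when $m \ge 1$, $\delta < r + 7$ always, and in fact $\delta < r$ as soon as $m = 1$ (and $\delta < 8$ when $m = 0$); in particular $\lfloor \delta \rfloor$ takes only finitely many values. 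The second basic observation is that $\delta$ is a multiple of $1/(r-1)$, so for an integer $N$ the inequality $|\delta - N| \le 1 - \frac{1}{r-1}$ --- exactly the hypothesis appearing in Propositions~\ref{master} and~\ref{master-111} and in the projection step --- holds precisely when $N = \lfloor \delta \rfloor$, or $N = \lceil \delta \rceil$ and $\delta \notin \zz$; the variants with $1 - \frac{2}{r-1}$ or $1 - \frac{3}{r-1}$ (used in Propositions~\ref{two_proj}, \ref{m0-delta2}, \ref{m0-delta4}, \ref{m0-delta35}) hold on correspondingly slightly narrower bands around an integer.

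The generic tuples I would handle with Propositions~\ref{master} and~\ref{master-111}, which reduce $I(d,g,r,\ell,m)$ to cases of strictly smaller $r$, provided the number of modifications directed at the projection point can be written as $\ell' + 2(d - d') + \sum_{i=1}^{m'} n_i$ (for \ref{master}), respectively $1 + \ell' + 2(d - d') + \sum n_i$ (for \ref{master-111}, whose ``difficult parity'' is the opposite), and made equal to $\lfloor \delta \rfloor$ or $\lceil \delta \rceil$; here $d' \in [g+r, d]$, $\ell' \in [0, \ell]$, $m' \in [0, m]$ (with $m' < m$ for \ref{master-111}), and each $n_i \equiv r - 1 \bmod 2$ with $2 \le n_i \le r - 1$. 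The role of $r \ge 14$ is that the ceiling $2m' + \ell' \le r - 2$ then never binds: one only ever needs $m' \le 2$ (two of the $R_i$ already furnish up to $2(r-1) > \delta$ toward the target) and $\ell'$ at most $\lfloor \delta \rfloor \le 7$ (when $m = 0$) or at most $3$ otherwise, used to correct parity, while taking the $n_i$ close to $r - 1$ keeps $\bar\ell = \ell - \ell' + \frac{1}{2}(m'(r-1) - \sum n_i)$ small. Running through the parities of $r$ and of $\lfloor \delta \rfloor$, such a decomposition exists whenever there is enough ``budget'': either $d > g + r$ (so $1$-secant lines may be peeled, furnishing the even term $2(d - d')$), or $\ell \ge 1$ (to fix parity via $\ell' \in \{1, 3\}$), or $m \ge 1$ with $\lfloor \delta \rfloor \ge 2$ or $3$ (so that $\sum n_i$ with $m' = 1$ reaches the target). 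Because $\delta$ is a multiple of $1/(r-1)$, the inequalities to be checked never fail by rounding (e.g.\ $\delta < 2 \Rightarrow \delta \le 2 - \frac{1}{r-1}$).

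The remaining ``corner'' configurations all have $d = g + r$ --- so $2(d - d')$ is forced to $0$ --- together with $\ell$, $m$, or $\lfloor \delta \rfloor$ small, and for these I would invoke the dedicated Propositions~\ref{two_proj}, \ref{delta5}, \ref{m0-delta2}, \ref{m0-delta4}, \ref{m0-delta35}, which substitute a ``peel a $2$-secant line'' move (changing $g$) for the unavailable ``peel a $1$-secant line'' move and whose numerical hypotheses are tailored to $\delta$ near $2$, $3$, $4$, or $5$. The point is that the box inequalities pin these corners down: when $m = 0$, $d = g + r$, and $\ell$ is small, one automatically has $g \ge 3$ (so Propositions~\ref{m0-delta35} and~\ref{m0-delta4} apply), and for $r \ge 14$ their respective bands $[3, 3 + \frac{r-4}{r-1}]$ and $[3 + \frac{2}{r-1}, 4)$ overlap (and similarly near $2$ and $5$), covering all of $\lfloor\delta\rfloor \in \{3,4,5\}$; when $m = 1$, $\ell = 0$, $d = g + r$, Proposition~\ref{master-111} with $m' = 0$ (target $1$) or Proposition~\ref{two_proj} covers $\lfloor\delta\rfloor = 1$, while Proposition~\ref{delta5} covers the isolated family with $\delta = 5$ and $d = g + r + 1$, where Proposition~\ref{two_proj} lacks $1$-secant budget. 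The one configuration no argument reaches is $\ell = m = 0$ with $\delta = 1$: the target $N = 1$ is odd, hence unattainable with $\ell = m' = 0$ and $\sum n_i = 0$; Proposition~\ref{master-111} is unavailable since it requires $m' < m = 0$; and every dedicated $m = 0$ proposition needs $\delta$ near $2$, $3$, $4$, or $5$. This is the stated exception, and it is treated in Section~\ref{sec:delta1}.

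The hard part is the parity bookkeeping: the congruence $n_i \equiv r - 1 \bmod 2$ fixes the granularity of $\sum n_i$, so the case split must simultaneously track the parity of $r$, the parity and size of $\lfloor \delta \rfloor$, whether $d > g + r$, and the sizes of $\ell$, $m$, and $g$, verifying in each corner that some proposition above threads the needle. Each individual check is a one-line inequality, but there are many of them, which is what makes the argument lengthy even with $r \ge 14$ removing the ceiling constraint.
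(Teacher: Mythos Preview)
Your overall plan matches the paper's: reduce to \eqref{box} via Proposition~\ref{prop:largeparams}, then apply Propositions~\ref{master} and~\ref{master-111} generically and the dedicated propositions at the edges. But three concrete things go wrong in the execution.

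First, the case \(m = g = 0\) is missing entirely. Here goodness reads \(2\ell \ge (1 - d) \bmod (r-1)\), and verifying that the target tuple \((d'-1, 0, r-1, \bar\ell, 0)\) is again good is not automatic --- it constrains \(d'\) and \(\bar\ell\) in ways your budget heuristic does not see. The paper devotes a separate argument (Proposition~\ref{lm:master-imp-00}) to this, splitting on \(d = r\), \(d = r+1\), and \(d \ge r+2\).

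Second, your claim that the remaining corner configurations ``all have \(d = g + r\)'' is false. The paper's analysis of the \(m \neq 0\) case produces four one-parameter families with \(d > g + r\) that still escape Propositions~\ref{master} and~\ref{master-111} under the natural parameter choices: \((3k{+}1, k, 2k, 0, 2k{-}3)\), \((k{+}1, 0, k, 0, 1)\), \((4k, 0, 2k{+}1, 0, 1)\), and \((4k{+}1, 2k{-}1, 2k{+}1, 0, 1)\). You correctly route the last to Proposition~\ref{delta5}, but the others are not caught by your casework.

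Third --- and this is what underlies the previous two --- you never verify that the target tuples are good. Here is where your budget heuristic concretely fails. Take \((4k, 0, 2k{+}1, 0, 1)\): one computes \(\delta = 3\), an odd integer with \(r\) odd, so only \(N = 3\) is admissible. With \(\ell = 0\) and each \(n_i\) even, Proposition~\ref{master} cannot hit \(3\). Proposition~\ref{master-111} with \(m' = 0\) and \(d' = d - 1\) does hit \(N = 3\), but one of its three required targets is \(I(4k{-}2, 0, 2k, 0, 0)\), and \((4k{-}2, 0, 2k, 0, 0)\) has \(g = m = 0\) with \((1 - d) \bmod (r-1) = 1 > 0 = 2\ell\), hence is \emph{not} good. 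The paper instead uses Proposition~\ref{two_proj} with \(\epsilon = 1\) here. The moral is that the interaction between parity, the goodness of targets, and the constraint \(d' > g + r\) when \(g = 0\) is not captured by a coarse ``enough budget'' criterion; this is why the paper systematically eliminates variables via Lemma~\ref{lm:eliminate} and then enumerates and treats the surviving exceptional families one by one.
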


The remainder of this section is devoted to a proof of Theorem~\ref{thm:14},
which is a purely combinatorial exercise.
Since all tuples in \eqref{XX} have \(r \leq 5\), by Proposition~\ref{prop:largeparams},
we may  suppose \eqref{box} is satisfied.

\subsection{\texorpdfstring{\boldmath The Cases With \(m \neq 0\)}{The Cases With m != 0}}
Our first step will be to show that
Proposition~\ref{master} by itself handles the majority of these cases. 
This consists of showing that we may assign integer values to the various parameters
appearing in Proposition~\ref{master} that satisfy the desired inequalities.
We shall accomplish this using the following lemma, which gives a sufficient criterion for a system
of inequalities to have an integer solution.

\begin{lem} \label{lm:eliminate}
Let \(a_i/b_i\) and \(c_j/d_j\) be rational numbers.
There is an integer \(n\) satisfying
\[n \geq \frac{a_i}{b_i} \ \text{for all \(i\)} \quad \text{and} \quad n \leq \frac{c_j}{d_j} \ \text{for all \(j\)},\]
provided that, for all \(i\) and \(j\), we have
\[\frac{a_i}{b_i} \leq \frac{c_j}{d_j} - \frac{(b_i - 1)(d_j - 1)}{b_i d_j}.\]
\end{lem}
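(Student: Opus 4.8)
The plan is to produce the smallest candidate allowed by the lower bounds and then verify it is consistent with the upper bounds. Concretely, I would set
\[ n \colonequals \max_i \left\lceil \frac{a_i}{b_i} \right\rceil, \]
which is the least integer with $n \ge a_i/b_i$ for every $i$. It then remains only to check $n \le c_j/d_j$ for every $j$. Since $n$ is an integer this is equivalent to $n \le \lfloor c_j/d_j \rfloor$, and writing $i^{\ast}$ for an index realizing the maximum it suffices to prove $\lceil a_{i^{\ast}}/b_{i^{\ast}} \rceil \le \lfloor c_j/d_j \rfloor$. In fact I would prove the cleaner pairwise statement $\lceil a_i/b_i \rceil \le \lfloor c_j/d_j \rfloor$ for \emph{all} $i,j$.

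The heart of the argument is a single elementary estimate. Using that $a_i, b_i, c_j, d_j$ are integers with $b_i, d_j > 0$, write $a_i/b_i = \lceil a_i/b_i \rceil - f$ with $f$ a multiple of $1/b_i$ satisfying $0 \le f \le (b_i - 1)/b_i$, and $c_j/d_j = \lfloor c_j/d_j \rfloor + h$ with $h$ a multiple of $1/d_j$ satisfying $0 \le h \le (d_j - 1)/d_j$. Substituting these into the hypothesis $a_i/b_i \le c_j/d_j - (b_i - 1)(d_j - 1)/(b_i d_j)$ and rearranging gives
\[ \left\lceil \frac{a_i}{b_i} \right\rceil - \left\lfloor \frac{c_j}{d_j} \right\rfloor \;\le\; f + h - \frac{(b_i - 1)(d_j - 1)}{b_i d_j} \;\le\; \frac{b_i - 1}{b_i} + \frac{d_j - 1}{d_j} - \frac{(b_i - 1)(d_j - 1)}{b_i d_j}. \]
A one-line common-denominator computation shows the right-hand side equals $1 - \frac{1}{b_i d_j} < 1$. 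As the left-hand side is an integer strictly less than $1$, it is $\le 0$, i.e.\ $\lceil a_i/b_i \rceil \le \lfloor c_j/d_j \rfloor$, as desired.

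Assembling the pieces: $n$ is an integer, it satisfies $n \ge \lceil a_i/b_i \rceil \ge a_i/b_i$ for all $i$ by construction, and $n = \lceil a_{i^{\ast}}/b_{i^{\ast}} \rceil \le \lfloor c_j/d_j \rfloor \le c_j/d_j$ for all $j$ by the previous paragraph, so $n$ has all the required properties. (If there are no lower-bound constraints one takes instead $n = \lfloor \min_j c_j/d_j \rfloor$, and symmetrically if there are no upper-bound constraints; these degenerate cases are immediate.) I do not expect a genuine obstacle here: the only points needing care are the bookkeeping that $f$ and $h$ are multiples of $1/b_i$ and $1/d_j$ respectively — so that the bounds $f \le (b_i-1)/b_i$ and $h \le (d_j-1)/d_j$ are valid — together with the identity $\frac{b_i-1}{b_i} + \frac{d_j-1}{d_j} - \frac{(b_i-1)(d_j-1)}{b_i d_j} = 1 - \frac{1}{b_i d_j}$, which just requires clearing denominators.
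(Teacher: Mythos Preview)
Your proof is correct and follows essentially the same strategy as the paper's. Both arguments reduce to a single pair $(i,j)$ and exploit the fact that the fractional part of a rational with denominator $b$ is at most $(b-1)/b$. The paper packages this slightly differently: it rewrites $a/b \leq n \leq c/d$ as the strict inequalities $(a-1)/b < n < (c+1)/d$ and then checks that this open interval has length at least $1 + 1/(bd) > 1$, whereas you bound $\lceil a/b\rceil - \lfloor c/d\rfloor$ directly via fractional parts. The algebraic identity is the same in both cases, so this is only an organizational difference.
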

\begin{proof}
The collection of intervals \([a_i/b_i, c_j/d_j]\) is closed under intersection,
so it suffices to check that there is an integer \(n\)
satisfying
\begin{equation} \label{abncd}
\frac{a}{b} \leq n \leq \frac{c}{d}
\end{equation}
provided that
\begin{equation} \label{abncd-target}
\frac{a}{b} \leq \frac{c}{d} - \frac{(b - 1)(d - 1)}{bd}.
\end{equation}
For this, we note that \eqref{abncd} is equivalent to
\[\frac{a - 1}{b} < n < \frac{c + 1}{d}.\]
Since any interval of length greater than \(1\) contains an integer, it suffices to have
\[\frac{c + 1}{d} - \frac{a - 1}{b} > 1,\]
or equivalently,
\[\frac{c + 1}{d} - \frac{a - 1}{b} \geq 1 + \frac{1}{bd}.\]
Upon rearrangement this yields \eqref{abncd-target} as desired.
\end{proof}

\noindent
The following simple observations will be used repeatedly in what follows.

\begin{lem} \label{deltam} If \(r\) is even and \(\delta\) is an integer, then \(\delta \equiv m\) mod \(2\).
\end{lem}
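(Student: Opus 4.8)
The plan is to read the congruence off directly from the defining formula for $\delta$. Recall that
\[\delta = \frac{2d + 2g - 2r + 2\ell + (r+1)m}{r-1},\]
so that $(r-1)\delta = 2d + 2g - 2r + 2\ell + (r+1)m$. Since $\delta$ is assumed to be an integer, this is an equality of integers, and we may reduce it modulo $2$.

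All of the terms $2d$, $2g$, $-2r$, and $2\ell$ on the right-hand side are even, so modulo $2$ we obtain $(r-1)\delta \equiv (r+1)m \pmod 2$. Because $r$ is even, both $r-1$ and $r+1$ are odd, so this congruence simplifies to $\delta \equiv m \pmod 2$, as claimed. I do not expect any obstacle here; the only point worth flagging is that the hypothesis $\delta \in \zz$ is exactly what is needed to make ``reducing mod $2$'' meaningful, and without it the statement would be vacuous.
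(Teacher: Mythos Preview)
Your proof is correct and is essentially identical to the paper's own argument, which simply says the claim follows directly from examining the formula for $\delta$; you have just spelled out the mod~$2$ reduction explicitly.
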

\begin{proof}
This follows directly from examining the formula
\[\delta = \frac{2d + 2g - 2r + 2\ell + (r + 1)m}{r - 1}. \qedhere\]
\end{proof}

\begin{lem} \label{lm:mbar-rho}
In Propositions~\ref{master} and~\ref{master-111}, suppose that \(d' \neq g + r\) if \(d \neq g + r\). Then
\[\bar{m} \leq \rho(d' - 1, g, r - 1) \quad \text{and} \quad \bar{m} \leq \rho(d' - 2, g, r - 2).\]
\end{lem}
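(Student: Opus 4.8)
The plan is to derive the two inequalities from nothing more than two elementary identities for $\rho$, the goodness of $(d, g, r, \ell, m)$, and the standing hypothesis \eqref{box} (which, recall, we may assume by Proposition~\ref{prop:largeparams}); in particular the $\delta$-inequality and the bound $2m' + \ell' \le r - 2$ will play no role. First I would record that, expanding $\rho(D, G, R) = (R + 1)D - RG - R(R + 1)$, one has
\[\rho(d' - 1, g, r - 1) = g + r(d' - g - r) \qand \rho(d' - 2, g, r - 2) = g + (r - 1)(d' - g - r).\]
Since $d' \ge g + r$, both quantities are $\ge g$, and $\rho(d' - 2, g, r - 2) \le \rho(d' - 1, g, r - 1)$. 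Also, since $0 \le m' \le m$, we have $\bar{m} = m - m' \le m$, so everything reduces to bounding $m$.

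Next I would split on whether $d = g + r$. If $d = g + r$, then the constraint $g + r \le d' \le d$ forces $d' = g + r$, so $d' - g - r = 0$ and both displayed quantities equal $g$; and goodness of $(d, g, r, \ell, m)$ gives $m \le \rho(d, g, r) = \rho(g + r, g, r) = g$, which settles this case. If instead $d \ne g + r$, then by hypothesis $d' \ne g + r$, and since $d' \ge g + r$ this forces $d' - g - r \ge 1$; the identities above then give $\rho(d' - 1, g, r - 1) \ge g + r$ and $\rho(d' - 2, g, r - 2) \ge g + r - 1$. On the other hand \eqref{box} gives $m \le r - 2 + \epsilon_0 \le r - 1 \le g + r - 1$, so $\bar{m} \le m \le g + r - 1 \le \rho(d' - 2, g, r - 2) \le \rho(d' - 1, g, r - 1)$, as desired.

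Since only $d'$ and $\bar{m} = m - m'$ enter the argument, the same reasoning applies verbatim to both Propositions~\ref{master} and~\ref{master-111}. I do not expect any serious obstacle here; the one point that requires care is that the statement genuinely relies on the standing assumption \eqref{box} — without it the conclusion is false, since for $d$ large one can build good tuples (e.g.\ with $g = 0$) for which $\bar{m}$ is arbitrarily large while $\rho(d' - 1, g, r - 1)$ stays bounded — and on the hypothesis ``$d' \ne g + r$ if $d \ne g + r$,'' which is exactly what makes the second case of the split go through.
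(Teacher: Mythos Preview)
Your proof is correct and follows essentially the same approach as the paper's own proof: the same split on whether \(d = g + r\), the same use of goodness (i.e., \(m \le \rho(d,g,r) = g\)) in the first case, and the same use of \eqref{box} (i.e., \(m \le r - 2 + \epsilon_0 \le r - 1\)) together with \(d' \ge g + r + 1\) in the second. Your explicit identities \(\rho(d'-1,g,r-1) = g + r(d'-g-r)\) and \(\rho(d'-2,g,r-2) = g + (r-1)(d'-g-r)\) simply make transparent the numerical bounds the paper quotes directly.
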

\begin{proof}
We divide into cases as follows.

\medskip
\noindent
\textbf{\boldmath Case 1: \(d = g + r\).} This implies \(d' = d = g + r\); thus
\(g = \rho(d, g, r) = \rho(d' - 1, g, r - 1) = \rho(d' - 2, g, r - 2)\).
On the other hand,
because \(m \leq \rho(d, g, r)\), we have
\(\bar{m} = m - m' \leq m \leq \rho(d, g, r)\).

\medskip
\noindent
\textbf{\boldmath Case 2: \(d > g + r\).} This implies \(d' \geq g + r + 1\); thus \(\rho(d' - 1, g, r - 1) \geq g + r\)
and \(\rho(d' - 2, g, r - 2) \geq g + r - 1\).
On the other hand, because
\(m \leq r - 2 + \epsilon_0\), we have
\(\bar{m} = m - m' \leq m \leq r - 1\).
\end{proof}

\noindent
The first main step of our combinatorial analysis is the following:

\begin{prop} \label{lm:master-imp} Let \((d, g, r, \ell, m)\) be a good tuple satisfying \eqref{box} with \(m \neq 0\)
and \(r \geq 14\).
Then the conditions of Proposition~\ref{master}
can be satisfied unless one of the following holds:
\begin{enumerate}
\item \label{nm-1} \(\ell = 0\), and \(\delta\) is an integer with the same parity as \(r\),
and \(\delta < r\) if \(r\) is even.
\item \label{nm-2} \(\ell < \delta < \ell + 2\) and \(g > 0\).
\item \label{r-3} \((d, g, r, \ell, m) = (3k + 1, k, 2k, 0, 2k - 3)\) for some \(k\).
\item \label{r-4} \((d, g, r, \ell, m) = (k + 1, 0, k, 0, 1)\) for some \(k\).
\end{enumerate}
\end{prop}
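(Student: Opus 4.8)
The plan is to translate the conditions of Proposition~\ref{master} into a Diophantine feasibility statement and then solve it by an explicit construction, with the four families \eqref{nm-1}--\eqref{r-4} emerging precisely as the configurations where no admissible choice exists. Applying Proposition~\ref{master} so as to reduce \(I(d,g,r,\ell,m)\) to a good tuple amounts to choosing integers \(m' \in [0,m]\), \(\ell' \in [0,\ell]\), \(d' \in [g + r + \epsilon_0, d]\), and \(n_1, \ldots, n_{m'}\) with \(n_i \equiv r - 1 \pmod 2\), \(2 \le n_i \le r - 1\), and \(n_i \ne 2\) when \((d', g) = (r + 1, 1)\), such that -- writing \(\Delta = d - g - r - \epsilon_0\), \(T = \ell' + 2(d - d') + \sum n_i\), \(s = \frac{(r - 1)m' - \sum n_i}{2}\), \(\bar\ell = \ell - \ell' + s\), \(\bar m = m - m'\) -- we have \(|\delta - T| \le 1 - \frac{1}{r - 1}\), \(0 \le \bar\ell \le \lfloor (r - 1)/2 \rfloor\), and \(\bar m \le \rho(d' - 1, g, r - 1)\), so that the reduced tuple is good (it automatically avoids \eqref{XX} since \(r - 1 \ge 13\)). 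Two preliminary observations simplify matters: since \(\delta\) is a \(\frac{1}{r-1}\)-integer, the condition on \(T\) is equivalent to \(T \in \{\lfloor \delta \rfloor, \lceil \delta \rceil\}\), and only to \(T = \delta\) when \(\delta \in \zz\); and \eqref{box} yields the crude bound \(\delta < 2r - 3\), while the relation \(d = g + r \Rightarrow m \le \rho(g+r,g,r) = g\) shows that the \(\bar m\)-bound always leaves a nonempty range of choices for \(d - d'\) (using Lemma~\ref{lm:mbar-rho} when \(d \ne g + r\)).

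First I would settle the case \(\ell \ge 1\), where I claim the conditions can always be met. Choosing the \(n_i\) as large as their parity allows keeps \(s\), hence \(\bar\ell \le \ell\), small; using \(\ell' \in \{\ell - 1, \ell\}\) realizes both parities of \(T\); and then for each fixed \(m' \in \{0, 1, 2\}\) the achievable values of \(T\) form an interval, the three intervals overlap, and their union is every integer in \([0,\, \ell + 2(r-1) + 2\Delta]\), which by the crude bound on \(\delta\) contains \(\lceil \delta \rceil\). (When \(m = 1\), only \(m' \in \{0, 1\}\) is available, but then \(\delta < 9\), still covered since \(r \ge 14\).) Verifying that the intervals genuinely overlap, and that the corresponding \(d'\) keep \(\bar m \le \rho(d' - 1, g, r - 1) = \rho(d, g, r) - (d - g - r) + (d - d')\), is routine, with Lemma~\ref{lm:eliminate} producing the requisite integers.

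The substantive case is \(\ell = 0\), so \(\ell' = 0\), \(T = 2(d - d') + \sum n_i\), and \(T \equiv m'(r - 1) \pmod 2\). Here the bound \(\bar\ell = s \le \lfloor (r - 1)/2 \rfloor\) forces \(\sum n_i \ge (r - 1)m' - 2\lfloor (r - 1)/2 \rfloor\), so for each \(m'\) the achievable \(T\) form an interval \(W_{m'} = [A(m'),\, m'(r - 1) + 2\Delta]\) in the class \(T \equiv m'(r-1) \pmod 2\), with an explicit \(A(m')\) (slightly larger in the \((d',g) = (r+1,1)\) subcase); since \(\delta < 2r - 3\), only \(W_0, W_1, W_2\) are relevant. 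One then sees: if \(\delta\) is an integer with \(\delta \equiv r \pmod 2\), there is a parity obstruction -- for \(r\) odd, \(T\) is always even; for \(r\) even, the even windows \(W_0 = [0, 2\Delta]\) and \(W_2 = [r, \ldots]\) leave the gap \((2\Delta, r)\) -- which is accounted for by \eqref{nm-1}, Lemma~\ref{deltam} pinning down the parity and the hypothesis ``\(\delta < r\) if \(r\) even'' making \(\delta\) miss the even windows. When \(\delta\) is not such an integer, the remaining failures are: \(\delta \in (1, 2)\) with \(\Delta = 0\) and the available parity unable to reach \(2\) -- which forces \(g > 0\) by \eqref{box}, giving \eqref{nm-2} -- together with two further degenerate parameter configurations, namely \(\delta = r + 1\) with the \(\bar m\)-bound obstructing \(W_1\) and the \(\bar\ell\)-bound obstructing \(W_3\), and the \(g = 0\), \(\Delta = 0\) case; a direct divisibility analysis identifies these as \eqref{r-3} and \eqref{r-4}.

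I expect the main obstacle to be the \(\ell = 0\) bookkeeping: one must simultaneously track the parity of \(T\) (tied to \(m'\) through \(n_i \equiv r - 1\)), the upper bound \(\bar\ell = s \le \lfloor (r - 1)/2 \rfloor\) (which pushes \(\sum n_i\), hence \(T\), upward, opening gaps between consecutive windows when \(\Delta\) is small), and the bound \(\bar m = m - m' \le \rho(d' - 1, g, r - 1)\) (which -- unusually -- is what rules out the otherwise-feasible \(m' = 1\) window in family~\eqref{r-3}), and then check that \eqref{nm-1}--\eqref{r-4} exhaust the configurations in which no admissible choice exists. The bound \(\delta < 2r - 3\) for \(r \ge 14\) is what keeps everything finite and confines \(\delta\) to the lowest windows, so that only the first one or two gaps can cause trouble.
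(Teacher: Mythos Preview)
Your plan contains a genuine error in the case split. You write ``First I would settle the case \(\ell \ge 1\), where I claim the conditions can always be met,'' and you only generate exception~\eqref{nm-2} from the \(\ell = 0\) analysis. But \eqref{nm-2} reads \(\ell < \delta < \ell + 2\) with \(g > 0\), and this exclusion is not vacuous for \(\ell \ge 1\). Concretely, take \((d,g,r,\ell,m) = (19,5,14,2,1)\): this is a good tuple in \eqref{box}, and \(\delta = 3 = \ell + 1\), so one needs \(T = 3\) exactly. Since \(d = g + r\), we are forced to \(d' = 19\), hence \(d - d' = 0\). With \(m' = 0\) one gets \(T = \ell' \le 2\); with \(m' = 1\) (the only other option, as \(m = 1\)) one needs \(\ell' + n_1 = 3\) with \(n_1\) odd and \(\ge 3\), forcing \((\ell',n_1) = (0,3)\), but then \(\bar\ell = (\ell - \ell') + \tfrac{(r-1)-n_1}{2} = 2 + 5 = 7 > \lfloor (r-1)/2 \rfloor = 6\), so the reduced tuple is not good. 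Thus Proposition~\ref{master} genuinely cannot be applied here, and the proposition statement correctly excludes this case via~\eqref{nm-2}.

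The paper's proof does not split on \(\ell\) first. Instead it sets up the full system of inequalities in \((m',\ell',d',n)\) and successively eliminates variables via Lemma~\ref{lm:eliminate}, arriving at a system in \(m'\) alone plus a parity condition; the case split (into three cases, on \(\ell\), the parity of \(r\), and the integrality of \(\delta\)) only comes at the end when solving for \(m'\). In that framework, the obstruction you missed appears in Subcase~3.3 (\(\delta\) close to \(\ell + 1\) and \(d - d'\) forced to be zero), where the argument explicitly invokes the exclusion~\eqref{nm-2} to push \(\delta \ge \ell + 2\). Your ``windows \(W_{m'}\)'' picture is a reasonable heuristic, but the interaction between the \(\bar\ell\)-bound and small \(\Delta\) creates gaps even when \(\ell \ge 1\), and these must be absorbed into~\eqref{nm-2}.
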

\begin{proof}
We will show a slightly stronger statement: The conditions of Proposition~\ref{master}
can be satisfied, together with the additional conditions that
\[m' \neq m \ \text{if} \ g = 0, \qand d' \neq g + r \ \text{if} \ d \neq g + r,\]
unless either one of the above-mentioned conditions holds or
\[(d, g, r, \ell, m) = (4k - 2, 0, 2k, 0, 1) \ \text{or} \ (4k + 1, 2k - 1, 2k, 0, 2k - 3) \quad \text{for some \(k\)}.\]

This is indeed a stronger statement because if \((d, g, r, \ell, m) = (4k - 2, 0, 2k, 0, 1)\),
then the conditions of Proposition~\ref{master} can be satisfied by taking:
\[\ell' = 0, \quad m' = m = 1, \quad d' = d = 4k - 2, \quad n_1 = 3,\]
and if \((d, g, r, \ell, m) = (4k + 1, 2k - 1, 2k, 0, 2k - 3)\), then the conditions of Proposition~\ref{master} can be satisfied by taking:
\[\ell' = 0, \quad m' = 1, \quad d' = 4k - 1, \quad n_1 = 2k - 1.\]

The advantage of this first additional condition is that
\(m' \neq m\) implies \(\bar{m} \neq 0\).
In combination with Lemma~\ref{lm:mbar-rho} (which applies because of the second additional condition),
these conditions therefore imply that
 \((d' - 1, g, r - 1, \bar{\ell}, \bar{m})\) is good provided only that
\[\bar{\ell} \leq \frac{r - 1}{2}.\]

A further advantage of this second additional condition is that
\(\sum n_i\) can be any integer of the form \((r - 1)m' - 2n\) where
\[0 \leq n \leq \kappa m' \quad \text{where} \quad \kappa = \kappa(d, g, r) \colonequals \begin{cases}
\frac{r - 4}{2} & \text{if \(r\) is even;} \\
\frac{r - 5}{2} & \text{if \(r\) is odd and \((d, g) = (r + 1, 1)\);} \\
\frac{r - 3}{2} & \text{if \(r\) is odd and \((d, g) \neq (r + 1, 1)\).}
\end{cases}\]

We next write down a system of inequalities such that an integer solution
(for \(\ell'\), \(m'\), \(d'\), and \(n\)) to this system
guarantees that the conditions of Proposition~\ref{master}
can be satisfied:
\begin{gather}
0 \leq m' \leq m - \epsilon_0 \label{gath1_1}\\
2m' + \ell' \leq r - 2 \label{gath1_2}\\
g + r + \epsilon_1 \leq d' \leq d \\
0 \leq n \leq \kappa m' \\
0 \leq \ell' \leq \ell \label{gath1_5}\\
\left|\delta - \left[\ell' + 2(d - d') + (r - 1)m' - 2n \right] \right| \leq 1 - \frac{1}{r - 1} \\
\ell - \ell' + n \leq \frac{r - 1}{2}.
\end{gather}
Using \eqref{gath1_5}, the inequality \eqref{gath1_2} follows from \(2m' + \ell \leq r - 2\).
We introduce a new variable \(s = d' + n\).
Replacing \eqref{gath1_2} with \(2m' + \ell \leq r - 2\) and
rewriting the resulting system in terms of \(s\) and \(n\), we obtain:
\begin{gather}
0 \leq m' \leq m - \epsilon_0  \label{gath2_1} \\
2m' + \ell \leq r - 2  \label{gath2_2}\\
s - d \leq n \leq s - g - r - \epsilon_1 \label{gath2_3}\\
0 \leq n \leq \kappa m' \label{gath2_4}\\
0 \leq \ell' \leq \ell  \label{gath2_5}\\
\delta - \left[2d - 2s + (r - 1)m' \right] - \frac{r - 2}{r - 1} \leq \ell' \leq \delta - \left[2d - 2s + (r - 1)m'\right] + \frac{r - 2}{r - 1}  \label{gath2_6} \\
n \leq \frac{r - 1}{2} + \ell' - \ell \label{gath2_7}.
\end{gather}

We use Lemma~\ref{lm:eliminate} to eliminate the variable \(n\).
In other words, \eqref{gath2_3}, \eqref{gath2_4}, and \eqref{gath2_7} involve \(n\).
Applying Lemma~\ref{lm:eliminate}, there is such an integer \(n\) provided that:
\begin{align}
s - d &\leq s - g - r - \epsilon_1 \label{gath3_1}\\
s - d &\leq \kappa m' \\
s - d &\leq \frac{r - 1}{2} + \ell' - \ell \\
0 &\leq s - g - r - \epsilon_1 \\
0 &\leq \kappa m' \label{gath3_5}\\
0 &\leq \frac{r - 1}{2} + \ell' - \ell.
\end{align}
Inequalities \eqref{gath3_1} and \eqref{gath3_5} are immediate
(they follow from \(d \geq g + r + \epsilon_1\) and \(m' \geq 0\)
respectively).
Rearranging the remaining inequalities, and including the inequalities  \eqref{gath2_1}, \eqref{gath2_2}, \eqref{gath2_5}, and \eqref{gath2_6} that do not involve \(n\),
it therefore suffices to show that there is an integer solution to the following system:
\begin{gather*}
0 \leq m' \leq m - \epsilon_0 \\
2m' + \ell \leq r - 2 \\
0 \leq \ell' \leq \ell \\
\delta - \left[2d - 2s + (r - 1)m' \right] - \frac{r - 2}{r - 1} \leq \ell' \leq \delta - \left[2d - 2s + (r - 1)m'\right] + \frac{r - 2}{r - 1} \\
s \leq d + \kappa m' \\
s - d + \ell - \frac{r - 1}{2} \leq \ell' \\
g + r + \epsilon_1 \leq s \\
\ell - \frac{r - 1}{2} \leq \ell'.
\end{gather*}
Using Lemma~\ref{lm:eliminate} to eliminate the variable \(\ell'\)
replaces the inequalities involving \(\ell'\) with:
\begin{align}
0 &\leq \ell,\label{gath4_1}\\
0 &\leq \delta - \left[2d - 2s + (r - 1)m'\right] + \frac{r - 2}{r - 1} \\
\delta - \left[2d - 2s + (r - 1)m' \right] - \frac{r - 2}{r - 1} &\leq \ell \\
\delta - \left[2d - 2s + (r - 1)m' \right] - \frac{r - 2}{r - 1} &\leq \delta - \left[2d - 2s + (r - 1)m'\right] + \frac{r - 2}{r - 1} - \frac{(r - 2)^2}{(r - 1)^2} \label{gath4_4}\\
s - d + \ell - \frac{r - 1}{2} &\leq \ell \\
s - d + \ell - \frac{r - 1}{2} &\leq \delta - \left[2d - 2s + (r - 1)m'\right] + \frac{r - 2}{r - 1} - \frac{r - 2}{2r - 2}\\
\ell -  \frac{r - 1}{2} &\leq \ell \label{gath4_7}\\
\ell - \frac{r - 1}{2} &\leq \delta - \left[2d - 2s + (r - 1)m'\right] + \frac{r - 2}{r - 1} - \frac{r - 2}{2r - 2}. 
\end{align}
Inequalities \eqref{gath4_1}, \eqref{gath4_4}, and \eqref{gath4_7} are immediate.
Simplifying the remaining ones, and including the inequalities that do not involve \(\ell'\),
we obtain:
\begin{align}
s &\leq d + \frac{(r - 1)m' - \delta}{2} + \frac{\ell}{2} + \frac{r - 2}{2r - 2}\label{gath5_1up}\\
s &\leq d + \frac{r - 1}{2} \\
s &\leq d + \kappa m' \\
s &\geq d + \frac{(r - 1)m' - \delta}{2} - \frac{r - 2}{2r - 2} \label{gath5_1low}\\
s &\geq d + (r - 1)m' - \delta  + \ell - \frac{r^2 - r - 1}{2r - 2} \\
s &\geq d + \frac{(r - 1)m' - \delta}{2} + \frac{\ell}{2} - \frac{r^2 - r - 1}{4r - 4} \\
s &\geq g + r + \epsilon_1 \\
0 \leq m' &\leq m - \epsilon_0 \\
2m' + \ell &\leq r - 2.
\end{align}

We now eliminate the variable \(s\). Mostly, we will accomplish this by using
Lemma~\ref{lm:eliminate}, except we will compare \eqref{gath5_1up} and \eqref{gath5_1low}
by ad-hoc methods. Namely, for \eqref{gath5_1up} and \eqref{gath5_1low}, we want there to be an integer
between 
\[d + \frac{(r - 1)m' - \delta}{2} - \frac{r - 2}{2r - 2} \quad \text{and} \quad d + \frac{(r - 1)m' - \delta}{2} + \frac{\ell}{2} + \frac{r - 2}{2r - 2}.\]
By direct inspection, such an integer exists if and only if
\begin{equation} \label{lzoi}
\ell \neq 0 \quad \text{or} \quad (r - 1)m' - \delta \ \text{is not an odd integer}.
\end{equation}
Eliminating \(s\), we therefore have condition~\eqref{lzoi} plus the following system of inequalities:
\begin{align}
d + \frac{(r - 1)m' - \delta}{2} - \frac{r - 2}{2r - 2} &\leq d + \frac{r - 1}{2} - \frac{2r - 3}{4r - 4} \label{gath6_1}\\
d + \frac{(r - 1)m' - \delta}{2} - \frac{r - 2}{2r - 2} &\leq d + \kappa m' \label{gath6_2}\\
d + (r - 1)m' - \delta  + \ell - \frac{r^2 - r - 1}{2r - 2} &\leq d + \frac{(r - 1)m' - \delta}{2} + \frac{\ell}{2} + \frac{r - 2}{2r - 2} - \frac{(2r - 3)^2}{(2r - 2)^2} \label{gath6_3}\\
d + (r - 1)m' - \delta  + \ell - \frac{r^2 - r - 1}{2r - 2} &\leq d + \frac{r - 1}{2} - \frac{2r - 3}{4r - 4} \label{gath6_4}\\
d + (r - 1)m' - \delta  + \ell - \frac{r^2 - r - 1}{2r - 2} &\leq d + \kappa m' \label{gath6_5}\\
d + \frac{(r - 1)m' - \delta}{2} + \frac{\ell}{2} - \frac{r^2 - r - 1}{4r - 4} &\leq d + \frac{(r - 1)m' - \delta}{2} + \frac{\ell}{2} + \frac{r - 2}{2r - 2} - \frac{(4r - 5)(2r - 3)}{(4r - 4)(2r - 2)} \label{gath6_6}\\
d + \frac{(r - 1)m' - \delta}{2} + \frac{\ell}{2} - \frac{r^2 - r - 1}{4r - 4} &\leq d + \frac{r - 1}{2} - \frac{4r - 5}{8r - 8} \label{gath6_7}\\
d + \frac{(r - 1)m' - \delta}{2} + \frac{\ell}{2} - \frac{r^2 - r - 1}{4r - 4} &\leq d + \kappa m' \label{gath6_8}\\
g + r + \epsilon_1 &\leq d + \frac{(r - 1)m' - \delta}{2} + \frac{\ell}{2} + \frac{r - 2}{2r - 2} \label{gath6_9}\\
g + r + \epsilon_1 &\leq d + \frac{r - 1}{2} \label{gath6_10}\\
g + r + \epsilon_1 &\leq d + \kappa m'\label{gath6_11}\\
0 \leq m' &\leq m - \epsilon_0 \label{gath6_12}\\
2m' + \ell &\leq r - 2. \label{gath6_13}
\end{align}
Inequalities \eqref{gath6_6}, \eqref{gath6_10}, and \eqref{gath6_11} are immediate.
Moreover, \eqref{gath6_1}, \eqref{gath6_3}, \eqref{gath6_4}, and \eqref{gath6_7} all follow from
\[(r - 1)m' - \delta  + \ell \leq \frac{r^2 - 2r}{r - 1},\]
and \eqref{gath6_5} and \eqref{gath6_8} follow from
\[(r - 1)m' - \delta  + \ell \leq \kappa m' + \frac{r^2 - r - 2}{2r - 2}.\]
The above system of inequalities
therefore follows from the following system:
\begin{align}
(r - 1)m' - \delta &\leq 2\kappa m' + \frac{r - 2}{r - 1} \label{eqs_final_list_start} \\
(r - 1)m' - \delta  + \ell &\leq \frac{r^2 - 2r}{r - 1} \\
(r - 1)m' - \delta  + \ell &\leq \kappa m' + \frac{r^2 - r - 2}{2r - 2}  \\
g + r + \epsilon_1 &\leq d + \frac{(r - 1)m' - \delta}{2} + \frac{\ell}{2} + \frac{r - 2}{2r - 2} \\
0 &\leq m' \leq m - \epsilon_0 \\
2m' + \ell &\leq r - 2.\label{eqs_final_list_end}
\end{align}
All that remains is therefore to show that there is an integer \(m'\)
satisfying \eqref{eqs_final_list_start}--\eqref{eqs_final_list_end} plus condition~\eqref{lzoi}.
For this we divide into three cases as follows.

\medskip

\noindent
\textbf{\boldmath Case 1: \(\ell = 0\) and \(r\) is even and \(\delta\) is an even integer.}
In this case we will take \(m' = 2\), which evidently satisfies \eqref{lzoi}.
Substituting \(\ell = 0\) and \(m' = 2\) into \eqref{eqs_final_list_start}--\eqref{eqs_final_list_end}, it remains only to verify:
\begin{align}
2(r - 1) - \delta &\leq \frac{2r^2 - 9r + 6}{r - 1} \label{gath7_1} \\
2(r - 1) - \delta &\leq \frac{r^2 - 2r}{r - 1} \label{gath7_2} \\
2(r - 1) - \delta &\leq \frac{3r^2 - 11r + 6}{2r - 2} \label{gath7_3}\\
g + r + \epsilon_1 &\leq d + \frac{2(r - 1) - \delta}{2} + \frac{r - 2}{2r - 2}\label{gath7_4}\\
0 &\leq 2 \leq m - \epsilon_0 \label{gath7_5}\\
4 &\leq r - 2.\label{gath7_6}
\end{align}
Note that \(\delta \geq r\) by our exclusion of the cases \(\delta < r\) in Proposition \ref{lm:master-imp}\eqref{nm-1}.
This implies \eqref{gath7_1}, \eqref{gath7_2}, and \eqref{gath7_3}.
Since \(d \geq g + r + \epsilon_1\), inequality \eqref{gath7_4} follows from
\(\delta \leq 2r - 2\). Inequality \eqref{gath7_5} follows from
\(m \geq 2 + \epsilon_0\), and \eqref{gath7_6} is immediate.
All that remains is therefore to check the following pair of inequalities:
\begin{align}
\delta &\leq 2r - 2 \label{gath8_1}\\
m &\geq 2 + \epsilon_0.\label{gath8_2}
\end{align}
For \eqref{gath8_1}, we note that
\[\delta = \frac{2(d - g - 2r + 1) + 4g + (r + 1)m + 2r - 2}{r - 1} \leq \frac{4(r - 1) + (r + 1)(r - 1) + 2r - 2}{r - 1} \leq 2r - 2.\]
For \eqref{gath8_2}, we note that \(m\) is even by Lemma~\ref{deltam};
in particular, \(m \geq 2\).
Inequality \eqref{gath8_2} thus holds unless \(g = 0\) and \(m = 2\). But in this case,
\[\delta = \frac{2d + 2}{r - 1} \leq \frac{2(2r - 1) + 2}{r - 1} < r,\]
contradicting our assumption that \(\delta \geq r\).

\medskip

\noindent
\textbf{\boldmath Case 2: \(\ell = 0\) and \(r\) is even and \(\delta\) is an odd integer.}
In this case we will take \(m' = 1\), which again evidently satisfies \eqref{lzoi}.
Substituting \(\ell = 0\) and \(m' = 1\) into \eqref{eqs_final_list_start}--\eqref{eqs_final_list_end}, it remains only to verify:
\begin{align}
(r - 1) - \delta &\leq \frac{r^2 - 4r + 2}{r - 1} \label{gath9_1}\\
(r - 1) - \delta &\leq \frac{r^2 - 2r}{r - 1} \label{gath9_2}\\
(r - 1) - \delta &\leq \frac{r^2 - 3r + 1}{r - 1} \label{gath9_3}\\
g + r + \epsilon_1 &\leq d + \frac{(r - 1) - \delta}{2} + \frac{r - 2}{2r - 2}\label{gath9_4}\\
0 &\leq 1 \leq m - \epsilon_0 \label{gath9_5}\\
2 &\leq r - 2.\label{gath9_6}
\end{align}
Inequalities \eqref{gath9_1}, \eqref{gath9_2}, and \eqref{gath9_3} follow from \(\delta \geq 3\), inequality 
\eqref{gath9_4} from \(\delta \leq 2(d - g - r - \epsilon_1) + (r - 1)\),
inequality 
\eqref{gath9_5} from \(m \geq 1 + \epsilon_0\), and  
\eqref{gath9_6} is immediate.
All that remains is therefore to check the following system of inequalities:
\begin{align}
\delta &\geq 3 \label{gath10_1}\\
\delta &\leq 2(d - g - r - \epsilon_1) + (r - 1)\label{gath10_2} \\
m &\geq 1 + \epsilon_0. \label{gath10_3}
\end{align}

For \eqref{gath10_1}, since
\(m \geq 1\), we have \(\delta > 1\). Since \(\delta\) is an odd integer,  \(\delta \geq 3\)
as desired.
For \eqref{gath10_3}, since \(m \geq 1\),
the inequality holds unless \(g = 0\) and \(m = 1\).
But in this case,
\[\delta = \frac{2d}{r - 1} - 1 \leq \frac{2(2r - 1)}{r - 1} - 1 < 5,\]
and so \(\delta = 3\), and so \(d = 2r - 2\).
In other words, writing \(r = 2k\), we have
\[(d, g, r, \ell, m) = (4k - 2, 0, 2k, 0, 1),\]
which one of the cases excluded by assumption.

All that remains is to verify \eqref{gath10_2}.
Note that \(m\) is odd by Lemma~\ref{deltam};
in particular, since \(m \leq r-2+\epsilon_0\) by \eqref{box}, we have one of:
\[m \leq r - 5, \quad m = r - 3, \quad \text{or} \quad m = r - 1,\]
where the final case can only occur if \(g = 0\).
Our argument will be via casework as follows.

\smallskip
\noindent
\textit{Subcase 2.1: \(d \geq g + r + 2\).}
By separately considering the cases \(g = 0\) (in which case \(m \leq r - 1\)) and \(g > 0\)
(in which case \(m \leq r - 3\)), we have
\(4g + (r + 1)m \leq r^2 + 2r - 7\),
with equality only if \(g = r - 1\) and \(m = r - 3\).
Therefore
\begin{align*}
\delta &= 2(d - g - r - 1) + (r + 1) - \frac{(r^2 + 2r - 7) - 4g - (r + 1)m + (2r - 4)(d - g - r - 2)}{r - 1} \\
&\leq 2(d - g - r - \epsilon_1) + (r + 1),
\end{align*}
with equality only if \(g = r - 1\) and \(m = r - 3\) and \(d = g + r + 2 = 2r + 1\).
Since \(\delta\) is an odd integer, we therefore have 
\(\delta \leq 2(d - g - r - \epsilon_1) + (r - 1)\) unless, writing \(r = 2k\), we have
\[(d, g, r, \ell, m) = (2r + 1, r - 1, r, 0, r - 3) = (4k + 1, 2k - 1, 2k, 0, 2k - 3),\]
which is again one of the cases excluded by assumption.

\smallskip
\noindent
\textit{Subcase 2.2: \(d \leq g + r + 1\) and \(m \leq r - 5\).}
We have
\[\delta = \frac{2(d - g - r - 1) + 4g + 2 + (r + 1)m}{r - 1} \leq \frac{4(r - 1) + 2 + (r + 1)(r - 5)}{r - 1} = r + 1 - \frac{6}{r - 1} < r + 1.\]
Since \(\delta\) is an odd integer, this implies \(\delta \leq r - 1\) as desired.

\smallskip
\noindent
\textit{Subcase 2.3: \(d = g + r\) and \(m = r - 3\).}
We have
\[\delta = r - 1 + \frac{4(g - 1)}{r - 1}.\]
Since \(\delta\) is an integer, and \(r - 1\) is odd,
this implies \(g \equiv 1\) mod \(r - 1\),
which since \(0 \leq g \leq r - 1\) in turn implies \(g = 1\),
and so \(\delta = r - 1\).

\smallskip
\noindent
\textit{Subcase 2.4: \(d = g + r + 1\) and \(m = r - 3\).}
We have
\[\delta = r - 1 + \frac{2(2g - 1)}{r - 1}.\]
Since \(\delta\) is an integer, and \(r - 1\) is odd,
this implies \(2g \equiv 1 \equiv r\) mod \(r - 1\),
which since \(0 \leq g \leq r - 1\) in turn implies \(g = r/2\).
Writing \(r = 2k\), we therefore have \(g = k\) and \(d = g + r + 1 = 3k + 1\),
i.e., we have
\[(d, g, r, \ell, m) = (3k + 1, k, 2k, 0, 2k - 3),\]
which is again one of the cases excluded by assumption.

\smallskip
\noindent
\textit{Subcase 2.5: \(d \leq g + r + 1\) and \(m = r - 1\).}
Since \(m \leq r - 2 + \epsilon_0\), we would have \(g = 0\),
and thus \(d = r + 1\). But this would imply
\(\delta = (r^2 + 1)/(r - 1) \notin \zz\), in contradiction to our assumption that \(\delta\)
is an odd integer.

\medskip
\noindent
\textbf{\boldmath Case 3: \(\ell \neq 0\) or \(r\) is odd or \(\delta\) is not an integer.}
If \(\ell \neq 0\) or \(\delta\) is not an integer, then \eqref{lzoi} holds.
Otherwise, the current assumption implies \(r\) is odd,
so \(\delta\) is even (the cases where \(\delta\) is also odd are excluded),
and so \eqref{lzoi} again holds.
We conclude that \eqref{lzoi} is automatic.

All that remains is therefore to check that
there exists an integer \(m'\) satisfying \eqref{eqs_final_list_start}--\eqref{eqs_final_list_end}.
Rearranging to make the bounds on \(m'\) explicit, this is the system:
\begin{align}
m' &\leq \frac{1}{r - 1 - 2\kappa} \cdot \left(\delta + \frac{r - 2}{r - 1}\right) \label{gath11_1}\\
m' &\leq \frac{1}{r - 1} \cdot \left(\delta - \ell + \frac{r^2 - 2r}{r - 1}\right) \\
m' &\leq \frac{1}{r - 1 - \kappa} \cdot \left(\delta - \ell + \frac{r^2 - r - 2}{2r - 2}\right) \label{gath11_3}\\
m' &\leq m - \epsilon_0 \\
m' &\leq \frac{r - 2 - \ell}{2} \\
m' &\geq \frac{1}{r - 1} \cdot \left(\delta - 2(d - g - r - \epsilon_1) - \ell - \frac{r - 2}{r - 1}\right) \label{gath11_1low}\\
m' &\geq 0.
\end{align}
We will compare the \eqref{gath11_1} and \eqref{gath11_3} to \eqref{gath11_1low} using ad-hoc
methods. But first we handle all of the other comparisons
using Lemma~\ref{lm:eliminate}, by verifying the following system of inequalities:
\begin{align*}
0 &\leq \frac{1}{r - 1 - 2\kappa} \cdot \left(\delta + \frac{r - 2}{r - 1}\right) \\
0 &\leq \frac{1}{r - 1} \cdot \left(\delta - \ell + \frac{r^2 - 2r}{r - 1}\right) \\
0 &\leq \frac{1}{r - 1 - \kappa} \cdot \left(\delta - \ell + \frac{r^2 - r - 2}{2r - 2}\right) \\
0 &\leq m - \epsilon_0 \\
0 &\leq \frac{r - 2 - \ell}{2} \\
\frac{1}{r - 1} \cdot \left(\delta - 2(d - g - r - \epsilon_1) - \ell - \frac{r - 2}{r - 1}\right) &\leq \frac{1}{r - 1} \cdot \left(\delta - \ell + \frac{r^2 - 2r}{r - 1}\right) - \frac{(r - 2)^4}{(r - 1)^4} \\
\frac{1}{r - 1} \cdot \left(\delta - 2(d - g - r - \epsilon_1) - \ell - \frac{r - 2}{r - 1}\right) &\leq m - \epsilon_0 \\
\frac{1}{r - 1} \cdot \left(\delta - 2(d - g - r - \epsilon_1) - \ell - \frac{r - 2}{r - 1}\right) &\leq \frac{r - 2 - \ell}{2} - \frac{(r - 1)^2 - 1}{2(r - 1)^2}.
\end{align*}
Substituting in the definition of \(\delta\) and rearranging, these inequalities are equivalent to:
\begin{align}
2(d - g - r) + 4g + 2\ell + (r + 1)m &\geq -r + 2  \\
4(d - g - r) + 8g + (r - 3)(r-2\ell) + 2(r + 1)m &\geq -r^2 + r \\
4(d - g - r) + 8g + (r - 3)(r-2\ell) + 2(r + 1)m &\geq -2r + 2 \\
m - 1 &\geq \epsilon_0 - 1 \\
r-2\ell &\geq -r + 4 \\
2(r - 1)^3 (d - g - r - \epsilon_1) &\geq -5r^3 + 23r^2 - 35r + 18 \\
(2r - 4)(d - g - r - \epsilon_1)  + 4(r - 1 - g) + (r - 3)\ell   \qquad & \label{gath12_7}\\
\phantom{.}  + (r^2 - 3r)(m - 1) + 2(1 - \epsilon_1) &\geq (r - 1)^2 \epsilon_0 - r^2 + 6r \notag \\
(8r - 16)(d - g - r) + 16(r - 1 - g) + (r^2 - 4r + 7)(r - 2\ell) \qquad & \\
\phantom{.} + (4r + 4)(r - 1 - m) + 8(1 - \epsilon_1) &\geq -r^3 + 10r^2 + 5r. \notag
\end{align}
From these expressions we see that all but \eqref{gath12_7} is immediate,
and that \eqref{gath12_7} holds when \(\epsilon_0 = 0\).
But when \(\epsilon_0 = 1\), then \(g = 0\) and \(\epsilon_1 = 1\), and so \eqref{gath12_7} becomes
\[(2r - 4)(d - r - 1) + (r^2 - 3r)(m - 1) + (r - 3)\ell \geq 5,\]
which holds unless \(d = r + 1\) and \(m = 1\) and \(\ell = 0\), or equivalently unless
\[(d, g, r, \ell, m) = (k + 1, 0, k, 0, 1),\]
which is again one of the cases excluded by assumption.

\medskip

All that remains is our promised ad-hoc comparison \eqref{gath11_1} and \eqref{gath11_3} to \eqref{gath11_1low}. That is, we want to show that there are integers between:
\begin{align*}
\frac{1}{r - 1} \cdot \left(\delta - 2(d - g - r - \epsilon_1) - \ell - \frac{r - 2}{r - 1}\right) \quad &\text{and} \quad \frac{1}{r - 1 - 2\kappa} \cdot \left(\delta + \frac{r - 2}{r - 1}\right) \\
\frac{1}{r - 1} \cdot \left(\delta - 2(d - g - r - \epsilon_1) - \ell - \frac{r - 2}{r - 1}\right) \quad &\text{and} \quad \frac{1}{r - 1 - \kappa} \cdot \left(\delta - \ell + \frac{r^2 - r - 2}{2r - 2}\right)
\end{align*}

\smallskip
\noindent
\textit{Remark \(\ast\): If \((d, g) = (r + 1, 1)\) then \(\delta < \ell + 2\).}
Indeed, if \((d, g) = (r + 1, 1)\), then \(m \leq \rho(d, g, r) = 1\), and so
\[\delta = \frac{4 + 2\ell + (r + 1)m}{r - 1} \leq \frac{r + 5 + 2\ell}{r - 1} < \ell + 2.\]

\smallskip
\noindent
\textit{Subcase 3.1: \(\delta < \ell + 1 + 2(d - g - r - \epsilon_1)\).}
In this case the lower bound is nonpositive. We have already shown that both upper
bounds are nonnegative above, so there is nothing more to check.

\smallskip
\noindent
\textit{Subcase 3.2: \(\delta \geq \ell + 3\).} By Remark \(\ast\) above,
\(\kappa \geq \frac{r - 4}{2}\).
It suffices to show that there are integers between:
\begin{align*}
\frac{1}{r - 1} \cdot \left(\delta - \ell - \frac{r - 2}{r - 1}\right) \quad &\text{and} \quad \frac{1}{r - 1 - 2 \cdot \frac{r - 4}{2}} \cdot \left(\delta - \ell + \frac{r - 2}{r - 1}\right) = \frac{1}{3} \cdot \left(\delta - \ell + \frac{r - 2}{r - 1}\right)\\
\frac{1}{r - 1} \cdot \left(\delta - \ell - \frac{r - 2}{r - 1}\right) \quad &\text{and} \quad \frac{1}{r - 1 - \frac{r - 4}{2}} \cdot \left(\delta - \ell + \frac{r^2 - r - 2}{2r - 2}\right) = \frac{1}{r + 2} \cdot \left(2(\delta - \ell) + \frac{r^2 - r - 2}{r - 1}\right)
\end{align*}
By Lemma~\ref{lm:eliminate}, this follows from the following inequalities:
\begin{align*}
\frac{1}{r - 1} \cdot \left(\delta - \ell - \frac{r - 2}{r - 1}\right) &\leq \frac{1}{3} \cdot \left(\delta - \ell + \frac{r - 2}{r - 1}\right) - \frac{(3(r - 1) - 1) ((r - 1)^2 - 1)}{3(r - 1)^3} \\
\frac{1}{r - 1} \cdot \left(\delta - \ell - \frac{r - 2}{r - 1}\right) &\leq \frac{1}{r + 2} \cdot \left(2(\delta - \ell) + \frac{r^2 - r - 2}{r - 1}\right) - \frac{((r + 2)(r - 1) - 1)((r - 1)^2 - 1)}{(r + 2)(r - 1)^3}
\end{align*}
But these are immediate for \(\delta - \ell \geq 3\).

\smallskip
\noindent
\textit{Subcase 3.3: \(\ell + 1 + 2(d - g - r - \epsilon_1) \leq \delta < \ell + 3\).}
These inequalities force \(d = g + r + \epsilon_1\), or equivalently
\begin{equation} \label{dgr}
d = g + r \quad \text{or} \quad d = g + r + 1.
\end{equation}
The inequality \(\delta < \ell + 3\) also implies
\[\frac{1}{r - 1} \cdot \left(\delta - \ell - \frac{r - 2}{r - 1}\right) \leq 1.\]
It therefore suffices to show
\[\frac{1}{r - 1 - 2\kappa} \cdot \left(\delta + \frac{r - 2}{r - 1}\right) \geq 1 \quad \text{and} \quad \frac{1}{r - 1 - \kappa} \cdot \left(\delta - \ell + \frac{r^2 - r - 2}{2r - 2} \right) \geq 1,\]
or upon rearrangement
\[\delta \geq r - 2 - 2\kappa + \frac{1}{r - 1} \quad \text{and} \quad \delta \geq \ell + \frac{r}{2} - 1 - \kappa + \frac{1}{r - 1}.\]

\smallskip
\noindent
Subsubcase 3.3.1: \(g = 0\). By \eqref{dgr}, we have \(d = r + 1\).
Since \(g = 0\), we have \((d, g) \neq (r + 1, 1)\).
Our goal is thus to show:
\[\delta \geq \begin{cases}
1 + \frac{1}{r - 1} & \text{if \(r\) is odd,} \\
2 + \frac{1}{r - 1} & \text{if \(r\) is even;}
\end{cases}
\quad \text{and} \quad
\delta \geq \ell + \begin{cases}
\frac{1}{2} + \frac{1}{r - 1} & \text{if \(r\) is odd,} \\
1 + \frac{1}{r - 1} & \text{if \(r\) is even.}
\end{cases}\]
When \(\ell = 0\), the first inequality implies the second.  In this case, recall that  \((d, g, r, \ell, m) = (k + 1, 0, k, 0, 1)\) is excluded
by assumption. Therefore \(m \geq 2\), which implies the first inequality because
\[\delta = \frac{2 + (r + 1)m}{r - 1} \geq \frac{2 + 2(r + 1)}{r - 1} \geq 2 + \frac{1}{r - 1}.\]

Now suppose \(\ell \geq 1\).  Note that \(\delta \geq \ell + 1 \geq 2\),
which implies the first inequality unless \(r\) is even and \(\delta = \ell + 1\).
Similarly, \(\delta \geq \ell + 1\) implies the second inequality
unless \(r\) is even and \(\delta = \ell + 1\).
It thus remains only to show that it is impossible to have \(\delta = \ell + 1\) when \(r\) is even.
To see this, observe that
\[\ell + 1 = \delta = \frac{2 + 2\ell + (r + 1)m}{r - 1}\]
implies
\[(r - 3)(\ell + 1) = (r + 1)m.\]
But if \(r\) were even, then this would imply \((r + 1) \mid (\ell + 1)\),
which forces \(\ell + 1 \geq r + 1\), contradicting our assumption that \(\ell \leq r/2\).

\smallskip
\noindent
Subsubcase 3.3.2: \(g > 0\). We excluded the cases \(\ell < \delta < \ell + 2\) in Proposition \ref{lm:master-imp}\eqref{nm-2}.
Since \(\ell + 1 \leq \delta < \ell + 3\), we therefore have \(\ell + 2 \leq \delta < \ell + 3\).
Moreover, by Remark \(\ast\), we have \((d, g) \neq (r + 1, 1)\).
As in the previous subsubcase, our goal thus is to show:
\[\delta \geq \begin{cases}
1 + \frac{1}{r - 1} & \text{if \(r\) is odd,} \\
2 + \frac{1}{r - 1} & \text{if \(r\) is even;}
\end{cases}
\quad \text{and} \quad
\delta \geq \ell + \begin{cases}
\frac{1}{2} + \frac{1}{r - 1} & \text{if \(r\) is odd,} \\
1 + \frac{1}{r - 1} & \text{if \(r\) is even.}
\end{cases}\]

Since \(\delta \geq \ell + 2\), the second inequality is immediate in all cases.
Also the first inequality is immediate if \(r\) is odd.
To see the first inequality when \(r\) is even, note that \(\delta \geq \ell + 2 \geq 2\),
so the first inequality holds unless we have equality everywhere, i.e., unless \(\ell = 0\) and \(\delta = 2\).
But this possibility is excluded by assumption
(recall that in Case~3 we have \(\ell \neq 0\) or \(r\) odd or \(\delta\) is not an integer).
\end{proof}

\noindent
The majority of the remaining cases are handled by
Proposition~\ref{master-111}. More precisely:

\begin{prop} \label{lm:cleanup} Let \((d, g, r, \ell, m)\) be a good tuple satisfying \eqref{box}
with \(m \neq 0\) and \(r \geq 14\). Suppose in addition that either condition~\eqref{nm-1} or~\eqref{nm-2} of Lemma~\ref{lm:master-imp}
is satisfied.
Then the conditions of Proposition~\ref{master-111} can be satisfied
unless one of the following holds:
\begin{enumerate}
\item \label{r-1} \((d, g, r, \ell, m) = (4k, 0, 2k + 1, 0, 1)\) for some \(k\).
\item \label{r-2} \((d, g, r, \ell, m) = (4k + 1, 2k - 1, 2k + 1, 0, 1)\) for some \(k\).
\end{enumerate}
\end{prop}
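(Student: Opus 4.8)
The plan is to closely mirror the combinatorial argument of Proposition~\ref{lm:master-imp}, with Proposition~\ref{master-111} taking the place of Proposition~\ref{master}. Since $r \geq 14$, the only differences between the hypotheses of the two propositions are: (i) the expression matched against $\delta$ in the balance inequality acquires an extra summand $1$, so it reads $|\delta - [1 + \ell' + 2(d - d') + \sum n_i]| \leq 1 - \frac{1}{r-1}$; (ii) the inequalities $m' < m$ and $2m' + \ell' < r - 2$ are now strict; and (iii) the three assertions $I(d' - 1, g, r - 1, \bar\ell, \bar m)$, $I(d' - 1, g, r - 1, \bar\ell, \bar m - 1)$, and $I(d' - 2, g, r - 2, \bar\ell, \bar m)$ must all hold, rather than just the first. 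Point (iii) is not an obstacle: imposing $m' < m$ forces $\bar m \geq 1$, so (after also imposing $d' \neq g + r$ whenever $d \neq g + r$, to make Lemma~\ref{lm:mbar-rho} available) the bounds $\bar m \leq \rho(d' - 1, g, r - 1)$ and $\bar m \leq \rho(d' - 2, g, r - 2)$ together with $\bar\ell \leq \frac{r - 1}{2}$ show that all three of those tuples are good, so all three $I(\cdots)$ are supplied by the ambient induction. Thus, exactly as in Proposition~\ref{lm:master-imp}, it suffices to exhibit an integer solution $(\ell', m', d', n)$ — with $\sum n_i$ taken to be $(r - 1)m' - 2n$ for $0 \leq n \leq \kappa m'$ and $\kappa = \kappa(d, g, r)$ as defined there — of an explicit system of linear inequalities encoding the hypotheses of Proposition~\ref{master-111}.

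I would then perform the same variable elimination: set $s = d' + n$, eliminate $n$ via Lemma~\ref{lm:eliminate}, then $\ell'$, then $s$, and finally produce an admissible integer $m'$; the few comparisons where Lemma~\ref{lm:eliminate} is too lossy are done by hand, the decisive point each time being whether a short interval of a prescribed length contains an integer, which comes down to a parity condition. The effect of the extra $1$ in (i) is precisely that the relevant parity is the opposite of the one appearing in Proposition~\ref{lm:master-imp}, so this variant is adapted to the cases \eqref{nm-1} and \eqref{nm-2} left over there. In case \eqref{nm-1} (where $\ell = 0$ and $\delta$ is an integer of the same parity as $r$, with $\delta < r$ if $r$ is even), I would take $\ell' = 0$ and $m' = 1$; then $1 + \ell' + \sum n_i \equiv 1 + (r - 1) \cdot 1 = r \equiv \delta \pmod 2$ has the correct parity, and the residual constraints reduce — after the elimination chain and the usual casework on $d - g - r$, on the parity and size of $m$ (via Lemma~\ref{deltam}), and on whether $g = 0$ — to finitely many boundary tuples, which one checks directly to be exactly the two families \eqref{r-1} and \eqref{r-2} (for \eqref{r-1}, $g = 0$ and $d = 2r - 2$ give $\delta = 3$; for \eqref{r-2}, $g = r - 2$ and $d = 2r - 1$ give $\delta = 5$). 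In case \eqref{nm-2} ($\ell < \delta < \ell + 2$ and $g > 0$), the required parity is accessible for generic choices of $m'$ and the same elimination produces a solution with no new exceptions, consistent with the fact that neither \eqref{r-1} nor \eqref{r-2} lies in this range of $\delta$.

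The main obstacle, as in Proposition~\ref{lm:master-imp}, is purely bookkeeping: each elimination step produces a dozen or so derived inequalities that must be checked against \eqref{box} and the defining inequalities of goodness, and the genuinely subtle points are the handful of ad hoc comparisons — between the upper and lower bounds on $s$, and then between the upper and lower bounds on $m'$ — where one must pin down exactly when the relevant interval contains an integer. Tracking the strict inequalities $m' < m$ and $2m' + \ell' < r - 2$ (as opposed to the non-strict versions used in Proposition~\ref{lm:master-imp}) adds a minor wrinkle, but with $r \geq 14$, and with $\ell = 0$ in the principal case \eqref{nm-1}, there is more than enough slack; the delicate part is ensuring that the exceptional set comes out to be precisely \eqref{r-1} and \eqref{r-2} and no larger.
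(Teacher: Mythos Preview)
Your overall strategy—run a parity-shifted version of the elimination from Proposition~\ref{lm:master-imp}—is the right framework, and for condition~\eqref{nm-1} with \(r\) odd the paper does essentially this. But the paper's execution is much simpler in the other two cases, and your account of the hard case has a real gap.

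For condition~\eqref{nm-2} (\(\ell < \delta < \ell + 2\), \(g > 0\)) the paper does no elimination at all: it takes \(m' = 0\), \(\ell' = \ell\), \(d' = d\), so the balance inequality becomes \(|\delta - \ell - 1| < 1\), which is precisely the case hypothesis. For condition~\eqref{nm-1} with \(r\) even, the paper again makes a direct choice \(m' = 1\), \(\ell' = 0\), \(d' = d\), \(n_1 = \delta - 1\); here Lemma~\ref{deltam} forces \(m\) even and hence \(m \geq 2\), so \(m' < m\) is automatic. Your proposal to run the full elimination in these cases would work but is considerable overkill.

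The genuine work is condition~\eqref{nm-1} with \(r\) odd, and here your proposal has a gap. You commit to \(m' = 1\) and then claim the exceptions \eqref{r-1} and \eqref{r-2} emerge from the residual constraints; but both of those families have \(m = 1\), so the strict inequality \(m' < m\) already rules out \(m' = 1\). What you call a ``minor wrinkle'' is exactly where the exceptions live, and you cannot locate them by an elimination that presupposes \(m' = 1\). The paper instead allows \(m'\) to vary, imposes the auxiliary condition \(m' \neq m - 1\) when \(g = 0\) (needed so that the tuple with \(\bar m - 1\) is good), sets \(\ell' = 0\) and \(n = d - d' + \tfrac{r-1}{2}m' - \tfrac{\delta-1}{2}\) (an integer since \(r,\delta\) are odd), and then eliminates \(d'\) and \(m'\). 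This actually produces \emph{three} boundary families, not two: besides \eqref{r-1} and \eqref{r-2} there is also \((3k+1, k-1, 2k+1, 0, 1)\), which is then dispatched separately by the non-strengthened choice \(m' = 0\), \(d' = d - 1\). Your outline would need to find and handle this third family as well.
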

\begin{proof}
We separately consider the following three cases:

\medskip
\noindent
\textbf{\boldmath Case 1: \(\ell < \delta < \ell + 2\) and \(g > 0\).}
We take
\(m' = 0\), \(\ell' = \ell\), and \(d' = d\),
which satisfy the conditions of Proposition~\ref{master-111} by Lemma~\ref{lm:mbar-rho}.

\medskip
\noindent
\textbf{\boldmath Case 2: \(\ell = 0\), and \(\delta\) and \(r\) are even integers with \(\delta < r\).}
We take
\[m' = 1, \quad \ell' = 0, \quad d' = d, \quad \text{and} \quad n_1 = \delta - 1.\]
Applying Lemma~\ref{lm:mbar-rho},
the conditions of Proposition~\ref{master-111} are satisfied provided that
\[m > 1 \quad \text{and}\]
\[2 \leq n_1 = \delta - 1 \leq r - 1 \quad \text{with \(n_1 = \delta - 1 \neq 2\) if \((d', g) = (r + 1, 1)\)}.\]
Since \(\delta\) is an even integer, \(\delta - 1 \neq 2\);
since \(\delta < r\) we have \(\delta - 1 \leq r - 1\).
All that remains to check is therefore that \(m \geq 2\) and that \(2 \leq \delta - 1\), which since \(\delta\)
is an even integer is equivalent to \(\delta > 2\).

To see this, we first apply Lemma~\ref{deltam} to conclude that \(m\) is even.
Since \(m \neq 0\), this implies \(m \geq 2\) as desired. This in turn implies \(\delta > 2\) because
\[\delta = \frac{2d + 2g - 2r + 2\ell + (r + 1)m}{r - 1} \geq \frac{2(r + 1)}{r - 1} > 2.\]

\medskip
\noindent
\textbf{\boldmath Case 3: \(\ell = 0\), and \(\delta\) and \(r\) are odd integers.}
As in the proof of Proposition~\ref{lm:master-imp}, we show a slightly stronger statement:
The conditions of Proposition~\ref{master-111}
can be satisfied, together with the additional conditions that
\[m' \neq m - 1 \ \text{if} \ g = 0, \quad \text{and} \quad d' \neq g + r \ \text{if} \ d \neq g + r,\]
unless either one of the above-mentioned conditions holds or
\[(d, g, r, \ell, m) = (3k + 1, k - 1, 2k + 1, 0, 1) \quad \text{for some \(k\)}.\]
This is indeed a stronger statement because if \((d, g, r, \ell, m) = (3k + 1, k - 1, 2k + 1, 0, 1)\),
then the conditions of Proposition~\ref{master-111} can be satisfied by taking:
\[\ell' = 0, \quad m' = 0, \quad \text{and} \quad d' = d - 1 = 3k.\]

Again
as in the proof of
Proposition~\ref{lm:master-imp}, 
Lemma \ref{lm:mbar-rho} guarantees that
the tuples
\((d' - 1, g, r - 1, \bar{\ell}, \bar{m})\), \((d' - 1, g, r - 1, \bar{\ell}, \bar{m} - 1)\), and \((d' - 2, g, r - 2, \bar{\ell}, \bar{m})\), are all good provided only that
\[\bar{\ell} \leq \frac{r - 3}{2}.\]

With \(\kappa\) as in the proof of
Proposition~\ref{lm:master-imp}, our task is thus to show that the following
system of inequalities can be satisfied
for integers \(\ell'\), \(m'\), \(d'\), and \(n\):
\begin{gather}
0 \leq m' \leq m - 1 - \epsilon_0 \\
2m' + \ell' \leq r - 3 \\
g + r + \epsilon_1 \leq d' \leq d \\
0 \leq n \leq \kappa m' \\
0 \leq \ell' \leq \ell = 0 \label{gath13_5} \\
\left|\delta - \left[1 + \ell' + 2(d - d') + (r - 1)m' - 2n \right] \right| \leq 1 - \frac{1}{r - 1} \label{gath13_6} \\
\ell - \ell' + n \leq \frac{r - 3}{2}.
\end{gather}
Inequalities \eqref{gath13_5} and \eqref{gath13_6} are satisfied by taking
\[\ell' = 0 \quad \text{and} \quad n = d - d' + \frac{r - 1}{2} m' - \frac{\delta - 1}{2}.\]
Substituting these into the remaining inequalities and rearranging,
we reduce to the system of inequalities:
\begin{gather}
0 \leq m' \leq m - 1 - \epsilon_0 \label{gath14_1}\\
m' \leq \frac{r - 3}{2} \label{gath14_2} \\
g + r + \epsilon_1 \leq d' \leq d \label{gath14_3} \\
d + \frac{r - 1}{2} m' - \frac{\delta - 1}{2} - \kappa m' \leq d' \leq d + \frac{r - 1}{2} m' - \frac{\delta - 1}{2} \label{gath14_4} \\
d + \frac{r - 1}{2} m' - \frac{\delta - 1}{2} - \frac{r - 3}{2} \leq d'. \label{gath14_5}
\end{gather}
All bounds on \(d'\) are integers because \(r\) and \(\delta\) are both odd integers.
Using Lemma~\ref{lm:eliminate} to eliminate \(d'\) replaces \eqref{gath14_3}--\eqref{gath14_5} with:
\begin{align}
g + r + \epsilon_1 &\leq d \label{gath15_1} \\
g + r + \epsilon_1 &\leq d + \frac{r - 1}{2} m' - \frac{\delta - 1}{2} \\
d + \frac{r - 1}{2} m' - \frac{\delta - 1}{2} - \kappa m' &\leq d \\
d + \frac{r - 1}{2} m' - \frac{\delta - 1}{2} - \kappa m' &\leq d + \frac{r - 1}{2} m' - \frac{\delta - 1}{2} \label{gath15_4} \\
d + \frac{r - 1}{2} m' - \frac{\delta - 1}{2} - \frac{r - 3}{2} &\leq d \\
d + \frac{r - 1}{2} m' - \frac{\delta - 1}{2} - \frac{r - 3}{2} &\leq d + \frac{r - 1}{2} m' - \frac{\delta - 1}{2}. \label{gath15_6}
\end{align}
Inequalities \eqref{gath15_1} and \eqref{gath15_6} always hold, while \eqref{gath15_4} is implied by \(m' \geq 0\).
Rearranging the others, and including \eqref{gath14_1} and \eqref{gath14_2},
we arrive at the system:
\begin{align*}
m' &\geq 0 \\
m' &\geq \frac{\delta - 1 - 2(d - g - r - \epsilon_1)}{r - 1} \\
m' &\leq \frac{\delta - 1}{r - 1 - 2\kappa} \\
m' &\leq \frac{\delta + r - 4}{r - 1} \\
m' &\leq m - 1 - \epsilon_0 \\
m' &\leq \frac{r - 3}{2}.
\end{align*}
Applying Lemma~\ref{lm:eliminate} to eliminate \(m'\), we reduce to the system:
\begin{align}
0 &\leq \frac{\delta - 1}{r - 1 - 2\kappa} \label{gath16_1}\\
0 &\leq \frac{\delta + r - 4}{r - 1} \label{gath16_2} \\
0 &\leq m - 1 - \epsilon_0 \label{gath16_3} \\
0 &\leq \frac{r - 3}{2} \label{gath16_4} \\
\frac{\delta - 1 - 2(d - g - r - \epsilon_1)}{r - 1} &\leq \frac{\delta - 1}{r - 1 - 2\kappa} - \frac{\left(\frac{r - 1}{2} - 1\right)\left(\frac{r - 1 - 2\kappa}{2} - 1\right)}{\left(\frac{r - 1}{2}\right)\left(\frac{r - 1 - 2\kappa}{2}\right)} \label{gath16_5} \\
\frac{\delta - 1 - 2(d - g - r - \epsilon_1)}{r - 1} &\leq \frac{\delta + r - 4}{r - 1} - \frac{\left(\frac{r - 1}{2} - 1\right)^2}{\left(\frac{r - 1}{2}\right)^2} \label{gath16_6} \\
\frac{\delta - 1 - 2(d - g - r - \epsilon_1)}{r - 1} &\leq m - 1 - \epsilon_0 \label{gath16_7} \\
\frac{\delta - 1 - 2(d - g - r - \epsilon_1)}{r - 1} &\leq \frac{r - 3}{2}. \label{gath16_8}
\end{align}
Since \(\delta\) is an odd integer, we have \(\delta \geq 1\),
which implies \eqref{gath16_1} and \eqref{gath16_2}.
The inequality \eqref{gath16_4} is immediate.
Inequality \eqref{gath16_6} follows from \(d \geq g + r + \epsilon_1\), which holds by construction.
For the remaining inequalities \eqref{gath16_3}, \eqref{gath16_5}, \eqref{gath16_7}, and \eqref{gath16_8}, we use the inequality \(\epsilon_1 \leq 1\) to reduce to the system:
\begin{align}
m &\geq 1 + \epsilon_0 \label{gath17_1} \\
\frac{\delta - 1 - 2(d - g - r - 1)}{r - 1} &\leq \frac{\delta - 1}{r - 1 - 2\kappa} - \frac{(r - 3)(r - 3 - 2\kappa)}{(r - 1)(r - 1 - 2\kappa)} \label{gath17_2} \\
\frac{\delta - 1 - 2(d - g - r - 1)}{r - 1} &\leq m - 1 - \epsilon_0 \label{gath17_3} \\
\frac{\delta - 1 - 2(d - g - r - 1)}{r - 1} &\leq \frac{r - 3}{2}. \label{gath17_4}
\end{align}
We divide our analysis as follows:

\smallskip
\noindent
\textit{Inequality \eqref{gath17_1}:}
This inequality asserts that
we do not simultaneously have \(g = 0\) (hence \(\epsilon_0 = 1\)) and \(m = 1\).
So assume \(g = 0\) and \(m = 1\). Then
\[\delta = \frac{2d + 2g - 2r + 2\ell + (r + 1)m}{r - 1} = \frac{2d - r + 1}{r - 1}.\]
Since \(r = g + r \leq d \leq g + 2r - 1 = 2r - 1\), we would have
\[1 < \frac{2r - r + 1}{r - 1} \leq \delta \leq \frac{2(2r - 1) - r + 1}{r - 1} < 5.\]
Since \(\delta\) is an odd integer, \(\delta = 3\), and so \(d = 2r - 2\).
In other words, writing \(r = 2k + 1\), we would have \((d, g, r, \ell, m) = (4k, 0, 2k + 1, 0, 1)\).
But this case is excluded by assumption.

\smallskip
\noindent
\textit{Inequality \eqref{gath17_2} when \((d, g) = (r + 1, 1)\):}
In this case, \(\kappa = (r - 5)/2\)
so upon rearrangement, \eqref{gath17_2} becomes
\[\delta \geq \frac{3r - 3}{r - 5}.\]
However,
\[\delta = \frac{2d + 2g - 2r + 2\ell + (r + 1)m}{r - 1} = \frac{(r + 1)m + 4}{r - 1};\]
since \(\delta\) is an integer, this implies \(2m + 4 \equiv (r + 1)m + 4 \equiv 0\) mod \(r - 1\),
and so \(m \equiv -2\) mod \((r - 1)/2\),
which implies \(m \geq (r - 1)/2 - 2 = (r - 5)/2\).
Therefore,
\[\delta = \frac{(r + 1)m + 4}{r - 1} \geq \frac{(r + 1) \cdot (r - 5)/2 + 4}{r - 1} = \frac{r - 3}{2}.\]
As \(r \geq 14\),
this implies \(\delta \geq (3r - 3)/(r - 5)\) as desired.

\smallskip
\noindent
\textit{Inequality \eqref{gath17_3} when \(m \leq 1 + \epsilon_0\):}
As we have already established \(m \geq 1 + \epsilon_0\), this implies \(m = 1 + \epsilon_0\).
In this case, \eqref{gath17_3} asserts
\[\delta \leq 2d - 2g - 2r - 1.\]
By definition,
\[\delta = \frac{2d + 2g - 2r + (r + 1)(1 + \epsilon_0)}{r - 1} = 2d - 2g - 2r + 5 - \frac{(2r - 4)(d - g - r) + (4r - 6) - (4g + (r + 1)\epsilon_0)}{r - 1}.\]

Since \(d \geq g + r\), we have \((2r - 4)(d - g - r) \geq 0\), with \((2r - 4)(d - g - r) \geq 2r - 4 \geq 4\)
unless equality holds.
Similarly, since \(g \leq r - 1\), we have \(4g + (r + 1)\epsilon_0 \leq 4r - 4\), with \(4g + (r + 1)\epsilon_0 \leq 4r - 8\)
unless equality holds.
Putting this together, we have \(\delta \leq 2d - 2g - 2r + 5 + 2/(r - 1)\),
with \(\delta \leq 2d - 2g - 2r + 5 - 2/(r - 1)\) unless equality holds.
As \(\delta\) is an odd integer, this implies \(\delta \leq 2d - 2g - 2r + 3\).

If \(g = 0\), then \(4g + (r + 1)\epsilon_0 = r + 1\).
Therefore
\(\delta \leq 2d - 2g - 2r + 5 - (3r - 7)/(r - 1)\),
with \(\delta \leq 2d - 2g - 2r + 5 - (5r - 11)/(r - 1) < 2d - 2g - 2r + 1\) unless equality holds.
As \(\delta\) is an odd integer, this implies \(\delta \leq 2d - 2g - 2r - 1\) as desired.

It thus remains only to rule out the cases where \(g > 0\) and
\(\delta = 2d - 2g - 2r + 3\) or \(\delta = 2d - 2g - 2r + 1\).
Upon rearrangement, this is equivalent to
\[(r - 2)(d - g - r) - 2g = -(r - 2) \ \text{or} \ 1\]
Since \(r\) is odd, considering the above equation mod \(2\) implies that \(d - g - r\)
must also be odd.
If \(d - g - r \geq 3\), then since \(g \leq r - 1\), the left-hand side is at least \(3(r - 2) - 2(r - 1) = r - 4\),
which is impossible. Therefore in this case we must have \(d - g - r = 1\). Solving for \(g\)
we obtain \(g = r - 2\) or \(g = (r - 3)/2\).
In other words, writing \(r = 2k + 1\), we would have
\((d, g, r, \ell, m) = (4k + 1, 2k - 1, 2k + 1, 0, 1)\) or
\((d, g, r, \ell, m) = (3k + 1, k - 1, 2k + 1, 0, 1)\).
But these cases are excluded by assumption.

\smallskip
\noindent
\textit{Inequalities \eqref{gath17_2} when \((d, g) \neq (r + 1, 1)\), and \eqref{gath17_3} when \(m \geq 2 + \epsilon_0\), and \eqref{gath17_4} (in all cases):}
Since \((d, g) \neq (r + 1, 1)\) for \eqref{gath17_2}, we may substitute \(\kappa = (r - 3)/2\).
Substituting in the definition of \(\delta\) and rearranging \eqref{gath17_2}, \eqref{gath17_3}, and \eqref{gath17_4}, we obtain:
\begin{align*}
(6r - 10)(d - g - r) + (r^2 - 2r - 3)(m - 1 - \epsilon_0) + (4r - 12)(g + \epsilon_0 - 1) + (r^2 - 6r + 9)\epsilon_0 + 2r - 14 &\geq 0\\
(2r - 4)(d - g - r) + (r^2 - 3r)(m - 2 - \epsilon_0) + 4(r - 1 - g) + (r + 1)(1 - \epsilon_0) + r^2 - 10r + 3 &\geq 0 \\
(4r - 8)(d - g - r) + (2r + 2)(r - 1 - m) - 8(r - 1 - g) + r^3 - 7r^2 - 3r + 9 &\geq 0.
\end{align*}
This establishes the desired inequalities
(using that \(m \geq 2 + \epsilon_0\) for \eqref{gath17_3}).
\end{proof}

Finally, we complete our analysis of the case \(m \neq 0\) by verifying
the desired result in the four remaining one-parameter infinite families of cases:
\begin{description}
\item[\boldmath Case \eqref{r-3} of Lemma~\ref{lm:master-imp}] This follow from Proposition~\ref{master-111} with the following
parameters:
\[\ell' = 0, \quad m' = 2, \quad d' = d = 3k + 1, \quad \text{and} \quad (n_1, n_2) = (3, 2k - 3).\]
\item[\boldmath Case \eqref{r-4} of Lemma~\ref{lm:master-imp}] This follows from Proposition~\ref{two_proj} with \(\epsilon = 0\).
\item[\boldmath Case \eqref{r-1} of Lemma~\ref{lm:cleanup}] This follows from Proposition~\ref{two_proj} with \(\epsilon = 1\).
\item[\boldmath Case \eqref{r-2} of Lemma~\ref{lm:cleanup}] This follows from Proposition~\ref{delta5}.
\end{description}

\subsection{\texorpdfstring{\boldmath The Cases With \(m = 0\) and \(g \neq 0\)}{The Cases With m = 0 and g != 0}}
As in the case \(m \neq 0\), we will begin by showing that Proposition~\ref{master}
handles ``most'' of the cases by itself.

\begin{prop} \label{lm:master-imp-0} Let \((d, g, r, \ell, 0)\) be a good tuple (with \(m = 0\)) satisfying \eqref{box},
such that \(g \neq 0\) and \(r \geq 14\).
Then the conditions of Proposition~\ref{master}
can be satisfied unless one of the following holds:
\begin{enumerate}
\item \label{db} \(\delta \geq \ell + 1 + 2(d - g - r)\).
\item \label{zzoi} \(\ell = 0\) and \(\delta\) is an odd integer.
\end{enumerate}
\end{prop}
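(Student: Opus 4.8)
The plan is to exhibit valid parameters for Proposition~\ref{master} directly. Since $m = 0$, the constraint $0 \le m' \le m$ forces $m' = 0$; hence $\bar m = 0$ and there are no $n_i$, so the hypotheses of Proposition~\ref{master} that we must arrange reduce to: integers $\ell'$ and $d'$ with $0 \le \ell' \le \ell$, $g + r \le d' \le d$, $\ell' \le r - 2$, $\bar\ell := \ell - \ell' \le \tfrac{r-1}{2}$, and
\[\bigl|\delta - [\ell' + 2(d - d')]\bigr| \le 1 - \tfrac{1}{r-1}.\]
Here the condition $\bar\ell \le \tfrac{r-1}{2}$, together with $\bar m = 0 \le \rho(d'-1,g,r-1)$ (which is automatic once $d' \ge g + r$), the fact that every tuple in \eqref{XX} has third coordinate at most $5$ whereas $r - 1 \ge 13$, and the fact that $g \ne 0$ kills the $g = m = 0$ clause, guarantees that the reduced tuple $(d'-1,g,r-1,\bar\ell,0)$ is again good, as required for the induction. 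Thus everything comes down to realizing some integer $k$ lying in the closed interval $J := \bigl[\delta - 1 + \tfrac{1}{r-1},\ \delta + 1 - \tfrac{1}{r-1}\bigr]$ in the form $k = \ell' + 2(d - d')$ with $\ell'$ and $d - d'$ in the allowed ranges, i.e.\ with $0 \le k \le \ell + 2(d - g - r)$, with $k$ additionally even when $\ell = 0$, and with $k \ge 1$ in the single edge case $r$ even, $\ell = r/2$ (where goodness of the target forces $\ell' \ge 1$).

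First I would record two elementary facts. (a) Since $m = 0$, $\delta(r-1) = 2(d + g - r + \ell)$ is an integer, so any two rationals with denominator dividing $r-1$ that lie within $\tfrac1{r-1}$ of each other coincide; in particular $J$ contains an even integer unless $|\delta - o| < \tfrac1{r-1}$ for some odd integer $o$, i.e.\ unless $\delta$ is itself an odd integer. (b) For $\ell \ge 1$ the representable values of $k$ are exactly the integers in $[0,\ \ell + 2(d-g-r)]$ (take $\ell' \in \{0,1\}$, or $\ell' \in \{\ell-1,\ell\}$, and pad with $2$'s), except in the edge case $r$ even and $\ell = r/2$, where they are the integers in $[1,\ \ell + 2(d-g-r)]$; and for $\ell = 0$ they are the even integers in $[0,\ 2(d-g-r)]$.

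The argument then splits into two cases. If $\ell \ge 1$, then $\neg$(2) holds automatically, and $\neg$(1) says $\delta < \ell + 1 + 2(d-g-r)$, which by fact (a) sharpens to $\delta - 1 + \tfrac1{r-1} \le \ell + 2(d-g-r)$; combining this with the crude bounds $\delta \ge \tfrac{4g}{r-1} > 0$ and (in the edge case, where $d + g - r + \ell \ge g + r/2$) $\delta > 1$, one checks that $k := \lceil \delta - 1 + \tfrac1{r-1}\rceil$ lies in $J$ and in $[0,\ \ell + 2(d-g-r)]$ (resp.\ $[1,\ \ell+2(d-g-r)]$), hence is representable by fact (b). If instead $\ell = 0$ and $\delta$ is not an odd integer, then by fact (a) $J$ contains an even integer; one takes $k$ to be the smallest even integer in $J$ and checks $0 \le k \le 2(d-g-r)$, the upper bound again coming from $\neg$(1) via fact (a) and the lower bound from $\delta > 0$. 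In every case one then reads off $\ell' = k - 2(d-d')$ and $d'$ from the representation in fact (b) and verifies the remaining constraints ($\ell' \le r - 2$, $\bar\ell \le \tfrac{r-1}{2}$, $g+r \le d' \le d$) mechanically.

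The only real subtlety — there is no genuinely hard inequality here — is the boundary bookkeeping: ensuring that the single chosen $k$ simultaneously sits in the closed width-$(2 - \tfrac2{r-1})$ interval $J$, has the correct parity when $\ell = 0$, and respects $0 \le k \le \ell + 2(d-g-r)$ together with the $\ell' \ge 1$ requirement in the edge case. All of this is forced by combining $\neg$(1), $\neg$(2), the lower bound $\delta \ge \tfrac{4g}{r-1}$ coming from $g \ge 1$, and the integrality fact (a). As an alternative organization, one could instead write out the displayed system of inequalities and eliminate $d'$ and then $\ell'$ via Lemma~\ref{lm:eliminate}, exactly as in the proof of Proposition~\ref{lm:master-imp}; I would pick whichever comes out shorter, but the small case analysis above looks the most transparent given how few parameters remain once $m = 0$.
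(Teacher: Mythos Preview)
Your proposal is correct. The reduction you give (set $k = \ell' + 2(d-d')$, characterize the representable $k$, then pick the smallest admissible integer in $J$) is exactly equivalent to what the paper does, just organized differently: the paper writes out the system of inequalities and eliminates first $\ell'$ and then $d'$ via Lemma~\ref{lm:eliminate}, with one ad-hoc comparison that produces the parity obstruction~\eqref{zzoi}, and the final surviving inequality produces~\eqref{db}. Your facts (a) and (b) are precisely the content of that ad-hoc comparison and the remaining elimination, respectively. The edge case $r$ even, $\ell = r/2$ (forcing $\ell' \ge 1$) is handled in the paper by the inequality $\ell - (r-1)/2 \le \ell'$, which survives the elimination and is checked at the end; your bound $\delta > 1$ there is the same computation.

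So this is the same proof with a more hands-on organization; as you note yourself, the systematic elimination is what the paper does (mirroring the much longer Proposition~\ref{lm:master-imp}), but with $m=0$ there are so few parameters that your direct choice is at least as clean.
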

\begin{proof}
Our goal is to show the existence of integers \(d'\) and \(\ell'\),
such that \((d' - 1, g, r - 1, \ell - \ell', 0)\) is good
(which is equivalent to \(\ell - \ell' = \bar{\ell} \leq (r - 1)/2\)),
and the inequalities of Proposition~\ref{master} are satisfied:
\begin{gather}
\ell - \ell' \leq \frac{r - 1}{2} \label{gath18_1} \\
0 \leq \ell' \leq \ell \label{gath18_2} \\
g + r \leq d' \leq d \\
|\delta - [\ell' + 2(d - d')]| \leq 1 - \frac{1}{r - 1} \label{gath18_4} \\
\ell' \leq r - 2. \label{gath18_5}
\end{gather}
Inequality \eqref{gath18_5} follows from \eqref{gath18_2} and the hypothesis
\(\ell \leq r/2\).
Rewriting \eqref{gath18_1}--\eqref{gath18_4}, we obtain the system:
\begin{gather*}
\ell - \frac{r - 1}{2} \leq \ell' \\
0 \leq \ell' \leq \ell \\
\delta - 2(d - d') - \frac{r - 2}{r - 1} \leq \ell' \leq \delta - 2(d - d') + \frac{r - 2}{r - 1}\\
g + r \leq d' \leq d.
\end{gather*}
Applying Lemma~\ref{lm:eliminate} to eliminate \(\ell'\), it suffices to show there is an integer solution \(d'\)
to the system:
\begin{align}
\ell - \frac{r - 1}{2} &\leq \ell \label{gath19_1} \\
\ell - \frac{r - 1}{2} &\leq \delta - 2(d - d') + \frac{r - 2}{r - 1} - \frac{r - 2}{2r - 2} \\
0 &\leq \ell \label{gath19_3} \\
0 &\leq \delta - 2(d - d') + \frac{r - 2}{r - 1} \\
\delta - 2(d - d') - \frac{r - 2}{r - 1} &\leq \ell \\
\delta - 2(d - d') - \frac{r - 2}{r - 1} &\leq \delta - 2(d - d') + \frac{r - 2}{r - 1} - \frac{(r - 2)^2}{(r - 1)^2} \label{gath19_6} \\
g + r &\leq d' \leq d.
\end{align}
Inequalities \eqref{gath19_1}, \eqref{gath19_3}, and \eqref{gath19_6} are immediate. 
Rearranging the remaining inequalities, we obtain:
\begin{align}
d' &\geq d - \frac{\delta}{2} + \frac{\ell}{2} - \frac{r^2 - r - 1}{4r - 4} \\
d' &\geq d - \frac{\delta}{2} - \frac{r - 2}{2r - 2} \label{gath20_sl} \\
d' &\geq g + r \\
d' &\leq d - \frac{\delta}{2} + \frac{\ell}{2} + \frac{r - 2}{2r - 2} \label{gath20_fu} \\
d' &\leq d.
\end{align}
We next eliminate \(d'\). Comparing \eqref{gath20_sl} to \eqref{gath20_fu},
we want there to be an integer between
\[d - \frac{\delta}{2} - \frac{r - 2}{2r - 2} \quad \text{and} \quad d - \frac{\delta}{2} + \frac{\ell}{2} + \frac{r - 2}{2r - 2}.\]
By inspection, such an integer exists unless \(\delta\) is an odd integer and \(\ell = 0\),
which is excluded by assumption Proposition~\ref{lm:master-imp-0}\eqref{zzoi}.
Applying Lemma~\ref{lm:eliminate} for the remaining pairs of inequalities, we reduce to verifying:
\begin{align}
d - \frac{\delta}{2} + \frac{\ell}{2} - \frac{r^2 - r - 1}{4r - 4} &\leq d - \frac{\delta}{2} + \frac{\ell}{2} + \frac{r - 2}{2r - 2} - \frac{(2r - 3)(4r - 5)}{(2r - 2)(4r - 4)} \\
d - \frac{\delta}{2} + \frac{\ell}{2} - \frac{r^2 - r - 1}{4r - 4} &\leq d \\
d - \frac{\delta}{2} - \frac{r - 2}{2r - 2} &\leq d \\
g + r &\leq d - \frac{\delta}{2} + \frac{\ell}{2} + \frac{r - 2}{2r - 2} \label{gath21_4} \\
g + r &\leq d.
\end{align}
Upon rearrangement, \eqref{gath21_4} is equivalent to
\[\delta \leq \ell + 1 + 2(d - g - r) - \frac{1}{r - 1},\]
which holds by our assumption Proposition~\ref{lm:master-imp-0}\eqref{db}. The remaining inequalities rearrange to:
\begin{align*}
2r^3 - 8r^2 + 10r - 5 &\geq 0 \\
(r - 3)(r - 2\ell) + 4(d - g - r) + 8g + 2r - 1 &\geq 0 \\
2\ell + 2(d - g - r) + 4g + r - 2 &\geq 0 \\
d - g - r &\geq 0,
\end{align*}
which hold because \((d, g, r, \ell, 0)\) is good and \(r \geq 14\).
\end{proof}

\begin{lem} Suppose that condition \eqref{db} or \eqref{zzoi} of Proposition~\ref{lm:master-imp-0} is satisfied,
but \((\ell, \delta) \neq (0, 1)\), and \(r \geq 14\). Then one of the following two conditions holds:
\begin{equation} \label{db-c}
\ell \leq 3, \quad g \geq 4, \quad \text{and} \quad 1 + \frac{1}{r - 1} \leq \delta \leq 5 - \frac{2}{r - 1}
\end{equation}
\centerline{or}
\begin{equation} \label{zzoi-c}
\ell = 0, \quad g \geq 4, \quad d \geq g + r + 3, \quad \text{and} \quad \delta = 5.
\end{equation}
\end{lem}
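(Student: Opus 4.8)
The plan is to set $e \colonequals d - g - r$, so that the standing hypotheses of this subsection ($m = 0$, $g \neq 0$, and $(d,g,r,\ell,0)$ good satisfying \eqref{box}) become $1 \leq g \leq r - 1$, $0 \leq e \leq r - 1$, and $0 \leq \ell \leq r/2$, while the definition of $\delta$ simplifies (using $m=0$) to $\delta = \frac{2(e + 2g + \ell)}{r - 1}$. After this substitution the assertion is a finite collection of elementary estimates in the integer parameters $e$, $g$, $\ell$, and I would organize the verification into two cases according to whether \eqref{db} holds or fails.

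First, in the case where \eqref{db} holds, I would clear denominators: the inequality $\delta \geq \ell + 1 + 2e$ is equivalent to $4g \geq (r-3)\ell + (r-1) + (2r-4)e$. Combining this with $g \leq r - 1$ gives $3(r-1) \geq (r-3)\ell + (2r-4)e$, which for $r \geq 14$ forces $e \leq 1$ and then $\ell \leq 3$; the same cleared inequality also gives $4g \geq r - 1 \geq 13$, hence $g \geq 4$. For the bounds on $\delta$ in \eqref{db-c}: if $(\ell, e) \neq (0,0)$ then $\delta \geq \ell + 1 + 2e \geq 2$, while if $(\ell, e) = (0,0)$ then $\delta = \frac{4g}{r-1} \geq 1$ by \eqref{db}, and the value $\delta = 1$ is precisely the excluded hypothesis $(\ell,\delta)=(0,1)$, so in fact $4g \geq r$ and $\delta \geq 1 + \frac{1}{r-1}$; for the upper bound, $e \leq 1$, $\ell \leq 3$, and $g \leq r-1$ give $\delta \leq \frac{2(2r+2)}{r-1} = 4 + \frac{8}{r-1} \leq 5 - \frac{2}{r-1}$ once $r \geq 11$. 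Hence \eqref{db-c} holds.

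Second, if \eqref{db} fails then by hypothesis \eqref{zzoi} holds: $\ell = 0$, $\delta$ is an odd integer, and $\delta < 1 + 2e$. Since $g \geq 1$ we get $\delta > 0$, so $\delta \geq 1$; ruling out the excluded $\delta = 1$ leaves $\delta \geq 3$, whence $\delta < 1 + 2e$ gives $e \geq 2$. On the other hand $\delta = \frac{2(e+2g)}{r-1} \leq 6$ using $e, g \leq r - 1$, so $\delta \in \{3,5\}$. The bound $\delta \geq 3$ reads $2e + 4g \geq 3(r-1)$; if $g \leq 3$ this would force $e \geq \frac{3r-15}{2} > r-1$ for $r \geq 14$, which is impossible, so $g \geq 4$. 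Finally, if $\delta = 5$ then $2e + 4g = 5(r-1)$, which with $g \leq r-1$ gives $e \geq \frac{r-1}{2} \geq 3$, i.e.\ $d \geq g + r + 3$. Thus $\delta = 3$ produces \eqref{db-c} (noting $\ell = 0 \leq 3$, $g \geq 4$, and $3$ lies in the required range) and $\delta = 5$ produces \eqref{zzoi-c}.

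I do not expect a genuine obstacle: the whole argument is bounded integer arithmetic. The two points that require a little care are remembering to invoke the hypothesis $(\ell,\delta) \neq (0,1)$ to upgrade $\delta \geq 1$ to a strict inequality — this happens once in each case — and checking that $r \geq 14$ (rather than a smaller bound) is genuinely what is used, for instance in the estimate $\frac{3r-15}{2} > r-1$.
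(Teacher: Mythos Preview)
Your proof is correct and follows essentially the same route as the paper. Both arguments split on which of \eqref{db} or \eqref{zzoi} holds and then reduce to elementary inequalities in $d-g-r$, $g$, $\ell$; the only cosmetic difference is that you introduce the shorthand $e = d - g - r$ and partition into ``\eqref{db} holds'' versus ``\eqref{db} fails, hence \eqref{zzoi}'', whereas the paper writes the estimates as explicit algebraic identities in $d,g,r,\ell,\delta$ and checks their sign directly.
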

\begin{proof} We divide into cases according to whether \eqref{db} or \eqref{zzoi} is satisfied.

\medskip
\noindent
\textbf{\boldmath Case 1: \eqref{db} holds.} In this case we establish \eqref{db-c}.
Because \(\ell\) and \(g\) are integers, the first two inequalities
follow from \(\ell < 4\) and \(g > 3\) respectively. Upon rearrangement,
these inequalities become:
\begin{align*}
(r - 1)(\delta - (\ell + 1 + 2(d - g - r))) + (2r - 4)(d - g - r) + 4(r - 1 - g) + (r - 9) &> 0 \\
(r - 1)(\delta - (\ell + 1 + 2(d - g - r))) + (2r - 4)(d - g - r) + (r - 3)\ell + (r - 13) &> 0,
\end{align*}
and therefore hold for \(r \geq 14\) as desired.
Since \(\ell \geq 0\) and \(d \geq g + r\), we have
\[\delta \geq \ell + 1 + 2(d - g - r) \geq 1,\]
with equality only if \(\ell = 0\).
But equality is excluded by assumption (as \((\ell, \delta) \neq (0, 1)\)).
Finally,
the inequality \(\delta \leq 5 - 2/(r - 1)\) becomes upon rearrangement
\[(2r - 2)(\delta - (\ell + 1 + 2(d - g - r))) + (2r - 2)(d - g - r) + (4r - 4)(r - 1 - g) + (r^2 - 12r + 15) \geq 0.\]

\medskip
\noindent
\textbf{\boldmath Case 2: \eqref{zzoi} holds.} In this case
\[(r - 1)(7 - \delta) = 2(g + 2r - 1 - d) + 4(r - 1 - g) + (r - 1) > 0.\]
Since \(\delta\) is an odd integer, but \(\delta \neq 1\) (because \(\ell = 0\) so
\(\delta = 1\) is excluded by assumption),
we therefore have \(\delta = 3\) or \(\delta = 5\).
In particular,
\[4(g - 3) = (r - 1)(\delta - 3) + 2(g + 2r - 1 - d) + (r - 13) > 0;\]
since \(g\) is an integer, this implies \(g \geq 4\).

\smallskip
\noindent
\textit{Subcase 2.1: \(\delta = 3\).} Then \eqref{db-c} is satisfied.

\smallskip
\noindent
\textit{Subcase 2.2: \(\delta = 5\).} In this case
\(2(d - g - r - 3) = (r - 1)(\delta - 5) + 4(r - 1 - g) + (r - 7) \geq 0\),
so \eqref{zzoi-c} is satisfied.
\end{proof}

Recall that the case \(\delta = 1\) and \(\ell = m = 0\) is excluded in Theorem~\ref{thm:14}. Therefore,
to complete our analysis of the case \(m = 0\) and \(g \neq 0\),
we just have to handle the following two cases:
\begin{description}
\item[\boldmath If \eqref{db-c} holds] Then we apply one of the following propositions according to the value of \(\delta\):
\begin{itemize}
\item If \(1 + \frac{1}{r - 1} \leq \delta \leq 3 - \frac{1}{r - 1}\): Proposition~\ref{m0-delta2}.
\item If \(2 + \frac{3}{r - 1} \leq \delta \leq 4 - \frac{3}{r - 1}\): Proposition~\ref{m0-delta35} with \(\epsilon = 0\).
\item If \(3 + \frac{2}{r - 1} \leq \delta \leq 5 - \frac{2}{r - 1}\): Proposition~\ref{m0-delta4}.
\end{itemize}
Note that the union of these intervals covers the entire interval for \(\delta\)
given by the final inequality of \eqref{db-c}.
Moreover the conditions \(\ell \leq 3\) and \(g \geq 4\)
imply that all tuples appearing in these lemmas are good (they have positive genus and at most \(\ell + 1 \leq 5\) lines in a projective
space of dimension at least \(r - 3 \geq 11\)).

\item[\boldmath If \eqref{zzoi-c} holds] Then we apply
Proposition~\ref{m0-delta35} with \(\epsilon = 1\).
\end{description}

\subsection{\boldmath The Cases With \(m = g = 0\)}
Since \(m = 0\), Lemma~\ref{gather_lines} can be applied unless
\(d \leq 2r - 2\).
In the remaining cases, we will show that Proposition~\ref{master}
\emph{always} applies.

\begin{prop} \label{lm:master-imp-00} Let \((d, 0, r, \ell, 0)\) be a good tuple (with \(m = g = 0\)) satisfying \(d \leq 2r - 2\)
and \(r \geq 14\).
Then the conditions of Proposition~\ref{master}
can be satisfied.
\end{prop}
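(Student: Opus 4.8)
The plan is to apply Proposition~\ref{master}. Since $m=0$ we are forced to take $m'=0$, so there are no parameters $n_i$, we have $\bar m=0$ and $\bar\ell=\ell-\ell'$, and the condition $2m'+\ell'\le r-2$ is automatic from $\ell'\le\ell\le r/2$ and $r\ge 14$. Thus the hypotheses of Proposition~\ref{master} amount to exhibiting integers $\ell'$ and $d'$ with
\[0\le\ell'\le\ell,\qquad r\le d'\le d,\qquad \left|\delta-\ell'-2(d-d')\right|\le 1-\frac{1}{r-1},\]
such that the tuple $(d'-1,0,r-1,\ell-\ell',0)$ is good. Since $d'\ge r$ forces $\rho(d'-1,0,r-1)\ge 0$ and $d'-1\ge r-1$, and since a tuple with third coordinate $r-1\ge 13$ never lies in \eqref{XX}, this last requirement unpacks to the two conditions $\ell-\ell'\le\frac{r-1}{2}$ and $2(\ell-\ell')\ge(2-d')\redmod(r-2)$. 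Writing $\bar d\colonequals d-r\in[0,r-2]$ and $c\colonequals d'-r\in[0,\bar d]$, one has $(2-d')\redmod(r-2)=(-c)\redmod(r-2)$, which equals $0$ for $c\in\{0,r-2\}$ and equals $r-2-c$ for $1\le c\le r-3$; the same formula, with $\bar d$ replaced by $c$, describes the analogous residue attached to the reduced tuple.

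First I would record the consequences of goodness of $(d,0,r,\ell,0)$: either $\bar d=0$, or $2\ell\ge(1-d)\redmod(r-1)=r-1-\bar d$, i.e.\ $\bar d+2\ell\ge r-1$. Hence $\delta=\frac{2(\bar d+\ell)}{r-1}$ satisfies $\delta\le 1+\frac{1}{r-1}$ when $\bar d=0$ (using $\ell\le\lfloor r/2\rfloor$), $\delta\ge 1+\frac{1}{r-1}$ when $\bar d\ge 1$ (using $\bar d+2\ell\ge r-1$), and $\delta\le 3-\frac{1}{r-1}$ in all cases (using $\bar d\le r-2$). So $\delta$ always lies in $[0,3)$ and the target number of transformations towards the point of projection, $k\colonequals\ell'+2(d-d')$, will be taken in $\{0,1,2\}$.

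The proof is then a short case analysis. If $\bar d=0$ (so $d=r$, forcing $d'=r$, $c=0$, and making the residue condition vacuous), take $\ell'=0$ if $2\ell\le r-2$ and $\ell'=1$ otherwise; in the second case $2\ell>r-2$ forces $\ell\ge1$, and one checks $|\delta-\ell'|\le 1-\frac{1}{r-1}$ and $\ell-\ell'\le\frac{r-1}{2}$ directly. If $\bar d\ge1$, so $\delta\ge 1+\frac{1}{r-1}$, I would choose: $(\ell',d')=(1,d)$ when $r$ is even, $\ell=r/2$ and $\bar d=1$ (here $k=1$ and $|\delta-1|=\frac{3}{r-1}\le1-\frac{1}{r-1}$); $(\ell',d')=(2,d)$ when $r$ is even, $\ell=r/2$ and $\bar d\ge2$ (here $k=2$); and $(\ell',d')=(0,d-1)$ in every remaining case, in which necessarily $\ell\le\frac{r-1}{2}$ (here $k=2$, and $|\delta-2|\le1-\frac{1}{r-1}$ is immediate from $1+\frac{1}{r-1}\le\delta\le 3-\frac{1}{r-1}$). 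In each branch $\ell'\le\ell$ holds because $r\ge14$, and the bound $\ell-\ell'\le\frac{r-1}{2}$ is easily checked; the reason $\ell'=1$ or $2$ must be taken in the first two subcases of $\bar d\ge1$ is precisely that $\ell'=0$ would make the reduced $\ell$-coordinate $r/2$ exceed $\frac{r-1}{2}$.

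The main point to verify in every branch is the residue inequality $2(\ell-\ell')\ge(2-d')\redmod(r-2)$ and, by the same computation with $\bar d$ replaced by $c$, the goodness of the reduced tuple. For $(\ell',d')=(0,d-1)$ one has $c=\bar d-1$, so the residue is $0$ if $\bar d=1$ and $r-1-\bar d$ if $\bar d\ge2$; in the latter case the required inequality $2\ell\ge r-1-\bar d$ is \emph{exactly} the goodness hypothesis $\bar d+2\ell\ge r-1$, so it holds for free. For $(\ell',d')=(2,d)$ one has $c=\bar d$, residue $r-2-\bar d$ (or $0$ if $\bar d=r-2$), and $2(\ell-2)=r-4\ge r-2-\bar d$ since $\bar d\ge2$; for $(\ell',d')=(1,d)$, where $\bar d=1$, the residue is $r-3$ and $2(\ell-1)=r-2\ge r-3$. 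I expect this residue bookkeeping, together with the lone edge case ``$r$ even, $\ell=r/2$, $\bar d=1$'' (where one cannot peel off a line without violating the $\bar\ell$-bound, so one must instead spend one point by setting $\ell'=1$), to be the only genuinely delicate points; all the remaining verifications are elementary inequalities in $r\ge14$, $\bar d\le r-2$, and $\ell\le\lfloor r/2\rfloor$.
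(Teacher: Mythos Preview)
Your argument is correct and reaches the same choices of \((\ell',d')\) as the paper, but organized differently. The paper splits on the value of \(d\): it treats \(d=r\) directly, treats \(d=r+1\) by writing down the three possible \((d,r,\ell)\) and giving explicit \((d',\ell')\) for each, and for \(d\geq r+2\) takes the ansatz \(\ell' = 2 - 2(d-d')\) and uses its general elimination Lemma~\ref{lm:eliminate} to show an integer \(d'\) exists. You instead split on whether \(r\) is even with \(\ell=r/2\) (the unique situation where the bound \(\bar\ell\le\tfrac{r-1}{2}\) forces \(\ell'\ge 1\)), and then pick \((\ell',d')\in\{(0,d-1),(1,d),(2,d)\}\) directly in each branch; your choices coincide with the paper's in every overlapping case. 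The trade-off is that your argument is shorter and self-contained, with no appeal to Lemma~\ref{lm:eliminate}, while the paper's version is more in keeping with the systematic inequality-elimination style used throughout Section~\ref{sec:combinat}. One cosmetic point: your parenthetical ``the same formula, with \(\bar d\) replaced by \(c\)'' is slightly garbled (the residue for the \emph{original} tuple is \((-\bar d)\redmod(r-1)\), not \((-\bar d)\redmod(r-2)\)), but the computation you actually use is correct.
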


\begin{proof}
As in the proof of Lemma~\ref{lm:master-imp-0}, our goal is to show the existance
of certain integers \(d'\) and \(\ell'\) which in particular must satisfy:
\begin{gather}
\ell - \frac{r - 1}{2} \leq \ell' \label{gath22_1} \\
0 \leq \ell' \leq \ell \\
\delta - 2(d - d') - \frac{r - 2}{r - 1} \leq \ell' \leq \delta - 2(d - d') + \frac{r - 2}{r - 1}\\
r \leq d' \leq d, \label{gath22_4}
\end{gather}
plus possibly some additional conditions to guarantee that \((d' - 1, 0, r - 1, \ell - \ell', 0)\)
is good. For this we divide into cases as follows:

\medskip
\noindent
\textbf{\boldmath Case 1: \(d = r\).}
In this case we take \(d' = r\).
With this choice \((1 - (d' - 1)) \redmod ((r - 1) - 1) = 0\),
and so \eqref{gath22_1}--\eqref{gath22_4} are sufficient for
\((d' - 1, g, r - 1, \ell - \ell', 0)\) to be good.
Substituting \(d = d' = r\) and \(\delta = 2\ell / (r - 1)\),
our goal is thus to show that there is an integer \(\ell'\) satisfying
\begin{gather*}
\ell - \frac{r - 1}{2} \leq \ell' \\
0 \leq \ell' \leq \ell \\
\frac{2\ell - r + 2}{r - 1} \leq \ell' \leq \frac{2\ell + r - 2}{r - 1}.
\end{gather*}
Applying Lemma~\ref{lm:eliminate}, it suffices to verify:
\begin{align}
0 &\leq \ell \label{gath23_1} \\
0 &\leq \frac{2\ell + r - 2}{r - 1} \\
\frac{2\ell - r + 2}{r - 1} &\leq \ell \label{gath23_3} \\
\frac{2\ell - r + 2}{r - 1} &\leq \frac{2\ell + r - 2}{r - 1} - \frac{(r - 2)^2}{(r - 1)^2} \label{gath23_4} \\
\ell - \frac{r - 1}{2} &\leq \ell \label{gath23_5} \\
\ell - \frac{r - 1}{2} &\leq \frac{2\ell + r - 2}{r - 1} - \frac{r - 2}{2r - 2}. \label{gath23_6}
\end{align}
Inequalities \eqref{gath23_1}--\eqref{gath23_3} follow from \(\ell \geq 0\),
and \eqref{gath23_4} and \eqref{gath23_5}
are automatic, and \eqref{gath23_6} follows from \(\ell \leq r/2\).

\medskip
\noindent
\textbf{\boldmath Case 2: \(d \geq r + 1\).}
Since \(r + 1 \leq d \leq 2r - 2\), we have \((1 - d) \redmod (r - 1) = (1 - d) + 2(r - 1) = 2r - 1 - d\).
Therefore, as \((d, g, r, \ell, 0)\) is good by assumption, \(\ell\) satisfies
\begin{equation}\label{zzlb}
\frac{2r - 1 - d}{2} \leq \ell \leq \frac{r}{2}.
\end{equation}
Similarly, because \(d' - 1 \leq 2r - 3\), we have
\((1 - (d' - 1)) \redmod ((r - 1) - 1) \leq (2 - d') + 2(r - 2) = 2r - 2 - d'\).
Therefore \((d' - 1, g, r - 1, \ell - \ell', 0)\) is good provided both
\eqref{gath22_1}--\eqref{gath22_4}
are satisfied, and also \(2(\ell - \ell') \geq 2r - 2 - d'\).
In other words, we want to show that there are integers \(\ell'\) and \(d'\)
satisfying the following system (here we have substituted in \(\delta = (2d - 2r + 2\ell)/(r - 1)\)):
\begin{gather}
\ell - \frac{r - 1}{2} \leq \ell' \label{gath24_1} \\
0 \leq \ell' \leq \ell \\
\frac{2d - 2r + 2\ell}{r - 1} - 2(d - d') - \frac{r - 2}{r - 1} \leq \ell' \leq \frac{2d - 2r + 2\ell}{r - 1} - 2(d - d') + \frac{r - 2}{r - 1}\\
r \leq d' \leq d \\
\ell' \leq \ell - \frac{2r - 2 - d'}{2}. \label{gath24_5}
\end{gather}

\smallskip
\noindent
\textit{Subcase 2.1: \(d = r + 1\).}
In this case, \eqref{zzlb} becomes \(r/2 - 1 \leq \ell \leq r/2\).
In other words, \((d, r, \ell)\) is of one of the following forms:
\[(d, r, \ell) = (2k + 1, 2k, k - 1), \quad (d, r, \ell) = (2k + 1, 2k, k), \quad \text{or} \quad (d, r, \ell) = (2k + 2, 2k + 1, k).\]
In these cases way may satisfy \eqref{gath24_1}--\eqref{gath24_5} by taking:
\[(d', \ell') = (2k, 0), \quad (d', \ell') = (2k + 1, 1), \quad \text{respectively} \quad (d', \ell') = (2k + 1, 0).\]

\smallskip
\noindent
\textit{Subcase 2.2: \(d \geq r + 2\).}
In this case we take \(\ell' = 2 - 2(d - d')\),
in which case \eqref{gath24_1}--\eqref{gath24_5} becomes:
\begin{gather*}
d' \geq \frac{4d + 2\ell - r - 3}{4} \\
d - 1 \leq d' \leq d - 1 + \frac{\ell}{2} \\
3r \leq 2d + 2\ell \leq 5r - 4 \\
r \leq d' \leq d \\
d' \leq \frac{4d + 2\ell - 2r - 2}{3}.
\end{gather*}
The inequality \(d' \geq r\) follows from \(d' \geq d - 1\) since \(d = r + 2\).
Deleting the inequality \(d' \geq r\), and eliminating \(d'\) via Lemma~\ref{lm:eliminate}, we reduce to the system:
\begin{align}
\frac{4d + 2\ell - r - 3}{4} &\leq d - 1 + \frac{\ell}{2} - \frac{3}{8} \label{gath25_1} \\
\frac{4d + 2\ell - r - 3}{4} &\leq d \\
\frac{4d + 2\ell - r - 3}{4} &\leq \frac{4d + 2\ell - 2r - 2}{3} - \frac{1}{2} \\
d - 1 &\leq d - 1 + \frac{\ell}{2} \label{gath25_4} \\
d - 1 &\leq d \label{gath25_5} \\
d - 1 &\leq \frac{4d + 2\ell - 2r - 2}{3} \\ 
3r &\leq 2d + 2\ell \\
2d + 2\ell &\leq 5r - 4.
\end{align}
Inequalities \eqref{gath25_1}, \eqref{gath25_4}, and \eqref{gath25_5}
are immediate. Rearranging the others, we obtain:
\begin{align*}
(r - 2\ell) + 3 &\geq 0 \\
(2\ell - 2r + 1 + d) + 3(d - r - 2) &\geq 0 \\
2\ell - 2r + 1 + d &\geq 0 \\
(2\ell - 2r + 1 + d) + (d - r - 2) + 1 &\geq 0 \\
(r - 2\ell) + 2(2r - 2 - d) + 8 &\geq 0, 
\end{align*}
which all follow from \eqref{zzlb}, our assumption \(r + 2 \leq d \leq 2r - 2\), and the hypotheses in \eqref{box}.
\end{proof}

\section{Most of the Sporadic Cases \label{sec:most-sporadic}}

The finite set of sporadic cases identified in the previous section
is unfortunately rather large.
Our next task is to introduce an additional argument
that, in combination with the arguments of Section~\ref{sec:inductive}, applies
to handle most of the sporadic cases, i.e., all but a
list that is short enough to \emph{write down explicitly}.

This argument will, essentially, be a variant on Proposition~\ref{master},
but where we allow transformations to come together at \(p\) in a less restricted way.  In particular, we will weaken the hypothesis ``\(2m' + \ell \leq r-2\)'' in the statement of Proposition~\ref{master} by allowing more modifications to limit to the point \(p\) than the rank of the normal bundle.  In this regime, the limiting bundle can depend on how the points are specialized into \(p\).  We will be able to give a description of some possible limits by limiting the marked points into \(p\) one at a time
inductively.  At each step we will be able to identify what the limiting modifications are at \(p\).
Suppose that, after limiting some collection of marked points into \(p\),
we have a transformation at \(p\) of the form
\begin{equation} \label{atp-form}
(np)[p \posmod \Lambda_1][p \posmod \Lambda_2] \quad \text{where} \quad \Lambda_1 \supseteq \Lambda_2, \tag{\(\dagger\)}
\end{equation}
where \(\Lambda_1\) and \(\Lambda_2\) are linear spaces in \(\pp N_C|_p\).
Let \(S\) and \(W\) be sets of parameters varying in irreducible bases with \(S \subseteq W\).
(For us, \(W\) will be the collection of all corresponding marked points, and \(S\)
will be those marked points at which the projected normal bundle is \emph{not} modified.)
Assume that \(\Lambda_2\) is linearly general as the parameters \(S\) vary,
and assume that \(\Lambda_1\) is either:
\begin{itemize}
\item Linearly general as the parameters \(W\) vary (``weakly general''); or
\item Weakly general and its image in \(\pp (N_C / N_{C \to p})|_p\) is
linearly general as only the parameters \(S\) vary (``strongly general'').
\end{itemize}

We will summarize this situation
by three pieces of data: the linear dimensions \(t_1 = \rk \Lambda_1 = \dim \Lambda_1 + 1\) and \(t_2 = \rk \Lambda_2 = \dim \Lambda_2 + 1\),
and whether we are in the weak or strong case.
Note that we always have \(t_2 \leq t_1 \leq r - 2\).  (We do not need to keep track of the integer \(n\) since this is can be deduced from the Euler characteristic of the limit bundle, which is the same as the original bundle.)

Modifications of type \eqref{atp-form} occur naturally when considering the degenerations that make up the key inductive argument outlined in Section \ref{subsec:over_ind}.  We review them here (and add one additional argument that we will use in this section) in order to motivate the shape of \eqref{atp-form}.
\begin{enumerate}
\item\label{ind1} If we peel off a \(1\)-secant line \(\bar{xy}\) and limit the point \(x\) to \(p\), we obtain the modification \([p \posmod y][p \posmod y]\) at \(p\).  This is of the form \eqref{atp-form} with \(\Lambda_1 = \Lambda_2 = y\); hence, we have \(t_1 = t_2 = 1\).  Since \(y\) is a general point, and no modifications occur at \(y\) in the quotient by projection from \(p\), both \(\Lambda_1\) and \(\Lambda_2\) are strongly linearly general.

\smallskip

\item\label{ind2} If we peel off a \(2\)-secant line \(\bar{zw}\) and limit the point \(z\) to \(p\), we obtain the modification
\([p \posmod 2w][p \posmod w]\)
at \(p\).  This is of the form \eqref{atp-form} with \(\Lambda_1 = T_wC\) and \(\Lambda_2 = w\); hence, we have \((t_1, t_2) = (2, 1)\).  Since \(w\) is a general point on the curve, at which no modifications occur in the quotient by projection from \(p\), both \(\Lambda_1\) and \(\Lambda_2\) are strongly linearly general.

\smallskip

\item\label{ind3} If we specialize \(R\) as in Section \ref{sec:onion_specialization} to contain \(n\) lines through at \(p\), then at \(p\) we obtain the modification \([p \posmod \Lambda]\), where \(\rk \Lambda = 2\).  This is of the form \eqref{atp-form} with \((t_1, t_2) = (2, 0)\).  By Lemmas \ref{M:unprojected} and \ref{M:projected}, the subspace \(\Lambda\) is
strongly linearly general if 
\[n \geq \begin{cases} 3 & \text{if \(C\) is an elliptic normal curve} \\ 2 & \text{otherwise.} \end{cases}\] 
\end{enumerate}

\begin{enumerate}[label=(\arabic*\('\))]
\addtocounter{enumi}{2}
\item\label{ind3p} If we specialize one of the \(v_i\) to \(p\), we obtain the modification \([p \posmod u_i]\) at \(p\).  This is of the form \eqref{atp-form} with \((t_1, t_2) = (1, 0)\), and is strongly linearly general,
since \(u_i\) is a general point, at which no modifications occur in the quotient by projection from \(p\).
\end{enumerate}

\begin{enumerate}
\addtocounter{enumi}{6}
\item\label{ind7} We allow ourselves one new degeneration in our more general inductive step, which is similar to \eqref{indstrat:center_proj_R} from Section \ref{subsec:over_ind}, but crucially different in that we specialize \(R\) to pass through \(p\)
\emph{before} we project. We first specialize \(R\) to the union of a line \(L\) through \(2\) points \(s_0, s_r\) on \(C\) and a rational curve \(R'\) of degree \(r-2\) through \(r-1\) points on \(C\) and meeting \(L\) at one point.  Then we specialize \(s_0\) to \(p\).  This results in the modification \([p \posmod s_r]\) at \(p\), which is of type \((t_1, t_2) = (1,0)\).  This modification is linearly general as \emph{all} the points of contact between \(C\) and \(R'\) move.
However, \(s_r\) is constrained to be one of the points at which the \(r\)-secant rational curve \(\bar{R'}\) meets \(\bar{C}\),
and modifications occur at the remainder of these
points, so it is only weakly linearly general.
\end{enumerate}

Our first goal is to understand what happens when we limit into \(p\) another point \(p'\), at which we have
another transformation
\((n' p') [p' \posmod \Lambda_1'][p' \posmod \Lambda_2']\) of the same form \eqref{atp-form}
(depending on sets of parameters \(S' \subseteq W'\) disjoint from \(W\)).
In the following five cases, which we consider separately, we will see that in the limit we obtain another transformation
of the form \eqref{atp-form}
(depending on parameters \(S \cup S' \subseteq W \cup W'\)).  

Most of the subspaces whose generality we must assess are of the form \(\Lambda + \Lambda'\) (the span of \(\Lambda\) and \(\Lambda'\)).  If  \(\Lambda\) and \(\Lambda'\) are both linearly general as independent parameters \(X\) and \(X'\) vary,
then their span is linearly general,
as we now show.  Let \(M\) be a fixed subspace; there is a choice of the parameters \(X\) for which the corresponding subspace \(\Lambda\) meets \(M\) transversely. Then there is a choice of the parameters \(X'\)
for which the corresponding subspace \(\Lambda'\) meets \(M + \Lambda\) transversely.  For this choice of \(X \cup X'\), the subspace \(M\) meets \(\Lambda + \Lambda'\) transversely. The only case where the resulting modification is not of this form is \ref{hardcase} below.

\begin{enumerate}[label=(\alph*)]
\item 
If \(t_1 + t_1' < r - 1\): In this case, the limiting transformation is
\[((n + n')p) [p \posmod \Lambda_1 + \Lambda_1'] [p \posmod \Lambda_2 + \Lambda_2'].\]
This transformation is
of the desired form.  The subspace \(\Lambda_1 + \Lambda_1'\) is strongly general if 
\(\Lambda_1\) and \(\Lambda_1'\) are both strongly general, and weakly general otherwise.  

\smallskip

\item
If \(t_2 + t_2' < t_1 + t_1' = r - 1\): In this case, the limiting transformation is
\[((n + n' + 1)p) [p \posmod \Lambda_2 + \Lambda_2'][p \posmod \emptyset].\]
This transformation is
of the desired form, and the subspace \(\Lambda_2 + \Lambda_2'\) is always strongly general.

\smallskip

\item\label{hardcase} 
If \(t_2' = 0\) and \(t_1' + t_2 \leq r - 1 \leq t_1' + t_1\):
In this case, the limiting transformation is
\[((n + n' + 1)p) [p \posmod \Lambda_2 + (\Lambda_1 \cap \Lambda_1')][p \posmod \emptyset].\]
We now show that \(\Lambda_2 + (\Lambda_1 \cap \Lambda_1')\) is strongly linearly general if both \(\Lambda_1\) and \(\Lambda_1'\) are strongly general,
and weakly linearly general otherwise.
Indeed, let \(M\) be any fixed subspace; we want to show that \(M\) is transverse to \(\Lambda_2  + (\Lambda_1 \cap \Lambda_1')\). Both \(\Lambda_1\) and \(\Lambda_2\) are linearly general; since the parameters \(W\) vary in an irreducible base, there is a single choice of the parameters \(W\) for which \(M\) is simultaneously transverse to both \(\Lambda_1\) and \(\Lambda_2\).  Since \(\Lambda_1\) 
is transverse to \(M\) for our choice of parameters \(W\), and \(\Lambda_2 \subset \Lambda_1\), we can restrict to \(\Lambda_1\) and consider transversality as subspaces of \(\Lambda_1\).  The subspace \(M \cap \Lambda_1\) is transverse to
 \(\Lambda_2\) for our choice of parameters \(W\).  The subspace \(\Lambda_1' \cap \Lambda_1\) is transverse to \(\Lambda_2\) since \(\Lambda_1'\) is linearly general, varying with independent parameters.  Therefore the transversality of \(M \cap \Lambda_1\) and \(\Lambda_2 + (\Lambda_1' \cap \Lambda_1)\) is equivalent to the transversality of 
  \(\Lambda_1 \cap \Lambda_1'\)
and \(\Lambda_2 + (M \cap \Lambda_1)\), which again follows by the linear generality of \(\Lambda_1'\).

\smallskip

\item
If \(t_1' + t_2 < t_1 + t_2' = r - 1\):
In this case, the limiting transformation is
\[((n + n' + 1)p) [p \posmod \Lambda_2 + \Lambda_1'][p \posmod \emptyset].\]
This transformation is
of the desired form, with \(\Lambda_2 + \Lambda_1'\) strongly general if \(\Lambda_1'\)
is strongly general, and weakly general otherwise.

\smallskip

\item If \(t_1 + t_2' = t_1' + t_2 = r - 1\):
In this case, the limiting transformation is
\[((n + n' + 2)p) [p \posmod \emptyset][p \posmod \emptyset].\]
This transformation is
of the desired form, with \(\emptyset\) always strong general.
\end{enumerate}

\begin{defin}
For integers \(0 \leq i,j < r-1\), let \(\{s_{ij}\}\) and \(\{w_{ij}\}\) be collections of nonnegative integers,
and consider \(s_{ij}\) (respectively \(w_{ij}\)) marked points 
decorated with modifications of type \eqref{atp-form} with \((t_1, t_2) = (i, j)\)
and \(\Lambda_1\) strongly (respectively weakly) general.
Consider all ways of limiting these marked points into \(p\), one at a time
in some order,
such that at every step of the process, we are in one of the five cases discussed above.
If there is such an order for which the final resulting transformation at \(p\) satisfies
\(t_2 = 0\) and \(\Lambda_1\) is strongly general, then we say that \(\{s_{ij}\}\) and \(\{w_{ij}\}\)
is \defi{erasable}.
\end{defin}

We are now ready to state our more flexible variant on Proposition~\ref{master}.  The high-level overview is that, in some order, we do the following specializations:
\begin{itemize}
\item Peel off \(g - g'\) two-secant lines. Specialize all of them into \(p\) as in \eqref{ind1}.
\item Peel off \(\epsilon_{\text{in}} + \epsilon_{\text{out}} = d - g - d' + g'\) one-secant lines.
Specialize \(\epsilon_{\text{in}}\) of these into \(p\) as in the proof of Proposition~\ref{master},
and the remaining \(\epsilon_{\text{out}}\) of them into \(p\) as in \eqref{ind2}.
\item Specialize \(m'\) of the rational curves \(R_i\) as in Section \ref{sec:onion_specialization} to lines and conics through \(p_i\).
Specialize all of the \(p_i\) to \(p\) as in \eqref{ind3}.
\item Specialize \(\ell'\) of the points \(v_i\) to \(p\) as in \ref{ind3p}.
\item Specialize \(m''\) of the rational curves \(R_i\) to the union \(L_i \cup R_i'\) as in \eqref{ind7}.
Specialize one of the points where \(L_i\) meets \(C\) into \(p\).
\item Specialize the remaining \(m - m' - m''\) rational curves \(R_i\) to pass through \(p\) as in
Section~\ref{subsec:over_ind}\eqref{indstrat:center_proj_R}.
\end{itemize}
After projecting from \(p\) we will reduce to a case of our inductive hypothesis plus a single linearly general modification at \(p\) precisely when the modifications at \(p\) above are erasable.

\begin{prop} \label{master-erasable}
Let \(\ell'\), \(m'\), and \(m''\) be nonnegative integers satisfying \(\ell' \leq \ell\) and \(m' + m'' \leq m\),
with \(m' = 0\) if \(r = 3\).
Let \(d'\) and \(g'\) be integers satisfying \(0 \leq g' \leq g\) and \(g' + r \leq d' \leq d - g + g'\),
with \(d' > g' + r\) if both \(g' = 0\) and \(m \neq 0\).
Let \(\epsilon_{\text{in}}\) and \(\epsilon_{\text{out}}\) be nonnegative integers with
\(\epsilon_{\text{in}} + \epsilon_{\text{out}} = d - g - d' + g'\).
For \(1 \leq i \leq m'\), let \(n_i\) be an integer satisfying \(n_i \equiv r - 1\) mod \(2\)
and \(2 \leq n_i \leq r - 1\), with \(n_i \neq 2\) if \((d', g') = (r + 1, 1)\).
Define
\[\bar{\ell} = \ell - \ell' + \frac{(r - 1)m' - \sum n_i}{2} \quad \text{and} \quad \bar{m}_\text{max} = m - m' \quad \text{and} \quad \bar{m}_\text{min} = m - m' - m''.\]
Suppose that the following collection is erasable:
\begin{align}
s_{10} &= \ell' + m - m' - m'' \notag \\
s_{11} &= \epsilon_{\text{out}} \notag \\
s_{20} &= m' \label{erase-ass} \\
s_{21} &= g - g' \notag \\
w_{10} &= m''. \notag
\end{align}
If
\[\left|\delta - \left[2\epsilon_{\text{in}} + g - g' + m'' + \ell' + \left \lfloor\frac{2 \epsilon_{\text{out}} + 3 (g - g') + m + m' + \ell'}{r - 1} \right \rfloor + \sum n_i \right] \right| \leq 1 - \frac{1}{r - 1},\]
and \(I(d' - 1, g', r - 1, \bar{\ell}, \bar{m})\) holds for all \(\bar{m}\) with \(\bar{m}_\text{min} \leq \bar{m} \leq \bar{m}_\text{max}\), then so does
\(I(d, g, r, \ell, m)\).
\end{prop}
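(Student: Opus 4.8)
The plan is to imitate the proofs of Propositions~\ref{master} and~\ref{master-111}, carrying out exactly the chain of degenerations sketched above, and then to feed the modifications that accumulate at $p$ into the erasability bookkeeping just developed. We must prove interpolation for $N_C[u_1 \biposmod v_1]\cdots[u_\ell \biposmod v_\ell][\posmodalong R_1 \cup \cdots \cup R_m]$. First I would peel off $g-g'$ two-secant lines and $\epsilon_{\text{in}}+\epsilon_{\text{out}} = d-g-d'+g'$ one-secant lines --- legitimate by Lemmas~\ref{lem:1secBN} and~\ref{lem:2secBN} together with the numerology $g'+r \le d' \le d-g+g'$ --- arranging, as always, that all of the $u_i$, $v_i$, and all points of incidence with the $R_i$, specialize onto the resulting general BN-curve $C' = C(\epsilon_{\text{in}}+\epsilon_{\text{out}}, g-g'; 0)$ of degree $d'$ and genus $g'$. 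As in Propositions~\ref{master} and~\ref{master-111} (and using Corollaries~\ref{cor:1sec_posmod} and~\ref{cor:2sec_posmod}), this reduces the problem to interpolation for $N_{C'}$ carrying the $\biposmod$'s, the $\posmodalong R_i$'s, one positive modification $[2x_j \posmod y_j]$ per peeled one-secant line, and the modifications coming from each peeled two-secant line.

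Next, fix a general point $p \in C'$. I would specialize $m'$ of the $R_i$ as in Section~\ref{sec:onion_specialization} using the given integers $n_i$ (requiring $m'=0$ if $r=3$, and noting that the hypotheses of Lemmas~\ref{M:unprojected} and~\ref{M:projected} hold since $n_i \ne 2$ exactly when $(d',g')=(r+1,1)$); specialize $m''$ of the $R_i$ as in ingredient~\eqref{ind7}; and leave the remaining $m-m'-m''$ of the $R_i$ untouched. Then I would limit, one marked point at a time and in a suitable order, into $p$: the point $x_j$ on $\epsilon_{\text{out}}$ of the one-secant lines (ingredient~\eqref{ind1}), one endpoint $z_k$ of each two-secant line (ingredient~\eqref{ind2}), the points $p_i$ of the $m'$ onion specializations (ingredient~\eqref{ind3}), the points $v_i$ for $i \le \ell'$ (ingredient~\ref{ind3p}), the distinguished point on each of the $m''$ curves of type~\eqref{ind7}, and --- after projecting, exactly as in \eqref{indstrat:center_proj_R} --- one point of each of the remaining $m-m'-m''$ curves; and simultaneously limit the point $y_j$ of the remaining $\epsilon_{\text{in}}$ ``interior'' one-secant lines into $p$ as in the proof of Proposition~\ref{master}. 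The ingredients \eqref{ind1}, \eqref{ind2}, \eqref{ind3}, \ref{ind3p}, and \eqref{ind7} produce at $p$ precisely the collection of type-\eqref{atp-form} transformations recorded in \eqref{erase-ass}; since that collection is erasable by hypothesis, the case analysis (a)--(e) above lets us order these collisions so that the net transformation at $p$ is $(n_0 p)[p \posmod \Lambda]$ with $\Lambda$ strongly general, where $n_0$ is the twist degree accumulated by the process. Meanwhile the remaining contributions --- the $2x_j$'s of the interior lines, the $u_i$-halves of the $\ell'$ collided $\biposmod$'s, the $q_{i,j}$'s of the onion specializations, the $s_0$-directions of the \eqref{ind7}-curves, and the $w_k$-toward-$p$ modifications produced by the two-secant collisions --- assemble into a single positive modification toward $p$, whose degree together with $n_0$ is exactly the bracketed quantity in the displayed hypothesis. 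Thus the limiting bundle $N^\circ$ is $N_{C'}$ equipped with that modification toward $p$, the single strongly general $[p \posmod \Lambda]$ at $p$, and the residual $\biposmod$'s and $\posmodalong$'s, with nothing else at $p$.

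Finally I would project from $p$. The inequality $\left|\delta - [\cdots]\right| \le 1 - \frac{1}{r-1}$ says the number of transformations toward $p$ is within $1 - \frac{1}{r-1}$ of $\delta$, so by Corollary~\ref{cor:intp_ses} interpolation for $N^\circ$ follows from interpolation for the quotient. Since $\Lambda$ is strongly general, its image $\bar\Lambda$ in the projected normal bundle at $p$ is linearly general as the still-general marked points vary, so $[p \posmod \bar\Lambda]$ is erased by Lemma~\ref{lem:lin_general}. It then remains to recognize the residual bundle: the $m'$ onion curves have become $\bar q_{i,j} \biposmod \bar q_{i,j+1}$-modifications, contributing $\frac{1}{2}((r-1)m' - \sum n_i)$ new pairs, so the total number of $\biposmod$'s is $\bar\ell$; the $m-m'-m''$ untouched $R_i$ become $r$-secant rational normal curves $R_i'$ in $\pp^{r-1}$ by Lemma~\ref{proj-general}, after erasing the extra linearly general modifications $[p \posmod t_i]$; and the $m''$ curves of type~\eqref{ind7} contribute modifications along $\bar R_i$ which, exactly as in Proposition~\ref{master-111}, are resolved into ``all or nothing'' by Lemmas~\ref{lem:monodromy} and~\ref{gen-to-fixed}, so the total number of $\posmodalong$'s is some $\bar m$ with $\bar m_{\text{min}} \le \bar m \le \bar m_{\text{max}}$. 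The residual bundle is therefore $N_{C(\epsilon_{\text{in}}+\epsilon_{\text{out}}, g-g'; 1)}$ with $\bar\ell$ double-modifications and $\bar m$ secant-curve modifications --- i.e.\ the assertion $I(d'-1, g', r-1, \bar\ell, \bar m)$, which holds by hypothesis.

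The main obstacle --- indeed the whole point of the ``erasable'' formalism --- is verifying that the chosen order of collisions at $p$ never leaves the class of type-\eqref{atp-form} transformations, and that the strong/weak generality is propagated correctly at every step: in particular that $\Lambda_2$ stays linearly general as the ``un-modified'' parameters vary (so that Corollary~\ref{cor:intp_ses} can be applied after projection) and that the final $\Lambda_1$ is strongly general (so that $[p \posmod \bar\Lambda]$ can be erased). This is exactly what the case analysis (a)--(e) preceding the definition of erasability provides, so once \eqref{erase-ass} is erasable the argument goes through. The remaining bookkeeping --- that $n_0$ equals the accumulated twist degree, that the various degrees assemble into the bracketed quantity, and that the residual modifications assemble into $I(d'-1,g',r-1,\bar\ell,\bar m)$ --- is routine and exactly parallel to Propositions~\ref{master} and~\ref{master-111}.
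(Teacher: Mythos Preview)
Your overall architecture is right, but your handling of the \(m - m' - m''\) ``remaining'' curves \(R_i\) does not match the hypothesis \eqref{erase-ass}, and this is a genuine gap rather than a wording issue.

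You propose to leave these \(R_i\) untouched before projection and then apply ingredient~\eqref{indstrat:center_proj_R} afterwards, erasing the resulting \([p \posmod t_i]\). But ingredients \eqref{ind1}, \eqref{ind2}, \eqref{ind3}, \ref{ind3p}, and \eqref{ind7} alone produce only \(s_{10} = \ell'\) at \(p\), not \(s_{10} = \ell' + (m - m' - m'')\). With your degeneration the erasability hypothesis \eqref{erase-ass} is simply the wrong collection, and correspondingly the floor term in the displayed \(\delta\)-inequality (which counts the Euler characteristic accumulated at \(p\)) would be off by \(m - m' - m''\).

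What the paper does instead is specialize one intersection point \(s^i_0\) of each such general \(R_i\) to \(p\) \emph{before} projecting. This produces at \(p\) the rank-one modification \([p \posmodalong R_i]\), i.e.\ the tangent direction to \(R_i\) at \(p\). By Lemma~\ref{proj-general}, once the remaining intersection points \(s^i_1,\ldots,s^i_r\) (hence \(\bar R_i\)) are fixed, this tangent direction still varies over all of \(\bar R_i \subset \pp N_C|_p\); since \(\bar R_i\) is nondegenerate and this extra parameter does not affect the projected bundle, the modification is strongly general of type \((1,0)\). That is exactly the missing \(m - m' - m''\) contribution to \(s_{10}\). After projecting, each \(\bar R_i\) is already an \(r\)-secant rational normal curve of degree \(r-2\) in a hyperplane of \(\pp^{r-1}\), so no post-projection step \eqref{indstrat:center_proj_R} and no extra \([p \posmod t_i]\) erasure is needed for these curves.

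A smaller point: for the \(m''\) curves of type \eqref{ind7}, you do not need Lemma~\ref{gen-to-fixed} here. Unlike in Proposition~\ref{master-111}, the directions \([p \posmod s^i_r]\) are absorbed into the erasable collection as the \(w_{10}\) contributions, so after erasing \([p \posmod \bar\Lambda]\) you are left with \([s^i_1 + \cdots + s^i_{r-1} \posmodalong \bar{R_i^-}]\) where \(\bar{R_i^-}\) is \(r\)-secant, and Lemma~\ref{lem:monodromy} alone interpolates between including and excluding each of these, giving the range \(\bar m_{\text{min}} \le \bar m \le \bar m_{\text{max}}\).
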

\begin{proof}
Our goal is to show interpolation for
\[N_{C(0,0;0)}[u_1 \biposmod v_1]\cdots[u_\ell \biposmod v_\ell][\posmodalong R_1 \cup \cdots \cup R_m].\]
Peeling off
\(\epsilon_{\text{in}} + \epsilon_{\text{out}} = d - g - d' + g'\) one-secant lines and \(g - g'\) two-secant lines
reduces to interpolation for
\begin{multline*}
N_{C(d - g - d' + g', g - g';0)}[u_1 \biposmod v_1]\cdots[u_\ell \biposmod v_\ell][\posmodalong R_1 \cup \cdots \cup R_m] \\
[2x_1 \posmod y_1] \cdots [2x_{\epsilon_{\text{in}} + \epsilon_{\text{out}}} \posmod y_{\epsilon_{\text{in}} + \epsilon_{\text{out}}}] [z_1 \biposmod w_1][z_1 \posmod 2w_1] \cdots [z_{g-g'} \biposmod w_{g-g'}][z_{g-g'} \posmod 2w_{g-g'}].
\end{multline*}

For \(1 \leq i \leq m'\), write \(n_i' = (r-1-n_i)/2\),
and degenerate \(R_i\) as in Section \ref{sec:onion_specialization} to the union \(R_i^\circ\), of \(n_i\) lines \(L_{i,j}\) meeting \(C\) at \(p_i\) and \(q_{i,j}\), and \(n_i'\) conics \(Q_{i,j}\) meeting \(C\) at \(p_i\) and \(q_{i,n_i + 2j-1}\) and \(q_{i,n_i + 2j}\).
For \(m' + 1 \leq i \leq m\), write
\(R_i \cap C = \{s^i_0, s^i_1, s^i_2, \ldots, s^i_{r - 1}, s^i_r\}\).
For \(m' + 1 \leq i \leq m' + m''\), specialize \(R_i\) to a union \(R_i^- \cup L_i\), where \(L_i\)
is the line through \(s^i_0\) and \(s^i_r\), and \(R_i^-\) is a rational curve of degree \(r - 2\)
passing through \(s^i_1, s^i_2, \ldots, s^i_{r - 1}\) and meeting \(L_i\) at a single point.
This induces a specialization of the above bundle to
\begin{multline*}
N_{C(d - g - d' + g', g - g';0)}[u_1 \biposmod v_1]\cdots[u_\ell \biposmod v_\ell][\posmodalong R_{m' + 1}^- \cup \cdots \cup R_{m' + m''}^-] [\posmodalong R_{m' + m'' + 1} \cup \cdots \cup R_m] \\
[q_{1,1} + \cdots + q_{1,r - 1} \posmodalong R_1^\circ] \cdots [q_{m',1} + \cdots + q_{m',r - 1} \posmodalong R_{m'}^\circ] \\
[s^{m' + 1}_0 \biposmod s^{m' + 1}_r] \cdots [s^{m' + m''}_0 \biposmod s^{m' + m''}_r] [p_1 \posmod M_1] \cdots [p_{m'} \posmod M_{m'}] \\
[2x_1 \posmod y_1] \cdots [2x_{\epsilon_{\text{in}} + \epsilon_{\text{out}}} \posmod y_{\epsilon_{\text{in}} + \epsilon_{\text{out}}}] [z_1 \biposmod w_1][z_1 \posmod 2w_1] \cdots [z_{g-g'} \biposmod w_{g-g'}][z_{g-g'} \posmod 2w_{g-g'}].
\end{multline*}
Fix a general point \(p \in C\), and
specialize \(p_1, p_2, \ldots, p_{m'}, v_1, v_2, \ldots, v_{\ell'}, y_1, y_2, \ldots, y_{\epsilon_{\text{in}}}, x_{\epsilon_{\text{in}} + 1}, x_{\epsilon_{\text{in}} + 2}, \ldots,\)
\(x_{\epsilon_{\text{in}} + \epsilon_{\text{out}}}, z_1, z_2, \ldots, z_{g - g'}, s^{m' + 1}_0, s^{m' + 2}_0, \ldots, s^m_0\) all to \(p\)
in some order. Our assumption that \eqref{erase-ass}
is erasable implies that we may choose the order so that the limiting bundle is
\begin{multline*}
N_{C(d - g - d' + g', g - g';0)}[u_{\ell' + 1} \biposmod v_{\ell' + 1}]\cdots[u_\ell \biposmod v_\ell] [\posmodalong R_{m' + 1}^- \cup \cdots \cup R_{m' + m''}^-] \\
[s^{m' + m'' + 1}_1 + s^{m' + m'' + 1}_2 + \cdots + s^{m' + m'' + 1}_r  \posmodalong R_{m' + m'' + 1}] \cdots [s^m_1 + s^m_2 + \cdots + s^m_r  \posmodalong R_m] \\
[q_{1,1} + \cdots + q_{1,r - 1} \posmodalong R_1^\circ] \cdots [q_{m',1} + \cdots + q_{m',r - 1} \posmodalong R_{m'}^\circ] \\
[2x_1 + \cdots + 2x_{\epsilon_{\text{in}}} + w_1 + \cdots + w_{g - g'} + s^{m' + 1}_r + \cdots + s^{m' + m''}_r + u_1 + \cdots + u_{\ell'} \posmod p] (np) [p \posmod \Lambda],
\end{multline*}
for some integer \(n\) and subspace \(\Lambda \subset \pp N_{C(d - g - d' + g', g - g';0)}|_p\),
disjoint from \(\pp N_{C(d - g - d' + g', g - g';0) \to p}|_p\) and
whose image \(\bar{\Lambda}\) in \(\pp (N_{C(d - g - d' + g', g - g';0)}/N_{C(d - g - d' + g', g - g';0) \to p})|_p\) is linearly general.
Computing the Euler characteristic, we obtain
\[(r - 1) n + \rk \Lambda = 2 \epsilon_{\text{out}} + 3 (g - g') + m + m' + \ell',\]
and so
\[n = \left\lfloor \frac{2 \epsilon_{\text{out}} + 3 (g - g') + m + m' + \ell'}{r - 1}\right\rfloor.\]
Projecting from \(p\), we reduce to interpolation for
\begin{multline*}
N_{C(d - g - d' + g', g - g';1)}[u_{\ell' + 1} \biposmod v_{\ell' + 1}]\cdots[u_\ell \biposmod v_\ell] [\posmodalong \bar{R}_{m' + m'' + 1} \cup \cdots \cup \bar{R}_{m}] \\
[s^{m' + 1}_1 + \cdots + s^{m' + 1}_{r - 1} \posmodalong \bar{R_{m' + 1}^-}] \cdots [s^{m' + m''}_1 + \cdots + s^{m' + m''}_{r - 1} \posmodalong \bar{R_{m' + m''}^-}] \\
[q_{1,n_1 + 1} \biposmod q_{1,n_1 + 2}] \cdots [q_{1, r-2} \biposmod q_{1, r-1}] \cdots [q_{m',n_{m'} + 1} \biposmod q_{m',n_{m'} + 2}] \cdots [q_{m', r-2} \biposmod q_{m', r-1}] (np) [p \posmod \bar{\Lambda}].
\end{multline*}
Erasing the transformation at \(p\), we reduce to interpolation for
\begin{multline*}
N_{C(d - g - d' + g', g - g';1)}[u_{\ell' + 1} \biposmod v_{\ell' + 1}]\cdots[u_\ell \biposmod v_\ell] [\posmodalong \bar{R}_{m' + m'' + 1} \cup \cdots \cup \bar{R}_{m}] \\
[s^{m' + 1}_1 + \cdots + s^{m' + 1}_{r - 1} \posmodalong \bar{R_{m' + 1}^-}] \cdots [s^{m' + m''}_1 + \cdots + s^{m' + m''}_{r - 1} \posmodalong \bar{R_{m' + m''}^-}] \\
[q_{1,n_1 + 1} \biposmod q_{1,n_1 + 2}] \cdots [q_{1, r-2} \biposmod q_{1, r-1}] \cdots [q_{m',n_{m'} + 1} \biposmod q_{m',n_{m'} + 2}] \cdots [q_{m', r-2} \biposmod q_{m', r-1}].
\end{multline*}
By Lemma~\ref{lem:monodromy}, this follows in turn from interpolation for the bundles
\begin{multline*}
N_{C(d - g - d' + g', g - g';1)}[u_{\ell' + 1} \biposmod v_{\ell' + 1}]\cdots[u_\ell \biposmod v_\ell] [\posmodalong \bar{R}_{m' + m'' + 1} \cup \cdots \cup \bar{R}_{m}] [\posmodalong \bar{R_{i_1}^-} \cup \bar{R_{i_2}^-} \cup \cdots \cup \bar{R_{i_j}^-}] \\
[q_{1,n_1 + 1} \biposmod q_{1,n_1 + 2}] \cdots [q_{1, r-2} \biposmod q_{1, r-1}] \cdots [q_{m',n_{m'} + 1} \biposmod q_{m',n_{m'} + 2}] \cdots [q_{m', r-2} \biposmod q_{m', r-1}],
\end{multline*}
with \(m' + 1 \leq i_1 < i_2 < \cdots < i_j \leq m' + m''\).
Finally, specializing \(R_{m' + m'' + 1}, R_{m' + m'' + 2}, \ldots, R_m\) to pass through \(p\) (as in the proof of Proposition~\ref{master}), we reduce
to interpolation for
\begin{multline*}
N_{C(d - g - d' + g', g - g';1)}[u_{\ell' + 1} \biposmod v_{\ell' + 1}]\cdots[u_\ell \biposmod v_\ell] [\posmodalong R'_{m' + m'' + 1} \cup \cdots \cup R'_m] [\posmodalong \bar{R_{i_1}^-} \cup \bar{R_{i_2}^-} \cup \cdots \cup \bar{R_{i_j}^-}] \\
[q_{1,n_1 + 1} \biposmod q_{1,n_1 + 2}] \cdots [q_{1, r-2} \biposmod q_{1, r-1}] \cdots [q_{m',n_{m'} + 1} \biposmod q_{m',n_{m'} + 2}] \cdots [q_{m', r-2} \biposmod q_{m', r-1}].
\end{multline*}
But these are precisely our assumptions \(I(d' - 1, g', r - 1, \bar{\ell}, \bar{m})\)
for \(\bar{m}_{\text{min}} \leq \bar{m} \leq \bar{m}_{\text{max}}\).
\end{proof}

We then write a computer program in \texttt{python} \cite{python} (see Appendix~\ref{app:code}) which
iterates over all of the finitely many sporadic cases identified in the previous section,
i.e., those tuples \((d, g, r, \ell, m)\) satisfying \(r \leq 13\) and \eqref{box} or \eqref{to-exc},
but excluding those tuples with \((\delta, \ell, m) = (1, 0, 0)\).
In each case, all possible parameters for every inductive argument in Section~\ref{sec:inductive},
as well as all possible parameters for Proposition~\ref{master-erasable}, are tried.
In all but the following \(30\) cases, one of these arguments applies:
\begin{center}
\begin{tabular}{cccccc}
(4, 0, 3, 0, 1) & (4, 0, 3, 0, 2) & (4, 0, 3, 1, 1) & (5, 0, 3, 0, 1) & (5, 1, 3, 0, 1) & (5, 1, 3, 1, 1) \\
(5, 2, 3, 0, 1) & (5, 2, 3, 0, 2) & (5, 2, 3, 1, 1) & (6, 2, 3, 0, 1) & (5, 0, 4, 0, 1) & (5, 0, 4, 2, 0) \\
(6, 2, 4, 0, 2) & (7, 3, 4, 0, 1) & (7, 3, 4, 1, 1) & (7, 1, 5, 0, 1) & (7, 2, 5, 0, 1) & (7, 2, 5, 2, 2) \\
(9, 2, 5, 0, 0) & (8, 3, 5, 2, 0) & (9, 4, 5, 0, 0) & (9, 4, 5, 1, 0) & (7, 0, 6, 0, 1) & (7, 1, 6, 2, 1) \\
(7, 1, 6, 3, 1) & (8, 2, 6, 2, 0) & (11, 5, 6, 0, 0) & (8, 1, 7, 0, 1) & (8, 1, 7, 1, 1) & (11, 4, 7, 1, 0)
\end{tabular}
\end{center}
Our remaining task is therefore to verify \(I(d, g, r, \ell, m)\) in these \(30\) base cases
(as well as to prove Theorem~\ref{thm:main} for canonical curves of even genus).

\section{The Remaining Sporadic Cases}\label{sec:remaining-sporadic}

\subsection{\texorpdfstring{\boldmath The cases \((d, g, r, \ell, m) = (7, 1, 6, 2, 1), (7, 1, 6, 3, 1)\), and \((8, 2, 6, 2, 0)\)}{The cases (d, g, r, l, m) = (7, 1, 6, 2, 1), (7, 1, 6, 3, 1), and (8, 2, 6, 2, 0)}}
In these three cases, our previous arguments apply provided that \(I(6, 1, 5, 3, 0)\) holds.
(Note that \((6, 1, 5, 3, 0)\) is, however, not good, which is why we were not able to deal with these
cases in the previous section and need to separately consider them here.)
Indeed:

\begin{description}
\item[\boldmath If \((d, g, r, \ell, m) = (7, 1, 6, 2, 1)\)]
We apply Proposition~\ref{master} with parameters:
\[\ell' = 0, \quad m' = 1, \quad d' = 7, \quad \text{and} \quad n_1 = 3.\]

\item[\boldmath If \((d, g, r, \ell, m) = (7, 1, 6, 3, 1)\)]
We apply Proposition~\ref{master} with parameters:
\[\ell' = 1, \quad m' = 1, \quad d' = 7, \quad \text{and} \quad n_1 = 3.\]

\item[\boldmath If \((d, g, r, \ell, m) = (8, 2, 6, 2, 0)\)]
We apply Proposition~\ref{m0-delta2}.
\end{description}

\noindent
It thus remains to check \(I(6, 1, 5, 3, 0)\). For this, we simply apply
Proposition~\ref{master} with parameters:
\[\ell' = 3, \quad m' = 0, \quad \text{and} \quad d' = 6,\]
thereby reducing \(I(6, 1, 5, 3, 0)\) to \(I(5, 1, 4, 0, 0)\),
which suffices because \((5, 1, 4, 0, 0)\) is good.

\subsection{\texorpdfstring{\boldmath The cases \((d, g, r, \ell, m) = (4, 0, 3, 1, 1), (5, 1, 3, 1, 1)\), and \((5, 2, 3, 1, 1)\)}{The cases (d, g, r, l, m) = (4, 0, 3, 1, 1), (5, 1, 3, 1, 1), and (5, 2, 3, 1, 1)}}
In each of these cases, we
want to show interpolation for
\[N_C[u \biposmod v][\posmodalong R].\]
Write \(R \cap C = \{q_1, q_2, q_3, q_4\}\). Specializing \(u\) to \(q_1\) and \(v\) to \(q_2\)
induces a specialization of this bundle to
\[N_C[q_3 + q_4 \posmodalong R](q_1 + q_2).\]
Removing the twists at \(q_1\) and \(q_2\), we reduce to interpolation for
\[N_C[q_3 + q_4 \posmodalong R].\]
Specializing \(R\) to the union of the lines \(\bar{q_1 q_2} \cup \bar{q_3 q_4}\)
induces a specialization of this bundle to
\[N_C[q_3 \biposmod q_4].\]
Interpolation for this bundle is the assertion \(I(d, g, 3, 1, 0)\),
and \((d, g, 3, 1, 0)\) is good in each of these cases.

\subsection{\texorpdfstring{\boldmath The cases \((d, g, r, \ell, m) = (5, 1, 3, 0, 1)\) and \((6, 2, 3, 0, 1)\)}{The cases (d, g, r, l, m) = (5, 1, 3, 0, 1) and (6, 2, 3, 0, 1)}}
In both of these cases, we
we want to show interpolation for
\(N_C[\posmodalong R]\).
Peeling off a \(2\)-secant line, we reduce to interpolation for
\[N_{C(1, 0; 0)}[\posmodalong R][z \biposmod w][z \posmod 2w] \simeq N_{C(1, 0; 0)}[\posmodalong R][z \biposmod w](z).\]
Removing the twist at \(z\), we reduce to interpolation for
\[N_{C(1, 0; 0)}[\posmodalong R][z \biposmod w].\]
Interpolation for this bundle is the assertion \(I(d - 1, g - 1, 3, 1, 1)\),
and \((d - 1, g - 1, 3, 1, 1)\) is good in both of these cases.

\subsection{\texorpdfstring{\boldmath The cases \((d, g, r, \ell, m) = (4, 0, 3, 0, 2)\) and \((5, 2, 3, 0, 2)\)}{The cases (d, g, r, l, m) = (4, 0, 3, 0, 2) and (5, 2, 3, 0, 2)}}
Let \(x_1, y_1, x_2, y_2 \in C\) be four general points. Projection from \(\bar{x_i y_i}\)
defines a general map \(\pi_i \colon C \to \pp^1\) of degree \(d - 2\), which is in particular separable.
Since \(x_2\) and \(y_2\) are general, \(\bar{x_2 y_2}\) does not meet
the tangent line to \(C\) at either \(x_1\), \(y_1\), or any of the ramification points of \(\pi_1\).
Thus \((\pi_1, \pi_2) \colon C \to \pp^1 \times \pp^1\) is birational onto its image, an isomorphism near \(x_1\)
and \(y_1\) (and by symmetry near \(x_2\) and \(y_2\)), and its image is nodal.

The number of nodes is the difference between the arithmetic and geometric genus, which is \((d - 3)^2 - g \neq 0\).
Therefore, there is a pair of points \(z, w \in C\), distinct from eachother and \(x_1, x_2, y_1, y_2\),
with \(\pi_i(z) = \pi_i(w)\) for both \(i\).
Geometrically, \(x_i, y_i, z, w\) are four distinct coplanar points.
Since \(x_i\) and \(y_i\) are general, \(\bar{x_i y_i}\) is not a trisecant to \(C\), so in particular,
\((x_i, y_i, z)\) and \((x_i, y_i, w)\) are not collinear.
Because \(x_i\) and \(y_i\) can be exchanged via monodromy, this implies
no three of \(x_i, y_i, z, w\) are collinear.

Our goal is to show interpolation for \(N_C[\posmodalong R_1 \cup R_2]\).
Specializing \(R_i\) to meet \(C\) at \(x_i, y_i, z, w\), this bundle specializes to
\[N_C[x_1 + y_1 \posmodalong R_1][x_2 + y_2 \posmodalong R_2](z + w).\]
Removing the twists at \(z\) and \(w\), we reduce to interpolation for
\[N_C[x_1 + y_1 \posmodalong R_1][x_2 + y_2 \posmodalong R_2].\]
Specializing \(R_i\) to the union of lines \(\bar{x_i y_i} \cup \bar{zw}\), this
bundle specializes to
\[N_C[x_1 \biposmod y_1][x_2 \biposmod y_2].\]
Interpolation for this bundle is the assertion \(I(d, g, 3, 2, 0)\).
Although the \((d, g, 3, 2, 0)\) are not good, our previous arguments still apply
in these cases:

\begin{description}
\item[\boldmath If \((d, g) = (4, 0)\)] We apply Proposition~\ref{master} with parameters:
\[\ell' = 1, \quad m' = 0, \quad \text{and} \quad d' = 3.\]
\item[\boldmath If \((d, g) = (5, 2)\)] We apply Proposition~\ref{master-erasable} with parameters:
\[\ell' = 1, \quad m' = m'' = \epsilon_{\text{in}} = \epsilon_{\text{out}} = 0, \quad d' = 3, \quad \text{and} \quad g' = 0.\]
(The required erasability of \((s_{10}, s_{11}, s_{20}, s_{21}, w_{10}) = (1, 0, 0, 2, 0)\)
can be checked by specializing the points in any order.)
\end{description}

\subsection{\texorpdfstring{\boldmath The cases \((d, g, r, \ell, m) = (4, 0, 3, 0, 1)\) and \((5, 0, 3, 0, 1)\)}{The cases (d, g, r, l, m) = (4, 0, 3, 0, 1) and (5, 0, 3, 0, 1)}}
In both of these cases, we want to show interpolation for 
\(N_C[\posmodalong R]\).
Write \(C \cap R = \{q_1, q_2, q_3, q_4\}\). Peel off a \(1\)-secant line, i.e., degenerate \(C\)
to \(C(1, 0; 0) \cup L\) --- but in such a way that \(q_4\) specializes onto \(L\),
while \(q_1\), \(q_2\), and \(q_3\) specialize onto \(C(1, 0; 0)\).
The restriction of the modified normal bundle to \(L\) is perfectly balanced of slope \(2\), so by
Lemma~\ref{lem:interpolation_rational_bal},
this reduces interpolation for \(N_C[\posmodalong R]\) to interpolation for
\[N_{C(1, 0; 0)}[q_1 + q_2 + q_3 \posmodalong R][z \posmod q_4].\]
Erasing the transformation \([z \posmod q_4]\), we reduce to interpolation for
\[N_{C(1, 0; 0)}[q_1 + q_2 + q_3 \posmodalong R].\]
Specializing \(R\) to the union of lines \(\bar{q_1 q_2} \cup \bar{q_3 q_4}\)
induces a specialization of this bundle to
\[N_{C(1, 0; 0)}[q_1 \biposmod q_2][q_3 \posmod q_4].\]
Erasing the transformation at \(q_3\), we reduce to interpolation for
\[N_{C(1, 0; 0)}[q_1 \biposmod q_2],\]
which is the assertion \(I(d - 1, 0, 3, 1, 0)\). Both
\((3, 0, 3, 1, 0)\) and \((4, 0, 3, 1, 0)\) are good.

\subsection{\texorpdfstring{\boldmath The case \((d, g, r, \ell, m) = (5, 2, 3, 0, 1)\)}{The case (d, g, r, l, m) = (5, 2, 3, 0, 1)}}
In this case, we want to show interpolation for 
\(N_C[\posmodalong R]\).
Write \(C \cap R = \{q_1, q_2, q_3, q_4\}\). Peel off a \(2\)-secant line, i.e., degenerate \(C\)
to \(C(0, 1; 0) \cup L\) --- but in such a way that \(q_3\) and \(q_4\) specialize onto \(L\),
while \(q_1\) and \(q_2\) specialize onto \(C(0, 1; 0)\).
The restriction of the modified normal bundle to \(L\) is perfectly balanced of slope \(3\), so by
Lemma~\ref{lem:interpolation_rational_bal},
this reduces interpolation for \(N_C[\posmodalong R]\) to interpolation for
\[N_{C(0, 1; 0)}[q_1 + q_2 \posmodalong R][z \biposmod w].\]
Let \(Q\) be the unique quadric containing \(C(0, 1; 0)\) and the line \(\bar{zw}\).
Then interpolation for this bundle follows from the balanced exact sequence
\[0 \to N_{C(0, 1; 0) / Q}(z + w) \to N_{C(0, 1; 0)}[q_1 + q_2 \posmodalong R][z \biposmod w] \to N_Q|_{C(0, 1; 0)}(q_1 + q_2) \to 0.\]

\subsection{\texorpdfstring{\boldmath The case \((d, g, r, \ell, m) = (5, 0, 4, 2, 0)\)}{The case (d, g, r, l, m) = (5, 0, 4, 2, 0)}} \label{ss:50420}
In this case, we want to show interpolation for
\[N_C[u_1 \biposmod v_1][u_2 \biposmod v_2].\]
Peel off a \(1\)-secant line, i.e., degenerate \(C\)
to \(C(1, 0; 0) \cup L\) --- but in such a way that \(v_1\) and \(v_2\) specialize onto \(L\),
while \(u_1\) and \(u_2\) specialize onto \(C(1, 0; 0)\).
By Lemma~\ref{lem:interpolation_rational_bal}, this reduces to interpolation for
\[N_{C(1, 0; 0)}[u_1 \posmod v_1][u_2 \posmod v_2][z \posmod v_2].\]
Specializing \(v_1\) to \(z\), we reduce to interpolation for
\[N_{C(1, 0; 0)}[u_1 \posmod z][u_2 \posmod v_2][z \posmod v_2].\]
Projecting from \(z\), we reduce to interpolation for
\[N_{C(1, 0; 1)}[u_2 \posmod v_2][z \posmod v_2].\]
Specializing \(v_2\) onto the line \(\bar{zu_2}\), we reduce to interpolation for
\[N_{C(1, 0; 1)}[z \biposmod u_2].\]
This is the assertion \(I(3, 0, 3, 1, 0)\), and \((3, 0, 3, 1, 0)\) is good.

\subsection{\texorpdfstring{\boldmath The cases \((d, g, r, \ell, m) = (6, 2, 4, 0, 2)\) and \((7, 3, 4, 0, 1)\)}{The cases (d, g, r, l, m) = (6, 2, 4, 0, 2) and (7, 3, 4, 0, 1)}} \label{ss:73401}
In these cases, we want to show interpolation for \(N_C[\posmodalong R_1 \cup \cdots \cup R_m]\).
Write \(R_i \cap C = \{q_{i1}, q_{i2}, q_{i3}, q_{i4}, q_{i5}\}\).
Note that \(m \leq g\) in both cases, so we may peel off \(m\) two-secant lines, i.e., degenerate \(C\)
to \(C(0, m; 0) \cup L_1 \cup \cdots \cup L_m\) --- but in such a way that \(q_{i5}\) specializes onto \(L_i\),
while the remaining \(q_{ij}\) specialize onto \(C(0, m; 0)\).
By Lemma~\ref{lem:interpolation_rational_bal},
this reduces interpolation for \(N_C[\posmodalong R_1 \cup \cdots \cup R_m]\) to interpolation for
\[N_{C(0, m; 0)}[q_{11} + q_{12} + q_{13} + q_{14} \posmodalong R_1] \cdots [q_{m1} + q_{m2} + q_{m3} + q_{m4} \posmodalong R_m] [z_1 \biposmod w_1] \cdots [z_m \biposmod w_m].\]
Specialize \(R_i\) to the union of lines \(\bar{q_{i1} q_{i2}}\), \(\bar{q_{i3}q_{i4}}\), and the unique line
through \(q_{i5}\) meeting both of these two lines.
This induces a specialization of the above bundle to
\[N_{C(0, m; 0)}[q_{11} \biposmod q_{12}][q_{13} \biposmod q_{14}] \cdots [q_{m1} \biposmod q_{m2}][q_{m3} \biposmod q_{m4}] [z_1 \biposmod w_1] \cdots [z_m \biposmod w_m].\]
In other words, all that remains is to check the assertion \(I(d - m, g - m, 4, 3m, 0)\).

\begin{description}
\item[\boldmath If \((d, g, m) = (6, 2, 2)\)] In this case, writing \(C\) for a curve of degree \(d - m = 4\) and genus
\(g - m = 0\), we want to establish interpolation for
\[N_C[u_1 \biposmod v_1]\cdots [u_6 \biposmod v_6].\]
Specializing ``to a tetrahedron'', i.e., specializing
\(u_1, u_2\) to \(u_3\), and \(u_4, v_6\) to \(v_1\), and \(v_4, u_5\) to \(v_2\), and \(v_5, u_6\) to \(v_3\),
this bundle specializes to
\[N_C(u_3 + v_1 + v_2 + v_3).\]
Removing the twists at \(u_3, v_1, v_2\), and \(v_3\), we reduce to interpolation for
\(N_C\), which is the assertion \(I(4, 0, 4, 0, 0)\).
Note that \((4, 0, 4, 0, 0)\) is good.

\item[\boldmath If \((d, g, m) = (7, 3, 1)\)] In this case, writing \(C\) for a curve of degree \(d - m = 6\) and genus
\(g - m = 2\), we want to establish interpolation for
\[N_C[u_1 \biposmod v_1][u_2 \biposmod v_2] [u_3 \biposmod v_3].\]
Note that \(\delta = 4\frac{2}{3}\).
Peeling off two \(2\)-secant lines, we reduce to interpolation for
\[N_{C(0, 2; 0)}[u_1 \biposmod v_1][u_2 \biposmod v_2] [u_3 \biposmod v_3][z_1 \biposmod w_1][z_1 \posmod 2w_1][z_2 \biposmod w_2][z_2 \posmod 2w_2].\]
Limiting \(w_1\) to \(w_2\), this bundle specializes to
\[N_{C(0, 2; 0)}[u_1 \biposmod v_1][u_2 \biposmod v_2] [u_3 \biposmod v_3][z_1 + z_2 \biposmod w_2][z_1 + z_2 \posmod 2w_2].\]
Projecting from \(w_2\), we reduce to interpolation for
\[N_{C(0, 2; 1)}[u_1 \biposmod v_1][u_2 \biposmod v_2] [u_3 \biposmod v_3][z_1 + z_2 \posmod w_2].\]
Limiting \(v_3\) to \(w_2\), and \(v_2\) to \(v_1\), we reduce to interpolation for
\[N_{C(0, 2; 1)}[u_1 + u_2 \posmod v_1] [z_1 + z_2 + u_3 \posmod w_2][w_2 \posmod u_3].\]
Projecting from \(w_2\) again, we reduce to interpolation for
\[N_{C(0, 2; 2)}(u_1 + u_2 + w_2),\]
which is a nonspecial line bundle and therefore satisfies interpolation.
\end{description}

\subsection{\texorpdfstring{\boldmath The cases \((d, g, r, \ell, m) = (5, 0, 4, 0, 1)\), and \((7, 3, 4, 1, 1)\)}{The cases (d, g, r, l, m) = (5, 0, 4, 0, 1), and (7, 3, 4, 1, 1)}}
We want to show interpolation for a vector bundle of rank \(3\),
and degree \(28\) and \(46\) respectively.
By Lemma~\ref{lem:muzz}, it suffices to check interpolation for corresponding vector bundles of rank \(27\) and \(45\)
where one positive transformation is omitted.

For \((d, g, r, \ell, m) = (5, 0, 4, 0, 1)\), our bundle
of degree \(28\) is \(N_C[\posmodalong R]\).
Write \(C \cap R = \{q_1, q_2, q_3, q_4, q_5\}\). It suffices to establish interpolation for
the degree \(27\) vector bundle \(N_C[q_1 + q_2 + q_3 + q_4 \posmodalong R]\).
Specialize \(R\) to the union of the lines \(\bar{q_1q_2}\), \(\bar{q_3q_4}\), and the unique line
through \(q_5\) meeting both of these two lines. This induces a specialization of this bundle to
\(N_C[q_1 \biposmod q_2][q_3 \biposmod q_4]\),
which is the assertion \(I(5, 0, 4, 2, 0)\).
Observe that \((5, 0, 4, 2, 0)\) was already considered above in Section~\ref{ss:50420}.

For \((d, g, r, \ell, m) = (7, 3, 4, 1, 1)\), our bundle
of degree \(46\) is \(N_C[u \biposmod v][\posmodalong R]\), and we can reduce to interpolation
for the degree \(45\) vector bundle
\(N_C[u \posmod v][\posmodalong R]\). Erasing the transformation at \(u\), this reduces to interpolation for
\(N_C[\posmodalong R]\), which is the assertion \(I(7, 3, 4, 0, 1)\).
Observe that \((7, 3, 4, 0, 1)\) was already considered above in Section~\ref{ss:73401}.

\subsection{\texorpdfstring{\boldmath The case \((d, g, r, \ell, m) = (7, 1, 5, 0, 1)\)}{The case (d, g, r, l, m) = (7, 1, 5, 0, 1)}}
In this case
we want to show interpolation for \(N_C[\posmodalong R]\).
Write \(R\cap C = \{q_1, q_2, q_3, q_4, q_5, q_6\}\).
Peel off a \(2\)-secant line, i.e., degenerate \(C\) to \(C(0, 1; 0) \cup L\) --- but in such a way that \(q_2\) and \(q_4\)
specialize onto \(L\) and the remaining points specialize onto \(C(0,1;0)\).  
By Lemma~\ref{lem:interpolation_rational_bal},
this reduces interpolation for \(N_C[\posmodalong R]\) to interpolation for
\[N_{C(0, 1; 0)}[q_1 + q_3 + q_5 + q_6 \posmodalong R][u \biposmod v].\]
Specializing \(R\) to the union of the three lines \(\bar{q_1 q_2}\), \(\bar{q_3 q_4}\), \(\bar{q_5 q_6}\), and the unique fourth line in \(\pp^5\) meeting these three lines, we reduce to interpolation for
\[N_{C(0,1;0)}[q_1 \posmod q_2][q_3 \posmod q_4][q_5 \biposmod q_6][u \biposmod v].\]
Limiting \(q_2\) to \(u\) and \(q_4\) to \(v\), we reduce to proving interpolation for
\[N_{C(0,1;0)}[q_1+v \posmod u][q_3+u \posmod v][q_5 \biposmod q_6]. \]
Projecting from \(u\) and then \(v\),
we reduce to interpolation for
\[N_{C(0,1;1)}[q_5 \biposmod q_6].\]
This is the assertion \(I(4, 0, 3, 1, 0)\), and \((4, 0, 3, 1, 0)\) is good.

\subsection{\texorpdfstring{\boldmath The case \((d, g, r, \ell, m) = (7, 2, 5, 0, 1)\)}{The case (d, g, r, l, m) = (7, 2, 5, 0, 1)}}
We want to show that
\(N_C[\posmodalong R]\) satisfies interpolation.  Peeling off a \(2\)-secant line,
we reduce to interpolation for
\[N_{C(0,1;0)}[\posmodalong R][z \biposmod w][z \posmod 2w].\]
We now specialize \(R\) as in Section \ref{sec:onion_specialization} to the union of two lines \(\bar{q_1p}\) and \(\bar{q_2 p}\), and a \(3\)-secant conic through \(q_3, q_4, p\).
Then limit \(w\) to \(p\).  This induces a specialization of our bundle to
\[N_{C(0,1;0)}[q_3 + q_4 \posmodalong R^\circ][z+q_1 + q_2 \posmod p][z \posmod 2p][p \posmod z+q_1 + q_2].\]
Projecting from \(p\) we reduce to interpolation for
\[N_{C(0,1;1)}[q_3 \biposmod q_4][z \posmod p].\]
Limiting \(q_4\) to \(p\), we reduce to interpolation for
\[N_{C(0,1;1)}[q_3 + z \posmod p][p \posmod q_3].\]
Projecting from \(p\), we reduce to interpolation for
\(N_{C(0,1;2)}[p \posmod q_3]\). Erasing the transformation \([p \posmod q_3]\),
this reduces to \(I(4, 1, 3, 0, 0)\), and \((4, 1, 3, 0, 0)\) is good.

\subsection{\texorpdfstring{\boldmath The case \((d, g, r, \ell, m) = (7, 2, 5, 2, 2)\)}{The case (d, g, r, l, m) = (7, 2, 5, 2, 2)}}
This case asserts interpolation for
\[N_C[\posmodalong R_1 + R_2][u_1 \biposmod v_1][u_2 \biposmod v_2].\]
We first specialize each \(R_i\) as in Section \ref{sec:onion_specialization} to the union of two lines \(\bar{q_{i1}p_i}, \bar{q_{i2}p_i}\) and a \(3\)-secant conic through \(\{p_i, q_{i3}, q_{i4}\}\).
We then specialize \(p_1\) and \(p_2\) together to a common point \(p\).
This induces a specialization of our bundle to
\[N_C(p)[q_{11} + q_{12} + q_{21} + q_{22} \posmod p][q_{13} + q_{14} \posmodalong R_1^\circ][q_{23} + q_{24} \posmodalong R_2^\circ][u_1 \biposmod v_1][u_2 \biposmod v_2].\]
Limiting \(u_1\) to \(p\) and removing the overall twist at \(p\) reduces us to interpolation for
\[N_C[v_1 + q_{11} + q_{12} + q_{21} + q_{22} \posmod p][q_{13} + q_{14} \posmod Q_1][q_{23} + q_{24} \posmod Q_2][p \posmod v_1][u_2 \biposmod v_2].\]
Projecting from \(p\), we reduce to interpolation for
\[N_{C(0,0;1)}[q_{13} \biposmod q_{14} ][q_{23} \biposmod q_{24}][p \posmod v_1][u_2 \biposmod v_2].\]
Erasing the transformation \([p \posmod v_1]\), and peeling off two \(2\)-secant lines, we reduce to
\[N_{C(0,2;1)}[q_{13} \biposmod q_{14} ][q_{23} \biposmod q_{24}][u_2 \biposmod v_2] [z_1 \biposmod w_1][z_1 \posmod 2w_1][z_2 \biposmod w_2][z_2 \posmod 2w_2].\]
Limiting \(w_1\) and \(w_2\) to \(p\), we reduce to interpolation for
\[N_{C(0,2;1)}[q_{13} \biposmod q_{14}][q_{23} \biposmod q_{24}][u_2 \biposmod v_2] [z_1 + z_2 \biposmod p][z_1 + z_2 \posmod 2p].\]
Projecting from \(p\), we reduce to interpolation for
\[N_{C(0,2;2)}[q_{13} \biposmod q_{14}][q_{23} \biposmod q_{24}][u_2 \biposmod v_2] [z_1 + z_2 \posmod p].\]
Limiting \(q_{24}\) to \(q_{14}\), and \(v_2\) to \(p\), and removing the resulting twist at \(q_{14}\), we reduce to
\[N_{C(0,2;2)}[q_{13} + q_{23} \posmod q_{14}][u_2 + z_1 + z_2 \posmod p][p \posmod u_2].\]
Projecting from \(p\), we reduce to interpolation for \(N_{C(0,2;3)}\),
which is a nonspecial line bundle.

\subsection{\texorpdfstring{\boldmath The case \((d, g, r, \ell, m) = (9, 2, 5, 0, 0)\)}{The case (d, g, r, l, m) = (9, 2, 5, 0, 0)}}
Peeling off two \(2\)-secant lines reduces to interpolation for
\[N_{C(0,2;0)}[ z_1 \biposmod w_1][z_1 \posmod 2 w_1][ z_2 \biposmod w_2][z_2 \posmod 2 w_2]. \]
Limit the points \(z_1\) and \(w_2\) to a common point \(p\).  This induces the specialization of our bundle to
\[N_{C(0,2;0)}[ p \biposmod w_1 + z_2][p \posmod 2 w_1][z_2 \posmod 2 p]. \]
Projection from \(p\) reduces to interpolation for
\[N_{C(0,2;1)}[p \posmod w_1][z_2 \posmod p].\]
Erasing the transformation \([p \posmod w_1]\) and then projecting from \(p\),
we reduce to interpolation for \(N_{C(0,2;2)}\).
This is \(I(5, 0, 3, 0, 0)\), and \((5, 0, 3, 0, 0)\) is good.

\subsection{\texorpdfstring{\boldmath The cases \((d, g, r, \ell, m) = (9, 4, 5, 0, 0)\) and \((9, 4, 5, 1, 0)\)}{The cases (d, g, r, l, m) = (9, 4, 5, 0, 0) and (9, 4, 5, 1, 0)}}
We want that both \(N_C\) and \(N_C[u \biposmod v]\) satisfy interpolation.
Peeling off four \(2\)-secant lines, we reduce to interpolation for
\begin{gather*}
N_{C(0,4;0)}[z_1 \biposmod w_1][z_1 \posmod 2w_1][z_2 \biposmod w_2][z_2 \posmod 2w_2] [z_3 \biposmod w_3][z_3 \posmod 2w_3][z_4 \biposmod w_4][z_4 \posmod 2w_4] \text{ and} \\
N_{C(0,4;0)}[z_1 \biposmod w_1][z_1 \posmod 2w_1][z_2 \biposmod w_2][z_2 \posmod 2w_2] [z_3 \biposmod w_3][z_3 \posmod 2w_3][z_4 \biposmod w_4][z_4 \posmod 2w_4][u \biposmod v].
\end{gather*}
Specializing \(w_2\) to \(w_1\), and \(w_4\) to \(w_3\), we reduce to interpolation for
\begin{gather*}
N_{C(0,4;0)}[z_1 + z_2 \biposmod w_1][z_1 + z_2 \posmod 2w_1] [z_3 + z_4 \biposmod w_3][z_3 + z_4 \posmod 2w_3] \text{ and} \\
N_{C(0,4;0)}[z_1 + z_2 \biposmod w_1][z_1 + z_2 \posmod 2w_1] [z_3 + z_4 \biposmod w_3][z_3 + z_4 \posmod 2w_3][u \biposmod v].
\end{gather*}
Projecting from \(w_1\) and then \(w_3\), we reduce to interpolation for
\[N_{C(0,4;2)}[z_1 + z_2 \posmod w_1][z_3 + z_4 \posmod w_3] \qand N_{C(0,4;2)}[z_1 + z_2 \posmod w_1][z_3 + z_4 \posmod w_3] [u \biposmod v].\]
Specializing \(v\) to \(w_1\), we reduce to interpolation for
\[N_{C(0,4;2)}[z_1 + z_2 \posmod w_1][z_3 + z_4 \posmod w_3] \qand N_{C(0,4;2)}[z_1 + z_2 + u \posmod w_1][z_3 + z_4 \posmod w_3] [w_1 \posmod u].\]
Projecting from \(w_1\), we reduce to interpolation for
\(N_{C(0,4;3)}\), which is a nonspecial line bundle.

\subsection{\texorpdfstring{\boldmath The cases \((d, g, r, \ell, m) = (8, 3, 5, 2, 0)\) and \((11, 5, 6, 0, 0)\)}{The cases (d, g, r, l, m) = (8, 3, 5, 2, 0) and (11, 5, 6, 0, 0)}}
We first reduce interpolation for both of these bundles to the same statement.

\begin{description}
\item[\boldmath For \((d, g, r, \ell, m) = (11, 5, 6, 0, 0)\)] Note that \(\delta = 4\). Our goal is to establish
interpolation for \(N_C\). We first peel off \(2\) two-secant lines,
which reduces our problem to interpolation for 
\[N_{C(2, 0; 0)}[z_1 \biposmod w_1][z_1 \posmod 2w_1] [z_2 \biposmod w_2][z_2 \posmod 2w_2].\]
Limiting \(w_2\) to \(w_1\) induces a specialization of this bundle to
\[N_{C(2, 0; 0)}[z_1 \biposmod w_1][z_1 \posmod 2w_1] [z_2 \biposmod w_1][z_2 \posmod 2w_1].\]
Projecting from \(w_1\), we reduce to interpolation for
\[N_{C(2, 0; 1)}[z_1 + z_2 \biposmod w_1].\]

\item[\boldmath For \((d, g, r, \ell, m) = (8, 3, 5, 2, 0)\)] Our goal is to establish interpolation
for
\[N_C[u_1 \biposmod v_1][u_2 \biposmod v_2].\]
Limiting \(v_2\) to \(v_1\), we reduce to interpolation for
\[N_C[u_1 + u_2 \biposmod v_1].\]
\end{description}

To finish the argument, let \(C\) be a general BN-curve of degree \(8\) and genus \(3\) in \(\pp^5\),
and \(p, q_1, q_2 \in C\) be general points. Above we have shown that both of the desired assertions
reduce to interpolation for the modified normal bundle
\[N_C[q_1 + q_2 \biposmod p].\]
We next peel off two \(2\)-secant lines, i.e., degenerate
\(C\) to \(C \cup L_1 \cup L_2\), where \(L_1\) and \(L_2\)
are \(2\)-secant lines to \(C\) --- but in such a way that
\(q_i\) limits onto \(L_i\), and \(p\) limits onto \(C\).
Applying Lemma \ref{lem:2sec}, 
we reduce to interpolation for
\[N_{C(0, 2; 0)} [z_1 \biposmod w_1][z_1 \posmod 2w_1 + p] [z_2 \biposmod w_2][z_2 \posmod 2w_2 + p] [p \posmod q_1 + q_2].\]
Over the function field of the moduli space of \emph{unordered} pairs of triples \(\{(z_1, w_1, q_1), (z_2, w_2, q_2)\}\),
the transformation \([p \posmod q_1 + q_2]\) is linearly general as just \(q_1\) and \(q_2\) vary.
Indeed, geometrically, it is transverse to any subspace of the normal space at \(p\) \emph{except}
for the two subspaces \(N_{C \to L_1}|_p\) and \(N_{C \to L_2}|_p\) --- but neither of these subspaces
is rational over this function field. Therefore, we may erase the transformation at \(p\), thereby
reducing to interpolation for
\[N_{C(0, 2; 0)} [z_1 \biposmod w_1][z_1 \posmod 2w_1 + p] [z_2 \biposmod w_2][z_2 \posmod 2w_2 + p].\]
Note that \(\delta = 3\frac{1}{2}\) for this bundle.
Peeling off a \(2\)-secant line, we reduce to interpolation for
\[N_{C(0, 3; 0)} [z_1 \biposmod w_1][z_1 \posmod 2w_1 + p] [z_2 \biposmod w_2][z_2 \posmod 2w_2 + p] [z_3 \biposmod w_3][z_3 \posmod 2w_3].\]
Specializing \(w_3\) to \(p\), and \(w_2\) to \(w_1\), we reduce to interpolation for
\[N_{C(0, 3; 0)} [z_1 + z_2 \biposmod w_1][z_1 + z_2 \posmod 2w_1] [z_1 + z_2 + z_3 \posmod p] [z_3 \posmod 2p][p \posmod z_3].\]
Projecting from \(w_1\) and then \(p\), we reduce to interpolation for
\[N_{C(0, 3; 2)} [z_1 + z_2 \posmod w_1] [z_3 \biposmod p].\]
Finally, projecting from \(w_1\) again, we reduce to interpolation for
\(N_{C(0, 3; 3)}\), which is a nonspecial line bundle.

\subsection{\texorpdfstring{\boldmath The case \((d, g, r, \ell, m) = (7, 0, 6, 0, 1)\)}{The case (d, g, r, l, m) = (7, 0, 6, 0, 1)}}
Arguing as in the proof of Proposition \ref{master-111}, it suffices to show that \(Q^-\) and \(Q^+\) satisfy interpolation, where
\[Q^- = N_{C(0, 0;1)}[s_1 + \cdots + s_5 \posmodalong \bar{R}], \qand Q^+ = Q^-[p \posmod s_0].\]
As in the proof of Proposition \ref{master-111}, interpolation for \(Q^+\) follows from interpolation for \(Q^-\) given the assertion \(I(5, 0, 4, 0, 1)\).  Since \((5, 0, 4, 0, 1)\) is good, it suffices to prove interpolation for \(Q^-\).  By Lemma \ref{lem:int_large_degree}, this follows in turn from interpolation for 
\[Q^-(s_0) = Q^-[s_0 \posmodalong \bar{R}][s_0\posmod \Lambda],\]
where \(\Lambda \subset Q^-|_{s_0}\) is codimension \(1\).
By Lemma~\ref{lem:muzz}, since \(\mu(Q^-[s_0 \posmodalong \bar{R}]) \in \zz\),
interpolation for \(Q^-(s_0)\) follows from interpolation for 
\[Q^-[s_0 \posmodalong \bar{R}] = N_{C(0,0;1)}[\posmodalong \bar{R}].\]
This is the assertion \(I(6, 0, 5, 0,1)\), and \((6,0,5,0,1)\) is good.

\subsection{\texorpdfstring{\boldmath The cases \((d, g, r, \ell, m) = (8, 1, 7, 0, 1)\) and \((8, 1, 7, 1, 1)\)}{The cases (d, g, r, l, m) = (8, 1, 7, 0, 1) and (8, 1, 7, 1, 1)}}
In both of these cases, our goal is to show interpolation for
\[N_C[u_1 \biposmod v_1]\cdots[u_\ell \biposmod v_\ell][\posmodalong R_1].\]
We specialize \(R_1\) to the union of the lines \(q_1 q_2\), \(q_3 q_4\), \(q_5 q_6\), \(q_7 q_8\),
together with a plane conic meeting each of these four lines.
This induces a specialization of this bundle to
\[N_C[u_1 \biposmod v_1]\cdots[u_\ell \biposmod v_\ell] [q_1 \biposmod q_2] [q_3 \biposmod q_4] [q_5 \biposmod q_6] [q_7 \biposmod q_8].\]
Note that the points \(q_1, q_2, q_3, q_4, q_5, q_6, q_7, q_8\) are \emph{not} general,
as they are constrained to lie in a hyperplane.

Let \(p_1, p_2, p_3, p_4 \in C\) be points with \(\O_C(1) = 2p_1 + 2p_2 + 2p_3 + 2p_4\)
(such points exist by Riemann--Roch because \(C\) is an elliptic curve). By construction, \(H^0(\O_C(1)(-2p_1 - 2p_2 - 2p_3 - 2p_4)) = 1\);
as \(C\) is embedded by a complete linear series, we conclude that the tangent lines to \(C\)
at \(p_1, p_2, p_3, p_4\) span a hyperplane~\(H\).
Specializing the hyperplane containing \(q_1, q_2, q_3, q_4, q_5, q_6, q_7, q_8\)
to \(H\), in such a way that \(q_1\) and \(q_8\) specialize to \(p_1\), and \(q_2\) and \(q_3\) specialize to \(p_2\),
and \(q_4\) and \(q_5\) specialize to \(p_3\), and \(q_6\) and \(q_7\) specialize to \(p_4\), 
we obtain a further specialization of the above bundle to
\begin{equation}\label{eq:bundle} N_C[u_1 \biposmod v_1]\cdots[u_\ell \biposmod v_\ell] [p_1 + p_3 \biposmod p_2 + p_4].\end{equation}
Note that, since \(H^0(\O_C(1)(-p_1 - p_2 - p_3 - p_4)) = 4\),
the points \(p_1, p_2, p_3, p_4\) are linearly independent.
We claim interpolation for \eqref{eq:bundle} reduces to interpolation for
\(N_{C(0, 0; 4)}\). To see this, we divide into cases as follows:

\begin{description}
\item[\boldmath If \(\ell = 0\)] Note that \(\delta = 2\). Our goal is to show interpolation for
\[N_C [p_1 + p_3 \biposmod p_2 + p_4].\]
Projecting from each of \(p_1, p_2, p_3, p_4\) in turn,
we reduce to interpolation for
\(N_{C(0, 0; 4)}\).

\item[\boldmath If \(\ell = 1\)] Note that \(\delta = 2\frac{1}{3}\). Our goal is to show interpolation for
\[N_C[u_1 \biposmod v_1] [p_1 + p_3 \biposmod p_2 + p_4].\]
Specializing \(u_1\) to \(p_1\) and \(v_1\) to \(p_3\), we reduce to interpolation for
\[N_C[p_1 \biposmod p_3] [p_1 + p_3 \biposmod p_2 + p_4].\]
Projecting from \(p_1\), \(p_2\), \(p_3\), and \(p_4\), we reduce to interpolation for
\(N_{C(0, 0; 4)}\).
\end{description}

\noindent
It remains to check interpolation for
\(N_{C(0, 0; 4)}\), which is the assertion \(I(4, 1, 3, 0, 0)\),
and \((4, 1, 3, 0, 0)\) is good.

\subsection{\texorpdfstring{\boldmath The case \((d, g, r, \ell, m) = (11, 4, 7, 1, 0)\)}{The case (d, g, r, l, m) = (11, 4, 7, 1, 0)}}
Our goal is to show interpolation for \(N_C[u \biposmod v]\).
Note that \(\delta = 3\). Peeling off a \(2\)-secant line, we reduce to interpolation for
\[N_{C(0, 1; 0)}[z \biposmod w][z \posmod 2w][u \biposmod v].\]
Specializing \(v\) to \(w\), we reduce to interpolation for
\[N_{C(0, 1; 0)}[z + u \biposmod w][z \posmod 2w].\]
Projecting from \(w\), we reduce to interpolation for
\[N_{C(0, 1; 1)}[z \posmod w][w \posmod z+u].\]
Erasing the transformation \([w \posmod z+u]\), we reduce to interpolation for
the two bundles
\[N_{C(0, 1; 1)}[z \biposmod w] \quad \text{and} \quad N_{C(0, 1; 1)}[z \posmod w].\]
The first of these statements is the assertion \(I(9, 3, 6, 1, 0)\).
For the second, erasing the transformation at \(z\)
reduces it to interpolation for \(N_{C(0, 1; 1)}\), which is the assertion
\(I(9, 3, 6, 0, 0)\).
Note that both \((9, 3, 6, 0, 0)\) and \((9, 3, 6, 1, 0)\) are good.

\section{Canonical curves of even genus}\label{sec:canonical}

In this section we prove interpolation for the normal bundle of a general canonical curve of even genus \(g \geq 8\),
which is the last remaining case (cf.\ Proposition~\ref{prop:I_implies_interpolation} and Section~\ref{sec:rational}).
These cases are difficult, in part, because interpolation does not hold for canonical curves of genus \(4\) and \(6\), i.e., when \(r=3\) or \(r=5\).

We will do this via degeneration to \(E \cup R\), as in Section~\ref{subsec:onion}.
That is, \(E\) is an elliptic normal curve in \(\pp^r\),
and \(R\) is a general \((r+1)\)-secant rational curve of degree \(r-1\),
where \(r = g - 1\) is odd.

\subsection{\boldmath Reduction to a bundle on \(E\)} \label{sec:red-E}
Recall that, due to the exceptional case of elliptic normal curves in odd-dimensional projective spaces in Lemma \ref{lem:rest_onion_bal}, we cannot reduce interpolation for \(N_{E \cup R}\) to interpolation for \(N_{E \cup R}|_E\).
Instead, we will reduce
interpolation for \(N_{E \cup R}\) to interpolation for a certain \emph{modification}
of \(N_{E \cup R}|_E\).
Our first step will be to show that \(N_{E \cup R}|_R\) is not perfectly balanced,
and give a geometric description of its Harder--Narasimhan (HN) filtration.

\begin{lem}\label{map_of_ell_curve}
Let \(q_1, \dots, q_{2n+2}\) be a general collection of points on \(\pp^1\).  Let \(p_1, \dots, p_{2n+2}\) be a general collection of points
on a general elliptic curve \(E\).  Then there exist exactly two maps
of degree \(n+1\) from \(E\) to \(\pp^1\) that send \(p_i\) to \(q_i\).
\end{lem}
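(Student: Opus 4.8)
The plan is to fix coordinates $q_i = [a_i:b_i]$ on $\pp^1$ and note that a degree-$(n+1)$ map $f\colon E\to\pp^1$ with $f(p_i)=q_i$ for all $i$ is the same datum as a line bundle $L$ of degree $n+1$, together with linearly independent sections $s_0,s_1\in H^0(L)$ spanning a basepoint-free pencil, satisfying $(b_i s_0-a_i s_1)(p_i)=0$ for $i=1,\dots,2n+2$; here $f=[s_0:s_1]$ and $L=f^*\O_{\pp^1}(1)$, and two such data give the same $f$ precisely when the line bundles agree and the pairs of sections are proportional. Since $n+1\ge 1$, the Picard bundle $\mathcal{E}$ on $\Pic^{n+1}(E)$ with fiber $H^0(L)$ over $[L]$ is a vector bundle of rank $n+1$; fixing a Poincar\'e bundle $\mathcal{P}$ on $E\times\Pic^{n+1}(E)$ and writing $\mathcal{P}_p:=\mathcal{P}|_{\{p\}\times\Pic^{n+1}(E)}$, the conditions above assemble, as $[L]$ varies, into a morphism of rank-$(2n+2)$ vector bundles
\[
\mathcal{M}\colon \mathcal{E}^{\oplus 2}\longrightarrow \bigoplus_{i=1}^{2n+2}\mathcal{P}_{p_i},\qquad (s_0,s_1)\longmapsto \big(b_i\cdot s_0(p_i)-a_i\cdot s_1(p_i)\big)_i,
\]
whose $i$th component is built from the evaluation map $\mathcal{E}\to\mathcal{P}_{p_i}$. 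Its determinant is a section of the line bundle $\big(\bigotimes_{i=1}^{2n+2}\mathcal{P}_{p_i}\big)\otimes(\det\mathcal{E})^{-2}$ on $\Pic^{n+1}(E)\cong E$.

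First I would compute the degree of this line bundle. A Grothendieck--Riemann--Roch computation for $\pi_2\colon E\times\Pic^{n+1}(E)\to\Pic^{n+1}(E)$ (using $R^1\pi_{2*}\mathcal{P}=0$, that $\operatorname{td}(T_{\pi_2})=1$ since $E$ is elliptic, and that $\operatorname{ch}_2(\mathcal{P})=-[\mathrm{pt}]\times[\mathrm{pt}]$ for a suitably normalized $\mathcal{P}$) gives $\deg\det\mathcal{E}=-1$, while each $\mathcal{P}_p$ has degree $0$; hence the line bundle has degree $2\cdot(2n+2)\cdot 0 - 2\cdot(-1)=2$. (This is independent of the choice of Poincar\'e bundle: replacing $\mathcal{P}$ by $\mathcal{P}\otimes\pi_2^*M$ multiplies the line bundle by $M^{\otimes(2n+2)}\otimes M^{\otimes(-2(n+1))}=\O$.) Therefore, as soon as $\det\mathcal{M}\not\equiv 0$, it has exactly two zeros counted with multiplicity. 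One can also see the ``$2$'' transparently by degenerating $E$ to a nodal cubic: the count becomes that of degree-$(n+1)$ maps $\pp^1\to\pp^1$ through $2n+2$ prescribed value-pairs and identifying the two preimages of the node, i.e.\ $2n+2$ linear conditions (leaving a $\pp^1$ of pairs $(P,Q)$) plus one quadratic condition $a_0b_{n+1}=a_{n+1}b_0$, cutting out two points of that $\pp^1$.

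The rest is a genericity argument, carried out for $p_i$ and $q_i$ general. I would check: (i) $\det\mathcal{M}\not\equiv 0$; a solution with $s_0,s_1$ proportional would force a nonzero section of a general $L$ to vanish at at least $2n+1$ general points, which is impossible, so it suffices to rule out linearly independent solutions for a \emph{general} $L$, and for general $p_i$ the evaluation functionals $s\mapsto s(p_i)$ are general, so this follows from the observation that the two subspaces $\{(c_i)\colon (c_ib_i)_i\in\ker\}$ and $\{(c_i)\colon (c_ia_i)_i\in\ker\}$ of $\mathbb{C}^{2n+2}$ (where $\ker$ is the kernel of $(s\mapsto s(p_i))_i\colon H^0(L)\to\mathbb{C}^{2n+2}$, of dimension $n+1$) meet only in $0$ for general $a_i,b_i$; (ii) the two zeros of $\det\mathcal{M}$ are simple, so $\ker\mathcal{M}$ is one-dimensional at each; (iii) the resulting pencils are basepoint-free, hence define genuine degree-$(n+1)$ maps (a base point would make $f$ factor through a map of strictly smaller degree still sending all $p_i$ to all $q_i$, excluded by a parameter count). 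Then each of the two zeros $[L_1],[L_2]$ of $\det\mathcal{M}$ yields exactly one map $f_j$ with $f_j(p_i)=q_i$; distinct line bundles give distinct maps, and every such map arises this way, so there are exactly two.

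The hypothesis that $E$ be general is used only to make the genericity claims in the last paragraph uniform; it is not essential to the argument. The main obstacle is the Riemann--Roch identification $\deg\det\mathcal{E}=-1$ on $\Pic^{n+1}(E)$, where one must be careful with sign conventions for the Poincar\'e bundle (or, alternatively, must carefully justify that the nodal-cubic degeneration computes the limit of the count without maps appearing or disappearing at the boundary); verifying basepoint-freeness in step (iii) is a secondary technical point.
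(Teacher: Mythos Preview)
Your approach is correct and genuinely different from the paper's. The paper first shows the count is finite and nonzero by checking that \(H^1(f^*T_{\pp^1}(-p_1-\cdots-p_{2n+2}))=0\) for a general \(f\), so deformations of \(f\) match deformations of the \(f(p_i)\); it then computes the number by degenerating the \emph{target} \(\pp^1\) to a chain of two \(\pp^1\)'s (with \(q_1,\dots,q_{n+1}\) on one component and \(q_{n+2},\dots,q_{2n+2}\) on the other) and classifying the resulting admissible covers, finding exactly two. Your argument instead packages the problem as a degeneracy locus on \(\Pic^{n+1}(E)\) and reads off the count \(2\) from a Chern-class computation via GRR.

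Each approach has its advantages. The paper's is elementary---no Picard bundles, no GRR---and the étaleness statement it proves in the first paragraph already subsumes your genericity steps (ii) and (iii) (simplicity of zeros and honesty of the resulting maps) in one stroke. Your approach, on the other hand, identifies the number \(2\) as an intersection number on \(\Pic^{n+1}(E)\), which is conceptually clean and would generalize more readily (e.g.\ to higher genus via the theta divisor). Your nodal-cubic remark is a source-side analogue of the paper's target-side degeneration; either limit argument is a valid substitute for the GRR step. The places where your write-up is thinnest are exactly (ii) and (iii): you could tighten these by borrowing the paper's observation that \(f^*T_{\pp^1}(-\sum p_i)\) has degree \(0\) on \(E\), hence vanishing cohomology for general \(f\), which shows the evaluation map \(f\mapsto(f(p_i))_i\) is étale and therefore forces both simplicity of the zeros and nondegeneracy of the corresponding maps.
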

\begin{proof}
If \(f \colon E \to \pp^1\) is a general map of degree \(n+1\),
then \(f^*T_{\pp^1}(-p_1 - \cdots -p_{2n+2})\) has vanishing cohomology.
Therefore, deformations of \(f\) are in bijection with deformations of the \(f(p_i)\).
The number of maps
of degree \(n+1\) from \(E\) to \(\pp^1\) that send \(p_i\) to \(q_i\)
is therefore finite and nonzero.

To calculate this number, we degenerate the target \(\pp^1\) to a binary curve,
with \(q_1, q_2, \ldots, q_{n+1}\) on the left \(\pp^1\), and \(q_{n + 2}, q_{n + 3}, \ldots, q_{2n+2}\)
on the right \(\pp^1\). This degeneration is illustrated in the following diagram:

\begin{center}
\begin{tikzpicture}
\draw (-5, -1) -- (1, 0.2);
\draw (5, -1) -- (-1, 0.2);
\filldraw (-4, -0.8) circle[radius=0.03];
\filldraw (-3, -0.6) circle[radius=0.03];
\filldraw (-1, -0.2) circle[radius=0.03];
\filldraw (1, -0.2) circle[radius=0.03];
\filldraw (2, -0.4) circle[radius=0.03];
\filldraw (4, -0.8) circle[radius=0.03];
\draw (-4, -1.1) node{\(q_1\)};
\draw (-3, -0.9) node{\(q_2\)};
\draw (-1, -0.5) node{\(q_{n+1}\)};
\draw (-2, -0.7) node{\(\cdots\)};
\filldraw(1, -0.5) node{\(q_{n+2}\)};
\filldraw(2, -0.7) node{\(q_{n+3}\)};
\draw (3, -0.9) node{\(\cdots\)};
\filldraw(4, -1.1) node{\(q_{2n+2}\)};
\end{tikzpicture}
\end{center}

Such a map \(E \to \pp^1\) then degenerates to an admissible cover
from a marked curve whose stable model is \((E, p_1, p_2, \ldots, p_{2n+2})\).
One can construct two such admissible covers
(both with no infinitesimal deformations sending \(p_i\) to \(q_i\)): In one such cover, \(E\) maps to the left
component, with \(p_1, p_2, \ldots, p_{n+1}\) mapping to \(q_1, q_2, \ldots, q_{n+1}\),
and \(p_{n + 2}, p_{n + 3}, \ldots, p_{2n+2}\) mapping to the node;
each of the points \(p_{n + 2}, p_{n + 3}, \ldots, p_{2n+2}\) is then attached to a rational
tail mapping isomorphically onto the right component.
Similarly, in the other such cover, \(E\) maps to the right
component, with \(p_{n + 2}, p_{n + 3}, \ldots, p_{2n+2}\) mapping to \(q_{n + 2}, q_{n + 3}, \ldots, q_{2n+2}\),
and \(p_1, p_2, \ldots, p_{n+1}\) mapping to the node and attached to a rational
tail mapping isomorphically onto the left component.
These covers are pictured in the following diagrams:
\begin{center}
\begin{tikzpicture}[scale=0.75]
\draw (1, 3.2) .. controls (-3, 2.53) and (-3, 1.53) .. (1, 2.2);
\draw (-5, 2.2) .. controls (-1, 2.87) and (-1, 1.87) .. (-5, 1.2);
\draw (-1, 3.17) -- (5, 2.17);
\draw (-1, 2.22) -- (5, 1.22);
\draw (-5, -1) -- (1, 0.2);
\draw (5, -1) -- (-1, 0.2);
\filldraw (-4, -0.8) circle[radius=0.03];
\filldraw (-3, -0.6) circle[radius=0.03];
\filldraw (-1, -0.2) circle[radius=0.03];
\filldraw (1, -0.2) circle[radius=0.03];
\filldraw (2, -0.4) circle[radius=0.03];
\filldraw (4, -0.8) circle[radius=0.03];
\filldraw (-4, 1.39) circle[radius=0.03];
\filldraw (-3, 1.65) circle[radius=0.03];
\filldraw (-1, 1.98) circle[radius=0.03];
\filldraw (0, 2.05) circle[radius=0.03];
\filldraw (0, 3.01) circle[radius=0.03];
\draw (-4, 1.1) node{\(p_1\)};
\draw (-3, 1.35) node{\(p_2\)};
\draw (-2, 1.53) node{\(\cdots\)};
\draw (-1.05, 1.7) node{\(p_{n+1}\)};
\draw (-4, -1.1) node{\(q_1\)};
\draw (-3, -0.9) node{\(q_2\)};
\draw (-1, -0.5) node{\(q_{n+1}\)};
\draw (-2, -0.7) node{\(\cdots\)};
\draw (0.05, 1.75) node{\(p_{n + 2}\)};
\draw (0, 3.3) node{\(p_{2n+2}\)};
\draw (0, 2.5) node{\(\vdots\)};
\filldraw(1, -0.5) node{\(q_{n+2}\)};
\filldraw(2, -0.7) node{\(q_{n+3}\)};
\draw (3, -0.9) node{\(\cdots\)};
\filldraw(4, -1.1) node{\(q_{2n+2}\)};
\end{tikzpicture}
\hfill
\begin{tikzpicture}[scale=0.75]
\draw (-1, 3.2) .. controls (3, 2.53) and (3, 1.53) .. (-1, 2.2);
\draw (5, 2.2) .. controls (1, 2.87) and (1, 1.87) .. (5, 1.2);
\draw (1, 3.17) -- (-5, 2.17);
\draw (1, 2.22) -- (-5, 1.22);
\draw (5, -1) -- (-1, 0.2);
\draw (-5, -1) -- (1, 0.2);
\filldraw (4, -0.8) circle[radius=0.03];
\filldraw (1.8, -0.36) circle[radius=0.03];
\filldraw (1, -0.2) circle[radius=0.03];
\filldraw (-1, -0.2) circle[radius=0.03];
\filldraw (-3, -0.6) circle[radius=0.03];
\filldraw (-4, -0.8) circle[radius=0.03];
\filldraw (4, 1.39) circle[radius=0.03];
\filldraw (1.8, 2.04) circle[radius=0.03];
\filldraw (1, 1.98) circle[radius=0.03];
\filldraw (0, 2.05) circle[radius=0.03];
\filldraw (0, 3.01) circle[radius=0.03];
\draw (4, 1.1) node{\(p_{2n+2}\)};
\draw (3, 1.35) node{\(\cdots\)};
\draw (2.0, 1.7) node{\(p_{n+3}\)};
\draw (0.9, 1.7) node{\(p_{n+2}\)};
\draw (4, -1.1) node{\(q_{2n+2}\)};
\draw (3.0, -0.88) node{\(\cdots\)};
\draw (1, -0.5) node{\(q_{n+2}\)};
\draw (2.0, -0.66) node{\(q_{n+3}\)};
\draw (-0.2, 1.7) node{\(p_{n+1}\)};
\draw (0, 3.3) node{\(p_1\)};
\draw (0, 2.5) node{\(\vdots\)};
\filldraw(-1, -0.5) node{\(q_{n+1}\)};
\filldraw(-3, -0.9) node{\(q_2\)};
\draw (-2, -0.7) node{\(\cdots\)};
\filldraw(-4, -1.1) node{\(q_1\)};
\end{tikzpicture}
\end{center}
In fact, these are the only two such admissible covers.
Indeed, the curve \(E\) must map to one of the two components of the above degeneration of the target \(\pp^1\), say without loss of generality to the left component.  Then \(p_{n+2}, \dots, p_{2n+2}\) must map to the node,
which we normalize to \([1:0]\).
Hence the map \(E \to \pp^1\) is given by \([s:1]\) for a section \(s \in H^0(\O(p_{n+2} + \cdots + p_{2n+2}))\).
Since \(p_1, \ldots, p_{n+1}\) are general, the evaluation map \(H^0(\O(p_{n+2} + \cdots + p_{2n+2})) \to \bigoplus_{i = 1}^{n+1} \O(p_{n+2} + \cdots + p_{2n+2})|_{p_i}\) is an isomorphism.
Hence \(s\) is uniquely determined.
We conclude that, when \(q_1, q_2, \ldots, q_{2n + 2} \in \pp^1\) are general,
there are exactly two such maps.
\end{proof}

\noindent
Write \(f_i \colon E \to \pp^1\) (for \(i \in \{1, 2\}\)) for these two maps,
and \(\bar{f} = (f_1, f_2) \colon E \to \pp^1 \times \pp^1\) for the resulting map.

\begin{lem} \label{ell_nodal}
In the setup of Lemma~\ref{map_of_ell_curve}, the map \(\bar{f}\) is
a general map from \(E\) to \(\pp^1\times \pp^1\) of bidegree \((n+1, n+1)\). In particular:
\begin{itemize}
\item \(\bar{f}\) is birational onto its image, and its image is nodal.
\item \(\bar{f}^* \O_{\pp^1\times \pp^1}(1, -1) \in \Pic^0(E)\) is general (and thus nontrivial).
\end{itemize}
\end{lem}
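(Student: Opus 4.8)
The plan is to prove that $\bar f$ is a \emph{general} map from $E$ to $\pp^1 \times \pp^1$ of bidegree $(n+1, n+1)$, and then deduce the two bulleted consequences by standard arguments. The key point is that, by Lemma~\ref{map_of_ell_curve}, the construction that sends a general collection of points $(q_i)$ on $\pp^1$ and $(p_i)$ on $E$ to the pair of degree-$(n+1)$ maps $f_1, f_2$ produces a pair whose associated map $\bar f = (f_1, f_2)$ sends $p_i \mapsto (q_i, q_i)$, i.e.\ interpolates the diagonal points $(q_i, q_i) \in \pp^1 \times \pp^1$. First I would compare dimensions. The space of maps $E \to \pp^1 \times \pp^1$ of bidegree $(n+1, n+1)$ has dimension $2(n+1) + 2(n+1) - 1 = 4n + 3$ (two copies of the $\pp^1$-case, i.e.\ $H^0(f_i^* T_{\pp^1})$ has dimension $2(n+1)$, minus the overall $\Aut(E)$ acting — actually one should just compute $\dim = h^0(\bar f^* T_{\pp^1\times\pp^1})$ using that the two factors have vanishing $H^1$ after twisting down by enough points). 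Requiring $\bar f(p_i)$ to lie on the diagonal for $2n+2$ general points imposes $2n+2$ conditions; the remaining parameters are the positions of the $p_i$ on $E$ (another $2n+2$) and of the $q_i$ on the diagonal ($2n+2$). Matching these counts shows that the locus swept out is all of the space of such maps, so $\bar f$ is general.

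More precisely, I would run the deformation-theoretic argument already used in the proof of Lemma~\ref{map_of_ell_curve}: for a general map $g \colon E \to \pp^1 \times \pp^1$ of bidegree $(n+1, n+1)$, the bundle $g^* T_{\pp^1 \times \pp^1}(-p_1 - \cdots - p_{2n+2})$ has vanishing $H^1$ (each summand $f_i^* T_{\pp^1}(-p_1 - \cdots - p_{2n+2})$ has degree $2(n+1) - (2n+2) = 0$ on an elliptic curve, so is either general of degree $0$ with no cohomology, or at worst one checks genericity). Hence deformations of $g$ map isomorphically onto deformations of the $2n+2$ points $g(p_i) \in \pp^1 \times \pp^1$. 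Starting from the maps produced in Lemma~\ref{map_of_ell_curve}, which have $g(p_i) = (q_i, q_i)$, this surjectivity onto deformations of the image points — combined with freedom to move the $p_i$ on $E$ and independently reach any nearby configuration of $2n+2$ points in $\pp^1 \times \pp^1$ (not just diagonal ones) — shows that the maps $\bar f$ arising in Lemma~\ref{map_of_ell_curve} are Zariski-dense in the space of all bidegree-$(n+1, n+1)$ maps. Therefore $\bar f$ is general, which is the main assertion.

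Given that $\bar f$ is general, the two bullets follow from standard facts. For birationality onto a nodal image: a general map from a smooth curve to a smooth surface is an immersion with only nodes as singularities, and is birational onto its image once the target has Picard rank large enough / the linear system is big enough — here bidegree $(n+1, n+1)$ with $n \geq 1$ is more than enough to separate a general pair of points and a general tangent vector, so the generic such map is birational with nodal image. For the second bullet: the map $\operatorname{Pic}^0(E) \times (\text{choices}) \to \operatorname{Pic}^0(E)$ sending a bidegree-$(n+1,n+1)$ map $g$ to $g^* \O_{\pp^1 \times \pp^1}(1, -1)$ is dominant — indeed for fixed $f_1$ the line bundle $f_1^*\O(1)$ is fixed but $f_2^*\O(1)$ ranges over an open subset of $\operatorname{Pic}^{n+1}(E)$ as $f_2$ varies, so the difference $f_1^*\O(1) - f_2^*\O(1)$ ranges over a dense subset of $\operatorname{Pic}^0(E)$ — and hence for $g = \bar f$ general this class is a general, in particular nonzero, element of $\operatorname{Pic}^0(E)$.

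The main obstacle I anticipate is making the density/dominance argument in the second paragraph fully rigorous: one must argue not merely that $\bar f$ interpolates the $q_i$, but that the incidence correspondence $\{(g, (p_i), (q_i)) : g(p_i) = (q_i, q_i)\}$ has a component dominating the space of maps $g$. This is where the vanishing of $H^1(g^* T_{\pp^1 \times \pp^1}(-\sum p_i))$ is essential — it guarantees the incidence correspondence is smooth of the expected dimension at the relevant points and that the projection to the space of $g$'s is dominant. Everything else (nodality, the Picard computation) is routine once genericity of $\bar f$ is in hand.
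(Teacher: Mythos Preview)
Your approach is essentially the paper's --- the same incidence correspondence, the same \(H^1\) vanishing --- but the paper runs it in the opposite direction, and this choice of direction is what makes it clean. The paper \emph{starts} from a general map \(f\colon E\to\pp^1\times\pp^1\), reads off \(\{p_i\}=f^{-1}(\Delta)\) and their images \(q_i\), and then checks that \((p_i,q_i)\) can be deformed to a general configuration by verifying \(H^1(f^*T_{\pp^1\times\pp^1}(-\Delta))=0\). Since \(T_{\pp^1\times\pp^1}(-\Delta)\simeq\O(1,-1)\oplus\O(-1,1)\) and \(f\) is already assumed general, this vanishing is immediate. Then for general \((p_i,q_i)\) the fiber of the incidence correspondence contains a general map; by Lemma~\ref{map_of_ell_curve} that fiber has exactly two elements, so \(\bar f\) is general.

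Your write-up wavers between this direction and its reverse. When you compute the \(H^1\) vanishing ``for a general map \(g\)'' you are doing exactly what the paper does, and that is enough. But when you then write ``starting from the maps produced in Lemma~\ref{map_of_ell_curve}'' you appear to want the vanishing at \(\bar f\) itself, and that is not free: the two summands \(f_i^*\O(2)(-\sum p_j)\) are, after using \(\O_E(\sum p_j)=\bar f^*\O(1,1)\), exactly \(\bar f^*\O(\pm1,\mp1)\) --- so their nontriviality is precisely the second bullet you are trying to prove. You could break the circularity by checking it directly in the degeneration of Lemma~\ref{map_of_ell_curve} (where \(f_1^*\O(1)=\O(p_{n+2}+\cdots+p_{2n+2})\neq\O(p_1+\cdots+p_{n+1})=f_2^*\O(1)\)), but the paper's direction sidesteps the issue entirely, and its identification \(f^*T_{\pp^1\times\pp^1}(-\Delta)\simeq f^*\O(1,-1)\oplus f^*\O(-1,1)\) makes both bullets fall out at once.
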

\begin{proof}
Fix a general elliptic curve \(E\).
Let \(f \colon E \to \pp^1 \times \pp^1\) be a general map.
Write \(\Delta \subset \pp^1 \times \pp^1\) for the diagonal.
Then the \(\{p_1, p_2, \ldots, p_{2n+2}\} = f^{-1}(\Delta) \subset E\),
and their images \(\{q_1, q_2, \ldots, q_{2n+2}\} \subset \pp^1\) under the composition of either projection with \(f\),
satisfy all the hypotheses of Lemma~\ref{map_of_ell_curve} except for possibly
genericity.

To complete the proof, it remains to check that there is no obstruction
to deforming \(f\) so that these points become general. In other words,
we must check \(H^1(f^* T_{\pp^1 \times \pp^1}(-\Delta)) = 0\).
But this is true because \(f\) is general and \(f^* T_{\pp^1 \times \pp^1}(-\Delta) \simeq f^*\O_{\pp^1 \times \pp^1}(1, -1) \oplus f^*\O_{\pp^1 \times \pp^1}(-1, 1)\).
\end{proof}

\begin{lem} \label{p1p2n} In the setup of Lemma~\ref{map_of_ell_curve}, we have
\[\bar{f}^* \O_{\pp^1 \times \pp^1}(1, 1) \simeq \O_E(p_1 + p_2 + \cdots + p_{2n+2}).\]
\end{lem}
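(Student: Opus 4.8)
The plan is to realize $p_1 + p_2 + \cdots + p_{2n+2}$ as the pullback of the diagonal under $\bar f$. Recall that the diagonal $\Delta \subset \pp^1 \times \pp^1$ is cut out by the bilinear form $s_0 t_1 - s_1 t_0$, and hence belongs to the linear system $|\O_{\pp^1 \times \pp^1}(1,1)|$. Since $f_1$ and $f_2$ are distinct maps (Lemma~\ref{map_of_ell_curve}), or since $\bar f$ is birational onto its image (Lemma~\ref{ell_nodal}), the image $\bar f(E)$ is not contained in $\Delta$; therefore the pullback $\bar f^* \Delta$ is a well-defined effective Cartier divisor on $E$ with $\O_E(\bar f^* \Delta) \simeq \bar f^* \O_{\pp^1 \times \pp^1}(1,1)$. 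Its degree is
\[
\bar f^* \O_{\pp^1 \times \pp^1}(1,1) \cdot [\mathrm{pt}] = \deg f_1^* \O_{\pp^1}(1) + \deg f_2^* \O_{\pp^1}(1) = (n+1) + (n+1) = 2n+2.
\]

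Next I would observe that each $p_i$ lies in the support of $\bar f^* \Delta$: indeed $\bar f(p_i) = (f_1(p_i), f_2(p_i)) = (q_i, q_i) \in \Delta$, so $p_i$ appears in $\bar f^* \Delta$ with multiplicity at least one. As the $p_i$ are pairwise distinct, this yields an inequality of effective divisors $p_1 + p_2 + \cdots + p_{2n+2} \leq \bar f^* \Delta$. Comparing degrees — both sides have degree $2n+2$ — the difference $\bar f^* \Delta - (p_1 + p_2 + \cdots + p_{2n+2})$ is an effective divisor of degree $0$, hence is zero. Therefore $\bar f^* \Delta = p_1 + p_2 + \cdots + p_{2n+2}$, and passing to associated line bundles gives $\bar f^* \O_{\pp^1 \times \pp^1}(1,1) \simeq \O_E(p_1 + p_2 + \cdots + p_{2n+2})$, as claimed.

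This argument is essentially formal; the only inputs are that $f_1 \neq f_2$ (so that $\bar f^* \Delta$ is defined) and that the $p_i$ are distinct, both supplied by Lemmas~\ref{map_of_ell_curve} and~\ref{ell_nodal}, so I do not anticipate any real obstacle. In particular, one does not even need transversality of $\bar f$ to $\Delta$, since the degree count forces equality directly; if one preferred, Lemma~\ref{ell_nodal} shows $\bar f$ is a general map of bidegree $(n+1, n+1)$ and hence meets $\Delta$ transversely, so $\bar f^* \Delta$ is automatically reduced of degree $2n+2$ and supported on $\{p_1, \dots, p_{2n+2}\}$.
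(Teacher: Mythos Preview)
Your proof is correct and genuinely different from the paper's. The paper argues by rigidity and specialization: since the $q_i$ vary in a rational base and maps from rational varieties to abelian varieties are constant, the isomorphism class of $\bar f^*\O_{\pp^1\times\pp^1}(1,1)$ is independent of the $q_i$, and one then reads off the answer in the binary-curve degeneration of Lemma~\ref{map_of_ell_curve}. Your argument instead identifies $p_1+\cdots+p_{2n+2}$ directly as $\bar f^*\Delta$ for the diagonal $\Delta\in|\O_{\pp^1\times\pp^1}(1,1)|$, using only that $f_1\neq f_2$ and that the $p_i$ are distinct. Your route is more elementary and self-contained; the paper's route showcases the ``rational base'' rigidity trick that it uses repeatedly elsewhere (e.g.\ in the proof of Proposition~\ref{prop:JHinterval}), but here your direct computation is cleaner and requires no degeneration.
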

\begin{proof}
The isomorphism class of the line bundle \(\bar{f}^* \O_{\pp^1 \times \pp^1}(1, 1)\) is independent of the moduli of the points \(q_1, \dots, q_{2n+2}\) because they vary in a rational base.  Hence we may calculate it in the degeneration of Lemma~\ref{map_of_ell_curve}, where the result clearly holds.
\end{proof}

\begin{lem} \label{push} In the setup of Lemma~\ref{map_of_ell_curve},
the pushforward of \(\O_E(p_1 + p_2 + \cdots + p_{2n+2})\)
along either map is perfectly balanced, i.e.,
\[(f_i)_* \O_E(p_1 + p_2 + \cdots + p_{2n+2}) \simeq \O_{\pp^1}(1)^{\oplus(n+1)}.\]
\end{lem}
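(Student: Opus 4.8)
The plan is to exploit that each $f_i\colon E\to\pp^1$ is a finite morphism of degree $n+1$. For such a map, $(f_i)_*$ is exact, preserves Euler characteristics, satisfies $H^j(\pp^1,(f_i)_*\mathcal{F})=H^j(E,\mathcal{F})$, and commutes with twisting by line bundles pulled back from $\pp^1$ via the projection formula. First I would record the numerics. Since $\deg_E\O_E(p_1+\cdots+p_{2n+2})=2n+2$, we get $\chi\bigl(\pp^1,(f_i)_*\O_E(p_1+\cdots+p_{2n+2})\bigr)=\chi\bigl(E,\O_E(p_1+\cdots+p_{2n+2})\bigr)=2n+2$, while the rank is $n+1$. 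Now a rank-$(n+1)$ bundle $\bigoplus_{j=1}^{n+1}\O_{\pp^1}(a_j)$ with $\sum(a_j+1)=2n+2$ is isomorphic to $\O_{\pp^1}(1)^{\oplus(n+1)}$ as soon as every $a_j\le 1$: the constraint $\sum a_j=n+1$ over $n+1$ terms then forces each $a_j=1$. Equivalently, it suffices to show $h^0\bigl((f_i)_*\O_E(p_1+\cdots+p_{2n+2})(-2)\bigr)=0$.

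By the projection formula this $H^0$ equals $h^0\bigl(E,\O_E(p_1+\cdots+p_{2n+2})\otimes f_i^*\O_{\pp^1}(-2)\bigr)$, the cohomology of a line bundle of degree $(2n+2)-2(n+1)=0$ on $E$, which vanishes precisely when that line bundle is nontrivial. To identify it, I would invoke Lemma~\ref{p1p2n} to write $\O_E(p_1+\cdots+p_{2n+2})\simeq\bar{f}^*\O_{\pp^1\times\pp^1}(1,1)$, and note that $f_1$ and $f_2$ are the two coordinate projections of $\bar{f}$, so $f_1^*\O_{\pp^1}(-2)=\bar{f}^*\O_{\pp^1\times\pp^1}(-2,0)$ and $f_2^*\O_{\pp^1}(-2)=\bar{f}^*\O_{\pp^1\times\pp^1}(0,-2)$. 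Hence the line bundle in question is $\bar{f}^*\O_{\pp^1\times\pp^1}(-1,1)$ for $i=1$ and $\bar{f}^*\O_{\pp^1\times\pp^1}(1,-1)$ for $i=2$; both are nontrivial by Lemma~\ref{ell_nodal}, which states that $\bar{f}^*\O_{\pp^1\times\pp^1}(1,-1)$ is a general (hence nonzero) element of $\Pic^0(E)$, so its inverse $\bar{f}^*\O_{\pp^1\times\pp^1}(-1,1)$ is also nonzero. This yields the required vanishing, and therefore the lemma. As a consistency check one may also observe $h^1\bigl((f_i)_*\O_E(p_1+\cdots+p_{2n+2})\bigr)=h^1\bigl(E,\O_E(p_1+\cdots+p_{2n+2})\bigr)=0$ since this bundle has positive degree, confirming that every $a_j\ge -1$ as well.

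There is no serious obstacle: the only nontrivial input is the nontriviality of $\bar{f}^*\O_{\pp^1\times\pp^1}(1,-1)$ in $\Pic^0(E)$, and this is exactly the content of Lemma~\ref{ell_nodal}. The one point requiring care is the bookkeeping of which of $f_1,f_2$ corresponds to which factor of $\pp^1\times\pp^1$; but since the situation is symmetric under swapping the two factors, the conclusion does not depend on this choice, and both cases reduce to the nonvanishing already established.
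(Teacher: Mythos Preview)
Your proof is correct and follows essentially the same approach as the paper: compute the degree of the pushforward via Euler characteristic, reduce balancedness to the vanishing of $H^0$ after twisting by $\O_{\pp^1}(-2)$, and identify the resulting degree-zero line bundle on $E$ as $\bar{f}^*\O_{\pp^1\times\pp^1}(\pm1,\mp1)$ using Lemmas~\ref{p1p2n} and~\ref{ell_nodal}. The only differences are cosmetic (your extra consistency check that $a_j\ge -1$ is not needed once $\sum a_j=n+1$ and each $a_j\le 1$).
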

\begin{proof}
Since \(f_i\) is of degree \(n+1\), the pushforward
\((f_i)_* \O_E(p_1 + p_2 + \cdots + p_{2n+2})\) is a rank \(n+1\)
vector bundle on \(\pp^1\), i.e., we can write
\[(f_i)_* \O_E(p_1 + p_2 + \cdots + p_{2n+2}) \simeq \bigoplus_{j = 1}^{n+1} \O_{\pp^1}(a_j).\]
The integers \(a_j\) satisfy
\[\sum a_j = \chi\left(\bigoplus_{j = 1}^{n+1} \O_{\pp^1}(a_j)\right) - (n+1) = \chi(\O_E(p_1 + p_2 + \cdots + p_{2n+2})) - (n+1) = 2n+2 - (n+1) = n+1,\]
so to see that \(a_j = 1\) for all \(j\), it suffices to see
that \(a_j < 2\) for all \(j\), i.e., that
\[0 = H^0\left(\bigoplus_{j = 1}^{n+1} \O_{\pp^1}(a_j - 2)\right) = H^0(\O_E(p_1 + p_2 + \cdots + p_{2n+2}) \otimes f_i^* \O_{\pp^1}(-2)),\]
or equivalently that
\[\O_E(p_1 + p_2 + \cdots + p_{2n+2}) \not\simeq \bar{f}^* \O_{\pp^1 \times \pp^1}(2, 0) = \bar{f}^* \O_{\pp^1 \times \pp^1}(1, 1) \otimes \bar{f}^* \O_{\pp^1 \times \pp^1}(1, -1),\]
which follows from Lemmas~\ref{ell_nodal} and~\ref{p1p2n}.
\end{proof}

Write \(r = 2n + 1\).
Applying Lemma~\ref{map_of_ell_curve}, there are exactly two maps \(f_i \colon E \to \pp^1 \simeq R\),
of degree \(n + 1\), sending \(\Gamma|_E\) to \(\Gamma|_R\).
Write \(\bar{f} = (f_1, f_2) \colon E \to \pp^1 \times \pp^1\).
Let \(S\) denote the blowup of \(\pp^1 \times \pp^1\) at the nodes of
the image of \(E\) under \(\bar{f}\),
so that the \(f_i\) give rise to an embedding \(f \colon E \hookrightarrow S\).  Writing \(F_1, \dots, F_{n^2-1}\) for the exceptional divisors, define
\[L \colonequals \O_S(n, n)\left(-\sum F_i\right) = K_S(1, 1)(E).\]

By adjunction on \(S\) and Lemma~\ref{p1p2n}, we have \(L|_E \simeq \O_S(1, 1)|_E \simeq \O_E(1)\).
Let \(\pi_1\colon S \to \pp^1\) and \(\pi_2\colon S \to \pp^1\) denote the two projections.
By intersection theory, for \(x \in \pp^1\), the restriction of the line bundle \(L(-E)\)
to the corresponding fiber \(\pi_i^{-1}(x)\) is
the (unique) line bundle of total degree \(-1\) that is
isomorphic to \(\O_{\pp^1}(-1)\) on any exceptional divisors lying over \(x\).
In particular, it has vanishing cohomology, so by the theorem on cohomology
and base-change, \((\pi_i)_* L(-E) = R^1 (\pi_i)_* L(-E) = 0\).
Combining this with Lemma~\ref{push}, we therefore have a natural identification
\begin{equation}(\pi_i)_* L \simeq (f_i)_* \O_E(1) \simeq \O_{\pp^1}(1)^{n + 1}.\end{equation}

The map \(S \to \pp^{2n + 1}\) via \(|L|\)
thus factors through a (uniquely defined) embedding
of \(\pp [(\pi_i)_* L] \simeq \pp^1 \times \pp^n\),
via the complete linear system of the relative \(\O(1)\) on \(\pp [(\pi_i)_* L]\),
which corresponds to \(|\O_{\pp^1 \times \pp^n}(1, 1)|\) on \(\pp^1 \times \pp^n\).
In particular, since \(S \to \pp [(\pi_i)_* L]\) is an embedding, so is
\(S \to \pp^{2n + 1}\).
Write \(\Sigma_i \subset \pp^{2n + 1}\) for the scroll
obtained as the image of the map \(\pp^1 \times \pp^n \to \pp^{2n+1}\).

\smallskip

\noindent
Putting all of this together, we can summarize this situation with the following diagram
of inclusions:
\begin{center}
\begin{tikzcd}
& & \Sigma_1 \arrow[rd] \\
E \arrow[r] & S \arrow[ru] \arrow[rd] & & \pp^r \\
& & \Sigma_2 \arrow[ru] \\
\end{tikzcd}
\end{center}

\begin{lem}\label{lem:sigmas_intersect_in_S}
The intersection \(\Sigma_1 \cap \Sigma_2\) coincides with \(S\).
\end{lem}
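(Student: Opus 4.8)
The plan is to prove the reverse inclusion $\Sigma_1\cap\Sigma_2\subseteq S$ (the inclusion $S\subseteq \Sigma_1\cap\Sigma_2$ being immediate from the construction) by cutting $\Sigma_1\cap\Sigma_2$ with the rulings of the two scrolls. Write $q_i\colon \Sigma_i=\pp[(\pi_i)_*L]\to\pp^1$ for the structure map, so that $q_i|_S=\pi_i$, and let $P_a\cong\pp^n$ (for $a\in\pp^1$) be the ruling $n$-plane $q_1^{-1}(a)$ of $\Sigma_1$ and $P'_b$ the ruling plane $q_2^{-1}(b)$ of $\Sigma_2$. Since the $P_a$ cover $\Sigma_1$ and the $P'_b$ cover $\Sigma_2$, we get the set-theoretic identity $\Sigma_1\cap\Sigma_2=\bigcup_{(a,b)\in\pp^1\times\pp^1}(P_a\cap P'_b)$, so it suffices to show $P_a\cap P'_b\subseteq S$ for every pair $(a,b)$.

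First I would record that $P_a=\langle \pi_1^{-1}(a)\rangle$ and $P'_b=\langle \pi_2^{-1}(b)\rangle$: since $R^1(\pi_1)_*L=0$, cohomology and base change identify the fiber of $\Sigma_1\to\pp^1$ over $a$ with $\pp H^0(\pi_1^{-1}(a),L)$, into which $\pi_1^{-1}(a)$ maps by its complete linear series, hence spanning it; and $S\cap P_a=\pi_1^{-1}(a)$ because $q_1|_S=\pi_1$. Consequently $P_a+P'_b=\langle\pi_1^{-1}(a)\cup\pi_2^{-1}(b)\rangle$ and $S\cap(P_a\cap P'_b)=\pi_1^{-1}(a)\cap\pi_2^{-1}(b)$; and since the intersection of two linear subspaces of projective space is again a reduced linear subspace, $P_a\cap P'_b$ is a linear space of dimension $2n-\dim\langle\pi_1^{-1}(a)\cup\pi_2^{-1}(b)\rangle$.

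The heart of the argument is a uniform cohomological computation on $S$. Write $\mu\colon S\to\pp^1\times\pp^1$ for the blowdown and $\Gamma\colonequals\pi_1^{-1}(a)\cup\pi_2^{-1}(b)$ (reduced); then $\Gamma$ is a connected nodal tree of $\pp^1$'s (arithmetic genus $0$) with $[\Gamma]=\mu^*\O(1,1)-\sum_{i\in I}F_i$, where $I=\{i:F_i\subseteq\pi_1^{-1}(a)\cap\pi_2^{-1}(b)\}$; since distinct exceptional curves come from distinct points of $\pp^1\times\pp^1$, we have $\#I\le 1$. A short intersection-number check shows $L$ has positive degree on every component of $\Gamma$ (exceptional components have degree $1$, and a strict transform of a ruling line has degree $n-k\ge 1$, where $k\le\lfloor(n+1)/2\rfloor<n$ is the number of blown-up points on that line, using that $\bar f$ is an immersion away from the nodes), so $L|_\Gamma$ is nonspecial with $h^0(\Gamma,L|_\Gamma)=\deg(L|_\Gamma)+1=(2n-\#I)+1$. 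On the other hand $L(-\Gamma)\simeq\mu^*\O(n-1,n-1)\bigl(-\sum_{i\notin I}F_i\bigr)$, whose cohomology is that of the sections of $\O_{\pp^1\times\pp^1}(n-1,n-1)$ vanishing at the $n^2-1-\#I$ general points $\mu(F_i)$ $(i\notin I)$; a direct count gives $h^0=\chi=1+\#I$ and $h^2=0$, hence $H^1(S,L(-\Gamma))=0$. Therefore $H^0(S,L)\to H^0(\Gamma,L|_\Gamma)$ is surjective, so $\dim\langle\Gamma\rangle=h^0(\Gamma,L|_\Gamma)-1=2n-\#I$, and thus $\dim(P_a\cap P'_b)=\#I$.

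To conclude: $P_a\cap P'_b$ is a linear space of dimension $\#I$ containing $S\cap(P_a\cap P'_b)=\pi_1^{-1}(a)\cap\pi_2^{-1}(b)$, which is likewise of dimension $\#I$ — a single reduced point when $\#I=0$ (since $[\pi_1^{-1}(a)]\cdot[\pi_2^{-1}(b)]=1$), and the line $F_i$ when $\#I=1$. In either case the containment forces $P_a\cap P'_b=\pi_1^{-1}(a)\cap\pi_2^{-1}(b)\subseteq S$, which finishes the proof. I do not expect a deep obstacle here: the one delicate point is that the vanishing $H^1(S,L(-\Gamma))=0$ must hold uniformly over \emph{all} fibers, including those lying over node-coordinates; but in every case it reduces to the statement that general points impose independent conditions on the very ample system $|\O_{\pp^1\times\pp^1}(n-1,n-1)|$ (valid here since the number of points is at most $h^0=n^2$), so this is bookkeeping rather than a real difficulty.
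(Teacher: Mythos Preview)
Your fibre-by-fibre computation of \(\dim(P_a\cap P'_b)\) is essentially the same set-theoretic argument the paper gives, just packaged cohomologically on \(S\) instead of via linear series on \(E\). One imprecision: the nodes of \(\bar f(E)\) are \emph{not} ``general points'' of \(\pp^1\times\pp^1\), so your justification of \(H^1(S,L(-\Gamma))=0\) is not quite right as stated. The vanishing is nevertheless true: for \(\#I=0\) one has \(L(-\Gamma)=K_S(E)\), and for \(\#I=1\) one has \(L(-\Gamma)=K_S(E+F_j)\); in either case Serre duality and \(H^1(\O_S)=0\) (rationality of \(S\)) give the result. This is easy to patch.

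The substantive gap is that you never pass from set-theoretic to \emph{scheme-theoretic} equality. Knowing \(P_a\cap P'_b=\pi_1^{-1}(a)\cap\pi_2^{-1}(b)\) for every \((a,b)\) shows that the fibres of \(\Sigma_1\cap\Sigma_2\to\pp^1\times\pp^1\) agree with those of \(S\to\pp^1\times\pp^1\), but since \(S\to\pp^1\times\pp^1\) is not flat along the exceptional divisors, fibre-by-fibre agreement does not formally imply \(\Sigma_1\cap\Sigma_2=S\) as schemes. Away from the exceptional divisors your computation does give what is needed, since \(T_z\Sigma_i\supseteq P_a\) (resp.\ \(P'_b\)) and these already span a hyperplane. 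But along an exceptional divisor \(F\) the span \(P_a+P'_b\) has codimension \(2\), and you must produce one more independent tangent direction. The paper does this by an explicit first-order deformation argument (deforming the node of \(\bar f(E)\) in the two ruling directions and showing the resulting sections of \(N_{\Lambda/\Lambda'}|_M\simeq\O_M(1)\) form a basis). This is genuinely the hardest part of the proof and is entirely absent from your write-up; without it the filtration \eqref{filt} is not known to consist of sub\emph{bundles} at the points of \(E\cap F\), which is exactly what the subsequent argument needs.
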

\begin{proof}
Let \(x_1, x_2 \in \pp^1\) be any two points, and write \(\Lambda_i\) for the fiber of
\(\Sigma_i\) over \(x_i\).
Note that \(\Lambda_i\) is the span of the divisor \(f_i^{-1}(x_i)\).

First suppose that \((x_1, x_2)\) is not a node of \(\bar{f}(E)\).
If \((x_1, x_2)\) does not lie on \(\bar{f}(E)\),
then the span of \(\Lambda_1\) and \(\Lambda_2\)
is the span of the divisor \(f_1^{-1}(x_1) + f_2^{-1}(x_2)\).
Since this divisor is linearly equivalent to \(O_E(1)\), the span is a hyperplane.
Otherwise, if \((x_1, x_2) = \bar{f}(y)\) lies on \(\bar{f}(E)\),
then the span of \(\Lambda_1\) and \(\Lambda_2\)
is the span of the divisor \(f_1^{-1}(x_1) + f_2^{-1}(x_2) - y\).
Since any \(2n + 1\) points on \(E\) are linearly general, this span is again a hyperplane.
Either way, since \(\Lambda_1\) and \(\Lambda_2\) span a hyperplane, they must meet at the single point that is the image of \((x_1, x_2)\) on \(S\).

Next suppose that \((x_1, x_2)\) is a node of \(\bar{f}(E)\),
say \((x_1, x_2) = \bar{f}(y_1) = \bar{f}(y_2)\).
Then the span of \(\Lambda_1\) and \(\Lambda_2\)
is the span of the divisor \(f_1^{-1}(x_1) + f_2^{-1}(x_2) - y_1 - y_2\).
Since any \(2n\) points on \(E\) are linearly general, this span is codimension~\(2\),
and so \(\Lambda_1\) and \(\Lambda_2\) meet along the line that is the image of the excpetional divisor over \((x_1, x_2)\).

Combining these two cases, we see that \(\Sigma_1 \cap \Sigma_2\) coincides with \(S\)
set-theoretically.
To upgrade this to a scheme-theoretic equality,
we must show that \(\Sigma_1\) and \(\Sigma_2\) are quasi-transverse
along \(S\), meaning that the tangent spaces to \(\Sigma_1\) and \(\Sigma_2\)
at points of \(S\) span a hyperplane.  (They cannot span all of \(\pp^r\), because \(\Sigma_1 \cap \Sigma_2\)
is pure of dimension \(2\).)
Away from the exceptional divisors, this is straight-forward: The tangent space to \(\Sigma_i\) contains \(\Lambda_i\), so it suffices to note that
\(\Lambda_1\) and \(\Lambda_2\) span a hyperplane.

It remains to consider an exceptional divisor \(M = \Lambda_1 \cap \Lambda_2\).
Write \(\Lambda\) for the span of \(\Lambda_1\) and \(\Lambda_2\).
As in the previous case, the span of the tangent spaces to the \(\Sigma_i\)
contains \(\Lambda\); however, in this case \(\Lambda\) is codimension~\(2\).
It thus remains to show that the natural map
\(N_{M / \Sigma_1} \oplus N_{M / \Sigma_2} \to N_{\Lambda/\pp^r}|_M\)
is everywhere nonzero along \(M\).
Recall that we write \(y_i\) for the points on \(E\)
lying over the node, i.e., satisfying
\(\bar{f}(y_1) = \bar{f}(y_2) = (x_1, x_2)\), so that \(M\) is the line
spanned by \(y_1\) and \(y_2\).
Let \(y_i^j\) be nontrivial first-order deformations of the \(y_i\)
satisfying \(f_j(y_1^j) = f_j(y_2^j)\). Such deformations are pictured in the following diagram:

\begin{minipage}{0.95\textwidth}
\phantom{Beginning of figure}
\vspace{-16\baselineskip}
\begin{center}
\begin{tikzpicture}[scale=1.5]
\draw (0, 0) .. controls (1, 1) and (1.75, 1.5) .. (2, 2.5);
\draw (0, 0) .. controls (0, 7) and (-4, -4) .. (0, 0);
\draw (-3, -2.25) -- (-3, 2.75) -- (3, 2.75) -- (3, -2.25) -- (-3, -2.25);
\draw (0, 0) .. controls (0, -2) and (-2, -2) .. (-2.5, -2);
\draw (2.1, 2.3) node{\(E\)};
\draw[dashed] (-3, 0) -- (3, 0);
\draw[dash dot dot] (0, -2.25) -- (0, 2.75);
\draw[dotted] (-3, -0.5) -- (3, -0.5);
\filldraw (0, 0) circle[radius=0.03];
\filldraw (0, 0.5) circle[radius=0.03];
\draw (-0.13, 0.17) node{\(y_1^1\)};
\draw (-0.13, 0.6) node{\(y_2^1\)};
\draw (-0.62, -0.69) node{\(y_1^2\)};
\draw (-0.26, -0.69) node{\(y_2^2\)};
\filldraw (-0.55, -0.5) circle[radius=0.03];
\filldraw (-0.05, -0.5) circle[radius=0.03];
\draw (0, -2.4) node{\(x_1\)};
\draw (-3.15, 0) node{\(x_2\)};
\draw (-3.45, -2.25) node{\(\pp^1 \times \pp^1\)};
\end{tikzpicture}
\end{center}
\vspace{-7\baselineskip}
\phantom{End of figure.}
\end{minipage}

The lines joining \(y_1^j\) and \(y_2^j\) give first-order
deformations of \(M\)
in \(\Sigma_j\), i.e., sections \(\sigma_j\) of \(N_{M / \Sigma_j}\).
It suffices to see that the images of the \(\sigma_j\)
in \(N_{\Lambda / \pp^r}|_M\) do not simultaneously vanish
anywhere along~\(M\).

The span of \(\Lambda\) and the tangent line to \(E\) at \(y_1\)
is the span of the divisor \(f_1^{-1}(x_1) + f_2^{-1}(x_2) - y_2\).
Since any \(2n + 1\) points on \(E\) are linearly general,
this span is a hyperplane.
Similarly, the span of \(\Lambda\) and the tangent line to \(E\) at \(y_2\)
is a hyperplane.
Moreover, the span of \(\Lambda\) and the tangent lines to \(E\) at
\emph{both} \(y_1\) and \(y_2\) is the span of the divisor
\(f_1^{-1}(x_1) + f_2^{-1}(x_2)\), which is linearly equivalent to \(\O_E(1)\),
and therefore again spans a hyperplane.
Since this hyperplane contains the first two of these hyperplanes,
all three of these hyperplanes must be equal. Write \(\Lambda'\) for this
hyperplane.

By construction, the images of both \(\sigma_j\)
are nonzero sections in the subspace
\(N_{\Lambda / \Lambda'}|_M \simeq \O_M(1)\).
Because \(\bar{f}(E)\) is nodal, the
deformations \((y_1^1, y_2^1), (y_1^2, y_2^2)\) form a basis of \(T_{y_1} E \oplus T_{y_2} E\).
The images of these two sections thus form a basis of \(H^0(N_{\Lambda / \Lambda'}|_M) = H^0(\O_M(1))\),
and so do not simultaneously vanish anywhere along \(M\) as desired.
\end{proof}

The upshot of this is that we have a natural filtration of \(N_{E \cup R}\),
whose successive quotients are vector bundles of ranks \(1\), \(2n - 2\), and \(1\)
respectively:
\begin{equation} \label{filt}
0 \subset N_{E \cup R / S} \subset N_{E \cup R / \Sigma_1} + N_{E \cup R / \Sigma_2} \subset N_{E \cup R}.
\end{equation}

\begin{lem}\label{lem:c1_NR_in_Sigma}
We have
\(c_1(N_{E \cup R / \Sigma_j}|_R) = n(2n + 3) + 1\).
\end{lem}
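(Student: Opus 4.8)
The plan is to identify the numerical class of $R$ inside $S$, and then finish with a routine Chern class computation on $\Sigma_j\simeq\pp^1\times\pp^n$.

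First I would pin down the class of $R$ in $S$. Recall that $R\subset S=\Sigma_1\cap\Sigma_2$, that $E\cup R$ is a nodal curve whose $2n+2$ nodes form $\Gamma$, and that $E$ is the strict transform in $S$ of the $(n+1,n+1)$-curve $\bar f(E)\subset\pp^1\times\pp^1$, passing through each of the $n^2-1$ blown-up points (the nodes of $\bar f(E)$) with its two branches separated. Writing $A,B$ for the pullbacks to $S$ of the two ruling classes of $\pp^1\times\pp^1$, we thus have $E\equiv(n+1)A+(n+1)B-2\sum F_i$ (consistent with adjunction on $S$ since $g(E)=1$). Write $R\equiv a_1A+a_2B-\sum b_iF_i$. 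Since $R$ meets $E$ transversely at the $2n+2$ points of $\Gamma$ we get $R\cdot E=2n+2$, and since $\O_{\pp^r}(1)|_R=\O_R(r-1)$ we get $L\cdot R=2n$; subtracting twice the second relation from the first gives $(n-1)(a_1+a_2)=2(n-1)$, so $a_1+a_2=2$ (here $n=(r-1)/2\geq 3$ because $g\geq 8$), and feeding this back into $L\cdot R=2n$ gives $\sum b_i=0$, hence $b_i=0$ for all $i$ since $R$ is irreducible and distinct from the $F_i$. Finally, adjunction $R^2+K_S\cdot R=-2$ for the smooth rational curve $R\subset S$ becomes $2a_1a_2-2(a_1+a_2)=-2$, forcing $a_1a_2=1$ and hence $a_1=a_2=1$. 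So $R\equiv A+B$ is the strict transform of a $(1,1)$-curve disjoint from the blown-up points; in particular $R\cdot A=R\cdot B=1$, so $R$ is a section of each projection $\varpi_j\colon\Sigma_j\to\pp^1$, and since $\O_{\pp^r}(1)|_R$ has degree $2n$ the curve $R\subset\Sigma_j\simeq\pp^1\times\pp^n$ has bidegree $(1,2n-1)$.

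Then I would finish as follows. Since $E\cup R$ is a nodal curve in the smooth variety $\Sigma_j$, we have $N_{E\cup R/\Sigma_j}|_R\simeq N_{R/\Sigma_j}[\Gamma\posmodalong E]$, where at each of the $2n+2$ nodes one modifies towards the $1$-dimensional image of the tangent direction of $E$ in the fiber of $N_{R/\Sigma_j}$; each such modification raises $c_1$ by $1$, so $c_1(N_{E\cup R/\Sigma_j}|_R)=c_1(N_{R/\Sigma_j})+(2n+2)$. Letting $h_1,h_2$ be the hyperplane classes of the two factors of $\Sigma_j=\pp^1\times\pp^n$, so that $c_1(T_{\Sigma_j})=2h_1+(n+1)h_2$, the normal bundle sequence of $R$ in $\Sigma_j$ gives $c_1(N_{R/\Sigma_j})=\deg\big(T_{\Sigma_j}|_R\big)-\deg T_R=\big(2\cdot1+(n+1)(2n-1)\big)-2=(n+1)(2n-1)$, using that $R$ has bidegree $(1,2n-1)$. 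Therefore $c_1(N_{E\cup R/\Sigma_j}|_R)=(n+1)(2n-1)+2(n+1)=(n+1)(2n+1)=n(2n+3)+1$, as claimed. (As a check, one can instead run the complementary exact sequence $0\to N_{E\cup R/\Sigma_j}|_R\to N_{E\cup R}|_R\to N_{\Sigma_j/\pp^r}|_R\to 0$ together with the fact that $N_{E\cup R}|_R$ has slope $r+2=2n+3$; the Euler sequence for $\Sigma_j\subset\pp^r$ gives $c_1(N_{\Sigma_j/\pp^r})=2n h_1+(n+1)h_2$, and one recovers the same value.)

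The main obstacle is the first step: showing $R\equiv A+B$, equivalently that $R$ is disjoint from the exceptional divisors and is a section of both scrolls rather than a class such as $2A$ or $A+B-F_i$. The three inputs $R\cdot E=2n+2$, $L\cdot R=2n$, and adjunction for a smooth rational curve on $S$ are exactly enough to force this, but some care is needed with the conventions for the two rulings, with the sign of $b_i$ (using irreducibility of $R$), and with the fact that $R$, being a rational normal curve in a hyperplane, is a smooth rational curve in $S$ of arithmetic genus $0$. Once the class of $R$ is known, everything that remains is bookkeeping with Euler sequences on $\pp^1\times\pp^n$.
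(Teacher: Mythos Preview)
Your proof is correct and reaches the same numerical value \((n+1)(2n+1)=n(2n+3)+1\), but it takes a different route from the paper. The paper works directly in \(\Pic\Sigma_j\) with basis \(\gamma\) (the class of an \(n\)-plane) and \(h\) (the hyperplane class), writes \(K_{\Sigma_j}=(n-1)\gamma-(n+1)h\), and applies adjunction using \(\gamma\cdot R=1\) and \(h\cdot R=2n\); it then adds \(\#\Gamma=2n+2\) exactly as you do. The paper simply asserts \(\gamma\cdot R=1\) without comment, relying on the implicit identification of \(R\) with the diagonal of \(\pp^1\times\pp^1\) (hence a section of each scroll) built into the construction of \(S\).

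Your approach instead recovers this fact numerically: you compute the class of \(R\) on \(S\) from the three constraints \(R\cdot E=2n+2\), \(L\cdot R=2n\), and adjunction for the smooth rational curve \(R\subset S\), forcing \(R\equiv A+B\). This is more self-contained---it proves rather than assumes that \(R\) is a section---at the cost of the extra intersection-theory bookkeeping on \(S\). Once you know \(R\) has bidegree \((1,2n-1)\) in \(\pp^1\times\pp^n\), your tangent-bundle computation and the paper's adjunction computation are equivalent (and your check via the complementary sequence for \(N_{\Sigma_j/\pp^r}\) is a nice sanity check the paper does not include).
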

\begin{proof}
The Picard group of \(\Sigma_j\)
is spanned by the class \(\gamma\) of one \(n\)-plane and the restriction of the hyperplane class \(h\) from \(\pp^{2n+1}\).
One computes that \(K_{\Sigma_j} = (n-1)\gamma - (n+1) h\) for such a scroll.
By adjunction we have
\begin{align*}
c_1(N_{R/\Sigma_j}) &= c_1(K_R) - c_1(K_S|_R) \\
&= -2 - ((n-1) \gamma - (n+1) h) \cdot R\\
&= -2 - (n-1) (\gamma \cdot R) + (n + 1) (h \cdot R) \\
&= -2 - (n-1) \cdot 1 + (n + 1) \cdot 2n \\
&= 2n^2 + n - 1.
\end{align*}
Therefore
\[c_1(N_{E \cup R / \Sigma_j}|_R) = c_1(N_{R/S}) + \# \Gamma = (2n^2 + n - 1) + (2n + 2) = n(2n + 3) + 1. \qedhere\]
\end{proof}

\begin{prop} \label{prop:HN} The vector bundle \(N_{E \cup R}|_R \simeq N_R[\posmodalong E]\)
is isomorphic to
\[\O_{\pp^1}(2n + 4) \oplus \O_{\pp^1}(2n + 3)^{\oplus (2n - 2)} \oplus \O_{\pp^1}(2n + 2).\]
Moreover, its HN-filtration
is precisely the restriction of the filtration \eqref{filt} to \(R\).
\end{prop}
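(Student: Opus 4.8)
The plan is to restrict the three-step filtration \eqref{filt} to $R$ and verify directly that the resulting filtration is the Harder--Narasimhan filtration of $N_{E\cup R}|_R\simeq N_R[\posmodalong E]$. Throughout write $r=2n+1$, so that (as recorded in Section~\ref{subsec:onion}) this bundle has rank $r-1=2n$ and slope $r+2=2n+3$, hence degree $(2n)(2n+3)=4n^2+6n$. Since the graded pieces of \eqref{filt} are vector bundles, restricting to $R$ produces a filtration by \emph{sub}bundles
\[0\subset F_1\subset F_2\subset N_R[\posmodalong E],\qquad F_1=N_{E\cup R/S}|_R,\quad F_2=(N_{E\cup R/\Sigma_1}+N_{E\cup R/\Sigma_2})|_R,\]
with $\rk F_1=1$ and $\rk F_2=2n-1$. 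It therefore suffices to compute the degrees of the three graded pieces and invoke the dichotomy of Lemma~\ref{lem:rest_onion_bal}.

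The key inputs are two degree computations. For $F_1$: since $E\cup R\subset S$ is a nodal curve on the smooth surface $S$, one has $N_{E\cup R/S}=\O_S(E+R)|_{E\cup R}$, whose restriction to $R$ is $\O_R(\Gamma)\otimes N_{R/S}$; and $R$ is the strict transform of a curve of bidegree $(1,1)$ on $\pp^1\times\pp^1$ avoiding the blown-up nodes — this follows from $\gamma\cdot R=1$ (used in the proof of Lemma~\ref{lem:c1_NR_in_Sigma}) together with $L\cdot R=2n$ — so $\deg N_{R/S}=R^2=2$ and $\deg F_1=\#\Gamma+2=2n+4$. For $F_2$: by Lemma~\ref{lem:sigmas_intersect_in_S} the scrolls $\Sigma_1$ and $\Sigma_2$ meet quasi-transversely along $S$, so $N_{E\cup R/\Sigma_1}|_R\cap N_{E\cup R/\Sigma_2}|_R=N_{E\cup R/S}|_R$ inside $N_R[\posmodalong E]$, whence an exact sequence
\[0\to N_{E\cup R/S}|_R\to N_{E\cup R/\Sigma_1}|_R\oplus N_{E\cup R/\Sigma_2}|_R\to F_2\to 0,\]
and combining this with Lemma~\ref{lem:c1_NR_in_Sigma} gives $\deg F_2=2\bigl(n(2n+3)+1\bigr)-(2n+4)=4n^2+4n-2$. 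Consequently the graded pieces $F_1$, $F_2/F_1$, $N_R[\posmodalong E]/F_2$ have slopes $2n+4$, $\tfrac{(4n^2+4n-2)-(2n+4)}{2n-2}=2n+3$, and $\tfrac{(4n^2+6n)-(4n^2+4n-2)}{1}=2n+2$ respectively.

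In particular $F_1$ is a line subbundle of degree $2n+4$, strictly exceeding the slope $2n+3$ of $N_R[\posmodalong E]$; so $N_R[\posmodalong E]$ is not semistable, and by Lemma~\ref{lem:rest_onion_bal} it must be the ``almost balanced'' bundle $\O_{\pp^1}(2n+4)\oplus\O_{\pp^1}(2n+3)^{\oplus(2n-2)}\oplus\O_{\pp^1}(2n+2)$, which proves the first assertion. For the second assertion, I would identify $F_1$ and $F_2$ with the canonical subbundles by a saturation argument: as $\O_{\pp^1}(2n+4)$ occurs with multiplicity one, any rank-$1$ subsheaf of degree $2n+4$ is automatically saturated and equals that summand, so $F_1=\O_{\pp^1}(2n+4)$ and $N_R[\posmodalong E]/F_1\simeq\O_{\pp^1}(2n+3)^{\oplus(2n-2)}\oplus\O_{\pp^1}(2n+2)$; likewise $F_2/F_1$ is a rank-$(2n-2)$ subsheaf of degree $(2n-2)(2n+3)$, the maximal possible, hence saturated and equal to the summand $\O_{\pp^1}(2n+3)^{\oplus(2n-2)}$. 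Thus $F_1\subset F_2\subset N_R[\posmodalong E]$ is exactly the Harder--Narasimhan filtration.

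The main points requiring genuine care are the two degree computations: pinning down the class of $R$ in $\Pic S$ (equivalently $\deg N_{R/S}=2$), and checking that the intersection $\Sigma_1\cap\Sigma_2=S$ of Lemma~\ref{lem:sigmas_intersect_in_S} is transverse enough along $R$ that $N_{E\cup R/\Sigma_1}|_R$ and $N_{E\cup R/\Sigma_2}|_R$ meet precisely in $N_{E\cup R/S}|_R$, so that the displayed short exact sequence is valid. Once these are in hand, everything else is elementary bookkeeping with degrees of bundles on $\pp^1$.
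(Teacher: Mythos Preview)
Your proof is correct, but the paper takes a slightly different and more economical route. Rather than computing \(\deg F_1\) via the class of \(R\) on \(S\), the paper works entirely with the rank-\(n\) subbundles \(N_{E\cup R/\Sigma_j}|_R\): their degree \(n(2n+3)+1\) (Lemma~\ref{lem:c1_NR_in_Sigma}) already exceeds the maximal degree \(n(2n+3)\) of a rank-\(n\) subsheaf of \(\O_{\pp^1}(2n+3)^{\oplus 2n}\), so the balanced option from Lemma~\ref{lem:rest_onion_bal} is ruled out immediately. For the HN-filtration, the paper observes that any rank-\(n\) subbundle of the almost-balanced bundle with this degree must contain the \(\O_{\pp^1}(2n+4)\) summand and lie inside \(\O_{\pp^1}(2n+4)\oplus\O_{\pp^1}(2n+3)^{\oplus(2n-2)}\); since \(F_1\) and \(F_2\) are by construction the intersection and span of the two \(N_{E\cup R/\Sigma_j}|_R\), this forces them to coincide with the HN pieces. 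Your approach instead computes all three graded pieces head-on, which requires the extra (but pleasant) geometric input that \(R\) is the strict transform of a \((1,1)\)-curve on \(\pp^1\times\pp^1\) missing the nodes, hence \(R^2=2\). The paper avoids this at the cost of a small linear-algebra observation about subbundles of prescribed rank and degree; your version is a bit more hands-on but makes the slopes of the graded pieces completely explicit.
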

\begin{proof}
By Lemma~\ref{lem:rest_onion_bal}, we have either
\(N_R[\posmodalong E] \simeq \O_{\pp^1}(2n + 4) \oplus \O_{\pp^1}(2n + 3)^{\oplus (2n - 2)} \oplus \O_{\pp^1}(2n + 2)\) or \(N_R[\posmodalong E] \simeq \O_{\pp^1}(2n + 3)^{\oplus 2n}\).  Lemma \ref{lem:c1_NR_in_Sigma} rules out the second case since
\(\O_{\pp^1}(2n + 3)^{\oplus 2n}\) admits no
subbundle of rank \(n\) and first Chern class \(n(2n + 3) + 1\).

Moreover, any subbundle of \(\O_{\pp^1}(2n + 4) \oplus \O_{\pp^1}(2n + 3)^{\oplus (2n - 2)} \oplus \O_{\pp^1}(2n + 2)\) of rank \(n\) and first Chern class \(n(2n + 3) + 1\)
contains \(O_{\pp^1}(2n + 4)\)
and is contained in \(O_{\pp^1}(2n + 4) \oplus \O_{\pp^1}(2n + 3)^{\oplus (2n - 2)}\).
Since the graded pieces of \eqref{filt} are the intersection
and span of the \(N_{E \cup R / \Sigma_j}\), they must therefore coincide with the HN-filtration.
\end{proof}

This provides the promised determination of \(N_R[\posmodalong E]\),
and the promised geometric construction of its HN-filtration.
This geometric description of the HN-filtration of \(N_{E \cup R}|_R\)
allows us to reduce interpolation for \(N_{E \cup R}\) to interpolation
for a modification of \(N_{E \cup R}|_E \simeq N_E[\posmodalong R]\) as follows:

\begin{lem}\label{lem:ell_norm_int1}
Let \(p\) and \(q\) be two distinct points of \(E \cap R\).
Then \(N_{E \cup R}\) satisfies interpolation provided that
\begin{equation} \label{Emod}
N_E[\posmodalong R][p \posmod N_{E/S}][q \negmod N_{E /\Sigma_1} + N_{E/\Sigma_2}]
\end{equation}
satisfies interpolation.
\end{lem}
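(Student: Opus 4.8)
The plan is to realize the bundle \eqref{Emod} as the restriction to $E$ of an explicit modification $N'$ of $N_{E\cup R}$, to recover interpolation for $N_{E\cup R}$ from interpolation for $N'$ by cohomology bookkeeping, and to recover interpolation for $N'$ from interpolation for \eqref{Emod} by Lemma~\ref{lem:int_rest}. Concretely, set
\[
N' \colonequals N_{E\cup R}[p \posmod N_{E\cup R/S}][q \negmod N_{E\cup R/\Sigma_1} + N_{E\cup R/\Sigma_2}],
\]
a modification of $N_{E\cup R}$ supported at the two nodes $p$ and $q$. Since $N_{E\cup R}|_E \simeq N_E[\posmodalong R]$ by Proposition~\ref{prop:hh}, and $N_{E\cup R/S}$ and $N_{E\cup R/\Sigma_j}$ restrict on $E$ to $N_{E/S}$ and $N_{E/\Sigma_j}$, restricting these modifications to $E$ identifies $N'|_E$ with the bundle \eqref{Emod}. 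On the other branch, write $r = 2n+1$; by Proposition~\ref{prop:HN} the Harder--Narasimhan filtration of $N_{E\cup R}|_R$ is the restriction of \eqref{filt}, so $N_{E\cup R/S}|_R$ is the destabilizing line summand and $(N_{E\cup R/\Sigma_1}+N_{E\cup R/\Sigma_2})|_R$ is the corank-one piece, and the standard formulas for elementary modifications of a split bundle on $\pp^1$ give
\[
N'|_R \simeq \O_{\pp^1}(2n+5) \oplus \O_{\pp^1}(2n+3)^{\oplus(2n-2)} \oplus \O_{\pp^1}(2n+1).
\]

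To pass from $N'$ back to $N_{E\cup R}$, note that these two sheaves agree away from $\{p,q\}$, that $N'$ is obtained from $N_{E\cup R}$ by one positive and one negative elementary modification, and that $N_{E\cup R} \subseteq N'(q) \subseteq N_{E\cup R}(p+q)$. Since $N'(q)$ satisfies interpolation whenever $N'$ does (twist by an effective divisor), and $\chi(N_{E\cup R})$ is large compared to $\rk(N_{E\cup R})$ times the arithmetic genus of $E\cup R$, the argument proving Lemma~\ref{lem:int_large_degree} — which works verbatim for any inclusion of sheaves with torsion cokernel, not merely for twists by effective divisors — shows that interpolation for $N'(q)$, hence for $N'$, implies interpolation for $N_{E\cup R}$.

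It remains to deduce interpolation for $N'$ from interpolation for \eqref{Emod}. Let $D_0 \subset R\setminus\Gamma$ be a general effective divisor of degree $4$. From the splitting type of $N'|_R$ above, $H^0(R, N'|_R(-D_0 - \Gamma)) = 0$, so the restriction map $\res_{R,\Gamma}\colon H^0(R, N'(-D_0)|_R) \to N'|_\Gamma$ is injective, and Lemma~\ref{lem:int_rest} applies to $N'(-D_0)$ with $X = E$ and $Y = R$: it suffices to show that
\[
V \colonequals \{\sigma \in H^0(E, N'|_E) : \sigma|_\Gamma \in \Im(\res_{R,\Gamma})\}
\]
has dimension $\chi(N'(-D_0))$ and satisfies interpolation. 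The genericity of $E$, $R$, $p$, $q$ gives $H^1(E, N'|_E(-\Gamma)) = 0$, hence $\res_{E,\Gamma}\colon H^0(E, N'|_E) \to N'|_\Gamma$ is surjective, and a direct Riemann--Roch computation then gives $\dim V = \dim\ker(\res_{E,\Gamma}) + \dim\Im(\res_{R,\Gamma}) = \chi(N'(-D_0))$.

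The remaining — and, I expect, hardest — point is that $V$ satisfies interpolation: granting interpolation for $N'|_E = \eqref{Emod}$, this amounts to showing that the subspace $\Im(\res_{R,\Gamma}) \subseteq N'|_\Gamma$ is in sufficiently general position, so that cutting $H^0(E, N'|_E)$ down by it leaves a space for which the expected number of points impose independent conditions. This is precisely what the particular modifications $[p\posmod N_{E\cup R/S}]$ and $[q\negmod N_{E\cup R/\Sigma_1}+N_{E\cup R/\Sigma_2}]$ are designed to arrange: one must analyze how the filtration \eqref{filt} — which is the Harder--Narasimhan filtration along $R$ but the filtration defining the modifications \eqref{Emod} along $E$ — meets $\Gamma$, and then invoke a monodromy argument using the generality of the configuration, in the spirit of Lemmas~\ref{map_of_ell_curve}, \ref{ell_nodal}, and~\ref{lem:sigmas_intersect_in_S}.
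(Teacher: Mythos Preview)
Your proposal has a genuine gap and a structural issue that the paper's argument avoids.

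\textbf{The structural issue.} You define \(N'\) by performing elementary modifications \([p \posmod \cdots]\) and \([q \negmod \cdots]\) at the nodes \(p,q\) of \(E\cup R\). But a node is not a Cartier divisor on \(E\cup R\), so the positive modification (Definition~\ref{def:modification}, which requires twisting by \(\O(D)\)) is not defined there; likewise \(N_{E\cup R}(p+q)\) and the chain of inclusions you invoke do not make sense. Even the negative modification, while definable as a kernel, need not be locally free at a node, and its restrictions to the two branches are not simply the branchwise modifications you claim.

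\textbf{The gap.} Even granting your \(N'\), you explicitly leave the key step unproven: showing that the subspace \(V \subset H^0(E, N'|_E)\) cut out by \(\Im(\res_{R,\Gamma})\) satisfies interpolation. Interpolation for the full \(H^0(E, N'|_E)\) (which is your hypothesis) does \emph{not} in general descend to a proper subspace; you would need \(\Im(\res_{R,\Gamma})\) to be in genuinely general position inside \(N'|_\Gamma\), and the vague appeal to ``monodromy arguments in the spirit of'' earlier lemmas does not establish this. The asserted vanishing \(H^1(E, N'|_E(-\Gamma))=0\) is also not justified.

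\textbf{What the paper does instead.} The paper never modifies at nodes. It takes three \emph{general smooth} points \(x,y,z \in R\), so that \(N_{E\cup R}(-x-y-z)\) is perfectly well-defined, and applies Lemma~\ref{lem:int_rest} to it. The resulting gluing subspace \(V_{x,y,z} \subset H^0(E,N_{E\cup R}|_E)\) is then studied by \emph{specializing} \(x \to p\) and \(y \to q\) (keeping \(z\) general). The point --- and this is exactly what your argument is missing --- is that under this specialization, the image of \(\ev_{R,\Gamma}\) degenerates to a ``coordinate'' subspace with respect to the HN-filtration of Proposition~\ref{prop:HN}, so the flat limit of \(V_{x,y,z}\) is identified on the nose with the full \(H^0\) of the bundle
\[
N_E[\posmodalong R][p \negmod N_{E/S}][q \negmod N_{E /\Sigma_1} + N_{E/\Sigma_2}].
\]
Interpolation for \(V_{x,y,z}\) then follows directly from interpolation for this bundle; a final twist by \(p\) via Lemma~\ref{lem:int_large_degree} converts the \([p \negmod \cdots]\) to \([p \posmod \cdots]\) and yields \eqref{Emod}. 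No general-position or monodromy argument is needed.
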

\begin{proof}  
We imitate the basic idea of the proof of \cite[Lemma 8.8]{aly}.
Write \(\Gamma \colonequals E \cap R\), which has size \(r+1\).  Write \(x, y, z\) for \(3\) general points on \(R\). Twisting down, we have
\[N_{E \cup R}(-x-y-z)|_R \simeq \O_{\pp^1}(r-2) \oplus \O_{\pp^1}(r-1)^{\oplus r-3} \oplus \O_{\pp^1}(r).\]
Therefore the evaluation map
\[\ev_{R,\Gamma} \colon H^0(N_{E \cup R}|_R)\to N_{E \cup R}|_{\Gamma} \]
is injective when restricted to the subspace \(H^0(N_{E \cup R}|_R(-x-y-z))\).  Our aim is to suitably specialize the points \(x,y,z\) so as to be able to identify the subspace of sections of \(N_{E \cup R}|_E\)
\[V_{x,y,z} \colonequals \left\{\sigma \in H^0(E, N_{E \cup R}|_E) : \sigma|_\Gamma \in \Im\left(\ev_{R,\Gamma}|_{H^0(N_{E \cup R}|_R(-x-y-z))}\right) \right\}\]
 that glues to the image of \(H^0(N_{E \cup R}|_R(-x-y-z))\) under \(\ev_{R,\Gamma}\).  By Lemma \ref{lem:int_rest}, it suffices to show that this subspace of sections has the correct dimension and satisfies interpolation to conclude that \(N_{E \cup R}(-x-y-z)\) satisfies interpolation, which implies that \(N_{E \cup R}\) satisfies interpolation.

Since \(\# \Gamma = r + 1\), the evaluation map \(\ev_{R, \Gamma}\) restricted to  the sections of the largest factor \(\O_{\pp^1}(r)\) is already an isomorphism.  The evaluation on the other factors is not an isomorphism: On the \(\O_{\pp^1}(r-1)\) factors, the image is a codimension \(1\) subspace of \(\O_{\pp^1}(r-1)|_\Gamma\), and on the  \(\O_{\pp^1}(r-2)\) factor, the image is a codimension \(2\) subspace of \(\O_{\pp^1}(r-2)|_\Gamma\).  We will appropriately specialize so as to force these subspaces to be ``coordinate'' planes.

First limit \(x\) to \(p\).  The gluing data across the nodes (in particular at \(p\)) is fixed, and therefore the limiting codimension \(1\) subspace of \(\O_{\pp^1}(r-1)|_\Gamma\) contains the subspace \(\O_{\pp^1}(r-1)|_{\Gamma \setminus p}\oplus 0|_p\) of sections vanishing at \(p\).  Since this subspace has the correct dimension, it must be the flat limit.  Then limit \(y\) to \(q\).  In the limit, the codimension \(2\) subspace of \(\O_{\pp^1}(r-2)|_{\Gamma}\) must contain the subspace \(\O_{\pp^1}(r-2)|_{\Gamma \setminus \{p, q\}}\oplus 0|_p\oplus 0|_q\) of sections vanishing at \(p\) and \(q\).  Since this has the correct dimension, it must be the flat limit.  Since the HN-filtration on \(N_{E \cup R}|_R\) is the restriction of \eqref{filt} to \(R\),
this flat limit is
\[H^0\left(E, N_E[\posmodalong R][p \negmod N_{E/S}][q \negmod N_{E /\Sigma_1} + N_{E/\Sigma_2}]\right).\]
Since this subspace is the space of sections of a vector bundle, it suffices to show that this bundle satisfies interpolation.
To complete the proof, we note that \(\mu(N_E[\posmodalong R][p \negmod N_{E/S}][q \negmod N_{E /\Sigma_1} + N_{E/\Sigma_2}]) \geq 1\), and so it suffices by Lemma \ref{lem:int_large_degree} to prove interpolation after twisting up by \(p\).
\end{proof}

\subsection{\texorpdfstring{\boldmath The case \(r \geq 9\)}{The case r >= 9}}
By Lemma~\ref{lem:ell_norm_int1}, it suffices to show \eqref{Emod} satisfies interpolation.
On an elliptic curve, we can characterize which bundles satisfy interpolation
in terms of the Atiyah classification:

\begin{lem}\label{lem:JH_to_interpolation}
Let \(\sE\) be a vector bundle on an elliptic curve \(E\). Then \(\sE\) satisfies interpolation if and only if
there is a nonnegative integer \(a\) for which
every Jordan--Holder (JH) factor \(\sF\) of \(\sE\) satisfies
\begin{equation}\label{eq:JH_bound} a \leq \mu(\sF) \leq a+1 \qand \sF \not\simeq \O_E.\end{equation}
\end{lem}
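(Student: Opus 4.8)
The plan is to translate interpolation for $\sE$ into a question about the vanishing of $h^0$ and $h^1$ of various twists, and then to use the Atiyah classification to reduce this to the JH factors. Recall that on an elliptic curve a vector bundle is a successive extension of its JH factors, each of which is either a line bundle of nonnegative degree, or an indecomposable bundle built out of a degree-zero line bundle (a Fourier--Mukai-type twist of a self-extension of $\O_E$), or $\O_E$ itself. The slope $\mu(\sF)$ of a JH factor is always a $[1/\rk]$-integer, and the JH factors of a twist $\sE(-D)$ are exactly the twists by $\O_E(-D)$ of the JH factors of $\sE$, with slopes shifted by $-\deg D$. This makes it natural to phrase everything in terms of slopes.

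First I would establish the ``only if'' direction. Suppose $\sE$ satisfies interpolation. Then $H^1(\sE) = 0$, so no JH factor of $\sE$ can be a line bundle of negative degree, and $\O_E$ cannot be a JH factor (since $H^1(\O_E) \neq 0$, any appearance of $\O_E$ as a subquotient would force $H^1(\sE) \neq 0$ by the long exact sequence, using that $\Hom(\O_E, -)$ and $\Ext^1(-,\O_E)$ behave well --- more precisely, $h^1(\sE) \geq h^1(\text{gr} \sE) \geq h^1(\O_E) = 1$; I should spell this out carefully since it is the step that excludes $\O_E$ entirely, not just $\O_E$ of the wrong slope). Next, set $a = \lfloor \mu_{\min}(\sE) \rfloor$ where $\mu_{\min}$ is the smallest slope of a JH factor. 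I would twist down by a general line bundle $L$ of degree $a$, so that $\sE \otimes L^\vee$ has all JH factors of slope in $[0,1]$ after the shift, except we must rule out slopes $> a+1$, i.e. that $\mu_{\max}(\sE) \leq a + 1$. For this, observe that if some JH factor $\sF$ had $\mu(\sF) \geq a + 1 + \frac{1}{N}$ for $N = \rk(\text{everything})$ while another had slope $a$, then for a suitable effective divisor $D$ of degree $a$ or $a+1$ one would get both $h^0(\sE(-D)) \neq 0$ (from the high-slope factor, using $h^0 \geq \chi > 0$) and $h^1(\sE(-D)) \neq 0$ (from the low-slope factor), contradicting \eqref{eq:DplusDminus} / the definition of interpolation; since $\sF$ of slope exactly $a$ with $\sF \not\simeq \O_E$ has $h^1 = 0$, one needs the low factor to be pushed to negative degree, which is exactly what a divisor of degree $a+1$ does when $\mu_{\min} = a$, so some care with the two cases $\mu_{\min} = a$ versus $\mu_{\min} \in (a, a+1)$ is needed. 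The cleanest approach is: interpolation forces the existence of $D_+$, $D_-$ with $\deg D_+ - \deg D_- \leq 1$ and $h^0(\sE(-D_+)) = 0$, $h^1(\sE(-D_-)) = 0$; the first forces $\mu_{\max} < \deg D_+$ roughly, the second forces $\mu_{\min} > \deg D_- - 0$ roughly (with the subtlety that slope-$0$ factors $\not\simeq \O_E$ are fine), giving $\mu_{\max} - \mu_{\min} \leq 1$ with the integrality coming from $a := \deg D_-$.

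For the ``if'' direction, assume every JH factor $\sF$ satisfies $a \leq \mu(\sF) \leq a+1$ and $\sF \not\simeq \O_E$. I would verify \eqref{eq:DplusDminus} directly. Take $D_-$ a general effective divisor of degree $a$: then each JH factor $\sF(-D_-)$ has slope in $[0,1]$ and is not $\simeq \O_E$, hence (for slope in $(0,1]$, where it has positive slope; for slope exactly $0$, where it is an indecomposable nontrivial degree-zero bundle twisted generically, which is semistable of slope $0$ with no sections and no $H^1$ since it is not $\O_E$ and the twist is general) we get $h^1(\sF(-D_-)) = 0$, and by the long exact sequence $h^1(\sE(-D_-)) = 0$. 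Symmetrically, take $D_+$ a general effective divisor of degree $a+1$: each $\sF(-D_+)$ has slope in $[-1, 0]$ and is not $\simeq \O_E$; a general such twist of slope in $[-1,0)$ has no sections, and of slope exactly $0$ (again nontrivial, generic) has no sections, so $h^0(\sE(-D_+)) = 0$. Since $\deg D_+ - \deg D_- = 1$, interpolation follows from the criterion \eqref{eq:DplusDminus}. (I should also note $H^1(\sE) = 0$, which is the $a \geq 0$ case applied with $D_- = 0$ if $a = 0$, or follows a fortiori; the hypothesis $a \geq 0$ is exactly what guarantees $H^1(\sE) = 0$.)

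The main obstacle is the careful bookkeeping around JH factors of slope exactly $a$ or exactly $a+1$ and the exclusion of $\O_E$: one must repeatedly use that a \emph{general} twist of a semistable degree-zero bundle on $E$ that is not a direct sum involving $\O_E$ has vanishing $H^0$ and $H^1$, and that in an extension, vanishing of $h^0$ (resp. $h^1$) of all graded pieces implies the same for the total bundle. These are standard facts about vector bundles on elliptic curves (Atiyah), and the only real subtlety is making sure the genericity of the divisor $D$ can be chosen simultaneously to handle all finitely many JH factors, which is immediate since there are finitely many of them and each bad locus in $\text{Pic}^{\deg D}(E)$ is a proper closed subset.
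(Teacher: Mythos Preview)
Your ``if'' direction is essentially correct and matches the paper: once every JH factor \(\sF(-D)\) has \(h^0 = 0\) (respectively \(h^1 = 0\)), the long exact sequences for the filtration give the same vanishing for \(\sE(-D)\), and the genericity of \(D\) handles the finitely many slope-\(a\) or slope-\((a+1)\) line-bundle factors.

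The ``only if'' direction, however, has a real gap. Your proposed inequality \(h^1(\sE) \geq h^1(\operatorname{gr}\sE)\) is false on an elliptic curve: if \(\sE\) is the nontrivial extension of \(\O_E\) by \(\O_E\) (Atiyah's rank-\(2\) bundle), then \(h^1(\sE) = 1\) while \(h^1(\operatorname{gr}\sE) = h^1(\O_E \oplus \O_E) = 2\). So you cannot pass from \(h^1(\sE) = 0\) to \(h^1(\sF) = 0\) for a JH factor \(\sF\) by this route. The same issue undermines your alternate \(D_+, D_-\) sketch: deducing \(\mu_{\max}(\sE) < \deg D_+\) from \(h^0(\sE(-D_+)) = 0\) still requires knowing that the top-slope JH factor is a \emph{subbundle} of \(\sE\), and dually for \(\mu_{\min}\).

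The missing ingredient, which the paper invokes in one line, is that on an elliptic curve \emph{every JH factor of \(\sE\) is both a subbundle and a quotient of \(\sE\)}. This follows from Atiyah's classification: the HN filtration splits (since \(\operatorname{Ext}^1\) between semistable bundles of different slopes vanishes on \(E\)), and within a semistable piece the category is equivalent to torsion sheaves on \(E\), where each simple factor \(\O_p\) is visibly both a sub and a quotient of any \(\O_{np}\). Once you have this, \(H^1(\sE) = 0\) forces \(H^1(\sF) = 0\) for each JH factor (as a quotient), giving \(\mu(\sF) \geq 0\) and \(\sF \not\simeq \O_E\); and if some integer \(b\) satisfied \(\mu(\sF_1) < b < \mu(\sF_2)\), then twisting by a general degree-\(b\) divisor and using that \(\sF_2\) is a sub and \(\sF_1\) is a quotient gives both \(h^0 \neq 0\) and \(h^1 \neq 0\), contradicting interpolation.
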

\begin{proof}
By the Atiyah classification, every JH-factor of \(\sE\) is both a subbundle and quotient of \(\sE\).

First suppose \(\sE\) satisfies interpolation. Then \(\sE\) is nonspecial, so every JH-factor \(\sF\) is nonspecial,
or equivalently satisfies \(\mu(\sF) \geq 0\) and \(\sF \not\simeq \O_E\).
If no such nonnegative integer \(a\) exists, then there would be a positive integer \(b\)
and JH-factors \(\sF_1\) and \(\sF_2\) with \(\mu(\sF_1) < b < \mu(\sF_2)\).
This is a contradiction, since for general points \(p_1, p_2, \ldots, p_b \in E\), we would have
\[H^0(\sE(-p_1 - \cdots - p_b)) \neq 0 \qand H^1(\sE(-p_1 - \cdots - p_b)) \neq 0.\]

In the other direction, suppose there is a nonnegative integer \(a\)
for which every JH-factor \(\sF\) satisfies \eqref{eq:JH_bound}.
Then for general points \(p_1, p_2, \ldots, p_{a + 1} \in E\),
\[H^0(\sE(-p_1 - \cdots - p_{a + 1})) = 0 \qand H^1(\sE(-p_1 - \cdots - p_a)) = 0.\]
Therefore \(\sE\) satisfies interpolation.
\end{proof}

\begin{lem}\label{lem:JH_E_to_Emod}
Let \(\sE\) be a vector bundle on an elliptic curve \(E\), and let \(a\) and \(b\) be integers.  For two points \(p, q \in E\), consider subspaces \(\Delta \subseteq \sE|_p\) of rank \(1\) and \(\Lambda \subseteq \sE|_q\) of corank \(1\).  If every JH-factor \(\sF\) of \(\sE\) satsifies \(a < \mu(\sF) < b\),
then every JH-factor \(\sF'\) of
\(\sE' \colonequals \sE[p \posmodwithplus \Delta][q \negmod \Lambda]\)
satisfies \(a \leq \mu(\sF') \leq b\).
\end{lem}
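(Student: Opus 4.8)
The plan is to establish the two bounds $\mu_{\min}(\sE') \geq a$ and $\mu_{\max}(\sE') \leq b$, where $\mu_{\min}$ and $\mu_{\max}$ denote the smallest and largest slopes occurring among the graded pieces of the Harder--Narasimhan filtration; since every JH-factor of $\sE'$ shares the slope of one of those pieces, these bounds imply the assertion. The two bounds are exchanged by duality: for a subspace $F$ at a point $x$ one has $(\sE[x \negmod F])^\vee = \sE^\vee[x \posmod F^\perp]$ and $(\sE[x \posmod F])^\vee = \sE^\vee[x \negmod F^\perp]$ (dualize the inclusions $\sE(-x) \subseteq \sE[x \negmod F] \subseteq \sE$ and compare degrees), so $(\sE')^\vee = \sE^\vee[q \posmod \Lambda^\perp][p \negmod \Delta^\perp]$ with $\Lambda^\perp$ of rank $1$ and $\Delta^\perp$ of corank $1$ --- again of the form considered in the lemma, now applied to $\sE^\vee$, all of whose JH-slopes lie in $(-b,-a)$. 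Hence it suffices to prove the lower bound $\mu_{\min}(\sE') \geq a$, and I describe how I would do this.

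Set $\sE^+ \colonequals \sE[p \posmod \Delta]$. Because $\Delta$ has rank $1$, the definitions give $\sE \subseteq \sE^+$ with $\sE^+/\sE$ of length $1$; because $\Lambda$ has corank $1$, they give $\sE' = \sE^+[q \negmod \Lambda] \subseteq \sE^+$ with $\sE^+/\sE'$ of length $1$. First I would note $\mu_{\min}(\sE^+) > a$: given any quotient bundle $Q$ of $\sE^+$, the image $Q_0$ of $\sE$ in $Q$ is a torsion-free subsheaf with $Q/Q_0$ a quotient of $\sE^+/\sE$, hence of length $\leq 1$, while $\sE \twoheadrightarrow Q_0$ forces $\deg Q_0 \geq \mu_{\min}(\sE)\,\rk Q$; thus $\deg Q \geq \deg Q_0 \geq \mu_{\min}(\sE)\,\rk Q > a\,\rk Q$. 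Now let $Q = \sE'/K$ be any quotient bundle of $\sE'$, with $K$ a subbundle, and let $\bar{K}$ be the saturation of $K$ in $\sE^+$. Then $\sE^+/\bar{K}$ is a quotient bundle of $\sE^+$ of rank $\rk Q$, so $\deg(\sE^+/\bar{K}) > a\,\rk Q$ by the previous sentence; on the other hand $\deg \bar{K} \geq \deg K = \deg \sE' - \deg Q$, so $\deg(\sE^+/\bar{K}) \leq \deg \sE^+ - \deg \sE' + \deg Q = \deg Q + 1$. Hence $\deg Q + 1 > a\,\rk Q$, and as $\deg Q$ and $a\,\rk Q$ are integers this gives $\deg Q \geq a\,\rk Q$, i.e. $\mu(Q) \geq a$. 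Since $Q$ was arbitrary, $\mu_{\min}(\sE') \geq a$.

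The one delicate point --- and the only place the hypotheses that $a$ and $b$ are \emph{integers} and that the inequalities for $\sE$ are \emph{strict} enter --- is this final integrality step: each of the two modifications has colength exactly $1$ (positive towards a line, negative towards a hyperplane), so it can shift the Harder--Narasimhan polygon only by a bounded amount, and strictness against the integer $a\,\rk Q$ is just enough to absorb that shift and recover the closed bound. I expect the only thing needing care to be the bookkeeping with saturations and colengths; I note that no property of elliptic curves (or of the characteristic) is used, so the argument in fact proves the statement over an arbitrary smooth projective curve.
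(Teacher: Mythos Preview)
Your argument is correct and follows essentially the same line as the paper's: reduce by duality to one of the two bounds, pass to the intermediate bundle \(\sE^+ = \sE[p \posmod \Delta]\), and use that a colength-\(1\) modification can shift slopes by at most \(1\), which a strict inequality against an integer absorbs. The paper carries this out for the upper bound via subsheaves of \(\sE^+\), invoking the Atiyah classification to say that each JH-factor of \(\sE'\) is already a subbundle (hence a subsheaf of \(\sE^+\)); you carry it out dually for the lower bound via quotient bundles and the identity \(\mu_{\min} = \inf_Q \mu(Q)\). Your route avoids the appeal to Atiyah, so --- as you observe --- it goes through over any smooth projective curve, whereas the paper's formulation is tailored to the elliptic case at hand.
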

\begin{proof}
Up to replacing \(\sE'\) with its dual, it suffices to show that every JH-factor \(\sF'\) of \(\sE'\) satisfies \(\mu(\sF') \leq b\).
Since every JH-factor is a subbundle, and \(\sE'\) is a subsheaf of \(\sE[p \posmodwithplus \Delta]\),
it suffices to show that every subsheaf \(\sF'\) of \(\sE[p \posmodwithplus \Delta]\) satisfies \(\mu(\sF') \leq b\).

If \(\sF'\) is a subsheaf of \(\sE\) we are done by assumption.
Otherwise, write \(x\) for the degree of \(\sF'\) and \(y\) for the rank of \(\sF'\) so that \(\mu(\sF') = x/y\).
Write \(\sF = \sF' \cap \sE\) for the corresponding subsheaf of \(\sE\).
Then \((x - 1)/y = \mu(\sF) < b\).
Since \(b\) is an integer, \(\mu(\sF') = x/y \leq b\) as desired.
\end{proof}

\noindent
Combining Lemmas~\ref{lem:JH_to_interpolation} and~\ref{lem:JH_E_to_Emod}, it suffices to prove:

\begin{prop} \label{prop:JHinterval}
If \(r \geq 9\) is odd, every JH-factor \(\sF\) of \(N_E[\posmodalong R]\) satisfies
\(r+4 < \mu(\sF) < r+5\).
\end{prop}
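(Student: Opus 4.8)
The plan is to control the Jordan--H\"older factors of $N_E[\posmodalong R]$ by studying its \emph{stability}, using that $\mu(N_E[\posmodalong R])$ is not an integer (so semistability would force all JH-factors to have slope strictly between consecutive integers, which is exactly what we want). Recall $N_E$ has rank $r-1$ and degree $(r+1)d - (r-3)(g-1)$; for $E$ an elliptic normal curve of degree $r+1$ in $\pp^r$ we have $d = r+1$ and $g=1$, so $c_1(N_E) = (r+1)^2$, and then $N_E[\posmodalong R]$ has degree $(r+1)^2 + (r+1) = (r+1)(r+2)$ and rank $r-1$, giving slope $(r+1)(r+2)/(r-1)$. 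Writing $r = 2n+1$ and comparing with Proposition~\ref{prop:HN}, one checks this slope lies strictly between $r+4$ and $r+5$ (indeed $(r+1)(r+2)/(r-1) = r+4 + (9-r)/(r-1)\cdot(\text{something})$ — the point is that for $r\geq 9$ it is pinched into that open unit interval, with the boundary cases $r=3,5,7$ precisely the excluded small cases). So it suffices to prove $N_E[\posmodalong R]$ is \emph{semistable}.

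\textbf{Key steps.} First I would record the numerology above and reduce, as just described, to semistability of $\sE \colonequals N_E[\posmodalong R] = N_{E\cup R}|_E$. Second, I would exploit the symmetry between $E$ and $R$ in the degeneration: by Proposition~\ref{prop:hh}, $N_{E\cup R}|_R \simeq N_R[\posmodalong E]$, whose HN-filtration we have \emph{completely} determined in Proposition~\ref{prop:HN} via the geometry of $S \subset \Sigma_1,\Sigma_2$. Third — and this is the crux — I would argue that a destabilizing subbundle $\sF \subset \sE$ of slope $\geq r+5$ would, via the Mayer--Vietoris sequence $0 \to N_{E\cup R} \to N_{E\cup R}|_E \oplus N_{E\cup R}|_R \to N_{E\cup R}|_\Gamma \to 0$, produce a subsheaf of $N_{E\cup R}$ whose restriction to $R$ violates the HN-bound from Proposition~\ref{prop:HN} (whose top slope is $2n+4 = r+3 < r+5$, after accounting for the $\#\Gamma = r+1$ twist along $\Gamma$). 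More precisely, I would take the saturation in $N_{E\cup R}$ of the subsheaf generated by sections of $\sF$ extended by zero on $R$, and bound its degree on $R$ using that sections of a subbundle of slope $s$ on $E$ vanishing on $\Gamma$ impose at most... — the combinatorial bookkeeping here (degree on $E$ minus $\#\Gamma$ versus maximal sub-slope on $R$) is what forces $s < r+5$. Alternatively, and perhaps more cleanly, I would use the Atiyah classification directly on $E$: every JH-factor of $\sE$ is simultaneously a sub- and a quotient bundle, so it is enough to bound sub-slopes and quotient-slopes of $\sE$ separately, each time projecting along $R$ and invoking Proposition~\ref{prop:HN}.

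\textbf{Main obstacle.} The delicate part will be the third step: turning a hypothetical destabilizing subsheaf of $\sE = N_{E\cup R}|_E$ into a statement about $N_{E\cup R}|_R$ that contradicts Proposition~\ref{prop:HN}. The subtlety is that a subbundle of $N_{E\cup R}|_E$ need not extend to a subbundle of $N_{E\cup R}$ with the ``expected'' restriction to $R$; one must saturate and carefully track how much degree can be lost at the $r+1$ nodes $\Gamma$. I expect the cleanest route is to bound, for a subbundle $\sF$ of rank $k$ and degree $e$ of $\sE$, the degree on $R$ of the saturation of $\ker(N_{E\cup R}\to (N_{E\cup R}|_E)/\sF)$: this restriction to $R$ has rank $k$ and degree at most $e - (\text{number of independent vanishing conditions at }\Gamma)$; since any $\leq r+1$ points of $E$ are linearly general and $\sF|_\Gamma$ has the expected dimension, one gets $\deg_R \leq e - \min(k\cdot(r+1),\, \dim H^0\text{-type bound})$, and comparing with the HN-slopes $\leq 2n+4$ of $N_R[\posmodalong E]$ (shifted by the $\Gamma$-twist) pins $e/k < r+5$. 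A parallel argument on quotients gives $e/k > r+4$. I would also double-check that the numerology genuinely excludes $r = 3,5,7$ (it must, since interpolation fails for canonical curves of genus $4$ and $6$) and that $r\geq 9$ is exactly the threshold where $(r+1)(r+2)/(r-1)$ enters the open interval $(r+4, r+5)$.
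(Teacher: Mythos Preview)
Your reduction to semistability is sound: the slope \((r+1)(r+2)/(r-1)\) does lie in the open interval \((r+4,\,r+5)\) for \(r\geq 9\) odd, so semistability of \(N_E[\posmodalong R]\) would indeed imply the proposition. The problem is in your third step, where you try to prove semistability by transferring information from \(R\) back to \(E\).

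There is no mechanism for turning a destabilizing subbundle \(\sF\subset N_{E\cup R}|_E\) into a subbundle of \(N_{E\cup R}|_R\) of the same rank whose slope you can then bound by Proposition~\ref{prop:HN}. The only link between the two restrictions is the finite set \(\Gamma=E\cap R\): all you get from \(\sF\) on the \(R\)-side is the subspace \(\sF|_\Gamma\subset N_{E\cup R}|_\Gamma\), which does not determine a rank-\(k\) subbundle of \(N_R[\posmodalong E]\). Your suggestion to take \(\ker\bigl(N_{E\cup R}\to (N_{E\cup R}|_E)/\sF\bigr)\) produces a subsheaf whose restriction to \(R\) has full rank \(r-1\), not rank \(k\), so comparing its slope to the HN-slopes of \(N_R[\posmodalong E]\) gives no useful inequality. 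Likewise ``extending sections of \(\sF\) by zero on \(R\)'' is not a well-defined operation for sheaves on a nodal curve. In short, Proposition~\ref{prop:HN} constrains \(N_{E\cup R}|_R\), but that constraint does not propagate to subbundles of \(N_{E\cup R}|_E\) in the way your argument needs.

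The paper's proof is entirely different and does not attempt semistability directly. It proceeds by induction on \(r\): one specializes \(R\) to a union \(\overline{pq}\cup R^-\) of a \(2\)-secant line and an \((r-1)\)-secant rational curve of degree \(r-2\), then projects from \(\overline{pq}\) to obtain an exact sequence on \(E\) with sub \(S\) (balanced of slope \(r+4\)) and quotient \(Q\) (a twist of the same problem in \(\pp^{r-2}\)). The inductive hypothesis controls the JH-factors of \(Q\). The key new idea, which has no analogue in your proposal, is a naturality argument: any maximal destabilizing sub- or quotient bundle of \(N_E[\posmodalong R]\) would yield, by taking determinants, a \emph{natural} map \(\Pic^{r+1}E\to\Pic^{n\cdot s}E\) for appropriate \(n,s\); Proposition~\ref{prop:black_magic} then forces \((r+1)\mid n\cdot s\), and the specialization bounds \(n\) small enough that this divisibility fails unless \(n=0\). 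The base case \(r=7\) comes from the already-established \(I(8,1,7,0,1)\), and the case \(r=9\) requires an extra round of the same Picard-divisibility trick applied to the specialization.
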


\noindent
Our proof of Proposition~\ref{prop:JHinterval} will be by induction on \(r\).

When \(r = 7\), we have \(\mu(N_E[\posmodalong R]) = 12 = r + 5\).
Since \(I(8, 1, 7, 0, 1)\) holds, we deduce from Lemma~\ref{lem:JH_to_interpolation}
that every JH-factor \(\sF\) of \(N_E[\posmodalong R]\)
has slope exactly \(r + 5\) in this case.
Although Proposition~\ref{prop:JHinterval} does not hold in this case,
it is close enough that we will be able to leverage it to establish
the case \(r = 9\).

In general, our strategy will be to use our inductive hypothesis
to show that Proposition~\ref{prop:JHinterval} is close enough to holding
that naturality of the HN-filtration forces it to hold exactly.

\begin{defin}
Let \(E\) be a genus \(1\) curve.  We say that a map
\[\Pic^aE \xrightarrow{f} \Pic^bE\]
is \defi{natural} if for any automorphism \(\theta \colon E \to E\), the following diagram commutes:
\begin{center}
\begin{tikzcd}
\Pic^a E \arrow{r}{f} \arrow{d}{\theta^*} & \Pic^bE \arrow{d}{\theta^*} \\
\Pic^a E \arrow{r}{f} & \Pic^bE
\end{tikzcd}
\end{center}
\end{defin}

\begin{prop}\label{prop:black_magic}
If \(\Pic^aE \to \Pic^bE\) is natural, then \(a \) divides \(b\).
\end{prop}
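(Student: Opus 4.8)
The plan is to test naturality against the translation automorphisms of $E$ and thereby convert the statement into an elementary fact about $a$-torsion on $E$. Fix a point $O \in E$ to regard $E$ as an elliptic curve, and use the Abel--Jacobi maps to fix bijections $\sigma_a \colon \Pic^a E \xrightarrow{\sim} E$ and $\sigma_b \colon \Pic^b E \xrightarrow{\sim} E$, sending $[\O_E(\sum n_i x_i)]$ to the group-law sum $\boxplus\, n_i x_i$. For $P \in E$, the translation $t_P \colon x \mapsto x + P$ is an automorphism of the curve $E$, and a direct divisor computation gives $t_P^* \O_E(\sum n_i x_i) = \O_E(\sum n_i (x_i - P))$; hence under $\sigma_a$ the automorphism $t_P^*$ of $\Pic^a E$ becomes translation by $-aP$ on $E$ (the factor $a$ coming from $\sum n_i = a$), and likewise $t_P^*$ on $\Pic^b E$ becomes translation by $-bP$.

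Next I would set $g = \sigma_b \circ f \circ \sigma_a^{-1} \colon E \to E$ and apply the naturality hypothesis to $\theta = t_P$; unwinding the identifications from the previous step turns the commuting square into the functional equation
\[
g(x - aP) = g(x) - bP \qquad \text{for all } x, P \in E .
\]
If $a = 0$ this already forces $bP = 0$ for every $P$, hence $b = 0$ and $a \mid b$. If $a \neq 0$, then multiplication by $a$ is surjective on $E(\bar k)$, so every $Q \in E$ has the form $aP$; comparing $g(x - Q)$ computed from two choices $aP_1 = aP_2 = Q$ yields $b(P_1 - P_2) = 0$, i.e.\ $b$ annihilates $E[a]$.

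Finally, since $E$ is general (in particular ordinary when $\cha > 0$), the group $E[a](\bar k)$ contains a point of order exactly $a$; a $b$-torsion point of order $a$ exists only if $a \mid b$, which completes the proof. The only genuinely computational step is the first one — correctly pinning down that $t_P^*$ acts on $\Pic^a E$ by translation by $-aP$, sign and factor of $a$ included — and that single computation is exactly where the divisibility $a \mid b$ originates; everything afterwards is formal. Note also that we used only the translation automorphisms and only the underlying set maps, so no regularity hypothesis on $f$ enters.
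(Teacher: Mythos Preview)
Your proof is correct and takes essentially the same approach as the paper: both arguments use translation automorphisms to deduce that every $a$-torsion point is $b$-torsion, and conclude $a \mid b$ from the existence of a point of exact order $a$. The paper's proof is just the one-line version (``translation by an $a$-torsion point acts trivially on $\Pic^a E$, hence on $\Pic^b E$''), while you have carefully unwound the Abel--Jacobi identifications, handled the $a=0$ case separately, and made explicit the ordinariness hypothesis needed in positive characteristic --- all of which the paper leaves implicit.
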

\begin{proof}
Translation by an \(a\)-torsion point acts as the identity on \(\Pic^aE\), and so it must also act as the identity on \(\Pic^bE\).
\end{proof}

\begin{proof}[Proof of Proposition~\ref{prop:JHinterval}]
It suffices to prove that
\(N_E[\posmodalong R]\) has no subbundles of slope \(r+5\) or more, and no quotient bundles of slope \(r+4\) or less.
We will prove this by induction on \(r\), using the case \(r = 7\) discussed above as our base case.
Our argument will consist of two steps:
\begin{enumerate}
\item We \emph{specialize} so that the statement of the proposition
becomes \emph{false}, but still close enough to true that
we can gather information about the possible limits of subbundles of large slope
(respectively quotient bundles of small slope).
\item Leveraging this information, we apply Proposition~\ref{prop:black_magic}
to the \emph{general} fiber.
\end{enumerate}

Our specialization will be of \(R\) to the union \(R^\circ = \bar{pq} \cup R^-\),
of a \(2\)-secant line \(\bar{pq}\) and an \((r-1)\)-secant rational curve \(R^-\) of degree \(r-2\) meeting \(\bar{p q}\)
at a single point.
Projection from \(\bar{pq}\) induces an exact sequence
\begin{equation}\label{eq:exact_projline} 0 \to \big[S \colonequals \O_E(1)(2p+q) \oplus \O_E(1)(2q + p)\big] \to N_E[\posmodalong R^\circ] \to \big[Q \colonequals N_{E(0,0;2)}(p+q)[\posmodalong \bar{R^-}]\big] \to 0.\end{equation}
The bundle \(S\) is perfectly balanced of slope \(r+4\).
The bundle \(Q\) is a twist of another instance of our problem in \(\pp^{r-2}\).
If \(r \geq 11\), then by induction, every
JH-factor of \(Q\) has slope strictly between \(r+4\) and \(r+5\);
if \(r=9\), then every JH-factor of \(Q\) has slope exactly \(r+5\).

We begin by showing \(N_E[\posmodalong R]\) has no quotient bundles of slope \(r+4\) or less.
Since \(Q\) has no quotient bundles of slope \(r + 4\) or less,
any such quotient must specialize to a quotient of \(S\),
and therefore must have slope exactly \(r + 4\) and rank at most \(2\).
Let \(G\) be the maximal such quotient of \(N_E[\posmodalong R]\) (i.e., on the general fiber).
Our above specialization of \(R\) shows that \(\mu(G) = r + 4\) and \(n \colonequals \rk G \leq \rk S = 2\).
The determinant \(\det G\) depends on the following data:
\begin{itemize}
\item A line bundle \(\O_E(1)\).
\item A basis for \(H^0(\O_E(1))\).
\item A hyperplane \(H \subset \pp H^0(\O_E(1))^\vee\).
\item A rational curve \(R \subset H\) of degree \(r - 1\) passing through the \(r + 1\) points of \(E \cap H\).
\end{itemize}
Except for the choice of line bundle \(\O_E(1)\), all of this data varies
in a rational family. Since any map from a rational variety to an abelian variety
is constant, \(\det G\) depends only on the choice of line bundle \(\O_E(1)\).
Extracting the determinant of \(G\) therefore gives a natural map
\[\Pic^{r+1} E \to \Pic^{n(r+4)}E.\]
Hence, by Proposition \ref{prop:black_magic}, we have \((r+1) \mid n(r+4)\),
and therefore \((r + 1) \mid 3n\). Since \(3n \leq 6 < 10 \leq r + 1\), we must have \(3n = 0\), i.e., \(n = 0\)
as desired.

We next show that \(N_E[\posmodalong R]\) has no subbundles of slope \(r+5\) or more.
If \(r > 9\), then the specialization \(N_E[\posmodalong R^\circ]\)
has no such subbundle, because \(S\) and \(Q\) do not.
It therefore remains only to consider the case \(r=9\),
in which every JH-factor of \(S\) has slope \(13\)
and every JH-factor of \(Q\) has slope \(14\).
Let \(G\) be the maximal such subbundle of \(N_E[\posmodalong R]\) (i.e., on the general fiber).
Our above specialization of \(R\) shows that \(\mu(G) = 14\) and \(n \colonequals \rk G \leq \rk Q = 6\).
Extracting the determinant of \(G\) therefore gives a natural map
\[\Pic^{10} E \to \Pic^{14n} E.\]
Hence, by Proposition \ref{prop:black_magic}, we have that \(10 \mid 14n\), so \(5 \mid n\).
If \(n = 0\), we are done, so suppose that \(n = 5\).

To obtain a contradiction, we analyze what happens in our specialization,
in which \(G\) specializes to a subbundle \(G^\circ\) of \(Q\) with slope \(14\)
and rank \(5\). We consider the determinant \(\det [G^\circ(-p-q)]\).
A priori this depends only on \(\O_E(1)\), \(p\), and \(q\)
(the remaining data varies in a rational family).
In fact, we claim it depends only on
\[\O_{E(0,0;2)}(1) = \O_E(1) - p - q.\]
Indeed, \(\det [G^\circ(-p-q)]\) is a product of JH-factors of
\(N_{E(0,0;2)}[\posmodalong \bar{R^-}]\), which is a discrete set of possibilities once we fix
\(\O_{E(0,0;2)}(1)\) and some additional data varying in a rational family.
As we fix \(\O_{E(0,0;2)}(1)\) and this additional data,
we may allow \(\{p, q\}\) to vary arbitrarily in \(E \times E\) by Lemma~\ref{proj-general}.
In this way, we obtain a map from \(E \times E\) to this discrete set, which must therefore be constant
because \(E \times E\) is connected.
Therefore \(\det [G^\circ(-p-q)]\) depends only on \(\O_{E(0,0;2)}(1)\) plus this additional data varying in a rational family,
and thus only on \(\O_{E(0,0;2)}(1)\).
The determinant of \(G^\circ(-p-q)\) therefore gives a natural map
\[\Pic^8 E \to \Pic^{60} E.\]
This is a contradiction by Proposition \ref{prop:black_magic}, since \(8\nmid 60\).
\end{proof}

\subsection{\boldmath The case \(r = 7\)}
To handle this case, we will first have to study
the restriction of \eqref{filt} to~\(E\), which we do for arbitrary odd \(r\).
This is a filtration of
\(N_E[\posmodalong R]\) whose successive quotients are:
\[N_{E/S}(1), \quad N_{S/\Sigma_1}|_E \oplus N_{S/\Sigma_2}|_E, \quad \text{and} \quad \frac{N_E}{N_{E / \Sigma_1} + N_{E / \Sigma_2}}.\]
We write \(H_i \colonequals f_i^* \O_{\pp^1}(1)\) for the corresponding hyperplane class.

\begin{prop}\label{prop:c1_NE_in_S}
We have \(N_{E/S} \simeq \O_E(3 - n)\) (and so \(N_{E/S}(1) \simeq \O_E(4 - n)\)).
\end{prop}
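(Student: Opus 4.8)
The statement is a computation of the normal bundle $N_{E/S}$ of the elliptic curve inside the surface $S$. Since $E$ is a curve on a smooth surface, $N_{E/S} \simeq \O_S(E)|_E$, so the task is to identify the line bundle $\O_S(E)|_E$ on $E \simeq \pp^1$-fiber-over-neither — rather, to express its class in $\Pic E$ using the identifications already set up. The key input is the adjunction-type relation $L = K_S(1,1)(E)$, equivalently $\O_S(E) = L \otimes K_S^{-1} \otimes \O_S(-1,-1)$, which was established when $L$ was defined. Restricting to $E$ and using $L|_E \simeq \O_E(1)$ (noted right after the definition of $L$, via adjunction on $S$ and Lemma~\ref{p1p2n}), we get
\[
N_{E/S} \simeq \O_S(E)|_E \simeq \O_E(1) \otimes (K_S|_E)^{-1} \otimes \O_S(-1,-1)|_E.
\]
So everything reduces to computing $K_S|_E$ and $\O_S(1,1)|_E$ as classes on $E$.

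\textbf{Key steps.} First I would compute $K_S$. Since $S$ is the blowup of $\pp^1 \times \pp^1$ at the $n^2 - 1$ nodes of $\bar f(E)$, we have $K_S = \pi^* K_{\pp^1 \times \pp^1} + \sum F_i = \O_S(-2,-2)(\sum F_i)$, where $\pi \colon S \to \pp^1 \times \pp^1$ is the blowdown. Next, restrict to $E$: since $f \colon E \hookrightarrow S$ is the strict transform and passes through each node once (the image $\bar f(E)$ has ordinary nodes, by Lemma~\ref{ell_nodal}), $F_i|_E$ is a single point $y_1^i + y_2^i$ — more precisely, $\sum_i F_i|_E = \O_E(\sum_i(y_1^i + y_2^i))$, the divisor of the $2(n^2-1)$ points lying over the nodes. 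Now $\O_S(1,1)|_E = \bar f^* \O_{\pp^1 \times \pp^1}(1,1)|_E = \O_E(p_1 + \cdots + p_{2n+2})$ by Lemma~\ref{p1p2n}, which via $2n + 2 = r + 1$ is $\O_E(1)$ (this is exactly the identification $L|_E \simeq \O_E(1)$ already invoked). Combining: $K_S|_E = \O_E(-2) \otimes \O_E\big(\sum(y_1^i + y_2^i)\big)$ as a class, so
\[
N_{E/S} \simeq \O_E(1) \otimes \O_E(2) \otimes \O_E\Big(-\sum_i(y_1^i + y_2^i)\Big) \otimes \O_E(-1) \simeq \O_E(2)\Big(-\sum_i(y_1^i + y_2^i)\Big).
\]
Finally I must identify this with $\O_E(3 - n)$, i.e.\ show $\sum_i(y_1^i + y_2^i)$ is linearly equivalent to the divisor of $(n-1)$ hyperplane sections of $E \subset \pp^r$. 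This is a statement purely about $\bar f(E) \subset \pp^1 \times \pp^1$: the nodes of a general bidegree $(n+1, n+1)$ curve on $\pp^1 \times \pp^1$, counted with the two preimage points each, form a divisor on $E$. I would compute its class by noting that the nodal locus is cut out on $E$ by a section of a natural line bundle — concretely, $\sum_i(y_1^i + y_2^i)$ is the ramification-type divisor detecting when $\bar f$ fails to be an immersion... but $\bar f$ \emph{is} an immersion (the nodes of the image are not cusps of $E$), so instead: the preimages of the nodes are exactly the locus where $f_1 \times f_2$ is $2$-to-$1$, i.e.\ the "double point divisor," whose class in $\Pic E$ for a map $E \to \pp^1 \times \pp^1$ of bidegree $(n+1,n+1)$ is $2 \cdot \bar f^* \O(1,1) \otimes \omega_E^{-1} = \O_E(2)(-\text{nothing})$... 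Let me reconsider: the standard double-point formula on a surface gives the class of the double-point divisor $D$ on the source as $D \equiv f^*[f(E)] - (K_E - f^* K_S)$ restricted appropriately; for $E$ a curve of genus $1$ mapping to a surface, $\deg D = (f_*E)^2 - \deg N_{E/S} - (2g(E) - 2) = \ldots$. Rather than chase this, the cleanest route is: $\sum_i(y_1^i + y_2^i) = \bar f^*(\Delta) - (\text{diagonal contribution})$ where $\Delta \subset \pp^1 \times \pp^1$ — no.

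\textbf{Cleaner finish, and the main obstacle.} The honest cleanest computation: $\O_S(E)|_E = N_{E/S}$, and I can get its \emph{degree} by the formula $\deg N_{E/S} = (E \cdot E)_S = (\bar f_* E)^2_{\pp^1\times\pp^1} - 2(n^2 - 1) = 2(n+1)^2 - 2n^2 + 2 = 4n + 4$ (self-intersection of a $(n+1,n+1)$ curve is $2(n+1)^2$, each node drops it by $2$), while $\O_E(3-n)$ has degree $(3-n) \cdot \deg \O_E(1) = (3-n)(2n+2)$ — these do \emph{not} agree, so the answer $\O_E(3-n)$ must mean the degree of $\O_E(1)$ on $E \subset \pp^r$ is... $\deg E = r + 1 = 2n + 2$, so $\deg \O_E(3-n) = (3-n)(2n+2)$, which for $n \geq 1$ is already negative-ish and unequal to $4n+4$. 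Hence my self-intersection bookkeeping has an error I would need to track down carefully — \emph{this degree reconciliation is the main obstacle}, and it forces the computation of $N_{E/S}$ through adjunction as above rather than through crude intersection numbers. So the plan is: (1) write $N_{E/S} = \O_S(E)|_E$; (2) use $\O_S(E) = L \otimes K_S^{-1} \otimes \O_S(-1,-1)$ together with the established $L|_E \simeq \O_E(1)$, reducing to computing $K_S|_E \otimes \O_S(1,1)|_E$; (3) use $K_S = \pi^*\O(-2,-2)(\sum F_i)$ and $\O_S(1,1)|_E = \O_E(p_1 + \cdots + p_{2n+2})$ from Lemma~\ref{p1p2n}, so $K_S|_E \otimes \O_S(1,1)|_E = \O_E(-p_1 - \cdots - p_{2n+2})(\sum_i F_i|_E)$; (4) identify $\sum_i F_i|_E$ with the node-preimage divisor on $E$ and show, via the fact that this divisor varies in a rational family as $\O_E(1)$ is fixed together with a choice of the $q_i$ (Lemma~\ref{proj-general}-style argument, exactly as in the proof of Proposition~\ref{prop:JHinterval}), that its class is forced — then a torsion/Proposition~\ref{prop:black_magic}-type argument pins it down to the unique class making the answer $\O_E(3-n)$. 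I expect step (4), pinning down the class of the node divisor on $E$, to be where all the real work lies; the adjunction manipulation in steps (1)--(3) is routine once the defining relation for $L$ is unwound.
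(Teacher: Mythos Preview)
Your adjunction setup in steps (1)--(3) is correct and reduces the problem to identifying the class of the node-preimage divisor \(D = \sum_i F_i|_E\) in \(\Pic E\). The genuine gap is step (4): a naturality argument in the style of Proposition~\ref{prop:black_magic} only yields divisibility constraints and cannot pin down the class. Concretely, there are several \(\Aut(E)\)-equivariant maps \(\Pic^{2n+2} E \to \Pic^{2(n^2-1)} E\) (they differ by twisting by \(2\)-torsion in \(\Pic^0 E\)), so naturality alone does not force \(D \equiv \O_E(n-1)\).

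Two fixes. First, the double-point formula you almost wrote down does the job: for the immersion \(\bar f \colon E \to \pp^1 \times \pp^1\) one has \(\bar f^* N_{\bar f(E)/\pp^1\times\pp^1} \simeq N_{\bar f}(D)\), so
\[
D \; \equiv \; \bar f^*\O(n+1,n+1) \; - \; \big(K_E - \bar f^* K_{\pp^1\times\pp^1}\big) \; = \; (n+1)(H_1+H_2) - 2(H_1+H_2) \; = \; \O_E(n-1),
\]
and then your step (3) gives \(N_{E/S} = \O_E(2)(-D) = \O_E(3-n)\). Second, the paper's proof is exactly this idea packaged without ever naming \(D\): one considers the chain of line-bundle maps
\[
N_{E/S} \simeq N_f \longrightarrow N_{\bar f} \longrightarrow \bar f^* N_{\bar f(E)/\pp^1\times\pp^1},
\]
observes that \emph{both} arrows vanish precisely along \(D\), and concludes that the three classes lie in arithmetic progression in \(\Pic E\), so \(c_1(N_{E/S}) = 2c_1(N_{\bar f}) - c_1(\bar f^* N_{\bar f(E)}) = (3-n)(H_1+H_2)\). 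This is cleaner than your route because \(D\) cancels.

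Separately, your self-intersection check has an arithmetic slip: blowing up a node (a point of multiplicity \(2\)) drops the self-intersection by \(m^2 = 4\), not \(2\). With the correct factor, \((E\cdot E)_S = 2(n+1)^2 - 4(n^2-1) = -2n^2 + 4n + 6 = (3-n)(2n+2)\), which matches \(\deg \O_E(3-n)\). So there was no genuine inconsistency---but of course this only confirms the degree, not the class in \(\Pic E\), so the line-bundle identification still requires the double-point/conductor argument above.
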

\begin{proof}
We consider the sequence (\emph{not} exact) of maps:
\[[N_{E/S} \simeq N_f] \to N_{\bar{f}} \to \bar{f}^* N_{\bar{f}(E) / \pp^1 \times \pp^1}.\]
By inspection both of these maps drop rank exactly at the points of \(E\) lying over the nodes of \(\bar{f}\).
Therefore their Chern classes lie in a linear progression, i.e.:
\begin{align*}
c_1(N_{E/S}) &= 2 \cdot c_1(N_{\bar{f}}) - c_1(\bar{f}^* N_{\bar{f}(E) / \pp^1 \times \pp^1}) \\
&= -2 c_1(\bar{f}^* K_{\pp^1 \times \pp^1}) - c_1(\bar{f}^* \O_{\pp^1 \times \pp^1}(n + 1, n + 1)) \\
&= -2 (-2 H_1 - 2H_2) - (n + 1)(H_1 + H_2) \\
&= (3 - n)(H_1 + H_2). \qedhere
\end{align*}
\end{proof}

\begin{prop}\label{prop:c1_NS_in_sigma}
We have \(c_1(N_{S/\Sigma_1}|_E \oplus N_{S/\Sigma_2}|_E) = \O_E(3n - 3)\).
\end{prop}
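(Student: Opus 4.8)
The plan is to turn the statement into a bookkeeping of determinant line bundles, using the chain of smooth closed subvarieties $E \subset S \subset \Sigma_j$. Since $c_1$ of a direct sum is the tensor product of the two determinants, and since for each $j \in \{1,2\}$ the nested inclusions give a short exact sequence $0 \to N_{E/S} \to N_{E/\Sigma_j} \to N_{S/\Sigma_j}|_E \to 0$, we get
\[
c_1\big(N_{S/\Sigma_1}|_E \oplus N_{S/\Sigma_2}|_E\big) \;=\; \det N_{E/\Sigma_1} \otimes \det N_{E/\Sigma_2} \otimes (N_{E/S})^{\otimes -2}.
\]
By Proposition~\ref{prop:c1_NE_in_S} we already know $N_{E/S} \simeq \O_E(3-n)$, so the whole computation reduces to determining $\det N_{E/\Sigma_j}$ as an element of $\Pic(E)$.

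To compute $\det N_{E/\Sigma_j}$, I would work directly on $\Sigma_j$. As explained above, $\Sigma_j \simeq \pp^1 \times \pp^n$, with the hyperplane class $\O_{\Sigma_j}(1)$ corresponding to $\O_{\pp^1 \times \pp^n}(1,1)$, and the ruling $\Sigma_j \to \pp^1$ restricting along $E \hookrightarrow \Sigma_j$ to the degree-$(n+1)$ map $f_j$. Set $H_j \colonequals f_j^*\O_{\pp^1}(1)$, a line bundle of degree $n+1$. Then the restrictions of the two rulings give $\O(1,1)|_E \simeq \O_E(1)$ and $\O(1,0)|_E \simeq H_j$, hence $\O(0,1)|_E \simeq \O_E(1)\otimes H_j^{-1}$. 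Now decompose $T_{\Sigma_j} = \pi_1^*T_{\pp^1} \oplus \pi_2^*T_{\pp^n}$, restrict to $E$, and take determinants using $\det T_{\pp^1} = \O_{\pp^1}(2)$ and $\det T_{\pp^n} = \O_{\pp^n}(n+1)$; together with adjunction $\det N_{E/\Sigma_j} = \det(T_{\Sigma_j}|_E)\otimes K_E$ and $K_E \simeq \O_E$, this yields
\[
\det N_{E/\Sigma_j} \;\simeq\; H_j^{\otimes (1-n)} \otimes \O_E(1)^{\otimes (n+1)}.
\]

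Feeding this back in, $\det N_{E/\Sigma_1} \otimes \det N_{E/\Sigma_2} \otimes (N_{E/S})^{\otimes -2} \simeq (H_1 \otimes H_2)^{\otimes (1-n)} \otimes \O_E(1)^{\otimes(4n-4)}$. The last step, and the only place where the argument uses more than the degrees of these bundles, is the identification $H_1 \otimes H_2 \simeq \O_E(1)$: by Lemma~\ref{p1p2n} (applied with the $2n+2$ points $p_i = E \cap R$) we have $H_1 \otimes H_2 = \bar f^*\O_{\pp^1\times\pp^1}(1,1) \simeq \O_E(p_1 + \cdots + p_{2n+2})$, and since $R$ lies in a hyperplane and $E \cap R$ consists of $r+1 = 2n+2 = \deg E$ points, $E \cap R$ is a hyperplane section of $E$, so $\O_E(p_1+\cdots+p_{2n+2}) \simeq \O_E(1)$. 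Substituting gives $\O_E(1)^{\otimes (1-n)} \otimes \O_E(1)^{\otimes (4n-4)} = \O_E(3n-3)$, as claimed. I expect the main (mild) obstacle to be precisely this $\Pic^0$-bookkeeping: on $E$ the degree does not determine a line bundle, so one must make sure every $\Pic^0$-contribution is accounted for, and Lemma~\ref{p1p2n} together with the hyperplane-section observation is exactly what makes those contributions cancel. As a consistency check, one can verify that $c_1$ of the three graded pieces of the filtration \eqref{filt} restricted to $E$ — namely $N_{E/S}(1) \simeq \O_E(4-n)$, the middle piece $\O_E(3n-3)$, and the top piece (which the same determinant computation shows is $\O_E(2)$) — add up to $\det(N_E[\posmodalong R]) = \O_E(r+2) = \O_E(2n+3)$, matching $c_1(N_E) = \O_E(r+1)$ twisted by the $r+1$ incidence points.
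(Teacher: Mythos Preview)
Your proof is correct and follows essentially the same approach as the paper: both use the nested normal-bundle sequence to reduce to computing $\det N_{E/\Sigma_j}$ via adjunction, then identify $H_1 \otimes H_2 \simeq \O_E(1)$ to finish. The paper phrases the canonical-class computation in the scroll basis $(\gamma_j, h)$ with $K_{\Sigma_j} = (n-1)\gamma_j - (n+1)h$ and writes $h|_E = H_1 + H_2$ (leaving the identification $H_1 + H_2 = \O_E(1)$ implicit from Lemma~\ref{p1p2n}), whereas you compute $\det T_{\pp^1 \times \pp^n}$ directly and make the $\Pic^0$-bookkeeping explicit---but these are the same computation in different notation.
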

\begin{proof}
As in the proof of Proposition~\ref{prop:HN}, define the classes
\(\gamma_j\) and \(h\) in \(\Pic \Sigma_j\) to be 
the class of one \(n\)-plane, and the restriction of the hyperplane class from \(\pp^{2n+1}\), respectively.
By adjunction,
\[c_1(N_{E/\Sigma_j}) = -c_1(K_{\Sigma_j}|_E) = - \left((n - 1)\gamma_j - (n + 1) h\right) \cdot E.\]
Therefore,
\begin{align*}
c_1(N_{S/\Sigma_1}|_E \oplus N_{S/\Sigma_2}|_E) &= c_1(N_{E/\Sigma_1}) + c_1(N_{E/\Sigma_2}) - 2c_1(N_{E/S}) \\
&= -(n - 1)(\gamma_1 + \gamma_2) \cdot E + 2(n + 1) (h \cdot E) - 2(3 - n)(H_1 + H_2) \\
&= -(n - 1)(H_1 + H_2) + 2(n + 1) (H_1 + H_2) - 2(3 - n)(H_1 + H_2) \\
&= (3n - 3)(H_1 + H_2). \qedhere
\end{align*}
\end{proof}

\begin{prop}\label{prop:c1_NQ}
We have \(\frac{N_E}{N_{E / \Sigma_1} + N_{E / \Sigma_2}} \simeq \O_E(2)\).
\end{prop}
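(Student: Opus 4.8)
The plan is to prove Proposition~\ref{prop:c1_NQ} purely by Chern-class bookkeeping, reusing the adjunction computations already carried out in Propositions~\ref{prop:c1_NE_in_S} and~\ref{prop:c1_NS_in_sigma}. Since $\frac{N_E}{N_{E / \Sigma_1} + N_{E / \Sigma_2}}$ is the top graded piece of the restriction to $E$ of the filtration \eqref{filt}, it is a line bundle, hence determined by its divisor class; so it suffices to compute $c_1$, and the identity we want amounts to showing this class equals $2(H_1 + H_2)$.

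First I would record the structure of the span. By the quasi-transversality of $\Sigma_1$ and $\Sigma_2$ along $S$ established in Lemma~\ref{lem:sigmas_intersect_in_S}, the subsheaf $N_{E/\Sigma_1} + N_{E/\Sigma_2} \subseteq N_E$ has corank $1$ and $N_{E/\Sigma_1} \cap N_{E/\Sigma_2} = N_{E/S}$. The exact sequence
\[0 \to N_{E/S} \to N_{E/\Sigma_1} \oplus N_{E/\Sigma_2} \to N_{E/\Sigma_1} + N_{E/\Sigma_2} \to 0\]
then yields
\[c_1\!\left(\frac{N_E}{N_{E / \Sigma_1} + N_{E / \Sigma_2}}\right) = c_1(N_E) - c_1(N_{E/\Sigma_1}) - c_1(N_{E/\Sigma_2}) + c_1(N_{E/S}).\]

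Next I would evaluate the four terms, all expressed in terms of the class $H_1 + H_2 = \O_E(1)$ (the equality $\O_E(1) \simeq \bar{f}^*\O_{\pp^1 \times \pp^1}(1,1) = \O_E(H_1 + H_2)$ being Lemma~\ref{p1p2n}). Adjunction on $\pp^r$, via the Euler sequence and $\omega_E \simeq \O_E$, gives $\det N_E \simeq \O_E(r+1) = (2n+2)(H_1 + H_2)$. The adjunction computation in the proof of Proposition~\ref{prop:c1_NS_in_sigma}, namely $c_1(N_{E/\Sigma_j}) = -c_1(K_{\Sigma_j}|_E)$ with $K_{\Sigma_j} = (n-1)\gamma_j - (n+1)h$ and $\gamma_j|_E = H_j$, $h|_E = H_1 + H_2$, gives $c_1(N_{E/\Sigma_1}) + c_1(N_{E/\Sigma_2}) = (n+3)(H_1 + H_2)$. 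Finally $c_1(N_{E/S}) = (3-n)(H_1 + H_2)$ is Proposition~\ref{prop:c1_NE_in_S}. Substituting,
\[c_1\!\left(\frac{N_E}{N_{E / \Sigma_1} + N_{E / \Sigma_2}}\right) = \big((2n+2) - (n+3) + (3-n)\big)(H_1 + H_2) = 2(H_1 + H_2) = \O_E(2),\]
and since a line bundle on a curve is determined by its Chern class, this gives the desired isomorphism.

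There is no substantial obstacle here; the one point that warrants care is ensuring every intermediate identity is an equality of line bundles (i.e.\ of divisor classes, not merely of degrees), which is automatic because $H_1 + H_2 = \O_E(1)$ is a genuine equality and each normal bundle in sight has its determinant pinned down exactly by adjunction — so, unlike in Proposition~\ref{prop:HN} and the later determinant arguments, no element of $\operatorname{Pic}^0(E)$ is ever left ambiguous.
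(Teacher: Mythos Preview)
Your proof is correct and follows essentially the same approach as the paper: compute $c_1(N_E) = \O_E(2n+2)$ by adjunction and subtract off the Chern classes of the lower graded pieces already determined in Propositions~\ref{prop:c1_NE_in_S} and~\ref{prop:c1_NS_in_sigma}. The paper phrases this as $(2n+2) - (3-n) - (3n-3) = 2$ using the two-step filtration directly, whereas you route through the short exact sequence $0 \to N_{E/S} \to N_{E/\Sigma_1} \oplus N_{E/\Sigma_2} \to N_{E/\Sigma_1} + N_{E/\Sigma_2} \to 0$, but the arithmetic and the underlying idea are identical.
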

\begin{proof}
Since \(K_{\pp^{2n + 1}} = \O_{\pp^{2n + 1}}(-(2n + 2))\), we have
\(c_1(N_E) = \O_E(2n + 2)\). Combined with the previous two propositions,
this implies the statement of the lemma (\((2n + 2) - (3 - n) - (3n - 3) = 2\)).
\end{proof}

We now take \(r = 7\) (equivalently \(n = 3\)), and let \(p\) and \(q\) be points of \(E \cap R\).
By Lemma \ref{lem:ell_norm_int1}, it suffices to show interpolation for
\[N_E[\posmodalong R][p \posmod N_{E/S}][q \negmod N_{E /\Sigma_1} + N_{E/\Sigma_2}].\]
This bundle has slope \(12\), so it suffices to show that for a general effective divisor \(D\) of degree \(12\),
\begin{equation}\label{eq:suff_van}
H^0(N_E[\posmodalong R][p \posmod N_{E/S}][q \negmod N_{E /\Sigma_1} + N_{E/\Sigma_2}](-D)) = 0.
\end{equation}
Furthermore, \(I(8, 1, 7, 0, 1)\) holds, so
\(h^0(N_E[\posmodalong R](-D)) = h^1(N_E[\posmodalong R](-D)) = 0\),
which implies that \(h^0 (N_E[\posmodalong R][p \posmod N_{E/S}](-D)) = 1\).
Call the unique section \(\sigma\).  If there is any point \(q \in E \cap R \setminus \{p\}\) for which \(\sigma|_q \notin (N_{E /\Sigma_1} + N_{E/\Sigma_2})|_q\), then we have proved the desired vanishing \eqref{eq:suff_van}.
We may therefore assume that at all points of \(E \cap R \setminus \{p\}\), the value of \(\sigma\) lies in the subbundle
\(N_{E /\Sigma_1} + N_{E/\Sigma_2}\).
By Proposition \ref{prop:c1_NQ} we have the exact sequence
\begin{equation}\label{eq:in_Q_seq}0 \to \left(N_{E/\Sigma_1}+N_{E/\Sigma_2}\right)[\posmodalong R](-D) \to N_E[\posmodalong R](-D) \to \O_E(2)(-D) \to 0.\end{equation}
Since
\(\deg\left(\O_E(2)(-D)(-E\cap R +p)\right) = -3\),
we must have that \(\sigma\) comes from a section of
\[\left(N_{E/\Sigma_1}+N_{E/\Sigma_2}\right)[\posmodalong R][p \posmod N_{E/S}](-D).\]
It therefore suffices to show that for some \(p \in E \cap R\), this bundle has no global sections
(or, equivalently since the degree is \(-3\), such that \(h^1 = 3\)).  Using \eqref{eq:in_Q_seq}, we have \(h^1\left(\left(N_{E/\Sigma_1}+N_{E/\Sigma_2}\right)[\posmodalong R](-D)\right)=4\), so such a point \(p \in E \cap R\) exists unless making positive modifications towards \emph{all} points of \( E \cap R\) does not decrease the \(h^1\):
\[h^1\left(\left(N_{E/\Sigma_1}+N_{E/\Sigma_2}\right)[\posmodalong R](-D)[E \cap R \posmodwithplus N_{E/S}]\right)=4.\]
Equivalently, we are done unless 
\begin{equation}\label{eq:h0_8}h^0\left(\left(N_{E/\Sigma_1}+N_{E/\Sigma_2}\right)[\posmodalong R](-D)[E \cap R \posmodwithplus N_{E/S}]\right) =8.\end{equation}
Taking the sum of the two normal bundle exact sequences for \(S \hookrightarrow \Sigma_1\) and \(S\hookrightarrow \Sigma_2\) along \(E\)
yields
\begin{equation}\label{eq:SinSigmas} 0 \to N_{E/S}(2(E \cap R)) \to \left(N_{E/\Sigma_1}+ N_{E/\Sigma_2}\right)[\posmodalong R][E \cap R \posmodwithplus N_{E/S}]\to N_{S/\Sigma_1}|_E \oplus N_{S/\Sigma_2}|_E  \to 0.\end{equation}
Twisting down by \(D\), the line subbundle \(N_{E/S}(2(E \cap R))(-D)\) has degree \(4\), and hence \(4\) global sections and vanishing \(H^1\).  The quotient twisted down by \(D\)
(which has degree \(0\)) must therefore also have \(4\) global section in order for \eqref{eq:h0_8} to hold.
Furthermore, the two scrolls \(\Sigma_1\) and \(\Sigma_2\) are exchanged by monodromy
because the two maps \(f_i \colon E \to \pp^1\) in Lemma \ref{map_of_ell_curve} have degree \(n+1\), which is not a multiple of \(2n + 2\), and hence by Proposition \ref{prop:black_magic} they cannot be individually naturally defined.
Thus the two rank \(2\) bundles \(N_{S/\Sigma_i}|_E(-D)\) necessarily both have \(2\) sections.
If \(N_{S/\Sigma_i}|_E\) were indecomposable, then by the Atiyah classification, it would necessarily be an extension of a degree \(12\) line bundle \(M\) by itself.  As long as \(\O_E(D) \not\simeq M\), we would have \(h^0(N_{S/\Sigma_i}|_E(-D)) = 0\).
Therefore \(N_{S/\Sigma_i}|_E\) is a direct sum of line bundles. Since \(h^0(N_{S/\Sigma_i}|_E(-D)) = 2\):
\[N_{S/\Sigma_i}|_E \simeq L_{i1} \oplus L_{i2}\qquad \text{where} \qquad \deg(L_{i1}) = 14 \ \text{and} \ \deg(L_{i2}) = 10.\]
Then \(\det [L_{11} \oplus L_{21}]\) gives a natural map
\[\Pic^8 E \to \Pic^{28} E,\]
which is a contradiction by Proposition \ref{prop:black_magic}, since \(8 \nmid 28\).

\pagebreak

\appendix

\section{Code for Section~\ref{sec:most-sporadic} \label{app:code}}

\lstinputlisting{sporadic.py}

\pagebreak

\bibliographystyle{amsplain.bst}
\bibliography{interpolation.bib}
  
\end{document}